\def\pasdegrille{\let\grille = \pasgrille}
\def\ecriture#1#2{\setbox1=\hbox{#1}
\dimen1= \wd1
\dimen2=\ht1
\dimen3=\dp1
\grille #2 \box1 }
\def\aat#1#2#3{
\divide \dimen1 by 48
\dimen3=\dimen1
\multiply \dimen1 by #1
\advance \dimen1 by -\dimen3
\divide \dimen1 by 101
\multiply \dimen1 by 100
\divide \dimen2 by \count11
\multiply \dimen2 by #2 
\setbox0=\hbox{#3}\ht0=0pt\dp0=0pt
  \rlap{\kern\dimen1 \vbox to0pt{\kern-\dimen2\box0\vss}}\dimen1= \wd1
\dimen2=\ht1}
\def\pasgrille{
\count12= \dimen1 
\divide \count12 by 50
\divide \dimen2 by \count12
\count11 =\dimen2
\ 
\divide \dimen1 by 48
\setlength{\unitlength}{\dimen1}
\smash{\rlap{\ }}
\dimen1= \wd1
\dimen2=\ht1
}
\def\grille{
\count12= \dimen1 
\divide \count12 by 50
\divide \dimen2 by \count12
\count11 =\dimen2
\ 
\divide \dimen1 by 48
\setlength{\unitlength}{\dimen1}
\smash{\rlap{\graphpaper[1](0,0)(50, \count11)}}
\dimen1= \wd1
\dimen2=\ht1
}
\def\e{{\varepsilon}}
\newcommand\R{\mathbb{R}}
\newcommand\C{\mathbb{C}}
\newcommand\N{\mathbb{N}}
\newcommand\Z{\mathbb{Z}}
\newcommand\bna{\begin{eqnarray*}}%équation non numérotée
\newcommand\ena{\end{eqnarray*}}
\newcommand\bnan{\begin{eqnarray}}%équation numérotée
\newcommand\enan{\end{eqnarray}}
\newcommand\bnp{\begin{proof}}%équation numérotée
\newcommand\enp{\end{proof}}
\newcommand\Hu{H^1}
\newcommand\Hup{\dot{H^1}}
\newcommand\HutL{\mathcal{E}}
\newcommand\nor[2]{\left\|#1\right\|_{#2}}%la norme
\newcommand\nort[1]{\left|\left|\left|#1\right|\right|\right|}%la norme triple
\newcommand\tend[2]{\underset{#1 \to #2}{\longrightarrow}}
\newcommand\tendweak[2]{\underset{#1 \to #2}{\rightharpoonup}}
\newcommand\limsu[2]{\underset{#1\to #2}{\varlimsup}}
\newcommand\limvar[2]{\underset{#1\to #2}{\lim}}
\newcommand\petito[1]{o(#1)}
\newcommand\grando[1]{\mathcal{O}(#1)}
\newtheorem{theorem}{Theorem}[section]
\newtheorem{remarque}{Remark}[section]
\newtheorem{lemme}{Lemma}[section]
\newtheorem{corollaire}{Corollary}[section]
\newtheorem{prop}{Proposition}[section]
\newtheorem{hypo}{Assumption}[section]
\newtheorem{definition}{Definition}[section]
\title{On stabilization and control for the critical Klein-Gordon equation on a 3-D compact manifold}
\author{Camille Laurent\thanks{Laboratoire de Math\'{e}matiques d'Orsay, UMR 8628 CNRS, Universit\'e Paris-Sud,  Orsay Cedex, F-91405 (camille.laurent@math.u-psud.fr).}
}
\begin{document}

\maketitle
\begin{abstract}
In this article, we study the internal stabilization and control of the critical nonlinear Klein-Gordon equation on 3-D compact manifolds. Under a geometric assumption slightly stronger than the classical geometric control condition, we prove exponential decay for some solutions bounded in the energy space but small in a lower norm. The proof combines profile decomposition and microlocal arguments. This profile decomposition, analogous to the one of Bahouri-Gérard \cite{BahouriGerard} on $\R^3$, is performed by taking care of possible geometric effects. It uses some results of S. Ibrahim \cite{ibrahim2004gon} on the behavior of concentrating waves on manifolds.\\
\end{abstract}
{\bf Key words.} Control, Stabilization, critical nonlinear Klein-Gordon equation, concentration compactness\\
\vspace{0.2 cm}{\bf AMS subject classifications.} 93B05, 93D15, 35L70, 35Q93 
\tableofcontents
%-----------------------------------------------------------------------
\section*{Introduction}
%-----------------------------------------------------------------------
In this article, we study the internal stabilization and exact controllability for the defocusing critical nonlinear Klein-Gordon equation on some compact manifolds. 
\begin{eqnarray}
\label{eqncontrolnl}
\left\lbrace
\begin{array}{rcl}%argument r=alignement à droite puis center et left
\Box u=\partial^2_t u - \Delta u &=&-u-|u|^4u \quad \textnormal{on}\quad [0,+\infty [\times M\\
(u(0),\partial_t u(0))&=&(u_{0},u_1) \in \HutL.
\end{array}
\right.
\end{eqnarray}
where $\Delta$ is the Laplace-Beltrami operator on $M$ and $\HutL$ is the energy space $H^1(M)\times L^2(M)$. The solution displays a conserved energy
\bnan
\label{defnonmlinearNRJ}
E(t)=\frac{1}{2}\left(\int_M \left|\partial_t u \right|^2+\int_M \left|u \right|^2+\int_M \left|\nabla u \right|^2\right) +\frac{1}{6}\int_M \left|u\right|^6.
\enan
This problem was already treated in the subcritical case by B. Dehman, G. Lebeau and E. Zuazua \cite{DLZstabNLW}. The problem is posed in a different geometry but their proof could easily be transposed in our setting. Yet, their result fails to apply to the critical problem for two main reasons, as explained in their paper : \\
(a) The boot-strap argument they employ to improve the regularity of solutions vanishing
in the zone of control $\omega$ so that the existing results on unique continuation apply, does not
work for this critical exponent.\\
(b) They can not use the linearizability results by P. Gérard \cite{linearisationondePG} to deduce that the microlocal
defect measure for the nonlinear problem propagates as in the linear case.

In this paper, we propose a strategy to avoid the second difficulty at the cost of an additional condition for the subset $\omega$. It was already performed by B. Dehman and P. Gérard \cite{DehPGNLW} in the case of $\R^3$ with a flat metric. In fact, in that case, this defect of linearisability is described by the profile decomposition of H. Bahouri and P. Gérard \cite{BahouriGerard}. The purpose of this paper is to extend a part of this proof to the case of a manifold with a variable metric. This more complicated geometry leads to extra difficulties, in the profile decomposition and the stabilization argument. We also mention the recent result of L. Aloui, S. Ibrahim and K. Nakanishi \cite{dampingcriticAlIbrNak} for $\R^d$. Their method of proof is very different and uses Morawetz-type estimates. They obtain uniform exponential decay for a damping around spatial infinity for any nonlinearity, provided the solution exists globally. This result is stronger than ours, but their method does not seem to apply to the more complicated geometries we deal with. 

We will need some geometrical condition to prove controllability. The first one is the classical geometric control condition of Rauch and Taylor \cite{RauchTaylordecay} and Bardos Lebeau Rauch \cite{BLR}, while the second one is more restrictive. 
\begin{hypo}[Geometric Control Condition]\label{hypGCC}There exists $T_0>0$ such that every geodesic travelling at speed $1$ meets $\omega$ in a time $t<T_0$.
\end{hypo}
\begin{definition}
\label{defcouplefocus}
We say that $(x_1,x_2,t)\in M^2\times \R$ is a couple of focus at distance $t$ if the set
\bna
F_{x_1,x_2,t}:=\left\{\left.\xi \in S^*_{x_1}M\right| exp_{x_1}t\xi =x_2\right\}
\ena
of directions of geodesics stemming from $x_1$ and reaching $x_2$ in a time $t$, has a positive surface measure.

We denote $T_{focus}$ the infimum of the $t\in \R$ such that there exists a couple of focus at distance $t$. 
\end{definition}
If $M$ is compact, we have necessarily $T_{focus}>0$.

\begin{hypo}[Geometric control before refocusing]\label{hypGCCfocus}The open set $\omega$ satisfies the Geometric Control Condition in a time $T_0<T_{focus}$.
\end{hypo}
For example, for $\mathbb{T}^3$, there is no refocusing and the geometric assumption is the classical Geometric Control Condition. Yet, for the sphere $S^3$, our assumption is stronger. For example, it is fulfilled if $\omega$ is a neighborhood of $\left\{x_4=0\right\}$. We can imagine some geometric situations where the Geometric Control Condition is fulfilled while our condition is not, for example if we take only a a neighborhood of $\left\{x_4=0,x_3\geq0\right\}$ (see Remark \ref{rmkhyp} and Figure \ref{fig.sphere} for $S^2$). We do not know if the exponential decay is true in this case.

The main result of this article is the following theorem.
\begin{theorem}
\label{mainthm}
Let $R_0>0$ and $\omega$ satisfying Assumption \ref{hypGCCfocus}. Then, there exist $T>0$ and $\delta>0$ such that for any $(u_0,u_1)$ and $(\tilde{u}_0,\tilde{u}_1)$ in $H^1\times L^2$, with 
\bna
\nor{(u_0,u_1)}{H^1\times L^2} \leq R_0; &\quad &\nor{(\tilde{u}_0,\tilde{u}_1)}{H^1\times L^2}\leq R_0\\
\nor{(u_0,u_1)}{L^2\times H^{-1}} \leq \delta; &\quad &\nor{(\tilde{u}_0,\tilde{u}_1)}{L^2\times H^{-1}}\leq \delta
\ena
there exists $g\in L^{\infty}([0,T],L^2)$ supported in $[0,T]\times \omega$ such that the unique strong solution of 
\begin{eqnarray*}
\left\lbrace
\begin{array}{rcl}%argument r=alignement à droite puis center et left
\Box u+u+|u|^4u&=& g\quad \textnormal{on}\quad [0,T]\times M\\
(u(0),\partial_t u(0))&=&(u_0,u_1) .
\end{array}
\right.
\end{eqnarray*}
satisfies $(u(T),\partial_t u(T))=(\tilde{u}_0,\tilde{u}_1)$.
\end{theorem}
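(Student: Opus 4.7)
The plan is to follow the by-now-classical Dehman-Lebeau-Zuazua scheme, in which exact controllability is obtained by combining a \emph{nonlinear exponential stabilization} result with a \emph{local exact controllability} result near the origin. Concretely, pick a damping $a\in C^\infty(M)$ with $\{a\neq 0\}\subset \omega$, consider the damped equation
\[
\Box u + u + |u|^4 u + a(x)^2 \partial_t u = 0,
\]
and first prove exponential decay of the energy for solutions satisfying the bounds of the theorem; then, given any sufficiently small initial datum in $\HutL$, use a linearisation-plus-fixed-point argument based on HUM for the linear Klein-Gordon equation to construct a control driving it to zero. These two ingredients assemble as follows: on $[0,T/2]$ run the damped equation with $g=0$ to bring $(u_0,u_1)$ into the smallness regime; on $[T/2,3T/4]$ drive it to the origin by local controllability; by time-reversibility, do the analogous construction from $(\tilde u_0,\tilde u_1)$ backwards on $[3T/4,T]$, and concatenate.

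The stabilization step is the heart of the matter, and I would argue by contradiction. Assuming a sequence $(u^n_0,u^n_1)$ satisfying the bounds of the theorem with $\int_0^T\!\!\int_M a^2 |\partial_t u_n|^2 \ll E_n(0)$, the critical nonlinearity prevents Strichartz-based linearisation (difficulty (b) of the introduction), so in its place I would apply a Bahouri-Gérard type profile decomposition for $(u_n,\partial_t u_n)$ adapted to the variable geometry of $M$, using S.~Ibrahim's analysis of concentrating waves to identify the leading Euclidean dynamics around each concentration core. This writes $u_n$ as a sum of rescaled and translated nonlinear concentrating profiles $\mathcal{U}_n^{(j)}$ plus a dispersive remainder of arbitrarily small Strichartz norm. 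Because $M$ carries no global scaling or translation symmetry, constructing this decomposition is a genuine technical task: one must work chart-by-chart around each core $(t_n^{(j)},x_n^{(j)})$, rescale by the concentration scale $h_n^{(j)}$, and verify that the limiting profile satisfies the constant-coefficient critical wave equation on $\R\times\R^3$.

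The role of Assumption \ref{hypGCCfocus} becomes explicit at this point: because $T_0<T_{focus}$, no concentrating profile can be hidden from $\omega$ via refocusing within the observation window, so each profile's bicharacteristic must pass transversally through $\{a\neq 0\}$, and the smallness of the observation then forces every profile to vanish in the limit. The residual strong convergence yields a limit solving the damped Klein-Gordon equation --- nonlinear or linear according to whether $E_n(0)$ stays bounded below or goes to zero --- whose observation vanishes on $[0,T]\times\omega$; the smallness in $L^2\times H^{-1}$ prescribed by the theorem is what rescues unique continuation in the Lebeau/Dehman-Lebeau-Zuazua sense, forcing the limit to be identically zero and contradicting the normalisation. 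The main obstacle throughout is exactly the construction and exploitation of the profile decomposition on a manifold with variable metric, together with the precise geometric handling of refocusing; once it is in place the remaining ingredients (linear observability under GCC, unique continuation, the local control by a Picard fixed point around the linear HUM control, and the concatenation) are standard adaptations of Dehman-Lebeau-Zuazua and Dehman-Gérard.
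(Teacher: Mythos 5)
Your proposal follows essentially the same route as the paper: exact controllability is obtained by combining exponential decay for the damped critical Klein--Gordon equation (Theorem \ref{thmdecresexp}, proved by contradiction via the Bahouri--G\'erard/Ibrahim profile decomposition, with Assumption \ref{hypGCCfocus} used to kill each concentrating profile) with local exact controllability near zero via a Picard fixed point around the linear HUM control (Theorem \ref{thmcontrolpetit}), together with a time-reversal reduction. One small imprecision worth noting: the smallness in $L^2\times H^{-1}$ is not there to ``rescue'' a nonlinear unique-continuation theorem (the paper states explicitly it cannot prove such a theorem) --- it forces the weak limit of any sequence contradicting observability to vanish, which is what lets the linearisation argument proceed; the only unique continuation actually invoked, in the vanishing-energy case, is the classical elliptic/eigenfunction one for the \emph{linear} Klein--Gordon equation.
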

Let us discuss the assumptions on the size. In some sense, our theorem is a high frequency controllability result and expresses in a rough physical way that we can control some "small noisy data". In the subcritical case, two similar kind of results were proved : in Dehman Lebeau Zuazua \cite{DLZstabNLW} similar results were proved for the nonlinear wave equation but without the smallness assumption in $L^2\times H^{-1}$ while in Dehman Lebeau \cite{HUMDehLeb}, they obtain similar high frequency controllability results for the subcritical equation but in a uniform time which is actually the time of linear controllability (see also the work of the author \cite{LaurentNLSdim3} for the Schrödinger equation). Actually, this smallness assumption is made necessary in our proof because we are not able to prove the following  unique continuation result.

\medskip

\noindent \textbf{Missing theorem.} \textit{$u\equiv 0$ is the unique strong solution in the energy space of 
 \bna
 \left\lbrace
\begin{array}{rcl}%argument r=alignement à droite puis center et left
\Box u+u+|u|^4u&=& 0\quad \textnormal{on}\quad [0,T]\times M\\
\partial_t u&=&0\quad \textnormal{on}\quad [0,T]\times \omega.
\end{array}
\right.
 \ena
}
 
\medskip

In the subcritical case, this kind of theorem can be proved with Carleman estimates under some additional geometrical conditions and once the solution is known to be smooth. Yet, in the critical case, we are not able to prove this propagation of regularity. Note also that H. Koch and D. Tataru \cite{KochTataruCarlLp} managed to prove some unique continuation result in the critical case, but in the case $u=0$ on $\omega$ instead of $\partial_t u=0$. In the case of $\R^3$ with flat metric and $\omega$ the complementary of a ball, B. Dehman and P. Gérard \cite{DehPGNLW} prove this theorem using the existence of the scattering operator proved by K. Nakanishi \cite{NakanishiScattering}, which is not available on a manifold.  

Moreover, as in the subcritical case, we do not know if the time of controllability does depend on the size of the data. This is actually still an open problem for several nonlinear evolution equations such as nonlinear wave or Schrödinger equation (even in the subcritical case). Note that for certain nonlinear parabolic equations, it has been proved that we can not have controllability in arbitrary short time, see \cite{FernandZuazheat} or \cite{FernandezGuerBurger}. 

\bigskip 

The strategy for proving Theorem \ref{mainthm} consists in proving a stabilization result for a damped nonlinear Klein-Gordon equation and then, by a perturbative argument using the linear control, to bring the solution to zero once the energy of the solution is small enough. Namely, we prove
\begin{theorem}
\label{thmdecresexp}
Let $R_0>0$, $\omega$ satisfying Assumption \ref{hypGCCfocus} and $a \in C^{\infty}(M)$ satisfying $a(x)>\eta>0$ for all $x\in \omega$. Then, there exist $C, \gamma>0$ and $\delta>0$ such that for any $(u_0,u_1)$ in $H^1\times L^2$, with 
\bna
\nor{(u_0,u_1)}{H^1\times L^2} \leq R_0; &\quad &\nor{(u_0,u_1)}{L^2\times H^{-1}} \leq \delta; 
\ena
the unique strong solution of 
\begin{eqnarray}
\label{solutiondampedbis}
\left\lbrace
\begin{array}{rcl}%argument r=alignement à droite puis center et left
\Box u+u+|u|^4u+a(x)^2\partial_tu &=& 0\quad \textnormal{on}\quad [0,T]\times M\\
(u(0),\partial_t u(0))&=&(u_0,u_1) .
\end{array}
\right.
\end{eqnarray}
satisfies $E(u)(t)\leq Ce^{-\gamma t}E(u)(0)$.
\end{theorem}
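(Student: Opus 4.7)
I would follow the scheme of Dehman--Lebeau--Zuazua~\cite{DLZstabNLW}, replacing the linearisability argument (which fails in the critical case) by the profile decomposition of Bahouri--Gérard~\cite{BahouriGerard} in its manifold version. The first step is the standard reduction to an observability inequality. Multiplying~\eqref{solutiondampedbis} by $\partial_t u$ and integrating gives the dissipation identity
\bna
E(u)(T)-E(u)(0)=-\int_0^T\!\!\int_M a(x)^2|\partial_t u|^2\,dx\,dt,
\ena
so it suffices to find $T>0$ and $C=C(R_0)$ such that for every solution of~\eqref{solutiondampedbis} satisfying the two size conditions of the statement,
\bna
E(u)(0)\leq C\int_0^T\!\!\int_M a(x)^2|\partial_t u|^2\,dx\,dt.
\ena
This yields $E(T)\leq\tfrac{C}{C+1}E(0)$; iterating on intervals of length $T$ (the $H^1\times L^2$ and $L^2\times H^{-1}$ bounds being preserved or decreased along the flow, the latter thanks to continuity of the damped flow for the weak norm) gives the exponential decay.

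\textbf{Contradiction argument.} Suppose observability fails: there exist solutions $(u_n)$ with $E(u_n)(0)\leq R_0^2$, $\nor{(u_n(0),\partial_tu_n(0))}{L^2\times H^{-1}}\to 0$, and
\bna
\alpha_n^2:=\int_0^T\!\!\int_M a(x)^2|\partial_t u_n|^2\,dx\,dt=\petito{E(u_n)(0)}.
\ena
Set $\beta_n^2=E(u_n)(0)$. If $\beta_n\to 0$, rescale $v_n=u_n/\beta_n$, which satisfies $\Box v_n+v_n+\beta_n^4|v_n|^4 v_n+a^2\partial_t v_n=0$ with unit linear energy and vanishing damping term. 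Strichartz estimates render the nonlinearity negligible, so any weak limit $v$ solves the linear damped Klein--Gordon equation with $\partial_t v\equiv 0$ on $[0,T]\times\omega$; classical Rauch--Taylor--BLR observability under Assumption~\ref{hypGCC} forces $v\equiv 0$. The energy that escapes weak convergence is captured by a microlocal defect measure, which propagates along the geodesic flow and is killed on $\omega$ by the damping; the smallness assumption $\nor{(u_0,u_1)}{L^2\times H^{-1}}\leq\delta$ ensures that this measure carries all the mass (no low-frequency residue), and GCC makes it vanish everywhere, contradicting $E_{\text{lin}}(v_n)(0)\sim 1$.

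\textbf{Nonlinear case $\beta_n\to\beta>0$.} This is the core difficulty. I apply the manifold-valued profile decomposition developed earlier in the paper (based on Ibrahim~\cite{ibrahim2004gon}) to the free evolution $u_n^L$ with data $(u_n(0),\partial_tu_n(0))$: for each $J$,
\bna
u_n^L=\sum_{j=1}^{J}p_n^j+w_n^J,
\ena
with pairwise orthogonal concentration cores/scales and $\nor{w_n^J}{L^5([0,T];L^{10}(M))}\to 0$ as $J,n\to\infty$. A nonlinear profile decomposition then approximates the damped solution $u_n$ by $\sum_j U_n^j+w_n^J$, where each $U_n^j$ is a nonlinear profile. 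Orthogonality decouples the observation term:
\bna
\int_0^T\!\!\int_M a^2|\partial_t u_n|^2=\sum_{j=1}^{J}\int_0^T\!\!\int_M a^2|\partial_t U_n^j|^2+\int_0^T\!\!\int_M a^2|\partial_t w_n^J|^2+\petito{1},
\ena
so every concentrating $U_n^j$ must have vanishing localised damping. Assumption~\ref{hypGCCfocus}, i.e.\ $T_0<T_{focus}$, is the crucial input: for a profile concentrating at $(x_0,t_0)$ at scale $h_n\to 0$, Ibrahim's theorem says the profile behaves, up to an error, as a Euclidean concentrating wave on $T_{x_0}M$ during a time window in which geodesics from $x_0$ have not yet refocused. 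Within this window GCC still provides a covering of this Euclidean profile by $\omega$, so one can apply the flat-space Dehman--Gérard argument~\cite{DehPGNLW} to conclude each $U_n^j\equiv 0$. The remaining non-concentrating profile is a finite-energy solution of the damped nonlinear Klein--Gordon equation with vanishing damping and $L^2\times H^{-1}$ norm of the order of $\delta$; taking $\delta$ small enough, a perturbation argument around $u\equiv 0$ together with Step~3 forces it to vanish as well. The linear remainder $w_n^J$ is handled as in Step~3. This contradicts $\beta_n\to\beta>0$.

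\textbf{Main obstacle.} The hardest step is the treatment of the concentrating profiles in the nonlinear regime: one needs to justify that the nonlinear dynamics of a high-frequency concentrating solution on the curved manifold coincide, modulo controlled errors, with the Euclidean nonlinear dynamics long enough for GCC to be useful. This is precisely where the strengthening of GCC into Assumption~\ref{hypGCCfocus} enters; without it, refocusing could produce a secondary concentration that is not controlled by the flat-space scattering theory, and we would need a full unique continuation theorem at critical regularity (the ``Missing theorem'' discussed in the introduction), which is currently out of reach.
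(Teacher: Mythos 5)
Your overall architecture matches the paper's: reduce to an observability inequality via the dissipation identity, argue by contradiction, rescale in the case $\beta_n\to 0$, and invoke the profile decomposition plus Assumption \ref{hypGCCfocus} when $\beta_n\to\beta>0$. However there is a genuine gap in the iteration step, and a secondary imprecision in the rescaled case.

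\textbf{The iteration gap.} Your parenthetical that ``the $H^1\times L^2$ and $L^2\times H^{-1}$ bounds [are] preserved or decreased along the flow, the latter thanks to continuity of the damped flow for the weak norm'' is not enough, and in fact the $L^2\times H^{-1}$ norm is \emph{not} monotone along the damped nonlinear flow. What one can prove (Lemma \ref{lmestimL2} in the paper) is only $\nor{(u(t),\partial_t u(t))}{L^2\times H^{-1}}\leq C_T\nor{(u_0,u_1)}{L^2\times H^{-1}}$ for $t\in[0,T]$, with a constant $C_T>1$. If you iterate naively, after $n$ intervals the weak norm is controlled only by $C_T^n\delta$, which eventually exceeds the threshold needed for the observability estimate; the hypotheses of your observability inequality would then no longer be met and the decay estimate $E((n+1)T)\leq(1-c)E(nT)$ could no longer be applied. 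The paper's proof handles this by a two-stage argument: first fix $\e>0$ small enough so that the (weak) observability estimate of Theorem \ref{thminegobserv} with the extra $\nor{\cdot}{L^2\times H^{-1}}E(u)(0)$ term can be absorbed; then choose $N$ so large that after $N$ decay steps the \emph{energy itself} is $\leq\e^2/2$, which forces $\nor{(u,\partial_t u)(nT)}{L^2\times H^{-1}}\leq\e$ automatically for all $n\geq N$ by the trivial bound $\nor{\cdot}{L^2\times H^{-1}}^2\leq 2E$; finally choose $\delta$ depending on $N$ (not on $n$) to ensure the weak norm stays $\leq\e$ during the first $N$ intervals via Lemma \ref{lmestimL2} and Corollary \ref{corL2choixpetit}. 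Without this bootstrapping device your iteration is circular.

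\textbf{The rescaled case.} In the case $\beta_n\to 0$, after rescaling and killing the nonlinearity, you attribute the conclusion to ``classical Rauch--Taylor--BLR observability,'' and separately claim that ``the smallness assumption $\nor{(u_0,u_1)}{L^2\times H^{-1}}\leq\delta$ ensures that this measure carries all the mass.'' Neither is quite right. After dividing by $\beta_n$ you lose control of the $L^2\times H^{-1}$ norm of the rescaled data, so the weak limit $w$ need not be zero by a smallness argument. One first shows strong convergence of the defect (via propagation of the microlocal defect measure and GCC, as in Proposition \ref{propcontrollin}), and then must argue independently that the weak limit $w$, which satisfies $\Box w+w=0$, $\partial_t w\equiv 0$ on $\omega$, vanishes. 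This is a unique continuation/eigenfunction argument, and it is precisely \emph{here} that the mass term in Klein--Gordon is used: the space of such solutions is finite dimensional and spanned by eigenfunctions; elliptic unique continuation forces $\partial_t w\equiv 0$; multiplying by $\bar w$ gives $w\equiv 0$. For the undamped wave $\Box w=0$ the constants survive this argument (cf.\ Section \ref{subsectcontrexwave}), which is why the theorem is stated for Klein--Gordon and not the wave equation. Your proposal does not identify this as the place where the equation choice matters.

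Finally, a lesser point: the mechanism by which concentrating profiles are killed is not really ``apply the flat-space Dehman--Gérard argument during the Euclidean window.'' The paper's Lemma \ref{propprofilnul} uses the positivity of the limit energy decomposition (Theorem \ref{thmprofilNRJ}) to localize the vanishing of the observation to each profile, then tracks the microlocal defect measure of the associated \emph{linear} concentrating wave \emph{after} the concentration time, using GCC on the post-concentration interval $[t_\infty^{(j)},L]$ (nonempty by $L>T_0$ and $t_\infty^{(j)}\in[0,\varepsilon]$); the strengthened Assumption \ref{hypGCCfocus} is what guarantees that $L<T_{focus}$ is compatible with $L>T_0$ and that the linear description is valid on all of this interval.
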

This theorem is false for the classical nonlinear wave equation (see subsection \ref{subsectcontrexwave}) and it is why we have chosen the Klein-Gordon equation instead.

Let us now discuss the proof of Theorem \ref{thmdecresexp}, following B. Dehman and P. Gérard \cite{DehPGNLW} for the case of $\R^3$. We have the energy decay
\bna
E(u)(t)=E(u)(0)-\int_0^t \int_M |a(x)\partial_t u|^2.
\ena
So, the exponential decay is equivalent to an observability estimate for the nonlinear damped equation. We prove it by contradiction. We are led to proving the strong convergence to zero of a normalized sequence $u_n$ of solutions contradicting observability. In the subcritical case, the argument consisted in two steps 
\begin{itemize}
\item to prove that the limit is zero by a unique continuation argument
\item to prove that the convergence is actually strong by linearization and linear propagation of compactness thanks to microlocal defect measures of P. Gérard \cite{defectmeasure} and L. Tartar \cite{tartarh-measure}.
\end{itemize}
By linearization, we mean (according to the terminology of P. Gérard \cite{linearisationondePG}) that we have\\ $\nort{u_n-v_n}\tend{n}{\infty}0$ where $v_n$ is solution of the linear Klein-Gordon equation with same initial data :
\bna
\left\lbrace
\begin{array}{rcl}%argument r=alignement à droite puis center et left
\Box v_n+v_n&=& 0\quad \textnormal{on}\quad [0,T]\times M\\
(v_n(0),\partial_t v_n(0))&=&(u_n(0),\partial_t u_n(0)).
\end{array}
\right.
\ena 

In our case, the smallness assumption in the lower regularity $L^2\times H^{-1}$ makes that the limit is automatically zero, which allows to skip the first step. In the subcritical case, any sequence weakly convergent to zero is linearizable. Yet, for critical nonlinearity, there exists nonlinearizable sequences. Hopefully, in the case of $\R^3$, this defect can be precisely described. It is linked to the non compact action of the invariants of the equation : the dilations and translations. More precisely, the work of H. Bahouri and P. Gérard \cite{BahouriGerard} states that any bounded sequence $u_n$ of solutions to the nonlinear critical wave equation can be decomposed into an infinite sum of : the weak limit of $u_n$, a sequence of solutions to the free wave equation and an infinite sum of profiles which are translations-dilations of fixed nonlinear solutions. This decomposition was used by the authors of \cite{DehPGNLW} to get the expected result in $\R^3$. Therefore, we are led to make an analog of this profile deomposition for compact manifolds. We begin by the definition of the profiles.
 \begin{definition}
 \label{defconcdata}
Let $x_{\infty} \in M$ and $(f,g)\in \mathcal{E}_{x_{\infty}}=(\dot{H}^1\times L^2)(T_{x_{\infty}}M)$. Given $[(f,g),\underline{h},\underline{x}] \in \mathcal{E}_{x_{\infty}}\times (\R_+^* \times M )^{\N}$ such that $lim_n (h_n,x_n)=(0,x_{\infty}) $
We call the associated concentrating data the class of equivalence, modulo sequences convergent to $0$ in $\HutL$, of sequence in $\mathcal{E}$ that take the form 
\bnan
\label{formconcentrdata}
h_n^{-\frac{1}{2}}\Psi_U(x) \left(f,\frac{1}{h_n}g\right)\left(\frac{x-x_n}{h_n}\right)+\petito{1}_{\HutL}
\enan
 in some coordinate patch $U_M\approx U\subset \R^d$ containing $x_{\infty}$ and for some $\Psi_U\in C^{\infty}_0(U)$ such that $\Psi_U(x)=1$ in a neighborhood of $x_{\infty}$. (Here, we have identified $x_n, x_{\infty}$ with its image in $U$).
\end{definition}
We will prove later (Lemma \ref{lemmedefconc}) that this definition does not depend on the coordinate charts and on $\Psi_U$: two sequences defined by (\ref{formconcentrdata}) in different coordinate charts are in the same class. In what follows, we will often call concentrating data associated to $[(f,g),\underline{h},\underline{x}]$ an arbitrary sequence in this class.
\begin{definition}
\label{defconcwave}
Let ($t_n$) a bounded sequence in $\R$ converging to $t_{\infty}$ and $(f_n,g_n)$ a concentrating data associated to $[(f,g),\underline{h},\underline{x}]$. A damped linear concentrating wave is a sequence $v_n$ solution of 
\bnan
\label{eqnadampedlinintro}
\left\lbrace
\begin{array}{rcl}%argument r=alignement à droite puis center et left
\Box v_n+v_n+a(x)\partial_t v_n&=&0 \quad \textnormal{on}\quad \R \times M\\
(v_n(t_n),\partial_t v_n(t_n))&=&(f_n,g_n).
\end{array}
\right.
\enan
The associated damped nonlinear concentrating wave is the sequence $u_n$ solution of 
\bnan
\label{eqnadampednonlinintro}
\left\lbrace
\begin{array}{rcl}%argument r=alignement à droite puis center et left
\Box u_n+u_n+a(x)\partial_t u_n+|u_n|^4u_n&=&0 \quad \textnormal{on}\quad \R \times M\\
(u_n(0),\partial_t u_n(0))&=&(v_n(0),\partial_t v_n(0)).
\end{array}
\right.
\enan
If $a\equiv 0$, we will only write linear or nonlinear concentrating wave.
\end{definition}
Energy estimates yields that two representants of the same concentrating data have the same associated concentrating wave modulo strong convergence in $L_{loc}^{\infty}(\R, \HutL)$. This is not obvious for the nonlinear evolution but will be a consequence of the study of nonlinear concentrating waves.

It can be easily seen that this kind of nonlinear solutions are not linearizable. Actually, it can be shown that this concentration phenomenon is the only obstacle to linearizability. We begin with the linear decomposition.
 \begin{theorem}
\label{thmdecompositionL}Let ($v_n$) be a sequence of solutions to the damped Klein-Gordon equation (\ref{eqnadampedlinintro}) with initial data at time $t=0$ $(\varphi_n,\psi_n)$ bounded in $\HutL$. Then, up to extraction, there exist a sequence of damped linear concentrating waves $(\underline{p}^{(j)})$, as defined in Definition \ref{defconcwave}, associated to concentrating data $[(\varphi^{(j)},\psi^{(j)}),\underline{h}^{(j)},\underline{x}^{(j)},\underline{t}^{(j)}]$, such that for any $l\in \N^*$,
\bnan
v_n(t,x)=v(t,x)+ \sum_{j=1}^l p_n^{(j)}(t,x)+w_n^{(l)}(t,x),\label{decomlinthm}\\
\forall T>0,\quad \underset{n\to \infty}{\varlimsup} \nor{w_n^{(l)}}{L^{\infty}([-T,T],L^6(M))\cap L^5([-T,T],L^{10})}  \tend{l}{\infty}0\\
\nor{(v_n,\partial_tv_n)}{\HutL}^2=\sum_{j=1}^l \nor{(p_n^{(j)},\partial_t p_n^{(j)})}{\HutL}^2 + \nor{(w_n^{(l)}),\partial_t w_n^{(l)})}{\HutL}^2 +  \petito{1}, \textnormal{ as } n\to \infty, \label{orthlinthm}
\enan
where $\petito{1}$ is uniform for $t\in[-T,T]$.
\end{theorem}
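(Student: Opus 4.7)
I would follow the iterative Bahouri--G\'erard extraction, adapted to the manifold setting via local coordinates around each concentration point. Set $r_n^{(0)} = v_n - v$, where $v$ is the weak limit (which is itself a solution of the damped linear equation by lower semicontinuity and passage to the limit). At each step $j \geq 1$, extract from $r_n^{(j-1)}$ a scale $h_n^{(j)} \to 0$, a core $(x_n^{(j)}, t_n^{(j)}) \to (x_\infty^{(j)}, t_\infty^{(j)})$, and a profile $(\varphi^{(j)}, \psi^{(j)}) \in \mathcal{E}_{x_\infty^{(j)}}$; build $p_n^{(j)}$ as the damped linear concentrating wave with these data; set $r_n^{(j)} = r_n^{(j-1)} - p_n^{(j)}$; and stop (in the limit $l \to \infty$) once the Strichartz norms of the residual vanish.

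\textbf{Extraction of a profile.} The engine of the extraction is a sharp Strichartz-type inequality on $M$ that detects concentration: if $\limsup_n \nor{r_n^{(j-1)}}{L^\infty L^6 \cap L^5 L^{10}} \geq \alpha > 0$, then there exist a scale $h_n \to 0$ and a core $(x_n, t_n)$ such that, in a chart $U \subset \R^3$ around $x_\infty$, the rescaled sequence $h_n^{1/2}\Psi_U \, r_n^{(j-1)}(t_n + h_n s, x_n + h_n y)$ (with matching rescaling on $\partial_t$) converges weakly to some nonzero element of $\mathcal{E}_{x_\infty}$. The key point is that at scale $h_n \to 0$ the metric freezes to $g(x_\infty)$ and the damping $a(x)\partial_t$ becomes subleading; by the results of S.\ Ibrahim on concentrating waves on manifolds, the damped linear evolution of the concentrating data \eqref{formconcentrdata} is, modulo $o(1)_{\HutL}$, obtained by conjugating a free flat wave on $T_{x_\infty}M$ by the chart map and cut-off $\Psi_U$. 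This also shows the definition does not depend on chart or $\Psi_U$ (Lemma \ref{lemmedefconc}).

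\textbf{Orthogonality and iteration.} By choosing at each step a scale of essentially maximal size and, within comparable scales, a core of essentially maximal contribution, one arranges asymptotic orthogonality of the cores: for $j \neq k$, either $h_n^{(j)}/h_n^{(k)} + h_n^{(k)}/h_n^{(j)} \to \infty$, or the scales are comparable and $(|x_n^{(j)} - x_n^{(k)}| + |t_n^{(j)} - t_n^{(k)}|)/h_n^{(j)} \to \infty$. A change of variables shows that under either alternative the cross-products $\langle (p_n^{(j)}, \partial_t p_n^{(j)}), (p_n^{(k)}, \partial_t p_n^{(k)}) \rangle_{\HutL}$ vanish as $n \to \infty$, which yields the Pythagorean identity \eqref{orthlinthm}. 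Combined with the uniform energy bound this forces $\sum_j \nor{(\varphi^{(j)}, \psi^{(j)})}{\mathcal{E}_{x_\infty^{(j)}}}^2 < \infty$, so the individual profile energies tend to zero. Since Ibrahim's analysis gives a quantitative bound $\nor{p_n^{(j)}}{L^5 L^{10}} \lesssim \nor{(\varphi^{(j)}, \psi^{(j)})}{\mathcal{E}_{x_\infty^{(j)}}}$ (uniformly in $n$) and a comparable contribution to the Strichartz norm of $r_n^{(j-1)}$, only finitely many profiles can contribute Strichartz norm above any given threshold, and the residual Strichartz norms tend to zero as $l \to \infty$.

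\textbf{Main obstacle.} I expect the hardest step to be the sharp Strichartz-type inequality that drives the extraction, together with the precise analysis of damped linear concentrating waves that justifies the ``frozen metric + frozen damping'' picture. On $\R^3$, Bahouri--G\'erard exploit exact scale invariance and translation invariance of the free wave operator; here one must work with Littlewood--Paley theory built from $\varphi(h^2 \Delta)$, track the error in freezing the metric on a ball of radius $h_n$ around $x_n$, and absorb the damping via a Duhamel comparison with the undamped concentrating wave. Once these linear tools are in hand, the rest of the proof (iteration, orthogonality, Strichartz control of the remainder) is a fairly mechanical translation of the flat argument.
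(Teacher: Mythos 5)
Your overall scheme is the Bahouri--G\'{e}rard extraction carried out in local charts via Ibrahim's description of concentrating waves, which is indeed the spine of the paper's argument. The two significant differences from the paper are organizational and technical, and one of them hides a real gap.

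\textbf{Two-step vs.\ one-loop extraction.} The paper does \emph{not} extract scale, core and profile simultaneously. It first extracts scales (Proposition \ref{propextractscales}) by decomposing the initial data along the spectral projections $\mathbf{1}_{[2^k,2^{k+1}[}(A_M)$ following the Besov-norm procedure of \cite{PGdefautSobolev}/\cite{PGGall2001}; the remainder is small in $B^1_{2,\infty}(M)\times B^0_{2,\infty}(M)$, and the refined Sobolev inequality $\|f\|_{L^6}\lesssim\|f\|_{H^1}^{1/3}\|f\|_{B^1_{2,\infty}}^{2/3}$ transfers this to $L^\infty L^6$ after transport by the $(h_n)$-oscillation propagation Proposition \ref{propproposcilamorti}. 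Only \emph{then}, for each fixed scale $h_n^{(j)}$, are cores extracted in Proposition \ref{propextrconc} using the weak-tracking tool $D_h$ of Definition \ref{defweaktrack} and the decreasing quantity $\delta(\underline{v})$ of Lemma \ref{lmtrackLiL6}. Your single-loop variant, driven by an unspecified ``sharp Strichartz-type inequality on $M$ that detects concentration'', is where the gap lies: on a manifold, the scale is an intrinsic spectral notion (defined via $A_M$), while the core only makes sense inside a chart after one has already zoomed in. You never state a precise inverse inequality that would simultaneously produce a scale and a core; on $\R^3$ such an ``inverse Strichartz'' is known (Keraani-style), but on $M$ it would have to be built out of precisely the two-step decomposition you are trying to avoid. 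The paper's $\delta$-tracking in Lemma \ref{lmtrackLiL6} is only applicable \emph{after} strict $(h_n)$-oscillation has been established for a single scale, and your proposal never secures that.

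\textbf{The Pythagorean identity.} You attribute \eqref{orthlinthm} to asymptotic vanishing of cross-products between pairs $(p_n^{(j)}, p_n^{(k)})$, obtained by ``choosing scales of maximal size'' and then cores of maximal contribution. This is not how the paper argues. The paper proves \eqref{orthlinthm} cumulatively (Lemma \ref{lmorthNRJ}): at each extraction the new residual $w_h^{(l+1)} = w_h^{(l)} - p_h^{(l+1)}$ satisfies $D_h^{(l+1)}(w_h^{(l+1)},\partial_t w_h^{(l+1)})(t_h^{(l+1)}) \rightharpoonup (0,0)$, and Lemma \ref{lmconserorth} (propagation of the joint microlocal defect measure for the damped equation) then gives the orthogonality between the \emph{latest} profile and the \emph{latest} residual, uniformly in $t$. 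No pairwise orthogonality of cores is needed for \eqref{orthlinthm}; that stronger statement (Theorem \ref{thmdecompositionL}') requires the non-refocusing hypothesis $2T < T_{focus}$, which your proposal does not mention and which is genuinely necessary on general manifolds. Also be careful about what a greedy maximal extraction actually delivers; the paper's algorithm extracts a profile whose $\dot H^1$ mass is merely $\geq \tfrac12 \delta$, and orthogonality of the resulting cores is then proved as a separate statement through Lemmas \ref{lmcvceproche}--\ref{lmconcorth}, not assumed as a by-product of greedy choices.

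In short: the high-level Bahouri--G\'{e}rard picture and the role of Ibrahim's concentrating-wave analysis are correctly identified, but the extraction engine you invoke is the crux of the proof and is left unspecified; replacing it by the paper's two-stage Besov scale-decomposition plus per-scale $D_h$/$\delta$ core extraction is not a cosmetic change but the content of Propositions \ref{propextractscales} and \ref{propextrconc}.
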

The nonlinear flow map follows this decomposition up to an error term in the strong following norm
\bna
\nort{u}_{I}=\nor{u}{L^{\infty}(I,H^1(M))}+\nor{\partial_t u}{L^{\infty}(I,L^2(M))}+\nor{u}{L^5(I,L^{10}(M))}.
\ena
\begin{theorem}
\label{thmdecompositionNL}
Let $T<T_{focus}/2$. Let $u_n$ be the sequence of solutions to damped non linear Klein-Gordon equation (\ref{eqnadampednonlinintro}) with initial data, at time $0$, $(\varphi_n,\psi_n)$ bounded in $\HutL$. Denote $p_n^{(j)}$ (resp $v$ the weak limit) the linear damped concentrating waves given by Theorem \ref{thmdecompositionL} and $q_n^{(j)}$ the associated nonlinear damped concentrating wave (resp $u$ the associated solution of the nonlinear equation with $(u,\partial_tu)_{t=0}=(v,\partial_tv)_{t=0}$). Then, up to extraction, we have
\bnan
u_n(t,x)= u+\sum_{j=1}^l q_n^{(j)}(t,x)+w_n^{(l)}(t,x)+r_n^{(l)} \label{decomponl}\\
\limsu{n}{\infty} \nort{r_n^{(l)}}_{[-T,T]} \tend{l}{\infty}0 \label{rnpetit}
\enan
where $w_n^{(l)}$ is given by Theorem \ref{thmdecompositionL}.

The same theorem remains true if  $M$ is the sphere $S^3$ and $a\equiv 0$ (undamped equation) without any assumption on the time $T$.
\end{theorem}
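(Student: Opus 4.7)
The plan is a Bahouri--G\'erard style perturbation argument. I set
\bna
\tilde{u}_n^{(l)} = u + \sum_{j=1}^l q_n^{(j)} + w_n^{(l)}, \qquad r_n^{(l)} = u_n - \tilde{u}_n^{(l)},
\ena
aiming to show $\nort{r_n^{(l)}}_{[-T,T]}\to 0$ as first $n\to\infty$, then $l\to\infty$. Since $q_n^{(j)}$ (resp.\ $u$) shares its Cauchy data at $t=0$ with $p_n^{(j)}$ (resp.\ $v$), the linear decomposition~\eqref{decomlinthm} forces $(r_n^{(l)},\partial_t r_n^{(l)})|_{t=0}=(0,0)$. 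Using that $w_n^{(l)}$ solves the linear damped equation while $u$ and each $q_n^{(j)}$ solve the full nonlinear one, a direct computation yields
\bna
\Box r_n^{(l)} + r_n^{(l)} + a(x)\partial_t r_n^{(l)} + |u_n|^4 u_n - |\tilde{u}_n^{(l)}|^4 \tilde{u}_n^{(l)} = F_n^{(l)},
\ena
where
\bna
F_n^{(l)} = |\tilde{u}_n^{(l)}|^4 \tilde{u}_n^{(l)} - |u|^4 u - \sum_{j=1}^l |q_n^{(j)}|^4 q_n^{(j)}
\ena
gathers the cross interactions between the building blocks together with the single diagonal term $|w_n^{(l)}|^4 w_n^{(l)}$.

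The heart of the argument is to prove $\limsu{l}{\infty}\limsu{n}{\infty}\nor{F_n^{(l)}}{L^1([-T,T],L^2)}=0$. The $w_n^{(l)}$ diagonal is controlled by $\nor{w_n^{(l)}}{L^5 L^{10}}^5\to 0$ from Theorem~\ref{thmdecompositionL}. Every other monomial in $F_n^{(l)}$ is a quintic product in which at least two distinct blocks appear; H\"older between $L^5(L^{10})$ (three factors) and $L^{5/2}(L^5)$ (one pair) reduces their control to pseudo-orthogonality estimates such as
\bna
\nor{q_n^{(i)} q_n^{(j)}}{L^{5/2}([-T,T],L^5)} \tend{n}{\infty} 0, \qquad i\neq j,
\ena
and the analogues involving $u$ or $w_n^{(l)}$ (the latter via the weak $\HutL$-convergence of $w_n^{(l)}$ to $0$ together with the spatial localization of the $q_n^{(j)}$). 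For the $q_n^{(i)} q_n^{(j)}$ bound I invoke the description of nonlinear concentrating waves from Ibrahim~\cite{ibrahim2004gon}: modulo $\HutL$-errors, $q_n^{(j)}$ is concentrated near the light cone issued from $(x_\infty^{(j)},t_\infty^{(j)})$ in a tube of transverse scale $h_n^{(j)}$ and, after rescaling on $T_{x_\infty^{(j)}}M$, converges to the Euclidean nonlinear scattering profile. Two profiles with diverging scale ratio $h_n^{(i)}/h_n^{(j)}$ become orthogonal after a change of variables; at comparable scales with distinct centers, the hypothesis $T<T_{focus}/2$ keeps the two light cones disjoint on $[-T,T]$, closing the claim.

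Once $\nor{F_n^{(l)}}{L^1 L^2}=\petito{1}$ is secured, the conclusion follows from a Strichartz continuity argument. Applied to $r_n^{(l)}$ on any subinterval $I\subset[-T,T]$, the damped linear Strichartz estimate gives
\bna
\nort{r_n^{(l)}}_I \leq C\nor{F_n^{(l)}}{L^1(I,L^2)} + C\bigl(\nort{\tilde{u}_n^{(l)}}_I^4 + \nort{r_n^{(l)}}_I^4\bigr)\nort{r_n^{(l)}}_I.
\ena
The orthogonality~\eqref{orthlinthm} combined with Strichartz for the linear profiles (via the tangent-space comparison) bounds $\nort{\tilde{u}_n^{(l)}}_{[-T,T]}$ uniformly in $n,l$, so a finite partition of $[-T,T]$ into intervals on which $\nort{\tilde{u}_n^{(l)}}_I$ is small closes the bootstrap and yields~\eqref{rnpetit}. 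The main obstacle is the pseudo-orthogonality step itself: on $M$ neither translation nor dilation is an exact symmetry, so every profile must be transferred to the tangent space with care, and the role of $T<T_{focus}/2$ is precisely to prevent nonlinear refocusing from producing macroscopic cross interactions. The $S^3$ undamped case escapes the time restriction because the conformal flatness of $S^3$ (via a Penrose-type conjugation to the Euclidean wave equation on a strip) lets one follow each profile globally past the refocusing time and close the orthogonality on all of $[-T,T]$.
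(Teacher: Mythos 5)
Your proposal follows the same overall scheme as the paper (set $r_n^{(l)} = u_n - u - \sum_j q_n^{(j)} - w_n^{(l)}$, show the source term $F_n^{(l)}$ is small in $L^1L^2$, close with a Strichartz bootstrap), but the justification of the key pseudo-orthogonality step is wrong, and it is precisely there that the hypothesis $T<T_{focus}/2$ enters — for a different reason than you give. Your claim that ``at comparable scales with distinct centers, the hypothesis $T<T_{focus}/2$ keeps the two light cones disjoint on $[-T,T]$'' cannot be the argument: backward/forward causal cones issued from two distinct points of $M$ do intersect for $T$ of order the Riemannian distance, which has nothing to do with $T_{focus}$. You also leave untreated the same-scale subcase $x_\infty^{(i)}=x_\infty^{(j)}$ with $|t_h^{(i)}-t_h^{(j)}|/h\to\infty$ (see Definition \ref{deforthog}). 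What the paper actually does (Proposition \ref{proporthogL3}) is to split $[-T,T]$ into $I_n^{1,\Lambda}\cup I_n^{2,\Lambda}\cup I_n^{3,\Lambda}$ using the three-regime description of nonlinear concentrating waves from Theorem \ref{thmdescriptionNLW}: outside the window $I_n^{2,\Lambda}$ the nonlinear wave is $\nort{\cdot}$-close to a linear damped concentrating wave whose $L^5L^{10}$ norm is small by the non-reconcentration Lemma \ref{lmnonreconcentr}, and inside $I_n^{2,\Lambda}$ orthogonality becomes time-disjointness of the windows or space-disjointness of the rescaled compactly-supported profiles. Both Theorem \ref{thmdescriptionNLW} and Lemma \ref{lmnonreconcentr} need the non-focusing condition (\ref{nonrefocus}), and since $t_\infty^{(j)}$ can be anywhere in $[-T,T]$ this is what forces $2T<T_{focus}$ (it also underlies the orthogonality of cores asserted in Theorem \ref{thmdecompositionL}'). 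Your intuition about scale separation is fine and matches the paper's use of Lemma \ref{lmorthL3}.

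Two more points worth flagging. First, the bootstrap closure as you state it is imprecise: a partition of $[-T,T]$ on which $\nort{\tilde u_n^{(l)}}_I$ is small cannot exist independently of $n$ (each $q_n^{(j)}$ concentrates all of its $L^5L^{10}$ mass near $t_\infty^{(j)}$), and $\nort{\cdot}$ contains $L^\infty$ pieces that are not reduced by partitioning; what one wants is $n$-dependent intervals on which the $L^5L^{10}$ norm is small with a count bounded in $n$ — this is the content of Lemma \ref{lmdivisioninterval}, whose proof again leans on Theorem \ref{thmdescriptionNLW}. The paper closes the loop with Lemma \ref{lmdecompNLpetit} (decomposition holds when $\varlimsup_n\nor{v_n}{L^5L^{10}}\le\delta_1$), which crucially uses Lemma \ref{lmnorleq} ($\nor{p_n^{(j)}}{L^5L^{10}}\lesssim\nor{v_n}{L^5L^{10}}$) to make all profiles small simultaneously; this ingredient is absent from your proposal. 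Second, for $S^3$ your appeal to conformal flatness via a Penrose-type conjugation is not how the paper proceeds: it uses the half-period reflection $p_n(t+\pi,x)=-p_n(t,-x)+\petito{1}$ (Lemma \ref{S3perio}) and Ibrahim's Theorem \ref{thmdescriptionNLWS3}, which tracks each profile across successive refocusings by iterating the scattering operator, and then restricts to subintervals of length less than $\pi$ (Corollary \ref{proporthogL3S3}).
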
 
The more precise result we get for the sphere $S^3$ will not be useful for the proof of our controllability result. Yet, we have chosen to give it because it is the only case where we are able to describe what happens when some refocusing occurs.

This profile decomposition has already been proved for the critical wave equation on $\R^3$ by H. Bahouri and P. Gerard \cite{BahouriGerard} and on the exterior of a convex obstacle by I. Gallagher and P. Gérard \cite{PGGall2001}. The same decomposition has also been performed for the Schrödinger equation by S. Keraani \cite{Keraanidefect} and quite recently for the wave maps by Krieger and Schlag \cite{KriegSchlagwavemap}.
Note that such decomposition has proved to be useful in different contexts : the understanding of the precise behavior near the threshold for well-posedness for focusing nonlinear wave see Kenig Merle \cite{KenigMerleNLW} and Duyckaerts Merle\cite{DuyckMerleNLW}, the study of the compactness of Strichartz estimates and maximizers for Strichartz estimates, (see Keraani \cite{Keraanidefect}), global existence for wave maps \cite{KriegSchlagwavemap}... May be our decomposition on manifolds could be useful in one of these contexts. Let us also mention that, this kind of decomposition appears for a long time in the context of Palais-Smale sequences for critical elliptic equation and optimal constant for Sobolev embedding, but with a finite number of profiles, see Brezis Coron \cite{BrezisCoronBubbles}, the book \cite{BookDruetBlow} and the references therein ...

Let us describe quickly the proof of the decomposition. The linear decomposition of Theorem \ref{thmdecompositionL} is made in two steps : first, we decompose our sequence in a sum of an infinite number of sequences oscillating at different rate $h_n^{(j)}$. Then, for each part oscillating at a fixed rate, we extract the possible concentration at certain points. We only have to prove that this process produces a rest $w_n^l$ that gets smaller in the norm $L^{\infty}L^6$ at each stage. Once the linear decomposition is established, Theorem \ref{thmdecompositionNL} says, roughly speaking, that the nonlinear flow map acts almost linearly on the linear decomposition. To establish the nonlinear decomposition we have to prove that each element of the decomposition do not interact with the others. For each element of the linear decomposition, we are able to describe the nonlinear solution arizing from this element as initial data. The linear rest $w_n^l$ is small in $L^{\infty}([-T,T],L^6)$ for $l$ large enough and so the associated nonlinear solution with same initial data is very close to the linear one. The behavior of nonlinear concentrating waves is described in \cite{ibrahim2004gon} (see subsection \ref{subsectbehavNL} for a short review). Before the concentration, linear and nonlinear waves are very close. For times close to the time of concentration, the nonlinear rescaled solution behaves as if the metric was flat and is subject to the scattering of $\R^3$. After concentration, the solution is close to a linear concentrating wave but with a new profile obtained by the scattering operator on $\R^3$.

We finish this introduction by a discussion on the geometric conditions we imposed to get our main theorem. For the linear wave equation, the controllability is known to be equivalent to the so called Geometric Control Condition (Assumption \ref{hypGCC}). This was first proved by Rauch and Taylor \cite{RauchTaylordecay} in the case of a compact manifold and by Bardos Lebeau Rauch \cite{BLR} for boundary control (see Burq Gérard for the necessity \cite{BurqGerardCNS}). For the nonlinear subcritical problem, the result of \cite{HUMDehLeb} only requires the classical Geometric Control Condition. Our assumption  is stronger and we can naturally wonder if it is really necessary. It is actually strongly linked with the critical behavior and nonlinear concentrating waves. Removing this stronger assumption would require a better understanding of the scattering operator of the nonlinear equation on $\R^3$ (see Remark \ref{rmkhyp}). However, we think that the same result could be obtained with the following weaker assumption.
\begin{hypo}\label{hypocontrolfocus} $\omega$ satisfies the Geometric Control Condition. Moreover, for every couple of focus $(x_1,x_2,t)$ at distance $t$, according to Definition \ref{defcouplefocus}, each geodesic starting from $x_1$ in direction $\xi$ such that $exp_{x_1}t\xi =x_2$ meets $\omega$ in a time $0\leq s<t$.
\end{hypo}

Finally, we note that our theorem can easily be extended to the case of $\R^3$ with a metric flat at infinity. In this case, our stabilization term $a(x)$ should fulfill the both assumptions
\begin{itemize}
\item there exist $R>0$ and $\rho>0$ such that $a(x)>\rho$ for $|x|>R$
\item $a(x)>\rho$ for $x\in \omega$ where $\omega$ satisfies Assumption \ref{hypGCCfocus}. 
\end{itemize}
The proof would be very similar. The only difference would come from the fact that the domain is not compact. So the profile decomposition would require the "compactness at infinity" (see property (1.6) of \cite{BahouriGerard}). Moreover, the equirepartition of the energy could not be made only with measures but with an explicit computation (see (3.14) of \cite{DehPGNLW})   
\begin{remarque}
\label{rmkhyp}
In order to know if our stronger Assumptions \ref{hypGCCfocus} or \ref{hypocontrolfocus} are really necessary compared to the classical Geometric Control Condition, we need to prove that the following scenario can not happen. We take the example of $S^3$ with $\omega$ a neighborhood of $\left\{x_4=0,x_3\geq0\right\}$.

 Take some data concentrating on the north pole, with a Fourier transform (on the tangent plane) supported around a direction $\xi_0$. The nonlinear solution will propagate linearly as long as it does not concentrate  : at time $t$ it will be supported in an neighborhood of the point $x(t)$ where $x(t)$ follows the geodesic stemming from the north pole at time $0$ in direction $\xi_0$. Then, if $\xi_0$ is well chosen, it can avoid $\omega$ during that time. Yet, at time $\pi$, the solution will concentrate again in the south pole. According to the description of S. Ibrahim \cite{ibrahim2004gon}, in a short time, the solution will be transformed following the nonlinear scattering operator on $\R^3$. So, at time $\pi+\e$ the solution is close to a linear concentrating wave but it concentrates with a new profile which is obtained with the nonlinear scattering operator on $\R^3$. This operator is strongly nonlinear and we do not know whether the new profile will be supported in Fourier near a new direction $\xi_1$. If it happens, the solution will then be supported near the point $y(t)$ where $y(t)$ follows the geodesic stemming from the south pole at time $\pi$ in direction $\xi_1$. In this situation, it will be possible that the trajectory $y(t)$ still avoids $\omega$. If this phenomenon happens several times, we would have a sequence that concentrates periodically on the north and south pole but always avoiding the region $\omega$ (which in that case satisfies Geometric Control Condition).

\begin{figure}[ht]
$$\ecriture{\includegraphics[width=5cm]{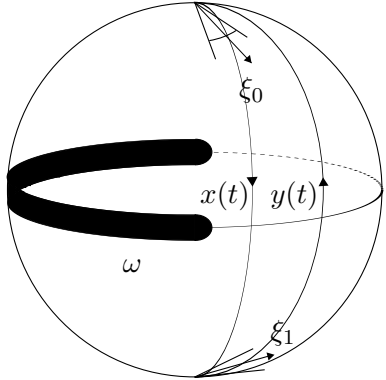}}
{\aat{31}{37}{$\xi_0$}\aat{26}{23}{$x(t)$}\aat{35}{23}{$y(t)$}\aat{35}{5}{$\xi_1$}\aat{16}{14}{$\omega$}}$$ \caption{Possible situation on the sphere}\label{fig.sphere}
\end{figure}
We are led to the following informal question. If $S$ is the scattering operator on $\R^3$, is that possible that for some data $(f,g)\in \dot{H}^1\times L^2$ supported in Fourier near a direction $\xi_0$, the Fourier transform of $S(f,g)$ is supported near another direction $\xi_1$. In other words, can the nonlinear wave operator change the direction of the light?  
\end{remarque}
The structure of the article is as follows. The first section contains some preliminaries that will be used all along the article : the existence theorem for damped nonlinear equation, the description of the main properties of concentrating waves and the useful properties of the scales necessary for the linear decomposition. The second section contains the proof of the profile decomposition of Theorem \ref{thmdecompositionL} and \ref{thmdecompositionNL}. It is naturally divided in two steps corresponding to the linear decomposition and the nonlinear one. We close this section by some useful consequences of the decomposition. The third section contains the proof of the main theorems : the control and stabilization.

Note that the main argument for the proof of stabilization is contained in the last section \ref{sectioncontrol} : in Proposition \ref{proplinear} we apply the linearization argument to get rid of the profiles while Theorem \ref{thminegobserv} contains the proof of the weak observability estimates. We advise the hurried reader to have a first glance at these two proofs in order to understand the global argument.
\subsection{Notation}
\label{defnort}
For $I$ an interval, denote
\bna
\nort{u}_{I}=\nor{u}{L^{\infty}(I,H^1(M))}+\nor{\partial_t u}{L^{\infty}(I,L^2(M))}+\nor{u}{L^5(I,L^{10}(M))}.
\ena
Moreover, when we work in local coordinate, we will need the similar norm (except for $\dot{H}^1$ instead of $H^1$)
\bna
\nort{u}_{I\times \R^3}=\nor{u}{L^{\infty}(I,\dot{H}^1(\R^3))}+\nor{\partial_t u}{L^{\infty}(I,L^2(\R^3))}+\nor{u}{L^5(I,L^{10}(\R^3))}.
\ena
Note that the if $I=\R$, $\nort{u}_{I\times \R^3}$ is invariant by the translation and scaling $u\mapsto \frac{1}{\sqrt{h}} u\left(\frac{t-t_0}{h},\frac{x-x_0}{h}\right)$.

The energy spaces are denoted
\bna
\mathcal{E}&=&H^1(M)\times L^2(M)\\
\mathcal{E}_{x_{\infty}}&=&\dot{H}^1(T_{x_{\infty}}M)\times L^2(T_{x_{\infty}}M)
\ena
with the respective norms 
\bna
\nor{(f,g)}{\HutL}^2&=&\nor{f}{L^2(M)}^2+\nor{\nabla f}{L^2(M)}^2+\nor{g}{L^2(M)}^2\\
\nor{(f,g)}{\HutL_{\infty}}^2&=&\nor{\nabla f}{L^2(T_{x_{\infty}}M)}^2+\nor{g}{L^2(T_{x_{\infty}}M)}^2
\ena
We will denote $\left\langle \cdot,\cdot  \right\rangle_{\HutL}$ and $\left\langle \cdot,\cdot  \right\rangle_{\HutL_{\infty}}$ the associated scalar product.

When dealing with solutions of non linear wave equations on $M$ (or on $T_{x_{\infty}}M$), "the unique strong solution" will mean the unique solution in the Strichartz space $L^5_{loc}(\R,L^{10}(M))$ (or $L^5_{loc}(\R,L^{10}(T_{x_{\infty}}M))$) such that $(u,\partial_t u)\in C(\R,\HutL)$ (or $C(\R,\HutL_{x_{\infty}})$.

All along the article, for a point $x\in M$, we will sometime not distinguish $x$ with its image in a coordinate patch and will write $\R^3$ instead of $T_{x_{\infty}}M$. $M$ will always be smooth, compact and the number of coordinate charts we use is always assumed to be finite. We also assume that all the charts are relatively compact. In all the article, $C$ will denote any constant, possibly depending on the manifold $M$ and the damping function $a$. We will also write $\lesssim$ instead of $\leq C$ for a constant $C$.\\
$B^s_{2,\infty}(M)$ denotes the Besov space on $M$ defined by
\bna
\nor{u}{B^s_{2,\infty}(M)}=\nor{\mathbf{1}_{[0,1[}(\sqrt{-\Delta_M})u}{L^2(M)}+\sup_{k\in \N}\nor{\mathbf{1}_{[2^k,2^{k+1}[}(\sqrt{-\Delta_M})u}{H^s(M)}.
\ena 
We use the same definition for $B^s_{2,\infty}(\R^3)$ with $\Delta_M$ replaced by $\Delta_{\R^3}$ which can be expressed using the Fourier transform and Littlewood-Paley decomposition. Of course, $B^s_{2,\infty}(M)$ is linked with $B^s_{2,\infty}(\R^3)$ by the expression in coordinate charts. This will be precised in Lemma \ref{equivBinft}.

From now on, $a=a(x)$ will always denote a smooth real valued function defined on $M$.
\section{Preliminaries}
\subsection{Existence theorem}
The existence of solutions to our equation is proved using two tools : Strichartz and Morawetz estimates. Strichartz estimates take the following form.  
\begin{prop}[Strichartz and energy estimates]
Let $T>0$ and $(p,q)$ satisfying 
\bna
\frac{1}{p}+\frac{3}{q}=\frac{1}{2}, \quad p>2
\ena
Then, there exists $C>0$ such that any solution $u$ of 
 \bna
 \left\lbrace
\begin{array}{rcl}%argument r=alignement à droite puis center et left
\Box v+v+a(x)\partial_tv&=& f\quad \textnormal{on}\quad [-T,T]\times M\\
(v(0),\partial_t u(0))&=&(u_0,u_1) .
\end{array}
\right.
 \ena
satisfies the estimate 
\bna
\nor{(v,\partial_tv)}{L^{\infty}([-T,T],\HutL)}+\nor{v}{L^p([-T,T],L^q(M))}\leq C(\nor{(u_0,u_1)}{\HutL}+\nor{f}{L^1([-T,T],L^2)}).
\ena
\end{prop}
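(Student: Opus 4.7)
The plan is to reduce to the well-known linear Strichartz estimates for the undamped wave equation on a compact Riemannian manifold, and then absorb the zeroth-order mass term and the first-order damping term by perturbation.

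First I would invoke the classical Strichartz estimate for the wave equation on $M$: for $u$ solving $\Box u = F$ with $(u,\partial_t u)|_{t=0}=(u_0,u_1)$ and for the admissible pair $(p,q)$ of the statement,
\[
\nor{u}{L^p([-T,T],L^q(M))}\leq C\bigl(\nor{(u_0,u_1)}{\HutL}+\nor{F}{L^1([-T,T],L^2(M))}\bigr).
\]
This is due to Kapitanski and, in its sharp local-in-time form on a general compact manifold, to Burq--G\'erard--Tzvetkov; it is proved via a frequency-localized WKB/FIO parametrix followed by a $TT^*$ argument and a Littlewood--Paley summation (using the fact that on a compact manifold the range of useful frequencies down to $1$ is dealt with by Sobolev embedding, and that for $d=3$ the admissibility condition $1/p+3/q=1/2$ with $p>2$ coincides with the one on $\R^3$).

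Next I would handle the mass term $v$ and the damping $a(x)\partial_t v$ as a $L^1_tL^2_x$ forcing. Writing the equation as $\Box v = f - v - a(x)\partial_t v$, the wave Strichartz estimate above yields
\[
\nor{v}{L^p([-T,T],L^q)}\leq C\bigl(\nor{(u_0,u_1)}{\HutL}+\nor{f}{L^1L^2}+T\nor{v}{L^\infty L^2}+T\nor{a}{L^\infty}\nor{\partial_t v}{L^\infty L^2}\bigr).
\]
So it only remains to bound the energy $\nor{(v,\partial_t v)}{L^\infty([-T,T],\HutL)}$ by $\nor{(u_0,u_1)}{\HutL}+\nor{f}{L^1L^2}$. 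This is the standard energy identity: multiplying the equation by $\partial_t v$ and integrating over $M$ gives
\[
\frac{d}{dt}\,\tfrac12\nor{(v,\partial_t v)}{\HutL}^2 = \int_M f\,\partial_t v - \int_M a(x)|\partial_t v|^2,
\]
whence $\frac{d}{dt}\nor{(v,\partial_t v)}{\HutL}\leq \nor{f}{L^2}+\nor{a}{L^\infty}\nor{(v,\partial_t v)}{\HutL}$, and Gronwall yields the desired bound with constant $C=C(T,\nor{a}{L^\infty})$.

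There is essentially no obstacle beyond citing the underlying wave Strichartz estimate on a compact manifold; the mass and damping terms are of strictly lower order than the principal part and are controlled by the energy identity, whose control is in turn independent of the Strichartz bound. In particular, the sign of $a$ plays no role here (the energy is only dissipated when $a\geq 0$, but the Gronwall absorption works for any smooth $a$). The only subtle point is that one must be careful to close the Strichartz bound after the energy bound, not simultaneously; once the $L^\infty\HutL$ norm is known, the $L^pL^q$ norm follows from a single application of the free estimate to the forcing $f-v-a\partial_t v$.
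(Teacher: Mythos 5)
Your proof is correct and reduces to the same core ingredient as the paper's: Kapitanski's linear Strichartz estimate for $\Box$ on a compact manifold, with the mass and damping terms treated as lower-order perturbations. The one organizational difference is worth noting. The paper's (very terse) proof says to "absorb the additional terms and get the desired estimate for $T$ small enough," then iterate over small time intervals; this is a simultaneous bootstrap of the energy and Strichartz norms on short intervals. You instead decouple the two: you first prove the energy bound $\nor{(v,\partial_t v)}{L^\infty([-T,T],\HutL)}\lesssim \nor{(u_0,u_1)}{\HutL}+\nor{f}{L^1L^2}$ once and for all on the full interval via the energy identity and Gronwall (no smallness of $T$ needed), and only then apply the linear Strichartz estimate a single time with source $f-v-a\partial_t v$. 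This avoids the iteration in the Strichartz step entirely and makes the dependence of the constant on $T$ explicit ($e^{C\|a\|_\infty T}$). Both routes are valid; yours is a little cleaner, and your closing remark about the order in which to close the two bounds is precisely where the simplification comes from.
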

\bnp
The case with $a\equiv 0$ for the wave equation can be found in L.V. Kapitanski \cite{KapitanskiStrichartzvar}. To treat the case of damped Klein-Gordon, we only have to absorb the additional terms and get the desired estimate for $T$ small enough. We can then reiterate the operation to get the result for large times.
\enp
Then, we are going to prove global existence for the equation 
\begin{eqnarray}
\label{eqnexist}
\left\lbrace
\begin{array}{rcl}%argument r=alignement à droite puis center et left
\Box u+u+|u|^4u&=&a(x)\partial_tu+g \quad \textnormal{on}\quad [-T,T]\times M\\
(u(0),\partial_t u(0))&=&(u_{0},u_1) \in \HutL
\end{array}
\right.
\end{eqnarray}
with $g\in L^1([-T,T],L^2(M))$ and $a\in C^{\infty}(M)$.

The proof is now very classical, see for example \cite{NLWzuilybourbaki} for a survey of the subject. The critical defocusing nonlinear wave equation on $\R^3$ was proved to be globally well posed by Shatah and Struwe \cite{NLWShatahStruweIMRN,NLWShatahStruweAnnals} using Morawetz estimates. Later, S. Ibrahim an M. Majdoub managed to apply this strategy in the case of variable coefficients in \cite{ibrahim2003existNLW}, but without damping and forcing term. In this subsection, we extend this strategy to the case with these additional terms. We also refer to the appendix of \cite{BahouriGerard} where the computation of Morawetz estimates on $\R^3$ is made with a forcing term. We also mention the result of N. Burq, G. Lebeau and F. Planchon \cite{BLPNLWcritdomain} in the case of 3-D domains.

We only have to check that the two additional terms do not create any trouble. Actually, the main difference is that the energy in the light cones is not decreasing, but it is locally "almost decreasing" (see formula (\ref{presquedecr})) and this will be enough to conclude with the same type of arguments.

As usual in critial problems, the local problem is well understood thanks to Strichartz estimates while we have to prove global existence. We only consider Shatah Struwe solutions, that is satisfying Strichartz estimates and we wave uniqueness for local solutions in this class. We assume that there is a maximal time of existence $t_0$ and we want to prove that it is infinite . 
The solution considered will be limit of smooth solutions of the nonlinear equation with smoothed initial data and nonlinearity. Therefore, the integrations by part are licit by a limiting argument.

We need some notations. To simplify the notations, the space-time point where we want to extend the solution will be $z_0=(t_0,x_0)=(0,0)$. $\varphi$ is the geodesic distance on $M$ to $x_0=0$ defined in a neighborhood $U$ of $0$. Denote for some small $\alpha<\beta<0$
\bna 
K_{\alpha}^{\beta}:=&\left\{z=(t,x)\in [\alpha,\beta] \times U \left|\varphi \leq |t| \right.\right\}& \textnormal{ backward truncated cone}\\
M_{\alpha}^{\beta}:=&\left\{z=(t,x)\in [\alpha,\beta] \times U \left|\varphi = |t| \right.\right\}& \textnormal{ mantle of the truncated cone }\\
D(t):=&\left\{x\in U \left|\varphi \leq |t| \right.\right\}& \textnormal{ spacelike section of the cone at time }t
\ena
In what follows, the gradient, norm, density are computed with respect to the Riemannian metric on $M$ (for example, we have $\left|\nabla \varphi\right|=1$). We also define
\bna
e(u)(t,x):=&\frac{1}{2}\left(\left|\partial_t u \right|^2+ \left|\nabla u \right|^2\right) +\frac{1}{6}\left|u\right|^6& \textnormal{local energy}\\
E(u,D(t)):=&\int_{D(t)}e(u)(t,x)dx& \textnormal{energy at time }t \textnormal{ in the section of the cone}\\
Flux(u,M_{\alpha}^{\beta})):= &\frac{1}{\sqrt{2}}\int_{M_{\alpha}^{\beta}}\frac{1}{2}\left|\partial_tu\nabla \varphi -\nabla u\right|^2+ \frac{1}{6}\left|u\right|^6~d\sigma & \textnormal{flux getting out of the truncated cone}
\ena
\begin{lemme}
\label{lmlimitNRJ}
Let $u$ be a solution  of equation (\ref{eqnexist}). The function $E(u,D(t))$ satisfies for $\alpha< \beta<0$
\bna
 E(u,D(\beta))+Flux (u,M_{\alpha}^{\beta})=E(u,D(\alpha))+ \iint_{K_{\alpha}^{\beta}}a(x) |\partial_tu|^2-\Re\iint_{K_{\alpha}^{\beta}}u\partial_t\bar{u}+\Re\iint_{K_{\alpha}^{\beta}}g\partial_t\bar{u}
\ena
and it has a left limit in $t=0$ as a function of $t$.
\end{lemme}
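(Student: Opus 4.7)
The plan is to run the classical multiplier argument with $\partial_t\bar u$, apply the divergence theorem on $K_\alpha^\beta$, and then deduce the left limit from a monotonicity--plus--continuity decomposition. More precisely, multiplying the equation $\Box u+u+|u|^4u=a(x)\partial_tu+g$ by $\partial_t\bar u$, taking the real part and using the standard pointwise identities
\bna
\Re(\partial_t^2 u\,\partial_t\bar u)=\tfrac{1}{2}\partial_t|\partial_t u|^2,\quad -\Re(\Delta u\,\partial_t\bar u)=\tfrac{1}{2}\partial_t|\nabla u|^2-\mathrm{div}_x\Re(\partial_t\bar u\,\nabla u),\quad \Re(|u|^4u\,\partial_t\bar u)=\tfrac{1}{6}\partial_t|u|^6,
\ena
one obtains the spacetime divergence identity $\partial_t e(u)-\mathrm{div}_x\Re(\partial_t\bar u\,\nabla u)=a(x)|\partial_t u|^2-\Re(u\,\partial_t\bar u)+\Re(g\,\partial_t\bar u)$. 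This is legal for the smooth approximants and passes to the limit in the Strichartz class by the regularization procedure already invoked in the existence scheme.

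Next I would integrate this identity over $K_\alpha^\beta$ and apply Stokes. The boundary decomposes into the two spacelike discs $D(\alpha),D(\beta)$ (with outward conormals $\mp(1,0)$, contributing $E(u,D(\beta))-E(u,D(\alpha))$) and the mantle $M_\alpha^\beta=\{\varphi+t=0\}$, whose outward unit conormal is $\frac{1}{\sqrt 2}(1,\nabla\varphi)$ because $|\nabla\varphi|=1$. On $M_\alpha^\beta$ the boundary integrand is
\bna
\tfrac{1}{\sqrt 2}\bigl[e(u)-\Re(\partial_t\bar u\,\nabla\varphi\cdot\nabla u)\bigr]=\tfrac{1}{\sqrt 2}\bigl[\tfrac{1}{2}|\partial_t u\,\nabla\varphi-\nabla u|^2+\tfrac{1}{6}|u|^6\bigr],
\ena
which is exactly the integrand defining $\mathrm{Flux}(u,M_\alpha^\beta)$; the claimed identity follows.

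For the left limit at $t=0$, I would rewrite the identity (with $\alpha$ frozen) as $E(u,D(\beta))=E(u,D(\alpha))-\mathrm{Flux}(u,M_\alpha^\beta)+F(\beta)$ where
\bna
F(\beta):=\iint_{K_\alpha^\beta}\bigl[a(x)|\partial_tu|^2-\Re(u\,\partial_t\bar u)+\Re(g\,\partial_t\bar u)\bigr].
\ena
The function $\beta\mapsto\mathrm{Flux}(u,M_\alpha^\beta)$ is nondecreasing in $\beta$ (the mantle grows with $\beta$) and bounded above by the total energy, hence admits a left limit at $0$. The function $F$ is absolutely continuous on $[\alpha,0]$: the integrand lies in $L^1(K_\alpha^0)$ since $(u,\partial_tu)\in C(\R,\HutL)$ together with Sobolev $H^1\hookrightarrow L^6$ controls the first two summands via Cauchy--Schwarz on the bounded set $K_\alpha^0$, while $g\in L^1_tL^2_x$ handles the last summand. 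Combining, $E(u,D(\beta))$ has a left limit as $\beta\to 0^-$.

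The only real subtlety is the rigor of the divergence identity for a merely Shatah--Struwe solution (the pointwise chain-rule manipulations above all require more regularity than is a priori available); this is circumvented by working with the smooth approximants, writing the identity there, and passing to the limit using strong $\HutL$ convergence of the Cauchy data and the Strichartz/energy estimates already established. Once this is in hand, everything else is bookkeeping and the geometry of the truncated cone.
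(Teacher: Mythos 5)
Your derivation of the identity is the same as the paper's: multiply by $\partial_t\bar u$, obtain the spacetime divergence form, integrate over $K_\alpha^\beta$ and apply Stokes, and read off the flux term from the conormal $\tfrac{1}{\sqrt 2}(1,\nabla\varphi)$ on the mantle using $|\nabla\varphi|=1$.

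For the left limit, however, you and the paper diverge. You split off $F(\beta)=\iint_{K_\alpha^\beta}[a|\partial_tu|^2-\Re(u\partial_t\bar u)+\Re(g\partial_t\bar u)]$, observe that it is (absolutely) continuous since the integrand is in $L^1(K_\alpha^0)$, note that $\beta\mapsto\mathrm{Flux}(u,M_\alpha^\beta)$ is nondecreasing with nonnegative integrand and bounded (via the identity itself, since $E(u,D(\beta))\geq 0$ and $F$ is bounded), and then conclude by monotone convergence of the flux plus continuity of $F$. This is a clean, soft argument and it is correct — though your phrase ``bounded above by the total energy'' is a slight imprecision (with the damping and forcing present, the correct bound is $E(u,D(\alpha))+\sup_\beta F(\beta)$, which follows from the identity, not from energy decay). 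The paper instead uses the positivity of the flux, Hölder on the three interior terms, and an elementary algebraic manipulation to derive the quantitative near-monotonicity estimate (\ref{presquedecr}), i.e., $f(\beta)^{1/2}\leq\frac{1}{1-2C(\beta-\alpha)}[f(\alpha)^{1/2}+C(\beta-\alpha)+\nor{g}{L^1([\alpha,\beta],L^2)}]$, from which the left limit follows. The difference matters beyond this lemma: (\ref{presquedecr}) is not just a route to the left limit but a key input that the paper reuses in Corollary \ref{corL6tendzero} and in the final step of the global-existence proof, where it replaces local energy decay. Your version settles the stated lemma but does not produce that quantitative estimate; if you adopt your shorter argument you should still prove (\ref{presquedecr}) separately, since the surrounding proof depends on it.
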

\bnp
The identity is obtained by multiplying the equation by $\partial_t\overline{u}$ to get $\partial_t e(u)-\Re div (\partial_t\overline{u} \nabla_x u)=a(x)|\partial_t u|^2 -u\partial_t\bar{u}+ \Re g\partial_t\overline{u}$, then, we integrate over the truncated cone $K_{\alpha}^{\beta}$ and use Stokes formula. Denote $f(t)=E(u,D(t))$. Using the positivity of the flux and H\"older inequality, we estimate 
\bna
\nor{f}{L^{\infty}([\alpha,\beta])}\leq f(\alpha)+ C(\beta-\alpha)\nor{f}{L^{\infty}([\alpha,\beta])} + C|\alpha|(\beta-\alpha)\nor{f}{L^{\infty}([\alpha,\beta])}^{2/3}+\nor{g}{L^1([\alpha,\beta],L^2)}\nor{f}{L^{\infty}([\alpha,\beta])}^{1/2}.
\ena
Using $C|\alpha|(\beta-\alpha)\nor{f}{L^{\infty}([\alpha,\beta])}^{2/3}\leq C(\beta-\alpha)(\nor{f}{L^{\infty}([\alpha,\beta])}^{1/2}+\nor{f}{L^{\infty}([\alpha,\beta])})$, we get for $\beta-\alpha$ small enough,
\bnan
\label{presquedecr}
f(\beta)^{1/2}\leq \frac{1}{1-2C(\beta-\alpha)}\left[f(\alpha)^{1/2}+C(\beta-\alpha)+\nor{g}{L^1([\alpha,\beta],L^2)}\right]
\enan
This property will replace the decreasing of the energy that occurs without damping and forcing term in all the rest of the proof. It easily implies that $f$ has a left limit.
\enp
\begin{lemme}
\label{lmMorawetz}
For $u$ and $g$ a strong solution of
\bna
\Box u+|u|^4u=g \quad \textnormal{on}\quad [-T,0[\times M
\ena
we have the estimate
\bna
\int_{D(\alpha)}|u|^6& \leq &C \left(\frac{\beta}{\alpha} \left(f(\beta))+f(\beta)^{1/3}\right)+\left|f(\beta)-f(\alpha)\right|+\nor{g}{L^1L^2(K_{\alpha}^{\beta})}\nor{\partial_t u}{L^{\infty}L^2(K_{\alpha}^{\beta})}\right.\\
&&+\left(\left|f(\beta)-f(\alpha)\right|+\nor{g}{L^1L^2(K_{\alpha}^{\beta})}\nor{\partial_t u}{L^{\infty}L^2(K_{\alpha}^{\beta})}\right)^{1/3}\\
&&+\nor{g}{L^1L^2(K_{\alpha}^{\beta})}\left(\nor{\partial_t u}{L^{\infty}L^2(K_{\alpha}^{\beta})}+\nor{\nabla u}{L^{\infty}L^2(K_{\alpha}^{\beta})}+\nor{u}{L^{\infty}L^6(K_{\alpha}^{\beta})}\right)\\
&&\left.+(\beta-\alpha) \underset{t\in [\alpha,\beta]}{\sup} \left[f(t)+f(t)^{1/3}\right]\right)
\ena 
where we have used the notation $f(t)=E(u,D(t))$.
\end{lemme}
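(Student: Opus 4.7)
The plan is to apply the Shatah--Struwe conformal multiplier method on the geodesic truncated cone, adapted to the variable-coefficient setting as in Ibrahim--Majdoub, while carefully tracking the contribution of the forcing $g$.

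First I would introduce the multiplier
\[
Xu := t\,\partial_t u + \varphi\,\nabla\varphi\cdot\nabla u + \tfrac{1}{2}u,
\]
which on $\mathbb{R}^3$ (with $\varphi=|x|$) is the standard conformal multiplier underlying the $L^6$ Morawetz identity. Taking $2\,\mathrm{Re}\bigl(Xu\cdot\overline{\Box u+|u|^4u}\bigr)$ and rewriting as $\partial_tP-\mathrm{div}\,Q$, then integrating over $K_\alpha^\beta$ and applying Stokes, yields an identity whose leading interior term is $-\tfrac{|t|}{3}|u|^6$ (with favorable sign once evaluated on the $D(\alpha)$-slice), plus boundary contributions on $D(\alpha)$ and $D(\beta)$, a mantle flux on $M_\alpha^\beta$, a bulk curvature error involving $\mathrm{Hess}\,\varphi-\tfrac1\varphi(g_M-d\varphi\otimes d\varphi)$ acting on $|\nabla u|^2$, and a source cross-term $2\iint_{K_\alpha^\beta}\mathrm{Re}(Xu\cdot\bar g)$.

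Next I would estimate each contribution. On $D(\beta)$, Cauchy--Schwarz together with the embedding $H^1\hookrightarrow L^6$ bounds the slice term by $C|\beta|(f(\beta)+f(\beta)^{1/3})$; since $|t|\geq|\alpha|$ on $D(\alpha)$ the division produces the factor $\beta/\alpha$ in front of $f(\beta)+f(\beta)^{1/3}$ in the stated inequality. The flux across $M_\alpha^\beta$ is controlled via Lemma~\ref{lmlimitNRJ}: the quadratic part is estimated by $|f(\beta)-f(\alpha)|+\|g\|_{L^1L^2(K_\alpha^\beta)}\|\partial_t u\|_{L^\infty L^2(K_\alpha^\beta)}$, while the nonnegative $|u|^6$-portion enters only through a H\"older interpolation with exponent $1/3$, which is the origin of the $(\,\cdot\,)^{1/3}$ summand in the statement. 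The direct source term $\iint Xu\cdot\bar g$ splits into three pieces $t\,\partial_t u\cdot\bar g$, $\varphi\nabla\varphi\cdot\nabla u\cdot\bar g$ and $\tfrac12 u\cdot\bar g$, each bounded by $\|g\|_{L^1L^2}$ times the corresponding $L^\infty L^2$ or $L^\infty L^6$ norm of $u$, producing exactly the line $\|g\|_{L^1L^2}(\|\partial_t u\|+\|\nabla u\|+\|u\|_{L^6})$ of the announced bound.

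The hard step is verifying that the multiplier identity genuinely delivers $|u|^6$ with the good sign on $D(\alpha)$ while keeping the metric correction absorbable. In geodesic normal coordinates around $x_0$ one has $\mathrm{Hess}\,\varphi=\tfrac1\varphi(g_M-d\varphi\otimes d\varphi)+O(\varphi)$, so the deviation of the curved Morawetz identity from its flat counterpart is bounded on $K_\alpha^\beta$; integrating this error against $|\nabla u|^2$, $|\partial_t u|^2$ and, through Sobolev, $|u|^6$, gives a term bounded by $(\beta-\alpha)\sup_{t\in[\alpha,\beta]}[f(t)+f(t)^{1/3}]$, which is exactly the last remainder in the statement. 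This curvature bookkeeping, already present in Ibrahim--Majdoub but now complicated by the presence of $g$, is the principal technical obstacle and the point where the argument most visibly departs from the original flat Shatah--Struwe computation.
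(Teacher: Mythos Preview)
Your proposal is correct and follows essentially the same approach as the paper. The paper's own proof is only a pointer to the standard conformal-multiplier Morawetz computation, citing Bahouri--G\'erard for the treatment of the forcing term $g$ and Ibrahim--Majdoub for the variable-metric curvature correction coming from $\mathrm{Hess}\,\varphi$; this is exactly the argument you have sketched, and your identification of each term in the final estimate with its origin in the multiplier identity is accurate.
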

\bnp
It is a consequence of Morawetz estimates. The only difference is the presence of the forcing term $g$ and the metric. The case of flat metric is treated in \cite{BahouriGerard}. The metric leads to the same estimates with an additional term $(\beta-\alpha) \underset{t\in [\alpha,\beta]}{\sup} f(t)+f(t)^{1/3}$ as treated in \cite{ibrahim2003existNLW}. Another minor difference is that in the presence of a forcing term, the energy does not decrease and $f(\beta)+f(\beta)^{1/3}$ have to be replaced by the supremum on the interval. Note also that our estimate is made in the backward cone while the computation is made in the future cone in these references. We leave the easy modifications to the reader.   
\enp
The previous estimates will be the main tools of the proof. It will be enough to prove some non concentration property in the light cone for $L^{\infty}L^6$, $L^5L^{10}$ and finally in energy space. It is the object of the following three corollaries. 
\begin{corollaire}
\label{corL6tendzero}
\bna
\int_{D(\alpha)}|u(\alpha,x)|^6dx \tend{\alpha}{0}0.
\ena
\end{corollaire}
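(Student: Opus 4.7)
The plan is to apply the Morawetz estimate of Lemma \ref{lmMorawetz} with $\beta$ chosen as a function of $\alpha$ so that each term of the right-hand side becomes small. First I would invoke Lemma \ref{lmlimitNRJ}, which ensures that $f(t)=E(u,D(t))$ has a left limit $E_0$ as $t\to 0^-$; in particular $f$ is bounded near $0^-$ and the Cauchy property $|f(\beta)-f(\alpha)|\to 0$ holds whenever both $\alpha,\beta\to 0^-$. In the context of the damped Klein--Gordon equation of Theorem \ref{thmdecresexp}, the lower-order terms $-u-a(x)\partial_t u$ are absorbed into the forcing term $g$ appearing in Lemma \ref{lmMorawetz}, and this modified $g$ remains in $L^1_{loc}(L^2)$ since $u\in L^\infty(H^1)$ and $\partial_t u\in L^\infty(L^2)$.

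The key trick is to fix $K>1$ and set $\beta=\alpha/K$, so that $\alpha<\beta<0$, $\beta/\alpha=1/K$, and $\beta-\alpha=|\alpha|(K-1)/K\to 0$ as $\alpha\to 0^-$. Plugging this choice into Lemma \ref{lmMorawetz}, the potentially dangerous first term becomes $\frac{1}{K}\bigl(f(\alpha/K)+f(\alpha/K)^{1/3}\bigr)\leq \frac{1}{K}(E_0+E_0^{1/3})+o(1)$ because $f(\alpha/K)\to E_0$. Every other term on the right-hand side tends to $0$ as $\alpha\to 0^-$: the differences $|f(\beta)-f(\alpha)|$ and their cube roots vanish by the Cauchy property; $\nor{g}{L^1([\alpha,\beta],L^2)}\to 0$ by absolute continuity of the Lebesgue integral on a shrinking time-interval, while $\nor{\partial_t u}{L^{\infty}L^2}$, $\nor{\nabla u}{L^{\infty}L^2}$ and $\nor{u}{L^{\infty}L^6}$ remain uniformly bounded by Sobolev embedding and the energy estimate; finally $(\beta-\alpha)\sup_{[\alpha,\beta]}[f+f^{1/3}]$ tends to $0$ because $f$ stays bounded and $\beta-\alpha\to 0$. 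One therefore concludes
\bna
\limsu{\alpha}{0^-} \int_{D(\alpha)}|u(\alpha,x)|^6\,dx \leq \frac{C\,(E_0+E_0^{1/3})}{K},
\ena
and letting $K\to \infty$ yields the corollary.

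The only real difficulty here is the algebraic tuning of $\beta$ as a function of $\alpha$: one must keep the prefactor $\beta/\alpha$ in the Morawetz estimate bounded (and ultimately arbitrarily small) while simultaneously arranging $\beta-\alpha\to 0$ to kill the remaining error terms. This is the classical Shatah--Struwe non-concentration argument, the adaptation here being that the exact monotonicity of the local energy is no longer available (because of the mass term and the damping) and must be replaced by the ``almost-decay'' inequality (\ref{presquedecr}) together with the existence of the left limit guaranteed by Lemma \ref{lmlimitNRJ}.
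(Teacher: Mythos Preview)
Your proof is correct and follows essentially the same route as the paper: apply Lemma \ref{lmMorawetz} with the lower-order terms $-u+a(x)\partial_t u$ absorbed into $g$, choose $\beta$ proportional to $\alpha$ (the paper writes $\beta=\varepsilon\alpha$ with $0<\varepsilon<1$, which is your $\beta=\alpha/K$ with $K=1/\varepsilon$), use the existence of the left limit $L$ of $f$ from Lemma \ref{lmlimitNRJ} to bound the principal term by $C\varepsilon(L+L^{1/3})$ and to see all other terms vanish, then let $\varepsilon\to 0$. The only minor imprecision is your appeal to ``Sobolev embedding'' for the $L^\infty L^6$ bound on the cone: this bound actually comes directly from the definition of the local nonlinear energy $f(t)=E(u,D(t))$, which already contains the term $\frac{1}{6}\int_{D(t)}|u|^6$.
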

\bnp
We are going to use the previous Lemma \ref{lmMorawetz}, replacing $g$ by $g-u+a(x)\partial_t u$ and with $\beta=\e \alpha$, $0<\e<1$. Denote $L$ the limit of $f(t)$ as $t$ tends to $0$ given by Lemma \ref{lmlimitNRJ}. So for $\alpha$ small enough, we have for a constant $C>0$
\bna
\nor{\partial_tu}{L^{\infty}L^2(K_{\alpha}^{\beta})}+\nor{\nabla u}{L^{\infty}L^2(K_{\alpha}^{\beta})}+\nor{u}{L^{\infty}L^6(K_{\alpha}^{\beta})}\leq 1+C(L^{1/2}+L^{1/6}).
\ena

We also use
\bna 
\nor{g-u+a(x)\partial_tu }{L^1L^2(K_{\alpha}^{\beta})}&\leq &\nor{g}{L^1L^2(K_{\alpha}^{\beta})}+C(\beta-\alpha)\nor{u}{L^{\infty}L^2(K_{\alpha}^{\beta})}+C(\beta-\alpha)\nor{\partial_tu }{L^{\infty}L^2(K_{\alpha}^{\beta})}\\
&\leq& \nor{g}{L^1L^2(K_{\alpha}^{\beta})}+C(\beta-\alpha)(1+L^{1/6}+L^{1/2})
\ena
which tends to $0$ as $\beta$ tends to $0$.
This yields
\bna
\limsu{\alpha}{0} \int_{D(\alpha)}|u(\alpha,x)|^6dx \leq C\e(L+L^{1/3}).
\ena
\enp
\begin{corollaire}
\label{corexistL5L10}
\bna
u\in L^5 L^{10}(K_{-T}^{0}).
\ena
\end{corollaire}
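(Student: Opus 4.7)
Here is my plan for proving Corollary \ref{corexistL5L10}. The strategy follows the Shatah-Struwe scheme \cite{NLWShatahStruweAnnals,ibrahim2003existNLW}, adapted to handle simultaneously the damping $a(x)\partial_t u$, the linear lower order term $u$, and the forcing $g$. The non-concentration property proved in Corollary \ref{corL6tendzero} is the key ingredient that closes the argument.

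\textbf{Step 1: Localization at the tip of the cone.} I would first use Corollary \ref{corL6tendzero} to choose $\alpha_0\in(-T,0)$ so close to $0$ that $\|u(\alpha_0,\cdot)\|_{L^6(D(\alpha_0))}\leq \eta$, for a small $\eta$ to be fixed. By finite speed of propagation, $u|_{K_{\alpha_0}^{0}}$ depends only on the Cauchy data on $D(\alpha_0)$, so I may multiply these data by a cutoff supported in a slightly larger set and work in a single coordinate chart containing the tip $(0,0)$, reducing matters essentially to a problem on $\R^3$ with a metric close to the flat one and a compactly supported source.

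\textbf{Step 2: Strichartz bootstrap on short subintervals.} I would then partition $[\alpha_0,0)$ into short subintervals $[a_k,a_{k+1}]$ and apply the Strichartz estimate of the previous proposition to Duhamel's formula on each of them. The resulting inequality controls $\|u\|_{L^5([a_k,a_{k+1}],L^{10})}$ by the energy norm of the data at $a_k$, the fifth-power self-contribution $\|u\|_{L^5L^{10}}^5$ coming from $|u|^4u$, and additive $L^1L^2$ contributions from $u$, $a(x)\partial_t u$, and $g$. The first is uniformly bounded by the almost-monotonicity (\ref{presquedecr}); the additive pieces are of order $(a_{k+1}-a_k)^{1/2}$ and hence small on short subintervals; and the nonlinear term is absorbed by a standard fixed-point argument provided the free-evolution part is small.

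\textbf{Step 3: From non-concentration to smallness of the linear part.} The essential remaining input is that the linear $L^5L^{10}$-norm of the free evolution of $(u(a_k),\partial_t u(a_k))$ on $[a_k,a_{k+1}]$ is small, uniformly in $k$. For this I would invoke the $L^6$ non-concentration of Corollary \ref{corL6tendzero} combined with a compactness argument on the family of Cauchy data at the times $a_k$, which rules out a concentrating profile and forces the free Strichartz norm to be uniformly small on short enough subintervals. A more direct alternative, closer to \cite{ibrahim2003existNLW}, is to apply Lemma \ref{lmMorawetz} inside the cone and combine it with Corollary \ref{corL6tendzero} to bound $\|u\|_{L^5L^{10}(K_{\alpha_0}^0)}$ directly, without writing the bootstrap explicitly.

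The main obstacle is purely technical book-keeping: one has to verify that each new term in the right-hand side (the lower-order $u$, the damping, and the forcing $g$) produces only additive $L^1L^2$ pieces which are small on short subintervals and therefore do not spoil the scaling balance of the critical Shatah-Struwe argument. Once this is done, the proof proceeds exactly as in \cite{ibrahim2003existNLW}.
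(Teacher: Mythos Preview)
Your outline has a genuine gap at the heart of Step 2--3. If you bootstrap directly on $\nor{u}{L^5L^{10}}$, the Duhamel/Strichartz estimate gives an inequality of the form
\[
\nor{u}{L^5L^{10}(K_s^0)} \leq C\,E(u,D(s))^{1/2} + C\nor{u}{L^5L^{10}(K_s^0)}^5 + (\textnormal{lower order}),
\]
and for this to close you need the constant term $E(u,D(s))^{1/2}$ to be small --- but it is only \emph{bounded}, not small. You try to get around this in Step 3 by asking instead that the free evolution of $(u(a_k),\partial_t u(a_k))$ be small in $L^5L^{10}$, and you propose to deduce this from the $L^6$ non-concentration of Corollary \ref{corL6tendzero} via a ``compactness argument ruling out concentrating profiles.'' This is not justified: the $L^6$ smallness of $u(a_k)$ says nothing about concentration of $\nabla u(a_k)$ or $\partial_t u(a_k)$, and at this point in the argument you do not yet have energy non-concentration (that is Corollary \ref{corNRJpasconcentr}, which \emph{uses} the present corollary). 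Your alternative --- invoking Lemma \ref{lmMorawetz} directly --- also misses the mark, since that lemma controls $\int_{D(\alpha)}|u|^6$, not any Strichartz norm.

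The paper's proof avoids this difficulty by a simple but decisive interpolation trick: it bootstraps on $\nor{u}{L^4L^{12}(K_s^0)}$ rather than on $L^5L^{10}$, and uses H\"older to write
\[
\nor{|u|^4u}{L^1L^2(K_s^0)} \leq \nor{u}{L^{\infty}L^6(K_s^0)}\,\nor{u}{L^4L^{12}(K_s^0)}^4.
\]
Now the small quantity $\nor{u}{L^{\infty}L^6(K_s^0)}$ from Corollary \ref{corL6tendzero} appears as the \emph{coefficient} of the highest power of the bootstrap norm, so the inequality $X \leq A + \varepsilon X^4$ closes for $s$ close to $0$ even though $A\sim E^{1/2}$ is only bounded. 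The $L^5L^{10}$ bound then follows by interpolation between $L^4L^{12}$ and $L^{\infty}L^6$. This one line replaces your entire Step 3, and renders the subinterval partitioning and the reduction to $\R^3$ in Step 1 unnecessary --- the localized-in-cone Strichartz estimate from \cite{ibrahim2003existNLW} (Proposition 4.4) applies directly on $K_s^0$.
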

\bnp
Localized Strichartz estimates in cones (see Proposition 4.4 of \cite{ibrahim2003existNLW}) give 
\bna
\nor{u}{L^4 L^{12}(K_s^{0})}&\leq& C E(u,D(s))^{1/2} +\nor{u}{L^5 L^{10}(K_s^{0})}^5+\nor{a(x)\partial_t u-u+g}{L^1 L^{2}(K_s^{0})}\\
&\leq &C E(u,D(s))^{1/2} +\nor{u}{L^{\infty} L^{6}(K_s^{0})}(1+\nor{u}{L^4 L^{12}(K_s^{0})}^4)+\nor{\partial_t u}{L^{\infty} L^{2}(K_s^{0})}+\nor{g}{L^1 L^{2}(K_s^{0})}.
\ena
A boot-strap argument and Corollary \ref{corL6tendzero} give that for $s$ sufficiently close to $0$, $\nor{u}{L^4 L^{12}(K_s^{0})}$ is bounded. We get the announced result by interpolation between $L^4 L^{12}$ and $L^{\infty} L^{6}$.
\enp
\begin{corollaire}
\label{corNRJpasconcentr}
\bna
E(u,D(s)) \tend{s}{0}0.
\ena
\end{corollaire}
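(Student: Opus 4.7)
The plan is to combine the vanishing of the flux through the mantle with a Pohozaev--Morawetz-type multiplier identity on the truncated cone, in the spirit of Shatah--Struwe and of the appendix of \cite{BahouriGerard}, adapted to the variable metric as in \cite{ibrahim2003existNLW}.

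First, I would prove that $\mathrm{Flux}(u,M_\alpha^\beta)\to 0$ as $\alpha,\beta\to 0^-$. By Lemma \ref{lmlimitNRJ}, $f(t)=E(u,D(t))$ has a left limit $L$ at $t=0$, so $f(\beta)-f(\alpha)\to 0$. Each of the three source integrals on the shrinking cone $K_\alpha^\beta$ also tends to $0$: the damping term $\iint a(x)|\partial_t u|^2$ and the Klein--Gordon mass term $\Re\iint u\partial_t\bar u$ are of size $O(\beta-\alpha)$ via H\"older with $\partial_t u,u\in L^\infty L^2$, while the forcing term is bounded by $\|g\|_{L^1([\alpha,\beta],L^2)}\,\|\partial_t u\|_{L^\infty L^2}\to 0$ because $g\in L^1L^2$. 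Since the flux is non-negative, it vanishes in the limit.

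Second, I would apply a Pohozaev--Morawetz multiplier adapted to the geodesic cone. In the flat model one multiplies the equation by $t\partial_t\bar u+x\cdot\nabla\bar u+\bar u$ and integrates over $K_\alpha^\beta$; in the curved setting the analogue is $t\partial_t\bar u+\varphi\,(\nabla\varphi)\cdot\nabla\bar u+\bar u$, using that $\varphi$ is smooth near the apex with $|\nabla\varphi|=1$ and agrees with the Euclidean distance to higher order there. After Stokes, one obtains an identity in which the kinetic--gradient part of $E(u,D(\alpha))$, weighted by $|\alpha|$, is controlled by (i) a boundary term on $D(\beta)$ weighted by $|\beta|$, (ii) a flux-type contribution on $M_\alpha^\beta$, (iii) an integral of $|u|^6$ over $K_\alpha^\beta$ controlled by Corollary \ref{corL6tendzero} and Corollary \ref{corexistL5L10}, and (iv) four remainders coming from the Klein--Gordon mass, the damping $a(x)\partial_t u$, the source $g$, and the commutator of the multiplier with $-\Delta_M$ due to the curvature. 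Each remainder is estimated by H\"older with uniformly bounded energy-space norms and the Strichartz bound $u\in L^5L^{10}$ from Corollary \ref{corexistL5L10}, and therefore vanishes as $|K_\alpha^\beta|\to 0$.

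Third, passing to the limit with the choice $\beta=\alpha/2$, the boundary term on $D(\beta)$ weighted by $|\beta|=|\alpha|/2$ gives $\tfrac12|\alpha|f(\alpha/2)+o(|\alpha|)$, the flux on $M_\alpha^\beta$ tends to $0$ by Step 1, the $|u|^6$ contribution tends to $0$ by Corollary \ref{corL6tendzero}, and the four remainders vanish by Step 2. The resulting inequality $|\alpha|\,f(\alpha)\leq \tfrac12|\alpha|\,f(\alpha/2)+o(|\alpha|)$ yields, after dividing by $|\alpha|$ and letting $\alpha\to 0^-$, that $L\leq L/2$, hence $L=0$. Combined with Corollary \ref{corL6tendzero} for the $|u|^6$ part of the energy density, this gives $E(u,D(s))\to 0$.

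The main obstacle is the careful bookkeeping in the Pohozaev identity: the Klein--Gordon mass, the damping and the source are not invariant under the critical scaling, and their correction integrals must be shown to vanish faster than the flux and the $L^6$ concentration. The variable metric introduces a commutator between the multiplier and $-\Delta_M$, which is controlled using that the geodesic distance $\varphi$ differs from the Euclidean distance to higher order at the apex; cleanly reproducing in this setting the flat Morawetz identity used in~\cite{BahouriGerard} is the delicate point, and is the reason for the additional care taken in~\cite{ibrahim2003existNLW}.
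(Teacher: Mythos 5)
Your first step is correct and is exactly what estimate (\ref{presquedecr}) gives: since $f(t)=E(u,D(t))$ has a left limit $L$ at $t=0^-$ and the three source integrals over $K_\alpha^\beta$ vanish as $\alpha,\beta\to0^-$, the flux through $M_\alpha^\beta$ tends to zero. But the heart of your proposal, the Pohozaev--Morawetz step, does not give what you claim. Multiplying the equation by $t\partial_t\bar u+\varphi\nabla\varphi\cdot\nabla\bar u+\bar u$ and integrating over the truncated cone produces boundary contributions on the spacelike sections of the form
\[
P(t)=\int_{D(t)}\Bigl[\,t\,e(u)+\bigl(\varphi\nabla\varphi\cdot\nabla u\bigr)\,\partial_t\bar u+u\,\partial_t\bar u\,\Bigr],
\]
not $|t|\,f(t)$. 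The cross term $(\varphi\nabla\varphi\cdot\nabla u)\partial_t\bar u$ has exactly the same size as $|t|\,E_0(u,D(t))$ (since $\varphi\le|t|$ on $D(t)$ and $|\nabla\varphi|=1$) and can cancel the energy contribution: pointwise one only has $t\,\tfrac12(|\partial_tu|^2+|\nabla u|^2)+\varphi\,(\nabla\varphi\cdot\nabla u)\partial_t u\le 0$, i.e.\ a one-sided bound. Hence $P(\alpha)$ is not bounded below by $|\alpha|f(\alpha)-o(|\alpha|)$, and the inequality $|\alpha|f(\alpha)\le\tfrac12|\alpha|f(\alpha/2)+o(|\alpha|)$ does not follow. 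What the dilation multiplier does yield, after these cancellations, is precisely the $L^6$ non-concentration (Lemma \ref{lmMorawetz} and Corollary \ref{corL6tendzero}), which you already have as input; it carries no additional information about $E(u,D(\alpha))$ itself.

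A cleaner way to see that some additional input is needed: for a \emph{linear} wave with $H^1\times L^2$ data, the flux through the mantle also tends to zero and the Pohozaev quantity tends to zero, yet the vanishing of $E_0(v,D(t))$ as $t\to 0^-$ is not a consequence of either identity. It is a soft fact (approximate the data by smooth functions and use finite propagation speed and energy conservation), and it is one of the genuinely distinct ingredients in the argument. The paper's proof uses this linear fact after reducing to the linear case via perturbation: fix $s<0$ with $\|u\|_{L^5L^{10}(K_s^0)}\le\varepsilon$ (Corollary \ref{corexistL5L10}), solve the linear damped equation $v_s$ with data $(u,\partial_t u)(s)$, estimate $E_0(u-v_s,D(t))^{1/2}\le C\varepsilon^5$ by energy estimates in the cone with forcing $-|u|^4u$, and then let $t\to0^-$ using $E_0(v_s,D(t))\to0$. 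This is where the $L^5L^{10}$ smallness is used, and it is not recoverable from a multiplier identity alone. Corollary \ref{corL6tendzero} then takes care of the $|u|^6$ part of the energy, as you indicate.
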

\bnp
Let $\e>0$. Corollary \ref{corexistL5L10} allows to fix $s<0$ close to $0$ so that $\nor{u}{L^5 L^{10}(K_s^{0})}\leq \e$. Denote $v_s$ the solution to the linear equation
\bna
\Box v_s+v_s+a(x)\partial_tv_s=0, \quad (v_s,\partial_t v)_{t=s}=(u,\partial_t u)_{t=s}
\ena
then, the difference $w_s=u-v_s$ is solution of
\bna
\Box w_s+w_s+a(x)\partial_tw_s=-|u|^4u, \quad (w_s,\partial_t w_s)_{t=s}=(0,0).
\ena
Then, for $s<t<0$, linear energy estimates give
\bna
E_0(w_s,D(t))^{1/2}\leq  C \nor{u}{L^5 L^{10}(K_s^{0})}^5\leq C\e^5
\ena
where we have set
\bna
E_0(w_s,D(t))=\frac{1}{2}\int_{D(t,z_0)}\left[|\nabla w_s|^2+|\partial_t w_s|^2\right]dx.
\ena
Triangular inequality yields
\bna
E_0(u,D(t))^{1/2}\leq  E_0(v_s,D(t))^{1/2}+C \e^5.
\ena
Since $v_s$ is solution of the free damped linear equation, we have $E_0(v_s,D(t))\tend{t}{0}0$. This yields the result with $E_0$ instead of $E$. The final result is obtained thanks to Corollary \ref{corL6tendzero}.
\enp
We can now finish the proof of global existence.

Let $\e>0$ to be chosen later. By Corollary \ref{corNRJpasconcentr}, $E(u,D(s))\leq \e$ for $s$ close enough to $0$. By dominated convergence, for any $s<0$ close to $0$, there exists $\eta>0$ so that
\bna
\int_{\varphi(x)\leq t_0-s+\eta} e(u)(s)=E(u,D(s,\eta))\leq 2\e
\ena
where $E(u,D(s,\eta))$ is the spacelike energy at time $s$ of the cone centered at $(t_0=\eta,x_0=0)$ (see Figure \ref{fig.cones}).
For $s$ close enough to $0$ and $s<s'<0$, we apply estimate (\ref{presquedecr}) in this cone. It gives 
\bna
E(u,D(s',\eta))^{1/2}\leq C \left(E(u,D(s,\eta))^{1/2}+|s'-s|+\nor{g}{L^1([s,s'],L^2)}\right)\leq C\e^{1/2}.
\ena
 
In particular, $\nor{u}{L^{\infty} L^{6}(K)}\leq C\e^{1/2}$ on the truncated cone
\bna
K=\left\{(s',x)\left|\varphi(x)\leq \eta-s', s<s'<0 \right.\right\}
\ena
Therefore, choosing $\e$ small enough to apply the same proof as Corollary \ref{corexistL5L10}, we get
\bna
\nor{u}{L^{5} L^{10}(K)}<+\infty.
\ena
Since $x_0=0$ is arbitrary, a compactness argument yields one $s<0$ such that $\nor{u}{L^{5}([s,0[ L^{10}(M))}<+\infty$. Therefore, by Duhamel formula, $(u(t),\partial_t u(t))$ has a limit in $\HutL$ as $t$ tends to $0$ and $u$ can be extended for some small $t>0$ using local existence theory.
\begin{figure}[ht]
$$\ecriture{\includegraphics[width=7cm]{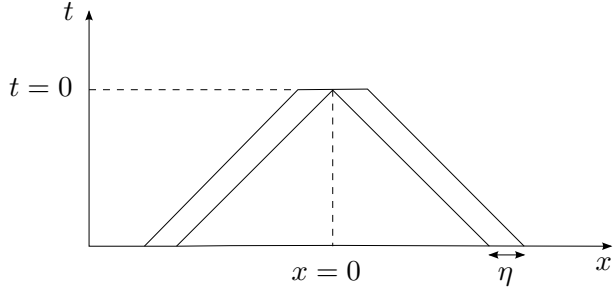}}
{\aat{20}{-2}{$x=0$}\aat{-1}{22}{$t$}\aat{48}{-1}{$x$}\aat{39}{-2}{$\eta$}\aat{-6}{15}{$t=0$}}$$ \caption{The truncated cone $K$}\label{fig.cones}
\end{figure}
\begin{remarque}It is likely that global existence can also be proved using the Kenig-Merle argument \cite{KenigMerleNLW} and the profile decomposition below (assuming only local existence) as is done for example in \cite{KriegSchlagwavemap} for the wave maps.
\end{remarque}
\subsection{Concentration waves}
In this section, we give details about concentrating waves that will be useful in the profile decomposition. The first lemma states that Definition \ref{defconcdata} of concentrating data does not depend on the choice of coordinate patch and cut-off function $\Psi_U$.
\begin{lemme}
\label{lemmedefconc}
Let $[(f,g),\underline{h},\underline{x}] \in \mathcal{E}\times (\R_+^* \times M \times)^{\N}$ such that $lim_n (h_n,x_n)=(0,x_{\infty})$ then, all the sequences defined by formula (\ref{formconcentrdata}) in different coordinates charts and the cut-off function $\Psi_U$ are equivalent, modulo convergence in $\HutL$.
\end{lemme}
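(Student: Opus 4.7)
The plan is to split the claim into two independent statements: (a) for a fixed coordinate chart, the class is independent of the cutoff $\Psi_U$; (b) changing the chart (and transforming the profile via the natural identification $T_{x_\infty}M\simeq\R^3$ given by $d\kappa_{x_\infty}$) yields an equivalent sequence. Combining (a) and (b) gives the lemma.

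For (a), I fix the chart $\kappa$ and take two admissible cutoffs $\Psi,\tilde\Psi$, both equal to $1$ on a common neighborhood $V$ of $x_\infty$. Their difference $\chi:=\Psi-\tilde\Psi$ vanishes on $V$, so the difference of the two sequences has the form $h_n^{-1/2}\chi(x)f((\kappa(x)-\kappa(x_n))/h_n)$ (with the analogous expression and weight $h_n^{-3/2}$ for the second component). Since $x_n\to x_\infty\in V$, for $n$ large the support of $\chi$ sits at distance $\geq\delta/2$ from $\kappa(x_n)$; in the rescaled variable $\xi=(\kappa(x)-\kappa(x_n))/h_n$ this corresponds to $|\xi|\geq\delta/(2h_n)\to\infty$. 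A direct change of variables then bounds the $\HutL$-norm of the difference by the $\dot{H}^1$-mass of $f$ outside a ball of radius $\delta/(2h_n)$ in $\R^3$ (resp. the $L^2$-mass of $g$), and these vanish as $n\to\infty$. The $L^2$ part of the $H^1$-norm of the $f$-component is controlled by H\"older together with the Sobolev embedding $\dot{H}^1\hookrightarrow L^6$, since the rescaled support has volume $O(h_n^{-3})$.

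For (b), I first note that the scaling map $S_n^{\kappa,\Psi}:(f,g)\mapsto$ (sequence defined by formula (\ref{formconcentrdata})) is linear with operator norm from $\mathcal{E}_{x_\infty}$ to $\mathcal{E}$ uniformly bounded in $n$ (from the same change-of-variables computation used in (a)). By density it therefore suffices to prove chart-independence when $(f,g)$ is smooth and compactly supported in $\R^3$. Pick two charts with $\kappa_i(x_\infty)=0$ (by translation), transition $\tau=\kappa_2\circ\kappa_1^{-1}$, and linearization $A:=d\tau_0$. Using (a), I choose a common cutoff $\Psi$ supported in $U_1\cap U_2$. Representing the profile in chart $i$ via $d\kappa_i$ gives $\hat{f}_2=\hat{f}_1\circ A^{-1}$ (and similarly for $g$), so that the two sequences differ, up to already-treated cutoff corrections, by $h_n^{-1/2}\Psi(x)[\hat{f}_1(\xi)-\hat{f}_1(\xi+\tilde\epsilon_n(\xi))]$, where $\xi=(\kappa_1(x)-\kappa_1(x_n))/h_n$ and
\bna
\tilde\epsilon_n(\xi):=A^{-1}\frac{\tau(\kappa_1(x_n)+h_n\xi)-\tau(\kappa_1(x_n))}{h_n}-\xi.
\ena
Taylor expansion of $\tau$ at $0$ yields both $\tilde\epsilon_n(\xi)\to 0$ and $d\tilde\epsilon_n/d\xi\to 0$ uniformly on compact $\xi$-sets (since $d\tau_{\kappa_1(x_n)}\to A$ and $|\xi|$ stays bounded on the support of $\hat{f}_1$). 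Passing to the rescaled variable and using dominated convergence together with the smoothness of $\hat{f}_1$ then gives $\HutL$-convergence of the difference to zero; the $g$-component is handled identically.

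The main obstacle is the chain-rule analysis in (b): naively, derivatives in the unscaled variable produce $1/h_n$ factors that look fatal, but these are absorbed when one works in the rescaled variable, because the relevant quantity $A^{-1}d\tau(\kappa_1(x_n)+h_n\xi)-I$ tends to $0$ on compact $\xi$-sets. The density reduction to smooth, compactly supported profiles is what makes the argument clean, replacing delicate continuity statements in $\dot{H}^1$ by uniform pointwise estimates plus dominated convergence.
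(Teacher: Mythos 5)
Your proof is correct and follows essentially the same route as the paper's: reduce to smooth compactly supported profiles using the uniform boundedness of the dilation map $S_n^{\kappa,\Psi}$ (which you make explicit, and which the paper leaves implicit under ``by approximation''), kill the cutoff discrepancy by pushing the mass of $f$, $g$ out to infinity in the rescaled variable, and handle the chart change by Taylor-expanding the transition map $\tau$ at the limit point so that the rescaled argument $\psi_n(\xi) = \xi + \tilde\epsilon_n(\xi)$ tends to the identity uniformly on compacts, followed by dominated convergence. The paper organizes this slightly differently — it phrases the cutoff reduction as replacing $\Psi_U$, $\Psi_V$ by a single $\Psi$ inside the chart-change computation, and invokes the fundamental theorem of calculus in place of your explicit $\tilde\epsilon_n$, but the analytic content is identical, down to the observation that the apparent $1/h_n$ singularity from the chain rule is harmless because $A^{-1}\,d\tau(\kappa_1(x_n)+h_n\xi)-I\to 0$ on compact sets. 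Your use of H\"older together with $\dot H^1\hookrightarrow L^6$ in step (a) is a mild stylistic variant that the paper skips by already assuming $f$ compactly supported and smooth.
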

\begin{proof}
It is very close to the one of S. Ibrahim \cite{ibrahim2004gon} where the concentrating data are given in geodesic coordinates.  So, let $V_M\approx V$ be another coordinate patch and $\Phi : V \mapsto U$ the associated transition map. Without loss of generality, we can suppose that $x_{\infty}$ is represented by $0$ in $U$ and $V$. We have to prove that the sequences 
$$h_n^{-\frac{1}{2}}\Phi^*\Psi_U(x) (f,\frac{1}{h_n}g)\left(\frac{x-\Phi(x_n)}{h_n}\right)=h_n^{-\frac{1}{2}}\Psi_U(\Phi(x)) (f,\frac{1}{h_n}g)\left(\frac{\Phi(x)-\Phi(x_n)}{h_n}\right)$$ and 
$$h_n^{-1/2}\Psi_V(x) (f\circ D\Phi(0),\frac{1}{h_n}g\circ D\Phi(0))\left(\frac{x-x_n}{h_n}\right)$$ 
are equivalent in the energy space associated to $M$ or $\R^3$ (the volume form and the gradient are not the same but the energies are equivalent). By approximation, we can assume $(f,g)\in (C^{\infty}_0(\R^3))^2$. We make the proof for the $\Hup$ part for $f$, the proof being simpler for $g$.  We remark that the terms coming from derivatives hitting on $\Psi_U(x)$ tend to $0$ in $L^2$. Therefore, we have to prove the convergence to $0$ of
\bna
h_n^{-3}\left\|\Psi_U(\Phi(x))D\Phi(x) \nabla f\left(\frac{\Phi(x)-\Phi(x_n)}{h_n}\right)- \Psi_V(x) D\Phi(0)\nabla f \left(\frac{D\Phi(0)x-D\Phi(0)x_n}{h_n}\right)\right\|^2_{L^2(V)}.
\ena
First, we prove that the cut-off functions $\Psi_U$ and $\Psi_V$ can be replaced by a unique $\Psi$. Let $\delta$ so that $B(0,\delta)\subset V$. Let $\Psi \in C^{\infty}_0(B(0,\delta))$ such that $\Psi \equiv 1$ in a neighborhood of $0$ and has a support included in the set of $x$ such that $\Psi_V(x)=\Psi_U(\Phi(x))=1$, so that $\Psi \Psi_V=\Psi$ and $\Psi (\Psi_U \circ \Phi)=\Psi$.  Then, on the support of $1-\Psi$, we have $\left\|\Phi(x)-\Phi(x_n)\right\|> \varepsilon$ for some $\varepsilon>0$ and some $n$ large enough. Therefore, we have 
\bna
&&h_n^{-3}\left\|(1-\Psi(x))\Psi_U(\Phi(x))D\Phi(x)\nabla f \left(\frac{\Phi(x)-\Phi(x_n)}{h_n}\right)\right\|^2_{L^2(V)}\\ &\leq &C h_n^{-3}\left\|\nabla f \left(\frac{\Phi(x)-\Phi(x_n)}{h_n}\right)\right\|^2_{L^2(\left\|\Phi(x)-\Phi(x_n)\right\|>\varepsilon)}
\ena
which is $0$ for $n$ large enough since $f$ has compact support. Making the same proof for the other term, we are led to prove the convergence to $0$ of
\bnan
&&h_n^{-3}\left\|\Psi(x)D\Phi(x) \nabla f\left(\frac{\Phi(x)-\Phi(x_n)}{h_n}\right)- \Psi(x) D\Phi(0)\nabla f \left(\frac{D\Phi(0)x-D\Phi(0)x_n}{h_n}\right)\right\|^2_{L^2(B(0,\delta))}\nonumber \\
\label{norm1} &\leq &\left\|D\Phi(h_nx+x_n)\nabla f\left(\frac{\Phi(h_nx+x_n)-\Phi(x_n)}{h_n}\right)- D\Phi(0)\nabla f \left(D\Phi(0)x\right)\right\|^2_{L^2(\left\{x:|x_n+h_n x|\leq \delta \right\}}.
\enan
By the fundamental theorem of calculus, there exists $z_n(x) \in [x_n,h_nx+x_n]$ such that\\ $\left|\frac{\Phi(h_nx+x_n)-\Phi(x_n)}{h_n}\right|=\left|D\Phi(z_n)x\right|>C\left|x\right|$ for some uniform $C>0$. As $\nabla f$ is compactly supported, we deduce that for $\left|x\right|$ large enough, the integral is zero. So, we are led with the norm (\ref{norm1}) with $L^2(B(0,C))$ instead of $L^2(\left\{x:|x_n+h_n x|\leq \delta \right\})$. We conclude by dominated convergence.
\end{proof}
Using the previous lemma in geodesic coordinates, we get that our definition of concentrating data is the same as Definition 1.2 of S. Ibrahim \cite{ibrahim2004gon}. 

Remark that for a concentrating data, $x_n-x_{\infty}$ can not be defined invariantly on $T_{x_{\infty}}M$, we can only define the limit of $(x_n-x_{\infty})/h_n$. The change of coordinates must act on $x_n$ as an element of $M$ and not $T_{x_{\infty}}M$ even if it converges to $x_{\infty}$. Yet, the functions $(f,g)$ of a concentrating data "live" on the tangent space. Moreover, the norm in energy of a concentrating data is the one of its data.
\begin{lemme}
\label{lmnormconcwave}
Let $(u_n,v_n)$ a concentrating data associated to $[(\varphi,\psi),\underline{h},\underline{x}]$, then, we have
\bna
\nor{(u_n,v_n)}{\HutL}=\nor{(\varphi,\psi)}{\HutL_{x_{\infty}}}+\petito{1}
\ena
where $\nabla_{x_{\infty}}$ and $L^2(T_{x_\infty}M)$ are computed with respect to the frozen metric.
\end{lemme}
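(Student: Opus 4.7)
The plan is to reduce to a coordinate chart and perform the change of variables $y=(x-x_n)/h_n$, so that the varying metric $g(x)$ becomes $g(x_n+h_ny)$, which converges pointwise to the frozen metric $g(x_\infty)$ on the tangent space. The lemma is invariant under changing the representative of the class, so by Lemma \ref{lemmedefconc} we may assume
\[
(u_n,v_n)(x)=h_n^{-1/2}\Psi_U(x)\left(\varphi,\frac{1}{h_n}\psi\right)\!\!\left(\frac{x-x_n}{h_n}\right)
\]
in a fixed coordinate patch $U$ around $x_\infty$ (identified with $0$), with $\Psi_U\equiv 1$ near $x_\infty$. By density, I would first reduce to $(\varphi,\psi)\in C^\infty_0(\R^3)^2$ — this is the key point that makes the convergence arguments below valid via dominated convergence, and the general case will follow from the uniform continuity of $(\varphi,\psi)\mapsto \|(\varphi,\psi)\|_{\HutL_{x_\infty}}$ and a bound showing that the map $[(\varphi,\psi),\underline{h},\underline{x}]\mapsto (u_n,v_n)$ is continuous uniformly in $n$.

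Next I would compute each of the four pieces of $\|(u_n,v_n)\|_{\HutL}^2$. For the $L^2$-norm of $u_n$, the change of variables yields
\[
\int_M |u_n|^2\,d\mathrm{vol}_g = h_n^{2}\int |\Psi_U(x_n+h_ny)|^2|\varphi(y)|^2\sqrt{\det g(x_n+h_ny)}\,dy,
\]
which tends to $0$ because of the factor $h_n^2$ — this is exactly why only the $\dot H^1$ part of $\varphi$ appears on the right-hand side. For $\|v_n\|_{L^2}^2$, the same change of variables produces no $h_n$ factor and gives in the limit $\int|\psi(y)|^2\sqrt{\det g(x_\infty)}\,dy=\|\psi\|_{L^2(T_{x_\infty}M)}^2$. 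For $\|\nabla u_n\|_{L^2}^2$, distribute the derivative: the terms where the derivative hits $\Psi_U$ come out with an extra factor $h_n^2$ after rescaling and vanish in the limit, while the main term is
\[
\int \Psi_U(x_n+h_ny)^2\, g^{ij}(x_n+h_ny)(\partial_i\varphi)(y)(\partial_j\varphi)(y)\sqrt{\det g(x_n+h_ny)}\,dy,
\]
which converges by dominated convergence to $\int g^{ij}(x_\infty)(\partial_i\varphi)(\partial_j\varphi)\sqrt{\det g(x_\infty)}\,dy = \|\nabla\varphi\|_{L^2(T_{x_\infty}M)}^2$, using $\Psi_U(x_\infty)=1$ and the continuity of the metric coefficients.

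The main (very minor) obstacle is simply the bookkeeping: one must check that the error terms coming from derivatives on the cut-off and from replacing $g(x_n+h_ny)$ by $g(x_\infty)$ are genuinely $\petito{1}$, which is guaranteed by the compact support of $\varphi,\psi$ together with $(h_n,x_n)\to(0,x_\infty)$. Finally, summing the four contributions gives
\[
\|(u_n,v_n)\|_{\HutL}^2 = \|\nabla\varphi\|_{L^2(T_{x_\infty}M)}^2 + \|\psi\|_{L^2(T_{x_\infty}M)}^2 + \petito{1} = \|(\varphi,\psi)\|_{\HutL_{x_\infty}}^2 + \petito{1},
\]
where the norms on $T_{x_\infty}M$ are taken with respect to the frozen metric. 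Taking square roots yields the claim, and the reduction from $C^\infty_0$ to general $(\varphi,\psi)\in\HutL_{x_\infty}$ by density concludes the proof.
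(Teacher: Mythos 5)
Your computation is correct and is exactly one of the two proofs the author alludes to: the paper's remark after the lemma says explicitly that it follows ``by a direct computation in coordinates,'' which is what you carry out. The scaling bookkeeping is right (the change of variable $y=(x-x_n)/h_n$ costs $h_n^3$, which combines with the prefactors $h_n^{-1/2}$ and $h_n^{-3/2}$ so that $\|u_n\|_{L^2}^2=O(h_n^2)$, $\|v_n\|_{L^2}^2$ and the main $\dot H^1$ term converge by dominated convergence, and the cut-off derivatives contribute another $O(h_n^2)$), and the density reduction at the start and end closes the argument. A trivial slip: you announce ``four pieces,'' but $\|(u_n,v_n)\|_{\HutL}^2$ decomposes into three ($\|u_n\|_{L^2}^2$, $\|\nabla u_n\|_{L^2}^2$, $\|v_n\|_{L^2}^2$), which is in fact what you compute, so this is only cosmetic. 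The other route indicated by the paper is to couple Lemma \ref{lmconcweak} (which gives $D_{h_n}(u_n,v_n)\rightharpoonup(\varphi,\psi)$) with the equivalent characterization in Lemma \ref{defweaktrackequiv}, tested against $(u_n,v_n)$ itself; that gives the $\dot H^1$ and $L^2$ limits abstractly, with $\|u_n\|_{L^2}\to 0$ supplied separately (e.g.\ by Rellich, since $u_n\rightharpoonup 0$ in $H^1(M)$). Your version is the more self-contained of the two and is perfectly acceptable.
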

The proof is a direct consequence of Lemma \ref{defweaktrackequiv} and \ref{lmconcweak} below or by a direct computation in coordinates.

The next definition is the tool that will be used to "track" the concentrations. 
\begin{definition}
\label{defweaktrack}
Let $x_{\infty} \in M$ and $(f,g)\in \HutL_{x_{\infty}}$. Given $[(f,g),\underline{h},\underline{x}] \in \HutL_{x_{\infty}}\times (\R_+^* \times M )^{\N}$ such that $lim_n (h_n,x_n)=(0,x_{\infty})$. Let $(f_n,g_n)$ be a sequence bounded in $\HutL$, we set
\bna
D_{h_n} (f_n,g_n)\rightharpoonup (f,g)
\ena
if in some coordinate patch $U_M\approx U\subset \R^d$ containing $x_{\infty}$ and for some $\Psi_U\in C^{\infty}_0(U)$ such that $\Psi_U(x)=1$ in a neighborhood of $x_{\infty}$, we have
\bna
h_n^{\frac{1}{2}} (\Psi_Uf_n,h_n\Psi_Ug_n)\left(x_n+h_nx\right) \rightharpoonup (f,g)\quad \textnormal{ weakly in }\HutL_{x_{\infty}}
\ena
where we have identified $\Psi_U(f_n,g_n)$ with its representation on $T_{x_\infty}M$ in the local trivialisation.

If this holds for one $(U,\Psi_U)$, it holds for any other coordinate chart with the induced transition map.

We denote $D_{h_n}^1 f_n\rightharpoonup f$ if we only consider the first part concerning $\dot{H}^1$ and $D_{h_n}^2 g_n\rightharpoonup g$ for the $L^2$ part convergence.
\end{definition}
Of course, this definition depends on the core of concentration $\underline{h}$ and $\underline{x}$. In the rest of the paper, the rate $\underline{h}$ and $\underline{x}$ will always be implicit. When several rate of concentration $[\underline{h}^{(j)},\underline{x}^{(j)}]$, $j\in \N$, are used in a proof, we use the notation $D_h^{(j)}$ to distinguish them.
  
The fact that this definition is independent of the choice of a coordinate chart can be seen with the following lemma which will also be useful afterward.
\begin{lemme}
\label{defweaktrackequiv}
$D_{h_n} (f_n,g_n)\rightharpoonup (f,g)$ is equivalent to 
\bna
\int_M \nabla_M f_n\cdot \nabla_M u_n&\tend{n}{\infty} &\int_{T_{x_{\infty}}M} \nabla_{x_{\infty}} f \cdot \nabla_{x_{\infty}} \varphi\\
\int_M g_n  v_n&\tend{n}{\infty} &\int_{T_{x_{\infty}}M} g \psi
\ena
where $(u_n,v_n)$ is any concentrating data associated with $[(\varphi,\psi),\underline{h},\underline{x}]$.
\end{lemme}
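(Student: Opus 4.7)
The strategy is to reduce both sides of the equivalence to a single coordinate chart $U$ containing $x_{\infty}$ together with a fixed cut-off $\Psi_U \equiv 1$ near $x_{\infty}$. Both sides are chart-invariant: the right-hand side by Lemma \ref{lemmedefconc}, the left-hand side by the independence statement in Definition \ref{defweaktrack}. In such a chart the pairings to control read
\[
\int_U g^{ij}(x)\,\partial_i f_n\,\partial_j u_n \,\sqrt{|g|}(x)\,dx, \qquad \int_U g_n\, v_n\, \sqrt{|g|}(x)\,dx,
\]
where $g^{ij}$ and $\sqrt{|g|}$ are the metric and volume factors.

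For the direct implication, assume $D_{h_n}(f_n,g_n) \rightharpoonup (f,g)$ and fix a representative $(u_n,v_n)$ of the concentrating data with profile $(\varphi,\psi) \in \mathcal{E}_{x_\infty}$. By density, first take $(\varphi,\psi) \in (C^\infty_0(\R^3))^2$, so that $u_n(x) = h_n^{-1/2}\Psi_U(x)\varphi((x-x_n)/h_n) + \petito{1}_{H^1}$, and similarly for $v_n$. Substitute and change variables $y = (x-x_n)/h_n$: the terms where the derivative lands on $\Psi_U(x_n + h_n y)$ pick up an extra factor $h_n$ and vanish in $L^2$, leaving the leading contribution
\[
\int_{\R^3} g^{ij}(x_n+h_n y)\,\partial_i\bigl(h_n^{1/2}\Psi_U f_n\bigr)(x_n + h_n y)\,\partial_j \varphi(y)\,\sqrt{|g|}(x_n+h_n y)\,dy.
\]
Since the metric coefficients converge uniformly to their values at $x_{\infty}$ on the compact support of $\nabla \varphi$, and the rescaled sequence converges weakly to $f$ in $\dot{H}^1(T_{x_\infty}M)$ by assumption, the limit equals the frozen-metric pairing $\int_{T_{x_\infty}M} \nabla_{x_\infty} f \cdot \nabla_{x_\infty} \varphi$. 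The $L^2$ part is analogous and strictly simpler. A density argument, using the uniform $\mathcal{E}$-bound of $(f_n, g_n)$ and the isometric character of the rescaling, removes the smoothness hypothesis on $(\varphi,\psi)$.

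For the converse, observe first that an explicit change of variables shows that $h_n^{1/2}(\Psi_U f_n,\, h_n \Psi_U g_n)(x_n + h_n\cdot)$ is bounded in $\mathcal{E}_{x_\infty}$, the metric and Jacobian factors converging uniformly to constants. Extract from any subsequence a further weakly convergent subsequence with limit $(\tilde f, \tilde g) \in \mathcal{E}_{x_\infty}$. Applying the already-proved direct implication with $(\tilde f, \tilde g)$ in place of $(f,g)$, and comparing with the hypothesized limits of $\int_M \nabla_M f_n \cdot \nabla_M u_n$ and $\int_M g_n v_n$ against every concentrating data with smooth profile, yields $\int \nabla_{x_\infty}(\tilde f - f)\cdot \nabla_{x_\infty}\varphi = 0$ and $\int(\tilde g - g)\psi = 0$ for all $(\varphi, \psi) \in (C^\infty_0(\R^3))^2$. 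Hence $(\tilde f, \tilde g) = (f,g)$, and uniqueness of the subsequential weak limit promotes this to convergence of the whole sequence.

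The only real technical point is bookkeeping: checking that the terms produced when derivatives fall on $\Psi_U$ are $\petito{1}$ in $L^2$ and that $g^{ij}(x_n + h_n y)\sqrt{|g|}(x_n+h_n y)$ converges uniformly on compact sets to its value at $x_\infty$. Both follow from $\Psi_U \equiv 1$ near $x_{\infty}$ and the smoothness of the metric, and give the result once combined with the isometric scaling of the rescaled norms.
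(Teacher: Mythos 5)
Your proof is correct, and the computational core of the direct implication (substitute the concentrating data, change variables $y=(x-x_n)/h_n$, discard the $\nabla\Psi_U$ terms as $\grando{h_n}$ in $L^2$, and use uniform convergence of $g^{ij}$ and $\sqrt{|g|}$ on the compact support of $\nabla\varphi$) is exactly the same computation as in the paper. Where you diverge is in logical organization: the paper does not split the equivalence into two implications at all, but rather shows by that same change-of-variables chain that the two pairings agree up to $\petito{1}$ unconditionally (as sequences of numbers, without assuming either converges), so the equivalence of their limits is immediate. Your converse is therefore correct but uses a strictly unnecessary detour through weak compactness and uniqueness of subsequential limits of the rescaled sequence; the paper's $\petito{1}$-identity makes that argument superfluous. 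The only thing to be careful about in your version is that you need the rescaled sequence to be bounded in $\HutL_{x_\infty}$ before extracting a weak cluster point — you flag this, and it does follow from the uniform $\HutL$-bound on $(f_n,g_n)$ together with the bounded metric and Jacobian distortion on the chart — and that the limits $\int\nabla_{x_\infty}(\tilde f - f)\cdot\nabla_{x_\infty}\varphi = 0$ for $\varphi\in C^\infty_0$ determine $\tilde f = f$ in $\dot H^1(T_{x_\infty}M)$, which is fine since $C^\infty_0$ is dense there.
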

The $\nabla$ is computed with respect to the metric on $M$ when the integral is over $M$ and with respect to the frozen metric in $x_{\infty}$ when the integral is over $T_{x_{\infty}}M$.
\begin{proof}
We only compute the first term for the $H^1$ norm and assume $\varphi \in C_0^{\infty}(\R^3)$. $d\omega(y)$ denotes the Riemannian volume form at the point $y$, $\cdot_y$ the scalar product at the point $y$ and $\nabla_{h_nx+x_n}= g(h_nx+x_n)^{-1} \nabla $.

We denote $V_h=\frac{V-x_n}{h}$ and $L_{n,V}=h_n^{\frac{1}{2}}\int_{V_h}\nabla_{x_{\infty}} \left[\Psi_Vf_n\left(x_n+h_nx\right)\right]\cdot \nabla_{x_{\infty}} \varphi(x)~d\omega(0)$.
\bna
L_{n,V}&=&h_n^{\frac{1}{2}}\int_{V_h}\nabla_{x_n+h_nx} \left[\Psi_Vf_n\left(x_n+h_nx\right)\right]\cdot_{(x_n+h_nx)} \nabla_{x_n+h_nx} \varphi(x)~d\omega(x_n+h_nx) +\petito{1}\\
&=&h_n^{\frac{3}{2}}\int_{V_h}\Psi_V(x_n+h_nx)(\nabla_{x_n+h_nx} f_n)\left(x_n+h_nx\right)\cdot_{(x_n+h_nx)} \nabla_{x_n+h_nx} \varphi(x)~d\omega(x_n+h_nx) +\petito{1}\\
&=&h_n^{-\frac{3}{2}}\int_{V}\nabla_{y} f_n\left(y\right)\cdot_{y}  \Psi_V(y)\nabla_{y}\varphi\left(\frac{y-x_n}{h_n}\right)~d\omega(y) +\petito{1}\\
&=&h_n^{-\frac{1}{2}}\int_{V}\nabla_{y} f_n\left(y\right)\cdot_{y} \nabla_{y} \left[\Psi_V(y)\varphi\left(\frac{y-x_n}{h_n}\right)\right]~d\omega(y) +\petito{1}=\int_{M}\nabla_{M} f_n\cdot \nabla_{M} u_n~ +\petito{1}.
\ena
Therefore, $L_{n,V}$ tends to $\int \nabla f(x)\cdot \nabla \varphi(x)~d\omega(0)$ if and only if $\int_{M}\nabla_{M} f_n\cdot \nabla_{M} u_n$ has the same limit. 
\end{proof}
An easy consequence of this lemma is the link with concentrating waves. 
\begin{lemme}
\label{lmconcweak}
Let $(f_n,g_n)$ be some concentrating data associated with $[(f,g),\underline{h},\underline{x}]$, then, we have 
\bna
D_{h_n} (f_n,g_n)\rightharpoonup (f,g)
\ena
\end{lemme}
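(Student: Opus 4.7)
I will invoke Lemma \ref{defweaktrackequiv} as an equivalent criterion for the weak convergence $D_{h_n}(f_n,g_n)\rightharpoonup (f,g)$. Accordingly, it suffices to show that for every $(\varphi,\psi)\in\HutL_{x_\infty}$ and every concentrating data $(u_n,v_n)$ associated to $[(\varphi,\psi),\underline{h},\underline{x}]$,
\bna
\int_M \nabla_M f_n \cdot \nabla_M u_n &\tend{n}{\infty}& \int_{T_{x_\infty}M}\nabla_{x_\infty} f\cdot\nabla_{x_\infty}\varphi,\\
\int_M g_n\, v_n &\tend{n}{\infty}& \int_{T_{x_\infty}M} g\,\psi.
\ena
Lemma \ref{lmnormconcwave} gives uniform $\HutL$ bounds on $(f_n,g_n)$ and $(u_n,v_n)$, so by Cauchy--Schwarz the pairings on the left depend continuously on $(f,g),(\varphi,\psi)\in\HutL_{x_\infty}$; a standard density argument therefore reduces the verification to the case $(f,g),(\varphi,\psi)\in (C^\infty_0(\R^3))^2$.

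Fix a coordinate patch $U\ni x_\infty$ with cut-off $\Psi_U$ equal to $1$ near $x_\infty$. Discarding $o(1)_{\HutL}$ perturbations, I may take the explicit representatives
\bna
f_n(x) = h_n^{-1/2}\Psi_U(x)\, f\!\left(\tfrac{x-x_n}{h_n}\right),\qquad u_n(x) = h_n^{-1/2}\Psi_U(x)\, \varphi\!\left(\tfrac{x-x_n}{h_n}\right),
\ena
with analogous formulas for $g_n,v_n$. The cross terms generated by the remainders are controlled by Cauchy--Schwarz together with the uniform $H^1$ (resp.\ $L^2$) bounds, so they vanish in the limit. For the principal term, the change of variables $y=(x-x_n)/h_n$, performed exactly as in the proof of Lemma \ref{defweaktrackequiv}, produces
\bna
\int_{\R^3}\Psi_U(x_n+h_ny)^2\,\nabla f(y)\cdot_{x_n+h_ny}\!\nabla\varphi(y)\; d\omega(x_n+h_ny).
\ena
Because $f$ and $\varphi$ are compactly supported, $\Psi_U(x_n+h_ny)\to 1$, the scalar product $\cdot_{x_n+h_ny}$ freezes to $\cdot_{x_\infty}$ and the volume form tends to the Euclidean one attached to $x_\infty$; dominated convergence then delivers the desired limit $\int_{T_{x_\infty}M}\nabla_{x_\infty}f\cdot\nabla_{x_\infty}\varphi$. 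The pairing $\int_M g_n v_n$ is handled in exactly the same way, and more simply since no derivatives are involved.

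The only source of delicacy is the treatment of the Riemannian metric through the rescaling, but this bookkeeping is precisely what has already been carried out in the proof of Lemma \ref{defweaktrackequiv}: in effect, the present statement is the reverse reading of that computation, with $(f_n,g_n)$ now playing the role of a fixed concentrating data and $(u_n,v_n)$ the test sequence. The principal work has therefore already been done upstream.
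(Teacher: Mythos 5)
Your argument is correct and follows essentially the same route as the paper: verify the pairing criterion of Lemma \ref{defweaktrackequiv} by substituting the explicit representatives from (\ref{formconcentrdata}), changing variables $y=(x-x_n)/h_n$, and passing to the limit by dominated convergence (the paper simply picks geodesic coordinates so that the frozen metric is the identity at $x_\infty$, which is a convenience rather than a necessity). One logical point to tidy: you invoke Lemma \ref{lmnormconcwave} for the uniform $\HutL$ bound on concentrating data, but the paper derives Lemma \ref{lmnormconcwave} as a \emph{consequence} of the present Lemma \ref{lmconcweak}, so that citation is formally circular --- the bound you need is of course immediate from the explicit formula (\ref{formconcentrdata}) by rescaling (the ``direct computation in coordinates'' alternative the paper itself mentions), so invoke that instead.
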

\bnp Lemma \ref{lemmedefconc} permits to work in geodesic coordinates so that the metric $g$ is the identity at the point $x_{\infty}$. In this chart, we have $f_n\left(x_n+h_nx\right)=\Psi_U(x_n+h_nx)h_n^{-\frac{1}{2}}f$.  So, the computation of Lemma \ref{defweaktrackequiv} gives $\int \nabla_{\infty} f\cdot \nabla_{\infty}\varphi d\omega(0)=\int_{M}\nabla_{M} f_n\cdot \nabla_{M} u_n~ +\petito{1}$ which gives the result.
\enp
We conclude this subsection by a definition of orthogonality that will discriminate concentrating data.
\begin{definition}
\label{deforthog}
We say that two sequences $[\underline{h}^{(1)},\underline{x}^{(1)},\underline{t}^{(1)}] $ and $[\underline{h}^{(2)},\underline{x}^{(2)},\underline{t}^{(2)}] $ are orthogonal if either 
\begin{itemize}
\item $\log \left|\frac{h_n^{(1)}}{h_n^{(2)}}\right| \tend{n}{\infty} +\infty $
\item $x_{\infty}^{(1)} \neq x_{\infty}^{(2)}$
\item $h_n^{(1)}=h_n^{(1)}=h$ and $x_{\infty}^{(1)} = x_{\infty}^{(2)}=x_{\infty}$ and in some coordinate chart around $x_{\infty}$, we have
\bna
\frac{\left|t_h^{(1)}-t_h^{(2)}\right|}{h}+\frac{\left|x_h^{(1)}-x_h^{(2)}\right|}{h} \tend{h}{0}+\infty &
\ena
\end{itemize}

We note $[\underline{h}^{(1)},\underline{x}^{(1)},\underline{t}^{(1)}] \perp [\underline{h}^{(2)},\underline{x}^{(2)},\underline{t}^{(2)}]$ and $(\underline{x}^{(1)},\underline{t}^{(1)}) \perp_h (\underline{x}^{(2)},\underline{t}^{(2)})$ if $\underline{h}^{(1)}=\underline{h}^{(2)}=h$.
\end{definition}
This definition does not depend on the coordinate chart. This can be seen because we have the estimate $\frac{1}{C}\left|x_h^{(1)}-x_h^{(2)}\right|\leq \left|\Phi(x_h^{(1)})-\Phi(x_h^{(2)})\right|\leq C\left|x_h^{(1)}-x_h^{(2)}\right|$ if $\Phi$ is the transition map.
\subsection{Scales}
In this subsection, we precise a few facts that will be useful in the first part of the proof of linear profile decomposition which consists of the extration of the scales of oscillation $h_n^j$.

On the Hilbert space $\HutL=\Hu(M) \times L^2(M)$, we define the self-adjoint operator $A_M$ by : 
\bna
&&D(A_M)=H^2_M\times H^1_M\\
&&A_M(u,v)=((-\Delta_M)^{1/2}v,(-\Delta_M)^{1/2} u)
\ena
We define similarly $A_{\R^d}$ with the flat laplacian. We denote $A_{\R^d,N}$ the obvious operator on $(\Hu(\R^d) \times L^2(\R^d))^N$ obtained by applying $A_{\R^d}$ on each "coordinate". 

The following definition is taken from Gallagher-Gérard\cite{PGGall2001}.
\begin{definition}
Let $A$ be a selfadjoint (unbounded) operator on a Hilbert space $H$. Let $(h_n)$ a sequence of positive numbers converging to $0$. A bounded sequence $(u_n)$ in $H$ is said $(h_n)$-oscillatory with respect to $A$ if
\bnan
\label{oscill}
\limsu{n}{\infty} \nor{1_{|A|\geq \frac{R}{h_n}}u_n}{H}\tend{R}{\infty} 0.
\enan
$(u_n)$ is said stricly $(h_n)$-oscillatory with respect to $A$ if it satisfies (\ref{oscill}) and 
\bna
\limsu{n}{\infty} \nor{1_{|A|\leq \frac{\e}{h_n}}u_n}{H}\tend{\e}{0} 0.
\ena
At the contrary, $(u_n)$ is said $(h_n)$-singular with respect to $A$ if we have 
\bna
\nor{1_{\frac{a}{h_n}|A|\leq \frac{b}{h_n}}u_n}{H}\tend{n}{\infty} 0 \quad \textnormal{ for all } 0<a<b.
\ena
\end{definition}
Remark that $1_{|x|\leq 1}$ can easily be replaced by a well chosen function $\varphi \in C_0^{\infty}(\R)$. Moreover, if a sequence $(u_n)$ is stricly $(h_n)$-oscillatory while a second sequence $(v_n)$ is $(h_n)$-singular, then we have the interesting property that $\left\langle u_n,v_n\right\rangle_{H}\tend{n}{\infty}0$. 
\begin{prop}
\label{proposcill}
Let $M=\cup_{i=1}^N U_i$ a finite covering of $M$ with some associated local coordinate patch $\Phi_i : U_i \rightarrow V_i \subset \R^3$.
Let $1=\sum_i \Psi_i$ be an associated partition of the unity of $M$ with $\Psi_i \in C^{\infty}_0(U_i)$. Let $(u_n,v_n)$ a bounded sequence in the $M$ energy space and $h_n$ a sequence converging to $0$. Then $(u_n,v_n)$ is (stricly) $(h_n)$-oscillatory with respect to $A_M$, if and only if all the $\Phi_{i*}\Psi_i (u_n,v_n)$ are (strictly) $(h_n)$-oscillatory with respect to $A_{\R^d}$.
\end{prop}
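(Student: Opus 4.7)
The plan is to reduce the proposition to a purely local semiclassical statement comparing $\chi(h_n\sqrt{-\Delta_M})$ with $\chi(h_n\sqrt{-\Delta_{\R^3}})$ in a chart, for $\chi\in C_0^\infty(\R)$. Three successive reductions accomplish this. First, since $A_M^2$ acts as the diagonal operator $(-\Delta_M,-\Delta_M)$ on $H^1\times L^2$, functional calculus of $|A_M|$ decouples into functional calculus of $\sqrt{-\Delta_M}$ on each factor, so $(h_n)$-oscillation of $(u_n,v_n)$ reduces to $(h_n)$-oscillation of $u_n$ in $H^1(M)$ and of $v_n$ in $L^2(M)$ separately (and likewise on $\R^3$). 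Second, writing $\ind_{\{|\lambda|\geq R\}}=1-\psi(\lambda/R)$ with $\psi\in C_0^\infty(\R)$ equal to $1$ near $0$, a standard approximation by smooth cutoffs lets one replace indicator functions by smooth functions of $h_n\sqrt{-\Delta_M}$ in the definition of (strict) $(h_n)$-oscillation. Third, since the partition $1=\sum_i\Psi_i$ is finite and multiplication by $\Psi_i$ is bounded on $H^1(M)$ and $L^2(M)$, the sequence $u_n$ is (strictly) $(h_n)$-oscillatory iff each $\Psi_i u_n$ is.

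After these reductions, the proposition follows from the key semiclassical comparison: for any $\chi\in C_0^\infty(\R)$ and any cutoff $\widetilde\Psi_i\in C_0^\infty(U_i)$ equal to $1$ on a neighborhood of $\mathrm{supp}\,\Psi_i$,
\bna
\nor{\widetilde\Psi_i \chi(h_n\sqrt{-\Delta_M}) \Psi_i - \widetilde\Psi_i \chi(h_n\sqrt{-\Delta_{\R^3}}) \Psi_i}{\mathcal{L}(L^2)} = O(h_n),
\ena
and similarly as an operator on $H^1$; here the first operator is read in the chart via $\Phi_i$. Both are semiclassical pseudodifferential operators of order zero whose principal symbols in the chart are $\chi(|\xi|_{g_i(x)})$ and $\chi(|\xi|)$, and the spatial cutoffs $\widetilde\Psi_i,\Psi_i$ localize the computation to a relatively compact piece where the two calculi can be compared modulo a subprincipal error of order $h_n$. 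To make this rigorous, I would use either the Helffer--Sj\"ostrand formula, expressing $\chi(h_n\sqrt{-\Delta})$ as a contour integral of the resolvent and comparing local parametrices for $(z^2+h_n^2\Delta_M)^{-1}$ and $(z^2+h_n^2\Delta_{\R^3})^{-1}$ via a Neumann--series argument, or the wave-group representation $\chi(h\sqrt{-\Delta})=(2\pi)^{-1}\int\hat\chi(t)\,e^{ith\sqrt{-\Delta}}\,dt$ combined with finite speed of propagation, which confines the Schwartz kernel within distance $O(h)$ of the diagonal so that only the metric near a given point enters.

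Applied with $\chi=1-\psi(\cdot/R)$ and with $\chi=\psi(\cdot/\e)$, the comparison yields
\bna
\nor{(1-\psi(h_n\sqrt{-\Delta_M}/R))\Psi_i u_n}{H^s(M)} = \nor{(1-\psi(h_n\sqrt{-\Delta_{\R^3}}/R))\Phi_{i*}\Psi_i u_n}{H^s(\R^3)} + O(h_n),
\ena
for $s\in\{0,1\}$, and an analogous identity for $\psi(h_n\sqrt{-\Delta_M}/\e)$. Taking $\limsup_n$ followed by $R\to\infty$ (respectively $\e\to 0$) gives both directions of the equivalence, in both the oscillatory and strictly oscillatory cases.

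The main obstacle is the semiclassical comparison estimate of the second paragraph; the other steps are routine. The comparison demands either a careful use of semiclassical pseudodifferential calculus or of finite propagation speed, uniform in the sequence $h_n\to 0$ and in the cutoffs. A statement of this type is used essentially without comment in Gallagher--G\'erard \cite{PGGall2001} in the exterior-domain setting, and the same techniques transfer here thanks to the smoothness of the metric and the finiteness of the atlas.
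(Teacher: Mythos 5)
Your overall strategy is genuinely different from the paper's: you aim at a direct semiclassical comparison of $\chi(h_n\sqrt{-\Delta_M})$ with $\chi(h_n\sqrt{-\Delta_{\R^3}})$ in charts, whereas the paper invokes the abstract transport lemma of Gallagher--G\'erard (their Proposition 2.2.3, reproduced as Proposition~\ref{propPGGalloscill}) applied to the bounded maps $\Lambda(u,v)=(\Phi_{i*}\Psi_i(u,v))_i$ and its one--sided inverse $\Gamma$, for which one only has to check elliptic Sobolev estimates of the form $\nor{A_2\Lambda u}{}\lesssim \nor{A_1u}{}+\nor{u}{}$ and $\nor{A_1\Lambda^*v}{}\lesssim \nor{A_2v}{}+\nor{v}{}$. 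The paper's route thus bypasses any microlocal or semiclassical machinery.

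There is, however, a genuine gap in your argument. The central estimate you state,
\bna
\nor{\widetilde\Psi_i \,\chi(h_n\sqrt{-\Delta_M})\,\Psi_i - \widetilde\Psi_i \,\chi(h_n\sqrt{-\Delta_{\R^3}})\,\Psi_i}{\mathcal{L}(L^2)}=O(h_n),
\ena
is false. As you correctly observe, in the chart the two operators are semiclassical $\Psi$DOs of order $0$ with principal symbols $\chi(|\xi|_{g_i(x)})$ and $\chi(|\xi|)$. These symbols coincide only when the chart is isometric; otherwise the difference is a semiclassical operator of order $0$ with a \emph{nonvanishing} principal symbol $\widetilde\Psi_i(x)\bigl(\chi(|\xi|_{g_i(x)})-\chi(|\xi|)\bigr)\Psi_i(x)$, hence has $\mathcal{L}(L^2)$ norm bounded below away from $0$ uniformly in $h_n$. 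A one--dimensional instance already shows this: stretch the natural chart of $S^1$ by a factor $2$; then $-\Delta_M=4D_{\theta'}^2$ and $\chi(h\sqrt{-\Delta_M})$ has symbol $\chi(2h|\xi'|)\neq\chi(h|\xi'|)$. Calling the difference a ``subprincipal error of order $h_n$'' is therefore not correct.

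Your approach can nonetheless be repaired. What is actually true and sufficient is a disjoint--support statement: on the relatively compact chart one has uniform bounds $C^{-1}|\xi|\le|\xi|_{g_i(x)}\le C|\xi|$, so if $\chi$ is supported in $\{|\lambda|\ge R\}$ and $\chi_0$ in $\{|\lambda|\le R/(2C)\}$, then the symbols $\chi(|\xi|_{g_i(x)})$ and $\chi_0(|\xi|)$ have disjoint supports and
\bna
\nor{\widetilde\Psi_i \,\chi(h_n\sqrt{-\Delta_M})\,\chi_0(h_n\sqrt{-\Delta_{\R^3}})\,\Psi_i}{\mathcal{L}(L^2)}=O(h_n^\infty),
\ena
(and symmetrically), which one establishes exactly by the Helffer--Sj\"ostrand or wave--group arguments you invoke. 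This yields estimates where the scale $R$ on the manifold side is compared to a scale $R'=R/C$ on the flat side. Since the definition of (strict) $(h_n)$--oscillation takes $\limsup_n$ first and then $R\to\infty$ (resp.\ $\e\to 0$), a fixed multiplicative loss in $R$ is harmless, and the equivalence follows. But the equality with $O(h_n)$ error that you wrote, from which you deduce the conclusion in your last paragraph, must be replaced by this two--scale inequality; as stated your key estimate does not hold.
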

\begin{proof}First, we remark that a sequence is (strictly) $(h_n)$-oscillatory with respect to $A$ if and only if it is (strictly) $(h_n^2)$-oscillatory with respect to $A^2$. So we can replace $A_M$ and $A_{\R^3}$ by $-(\Delta_M,\Delta_M)$ and $-(\Delta_{\R^3},\Delta_{\R^3})$. We apply a proposition taken from \cite{PGGall2001} that makes the link between oscillation with different operators. 
\begin{prop} [Proposition 2.2.3 of \cite{PGGall2001}]
\label{propPGGalloscill}
Let $\Lambda :H_1 \rightarrow H_2$ be a continuous linear map between Hilbert spaces $H_1$, $H_2$. Let $A_1$ be a selfadjoint operator on $H_1$, $A_2$ be a selfadjoint operator on $H_2$. Assume there exists $C>0$ such that $\Lambda(D(A_1))\subset D(A_2)$, $\Lambda^*(D(A_2))\subset D(A_1)$ and for any $u\in D(A_1)$, $v\in D(A_2)$,
\bnan
\label{inegscale1}\nor{A_2 \Lambda u}{}\leq C(\nor{A_1 u}{}+\nor{u}{})\\
\label{inegscale2}\nor{A_1 \Lambda^* v}{}\leq C(\nor{A_2 v}{}+\nor{v}{}).
\enan
If a bounded sequence $(u_n)$ in $H_1$ is (strictly) ($h_n$)-oscillatory with respect to $A_1$, then $(\Lambda u_n)$ is (strictly) ($h_n$)-oscillatory with respect to $A_2$.
\end{prop}
To prove the first implication, we apply the proposition with\\ $\Lambda (u,v)= (\Phi_{1*}\Psi_1 (u,v),\cdots, \Phi_{N*}\Psi_N (u,v))$. We only prove the necessary estimates, the inclusions of domains being a direct consequence of the inequalities and of the density of smooth functions. To simplify the notation, we denote $(u_i,v_i)=\Phi_{i*}\Psi_i (u,v)$. The proof of (\ref{inegscale1}) mainly uses the equivalent definitions of the $H^s$ norm on a manifold. 
\bna
\nor{A_{\R^3}^2(u_i,v_i)}{H^1_{\R^3}\times L^2_{\R^3}}&=&\nor{\Delta_{\R^3} u_i}{\Hu_{\R^3}}+\nor{\Delta_{\R^3} v_i}{L^2_{\R^3}} \lesssim\nor{u_i}{H^3_{\R^3}}+\nor{v_i}{H^2_{\R^3}}\lesssim \nor{u}{H^3_{M}}+\nor{v}{H^2_{M}}\\
&\lesssim & \nor{u}{H^1_{M}}+\nor{\Delta_M u }{H^1_M }+\nor{v}{L^2_M}+\nor{\Delta_M v}{L^2_M}\\
&\lesssim & \nor{A_{M}^2(u,v)}{H^1_M\times L^2_M}+\nor{(u,v)}{H^1_M\times L^2_M}.
\ena 
Let us prove (\ref{inegscale2}) for the duality $H^1\times L^2$ of the scalar product. Let $(f,g)=(f_i,g_i)_{i=1\cdots N}\in (C_0^{\infty}(\R^3)\times C_0^{\infty}(\R^3))^N$ and $(u,v)\in C^{\infty}(M)$. 
\bna
&&( (u,v),A_M^2\Lambda^*(f,g))_{H^1(M)\times L^2(M)}=(\Lambda A_M^2(u,v),(f,g))_{(H^1(\R^3)\times L^2(\R^3))^N}\\
&=&\sum_i (\Phi_{i*}\Psi_i \Delta_M u,f_i)_{H^1_{\R^3}}+\sum_i(\Phi_{i*}\Psi_i \Delta_M v,g_i)_{L^2}\\
&\lesssim &\sum_i \nor{\Phi_{i*}\Psi_i \Delta_M u}{H^{-1}_{\R^3}}\nor{f_i}{H^3_{\R^3}} +\sum_i \nor{\Phi_{i*}\Psi_i \Delta_M v}{H^{-2}_{\R^3}}\nor{g_i}{H^2_{\R^3}} \\
&\lesssim & \nor{u}{H^{1}_{M}}\sum_i \nor{f_i}{H^3_{\R^3}}+ \nor{\Delta_Mv}{H^{-2}_{M}}\sum_i\nor{g_i}{H^2_{\R^3}}\\
& \lesssim &\nor{(u,v)}{H^1_{M}\times L^2_{M}}\left(\sum_i \nor{(\Delta_{\R^3}f_i,\Delta_{\R^3} g_i)} {H^1_{\R^3}\times L^2_{\R^3}}+ \nor{(f_i,g_i)} {H^1_{\R^3}\times L^2_{\R^3}}\right).
\ena
Therefore, we get $\nor{A_M^2\Lambda^*(f,g)}{H^1_M\times L^2_M} \leq C \left(\nor{A^2_{\R^3,N}(f,g)}{(H^1_{\R^3}\times L^2_{\R^3})^N}+\nor{(f,g)}{(H^1_{\R^3}\times L^2_{\R^3})^N}\right)$ and Proposition \ref{propPGGalloscill} implies that (strict) $(h_n)$-oscillation of $(u_n)$ with respect to $A_M$ implies (strict) $(h_n)$-oscillation of $\Lambda u_n$ with respect to $A_{\R^3,N} $.

To prove the other implication, we use a quite similar operator. Denote $\varphi_i$ some other cut-off functions in $C^{\infty}_0(V_i)\subset C^{\infty}_0(\R^3)$ such that $\varphi_i\equiv 1$ on $Supp (\Phi_{i*}\Psi_i)$. We define $\Gamma$ the bounded operator from $(H^1_{\R^3}\times L^2_{\R^3})^N$ to $H^1_M\times L^2_M$ given by 
\bna
\Gamma (f,g)=\sum_i \Phi_{i*}^{-1}\varphi_i (f_i,g_i)
\ena
Then, we have $\Gamma \circ \Lambda =Id$ and we only have to prove that (strict) $(h_n)$-oscillation of $(f_n,g_n)$ with respect to $A_{\R^3,N}$ implies (strict) $(h_n)$-oscillation of $\Gamma (f_n,g_n)$ with respect to $A_M$. The needed estimates are quite similar and we omit them. 
\end{proof}
\begin{remarque}
Another way to prove Proposition \ref{proposcill} would have been to use the pseudodifferential operators $\varphi(h^2\Delta_M)$ as in \cite{Strichartz}.
\end{remarque}
Now, we will prove that the ($h_n$)-oscillation is conserved by the equation, even with a damping term.
\begin{prop}Let $T>0$. \label{propproposcilamorti}
Let $(\varphi_n,\psi_n)$ a bounded sequence of $\HutL$ that is (stricly) ($h_n$)-oscillatory with respect to $A_M$. Let $u_n$ be the solution of
\begin{eqnarray}
\label{eqnamortioscill}
\left\lbrace
\begin{array}{rcl}%argument r=alignement à droite puis center et left
\Box u_n +u_n&=&a(x)\partial_t u_n \quad \textnormal{on}\quad [0,T]\times M\\
(u_n(0),\partial_t u_n(0))&=&(\varphi_n,\psi_n) .
\end{array}
\right.
\end{eqnarray}
Then, $(u_n(t),\partial_t u_n(t))$ are (strictly) ($h_n$)-oscillatory with respect to $A_M$, uniformly on $[0,T]$.\\
At the contrary, if $(\varphi_n,\psi_n)$ is ($h_n$)-singular with respect to $A_M$, $(u_n(t),\partial_t u_n(t))$ is ($h_n$)-singular with respect to $A_M$, uniformly on $[0,T]$.
\end{prop}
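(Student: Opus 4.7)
The plan is to spectrally localize the initial data at frequencies $|A_M|\gtrsim R/h_n$ (respectively $\lesssim\varepsilon/h_n$, respectively $\sim 1/h_n$), transport this localization by the damped flow, and show that the error introduced by the non-commutation with the damping term $a(x)\partial_t$ is negligible in the limit $n\to\infty$.

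Concretely, fix a smooth cutoff $\chi\in C^\infty(\R)$ with $\chi\equiv 0$ on $[0,1]$ and $\chi\equiv 1$ on $[2,+\infty)$, and set $\chi_R(s)=\chi(s/R)$. Since $A_M^2=-\Delta_M\otimes I$ acts diagonally on the two components, any function of $|A_M|$ is just the corresponding function of $\sqrt{-\Delta_M}$ applied componentwise. Define
\bna
(w_n,z_n)(t)=\chi_R\bigl(h_n\sqrt{-\Delta_M}\bigr)(u_n,\partial_t u_n)(t).
\ena
As $\chi_R(h_n\sqrt{-\Delta_M})$ commutes with $\partial_t$ and with $-\Delta_M+1$, applying it to (\ref{eqnamortioscill}) gives
\bna
\Box w_n+w_n-a(x)\partial_t w_n=\bigl[\chi_R(h_n\sqrt{-\Delta_M}),a(x)\bigr]\,\partial_t u_n,\qquad z_n=\partial_t w_n,
\ena
with initial data $(w_n(0),z_n(0))=\chi_R(h_n\sqrt{-\Delta_M})(\varphi_n,\psi_n)$. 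The standard energy estimate for the damped Klein--Gordon equation then yields, for all $t\in[0,T]$,
\bna
\nor{(w_n,\partial_t w_n)(t)}{\mathcal{E}}\leq C_T\Bigl(\nor{\chi_R(h_n\sqrt{-\Delta_M})(\varphi_n,\psi_n)}{\mathcal{E}}+\int_0^T\nor{[\chi_R(h_n\sqrt{-\Delta_M}),a(x)]\partial_s u_n}{L^2}\,ds\Bigr).
\ena

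The first term on the right is controlled by the $(h_n)$-oscillation assumption on the data: its $\limsup_n$ tends to $0$ as $R\to\infty$. The key point is therefore the commutator term. Since $a\in C^\infty(M)$ and $\chi_R$ is a Schwartz function on the spectrum of $\sqrt{-\Delta_M}$ in the relevant range, $\chi_R(h_n\sqrt{-\Delta_M})$ is a semiclassical pseudodifferential operator (of order $0$, with principal symbol $\chi_R(|\xi|_g)$), as explained e.g.\ in the reference to \cite{Strichartz} invoked in the preceding remark. The semiclassical symbolic calculus gives
\bna
\bigl\|[\chi_R(h_n\sqrt{-\Delta_M}),a(x)]\bigr\|_{L^2\to L^2}=O(h_n),
\ena
and $\nor{\partial_s u_n}{L^\infty([0,T],L^2)}$ is bounded uniformly in $n$ by the usual energy estimate for the damped equation. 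Hence the commutator term is $O(Th_n)\to 0$ as $n\to\infty$. Taking first $n\to\infty$ and then $R\to\infty$ proves that $(u_n,\partial_t u_n)$ is $(h_n)$-oscillatory uniformly on $[0,T]$.

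The strictly oscillatory case is treated identically by replacing $\chi_R$ by a cutoff $\tilde\chi_\varepsilon(s)=\tilde\chi(s/\varepsilon)$ supported near $s=0$, and sending $\varepsilon\to 0$ after $n\to\infty$. For the $(h_n)$-singular case, one uses a smooth bump $\theta$ supported in $[a,b]$ and considers $\theta(h_n\sqrt{-\Delta_M})(u_n,\partial_t u_n)$: the same commutator argument shows the $L^2$ operator norm of $[\theta(h_n\sqrt{-\Delta_M}),a(x)]$ is $O(h_n)$, so the propagated spectral mass in $[a/h_n,b/h_n]$ differs from its initial value by $o(1)$, which vanishes by hypothesis. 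The main technical point in all three cases is the semiclassical commutator bound $O(h_n)$; once this is established, the argument is a direct energy estimate for the inhomogeneous damped Klein--Gordon equation.
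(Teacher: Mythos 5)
Your proof is correct and takes essentially the same route as the paper: spectrally localize $(u_n,\partial_t u_n)$ at scale $h_n$, observe that the localized pair solves the same damped equation up to a commutator source $[\chi(h_n\sqrt{-\Delta_M}),a]\partial_t u_n$, bound that commutator by $O(h_n)$ via semiclassical functional calculus (Helffer--Sj\"ostrand / Burq--G\'erard--Tzvetkov), and close with an energy estimate, sending $n\to\infty$ before the spectral parameter. The only differences are cosmetic (the paper cuts with a compactly supported $\chi(R^2h_n^2\Delta)$ applied componentwise rather than $\chi_R(h_n\sqrt{-\Delta_M})$), and your $O(h_n)$ commutator bound, while slightly less sharp than the $O(h_n/R)$ one could extract, is correct and amply sufficient since $n\to\infty$ is taken first.
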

\begin{proof}
Let $\chi \in C^{\infty}_0(\R)$ such that $0\leq \chi(s)\leq 1$ and $\chi(s)=1$ for $|s| \leq 1$. The  ($h_n$)-oscillation (resp strict oscillation) is equivalent to $\limsu{n}{\infty} \nor{(1-\chi)(R^2 h_n^2 \Delta)(u_n,\partial_t u_n)}{\HutL} \tend{R}{\infty}0$\\ (resp $\limsu{n}{\infty} \nor{\chi(\frac{ h_n^2 \Delta}{R^2})(u_n,\partial_t u_n) }{\HutL} \tend{R}{\infty}0$).

$v_n=(1-\chi)(R^2 h_n^2 \Delta) u_n$ is solution of  
\begin{eqnarray}
\left\lbrace
\begin{array}{rcl}%argument r=alignement à droite puis center et left
\Box v_n +v_n&=&a(x)\partial_t v_n -[\chi(R^2 h_n^2 \Delta),a] \partial_t u_n\quad \textnormal{on}\quad [0,T] \times M\\
(v_n(0),\partial_t v_n(0))&=&(1-\chi)(R^2 h_n^2 \Delta)(\varphi_n,\psi_n).
\end{array}
\right.
\end{eqnarray}
and energy estimates give
\bna
\nor{(v_n(t),\partial_t v_n(t))}{\HutL}&\leq &C_T\nor{(1-\chi)(R^2 h_n^2 \Delta)(\varphi_n,\psi_n)}{\HutL}+C_T \nor{[a,\chi(R^2 h_n^2 \Delta)]\partial_t u_n}{L^1([0,t],L^2)}\\
&\leq&C_T\nor{(1-\chi)(R^2 h_n^2 \Delta)(\varphi_n,\psi_n)}{\HutL}+C_{T} R h_n.
\ena
where the last inequality comes from the fact that $\chi(-h^2\Delta)$ is a semiclassical pseudodifferential operator, as proved in Burq, Gérard and Tzvetkov \cite{Strichartz}, Proposition 2.1 using the Helffer-Sjöstrand formula.

Therefore, passing to the limitsup in $n$ and using the oscillation assumption, we get the expected result uniformly in $t$ for $0\leq t \leq T$. The results for strict oscillation and singularity are proved similarly. 
\end{proof}
\begin{prop}
\label{propBinftamorti}
There exists $C_T>0$ such that for every $(\varphi_n,\psi_n)$ bounded sequence of $\HutL$ weakly convergent to $0$, we have the estimate 
\bna
\underset{n\to \infty}{\varlimsup} \nor{\left(u_n,\partial_t u_n\right)}{L^{\infty}([0,T],B^1_{2,\infty}(M)\times B^0_{2,\infty}(M)}) \leq C_T \underset{n\to \infty}{\varlimsup} \nor{\left(\varphi_n,\psi_n\right)}{B^1_{2,\infty}(M)\times B^0_{2,\infty}(M)} 
\ena
where $u_n$ is the solution of (\ref{eqnamortioscill}) . 
\end{prop}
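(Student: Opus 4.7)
\bnp
The approach is to apply dyadic spectral cut-offs to the equation and use the semiclassical pseudodifferential calculus of \cite{Strichartz} (as in the proof of Proposition \ref{propproposcilamorti}), together with compactness for the finitely many low-frequency pieces. Fix $\chi\in C_0^\infty((1/2,4))$ with $\chi\equiv 1$ on $[1,2]$ and set $\chi_k(\lambda)=\chi(\lambda/2^k)$ for $k\in\N$. Let $P_k=\mathbf{1}_{[2^k,2^{k+1}[}(\sqrt{-\Delta_M})$. Since $P_k$ is a spectral projector of $\sqrt{-\Delta_M}$, it is a contraction on $H^s(M)$, and $P_k=P_k\chi_k(\sqrt{-\Delta_M})$, so $\nor{P_kv}{H^s}\leq\nor{\chi_k(\sqrt{-\Delta_M})v}{H^s}$. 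Conversely, since the support of $\chi_k$ meets at most three consecutive dyadic bands, $\nor{\chi_k(\sqrt{-\Delta_M})\varphi_n}{H^1}\lesssim\nor{\varphi_n}{B^1_{2,\infty}(M)}$, and similarly for $\psi_n$. The plan therefore reduces to a uniform-in-$k$ estimate on $\nor{\chi_k(\sqrt{-\Delta_M})(u_n,\partial_tu_n)}{L^\infty([0,T],\mathcal{E})}$.

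Applying $\chi_k(\sqrt{-\Delta_M})$ to equation (\ref{eqnamortioscill}) produces
\bna
\Box(\chi_ku_n)+\chi_ku_n-a(x)\partial_t(\chi_ku_n)=[\chi_k(\sqrt{-\Delta_M}),a]\partial_tu_n,
\ena
and the standard energy estimate (with Gronwall to absorb the zero-order damping-type term) gives
\bna
\nor{(\chi_ku_n,\partial_t\chi_ku_n)}{L^\infty([0,T],\mathcal{E})}\leq C_T\nor{(\chi_k\varphi_n,\chi_k\psi_n)}{\mathcal{E}}+C_T\nor{[\chi_k(\sqrt{-\Delta_M}),a]\partial_tu_n}{L^1([0,T],L^2)}.
\ena
The first term on the right is already $\lesssim\nor{(\varphi_n,\psi_n)}{B^1_{2,\infty}(M)\times B^0_{2,\infty}(M)}$, so the whole proof reduces to controlling the commutator term.

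To do so, fix $\e>0$ and split the range of $k$. For high $k$, viewing $\chi_k(\sqrt{-\Delta_M})=\chi(2^{-k}\sqrt{-\Delta_M})$ as a semiclassical pseudodifferential operator with $h=2^{-k}$ yields, as in the proof of Proposition \ref{propproposcilamorti}, $\nor{[\chi_k(\sqrt{-\Delta_M}),a]}{L^2\to L^2}\lesssim 2^{-k}$; choosing $k_0$ such that $C_T 2^{-k_0}\sup_n\nor{(\varphi_n,\psi_n)}{\mathcal{E}}\leq\e$, the commutator contribution is bounded by $\e$ for all $k\geq k_0$. For the finitely many remaining $k<k_0$, the compactness of $M$ forces $-\Delta_M$ to have discrete spectrum of finite multiplicity, so $\chi_k(\sqrt{-\Delta_M})$ has finite rank and $[\chi_k(\sqrt{-\Delta_M}),a]$ is compact on $L^2(M)$. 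The weak convergence $(\varphi_n,\psi_n)\rightharpoonup 0$ in $\mathcal{E}$, combined with continuity of the linear damped flow, gives $\partial_tu_n(t)\rightharpoonup 0$ in $L^2(M)$ pointwise in $t\in[0,T]$; compactness then turns this into strong convergence of $[\chi_k(\sqrt{-\Delta_M}),a]\partial_tu_n(t)$ to $0$ in $L^2$, and since the integrand is uniformly bounded, dominated convergence yields $\nor{[\chi_k(\sqrt{-\Delta_M}),a]\partial_tu_n}{L^1([0,T],L^2)}\tend{n}{\infty}0$.

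Combining both ranges, taking $\sup_k$, passing to $\varlimsup_n$, and finally letting $\e\to 0$, controls the $\sup_k\nor{P_k(u_n,\partial_tu_n)}{H^1\times L^2}$ part of the Besov norm. The remaining low-frequency contribution $\nor{\mathbf{1}_{[0,1[}(\sqrt{-\Delta_M})(u_n,\partial_tu_n)}{L^2}$ is handled identically with $\mathbf{1}_{[0,1[}(\sqrt{-\Delta_M})$ replacing $\chi_k(\sqrt{-\Delta_M})$, and in fact tends to $0$. The one delicate step is the low-frequency commutator: its smallness rests entirely on the weak convergence $(\varphi_n,\psi_n)\rightharpoonup 0$; without this assumption, the argument would only produce the inequality up to an additive uniform term $\sup_n\nor{(\varphi_n,\psi_n)}{\mathcal{E}}$, which is why weak convergence appears in the hypotheses of the statement.
\enp
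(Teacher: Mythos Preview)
Your proof is correct and follows essentially the same route as the paper: dyadic spectral cut-offs applied to the equation, the semiclassical commutator bound $\nor{[\chi_k,a]}{L^2\to L^2}\lesssim 2^{-k}$ for high frequencies, and compactness for the finitely many low frequencies. The only cosmetic difference is the treatment of the low-frequency commutator: the paper notes that $[a,\chi(-2^{-2k}\Delta)]$ maps $L^2$ into $H^1$ and invokes the Aubin--Lions lemma, whereas you observe directly that $\chi_k(\sqrt{-\Delta_M})$ has finite rank and use pointwise weak convergence plus dominated convergence; both arguments exploit the same compactness and are interchangeable here.
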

\bnp Without loss of generality and since the equation is linear, we can assume that $\nor{(\varphi_n,\psi_n)}{\HutL}$ is bounded by $1$. Let $\e>0$. Let $\chi_0, \chi \in C^{\infty}_0(\R)$ so that $1=\chi_0+\sum_{k=1}^{\infty} \chi(2^{-2k}x)$.
We denote $u_n^k=\chi(2^{-2k}\Delta)u_n$. Using the same estimates as in the previous lemma, we get
\bna
\nor{(u_n^k(t),\partial_t u_n^k(t))}{\HutL}\leq C_T \nor{(u_n^k(0),\partial_t u_n^k(0))}{\HutL}+C_{T} 2^{-k}.
\ena
Take $K$ large enough so that $C_{T} 2^{-k}\leq \e$ for $k\geq K$ so that we have .
\bnan
\label{ineg1Bi}
\nor{(u_n^k(t),\partial_t u_n^k(t))}{\HutL}\leq C_T \nor{\left(\varphi_n,\psi_n\right)}{B^1_{2,\infty}(M)\times B^0_{2,\infty}(M)}+\e.
\enan
Then, for $k<K$, we use again some energy estimates for the equation verified by $u_n^k$, we get 
\bna
\nor{(u_n^k(t),\partial_t u_n^k(t))}{\HutL}\leq C_T \nor{(u_n^k(0),\partial_t u_n^k(0))}{\HutL}+C_T \nor{[a,\chi(-2^{-2k} \Delta)]\partial_t u_n}{L^1([0,T],L^2)}.
\ena
Yet, for fixed $k$, $[a,\chi(-2^{-2k} \Delta)]$ is an operator from $L^2$ into $H^1$ (for instance) and we conclude by the Aubin-Lions Lemma that for fixed $k\leq K$
\bnan
\label{ineg2Bi}
\underset{n\to \infty}{\varlimsup} \nor{(u_n^k(t),\partial_t u_n^k(t))}{\HutL}\leq C_T \underset{n\to \infty}{\varlimsup} \nor{(u_n^k(0),\partial_t u_n^k(0))}{\HutL}.
\enan
We get the expected result with an additional $\e$ by combining (\ref{ineg1Bi}) and (\ref{ineg2Bi}). 
\enp
We end this subsection by two lemma that will be useful in the nonlinear decomposition. The first one is lemma 3.2 of \cite{PGGall2001}.
\begin{lemme}
\label{lmorthL3PGGall}
Let $h_n$ and $\tilde{h_n}$ be two orhogonal scales, and let $(f_n)$ and $\tilde{f_n}$ be two sequences such that such $\nabla f_n$ (resp $\nabla \tilde{f}_n)$) is strictly ($h_n$) (resp $\tilde{h}_n$)-oscillatory with respect to $\Delta_{\R^3}$. Then, we have : 
\bna
\limsu{n}{\infty}\nor{f_n\tilde{f}_n}{L^3(\R^3)}=0
\ena
\end{lemme}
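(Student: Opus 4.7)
The plan is to reduce the estimate to a frequency-localized main piece via Littlewood--Paley truncation, controlling the leftovers by Sobolev embedding and the strict oscillation hypothesis, and then to exploit the orthogonality of the scales $h_n$, $\tilde h_n$ through a pair of Bernstein inequalities that combine into a factor tending to $0$. By the definition of orthogonal scales we may assume, passing to a subsequence and swapping the roles if needed, that $h_n/\tilde h_n\to 0$.

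First I would fix $\chi\in C_0^\infty(\R)$ with $\chi\equiv 1$ on $[-1,1]$ and supported in $[-2,2]$, and introduce the dyadic annular cut-offs
\[
P_n^R:=\chi(R^{-2}h_n^2\Delta_{\R^3})-\chi(R^2 h_n^2\Delta_{\R^3}),\qquad \tilde P_n^R:=\chi(R^{-2}\tilde h_n^2\Delta_{\R^3})-\chi(R^2\tilde h_n^2\Delta_{\R^3}).
\]
Write $f_n=P_n^R f_n+\rho_n^R$ and $\tilde f_n=\tilde P_n^R\tilde f_n+\tilde\rho_n^R$. Strict $(h_n)$-oscillation of $\nabla f_n$ with respect to $\Delta_{\R^3}$ gives $\limsup_n\|\nabla\rho_n^R\|_{L^2}\to 0$ as $R\to\infty$, and symmetrically for $\tilde\rho_n^R$. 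Expanding $f_n\tilde f_n$ yields one main term $P_n^R f_n\cdot\tilde P_n^R\tilde f_n$ plus three error terms, each containing at least one remainder. All error terms are handled by Hölder $L^3\le L^6\cdot L^6$ together with the Sobolev embedding $\dot H^1(\R^3)\hookrightarrow L^6(\R^3)$; typically
\[
\|\rho_n^R\,\tilde f_n\|_{L^3}\le \|\rho_n^R\|_{L^6}\|\tilde f_n\|_{L^6}\lesssim \|\nabla\rho_n^R\|_{L^2}\|\nabla\tilde f_n\|_{L^2},
\]
which vanishes when first $n\to\infty$ and then $R\to\infty$.

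For the main term I would apply Hölder in the asymmetric form $L^3\le L^3\cdot L^\infty$ and then Bernstein on the frequency annulus. Since $P_n^R f_n$ is Fourier-supported in $\{|\xi|\lesssim R/h_n\}$ with a lower cut-off at $|\xi|\gtrsim 1/(Rh_n)$, Bernstein plus the Poincaré-type bound $\|P_n^R f_n\|_{L^2}\lesssim Rh_n\|\nabla f_n\|_{L^2}$ gives
\[
\|P_n^R f_n\|_{L^3}\lesssim (R/h_n)^{1/2}\|P_n^R f_n\|_{L^2}\lesssim R^{3/2}h_n^{1/2},
\]
and analogously
\[
\|\tilde P_n^R\tilde f_n\|_{L^\infty}\lesssim (R/\tilde h_n)^{3/2}\|\tilde P_n^R\tilde f_n\|_{L^2}\lesssim R^{5/2}\tilde h_n^{-1/2}.
\]
Multiplying gives $\|P_n^R f_n\cdot\tilde P_n^R\tilde f_n\|_{L^3}\lesssim R^4(h_n/\tilde h_n)^{1/2}$, which tends to $0$ as $n\to\infty$ for each fixed $R$ by orthogonality. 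Putting the two steps together we obtain $\limsup_n\|f_n\tilde f_n\|_{L^3}\le o_R(1)$, and sending $R\to\infty$ finishes the proof.

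The only place where real care is needed is the combination of the two Bernstein inequalities in Step~3: one must pair the $L^3$ loss at the high scale against the $L^\infty$ gain at the low scale so that the net power of $h_n/\tilde h_n$ is strictly positive. Using the symmetric splitting $L^6\cdot L^6$ there would only give boundedness, so the asymmetric Hölder exponent is essential. Everything else is a fairly standard Littlewood--Paley/Sobolev bookkeeping argument, as in Lemma~3.2 of \cite{PGGall2001}.
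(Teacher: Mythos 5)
Your proof is correct. The paper does not reproduce a proof of this statement --- it is quoted as Lemma 3.2 of Gallagher--G\'erard \cite{PGGall2001} --- and your argument is of exactly the Littlewood--Paley/Bernstein type used there: annular truncation at the two scales with the tails controlled by the strict-oscillation hypothesis via $\dot H^1\hookrightarrow L^6$, followed by an asymmetric H\"older $L^3\cdot L^\infty$ and Bernstein on the main term to extract a positive power of $h_n/\tilde h_n$. The one point you flag yourself, that the symmetric $L^6\cdot L^6$ split would only give boundedness while the $L^3\cdot L^\infty$ pairing is what produces the net factor $(h_n/\tilde h_n)^{1/2}$, is indeed the crux, and your Bernstein/Poincar\'e bookkeeping ($\|P_n^R f_n\|_{L^3}\lesssim R^{3/2}h_n^{1/2}$, $\|\tilde P_n^R\tilde f_n\|_{L^\infty}\lesssim R^{5/2}\tilde h_n^{-1/2}$) is consistent. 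The preliminary reductions (passing to a subsequence so that $\log(h_n/\tilde h_n)\to -\infty$, which suffices since the claim is a $\limsup$ statement, and the observation that every cross term contains at least one remainder that vanishes in $\dot H^1$ as $R\to\infty$) are also handled correctly.
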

Then, we easily deduce the following result.
\begin{lemme}
\label{lmorthL3}
Let $h_n$ and $\tilde{h_n}$ be two orhogonal scales and $v_n$, $\tilde{v}_n$ be two sequences that are strictly $h_n$ (resp $\tilde{h_n}$) oscillatory with respect to $\Delta_M$ (considered on the Hilbert space $H^1$), uniformly on $[-T,T]$. Then, we have
\bna
\nor{v_n\tilde{v}_n}{L^{\infty}([-T,T],L^3(M))} \tend{n}{\infty} 0 \label{resultorthL3}
\ena
Moreover, the same result remains true if $\tilde{v}_n$ is a constant sequence $v\in H^1$ and $\tilde{h}_n=1$. 
\end{lemme}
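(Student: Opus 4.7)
My plan is to reduce the statement on $M$ to the Euclidean statement of Lemma \ref{lmorthL3PGGall} via a partition of unity and then apply Proposition \ref{proposcill} to transfer the oscillation properties to $\R^3$.

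First, I would argue by contradiction: if $\nor{v_n\tilde v_n}{L^\infty([-T,T],L^3(M))}$ does not tend to $0$, then up to extraction there exists $\delta>0$ and a sequence $t_n\in[-T,T]$ with $\nor{v_n(t_n)\,\tilde v_n(t_n)}{L^3(M)}\ge\delta$. Set $w_n=v_n(t_n)$ and $\tilde w_n=\tilde v_n(t_n)$. Because the strict oscillation of $v_n$ and $\tilde v_n$ with respect to $\Delta_M$ (seen on $H^1$) is \emph{uniform} on $[-T,T]$, the pointwise-in-time sequences $(w_n)$ and $(\tilde w_n)$ are still strictly $(h_n)$- and $(\tilde h_n)$-oscillatory with respect to $\Delta_M$.

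Next, pick a finite cover $M=\bigcup_i U_i$ with coordinate maps $\Phi_i\colon U_i\to V_i\subset\R^3$, a subordinate partition of unity $1=\sum_i\Psi_i$ with $\Psi_i\in C_0^\infty(U_i)$, and auxiliary cut-offs $\widetilde\Psi_i\in C_0^\infty(U_i)$ with $\widetilde\Psi_i\equiv 1$ on $\mathrm{supp}\,\Psi_i$. Writing $\Psi_i=\Psi_i\widetilde\Psi_i$ and using the triangle inequality gives
\bna
\nor{w_n\tilde w_n}{L^3(M)}\le\sum_i\nor{(\Psi_i w_n)(\widetilde\Psi_i\tilde w_n)}{L^3(M)}.
\ena
Since the Riemannian volume density is bounded from above and below by constants on each $V_i$, the $L^3(M)$ norm of a function supported in $U_i$ is equivalent to the Lebesgue $L^3(\R^3)$ norm of its pushforward. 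Thus, setting $f_n^i=\Phi_{i*}(\Psi_i w_n)$ and $\tilde f_n^i=\Phi_{i*}(\widetilde\Psi_i\tilde w_n)$, it suffices to show that $\nor{f_n^i\tilde f_n^i}{L^3(\R^3)}\to 0$ for each $i$.

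Now I would invoke Proposition \ref{proposcill} applied to the pairs $(w_n,0)$ and $(\tilde w_n,0)$, which transfers strict oscillation from $A_M$ on $H^1(M)\times L^2(M)$ to strict oscillation of $f_n^i$ and $\tilde f_n^i$ with respect to $A_{\R^3}$ on $H^1(\R^3)\times L^2(\R^3)$; in particular $\nabla f_n^i$ is strictly $(h_n)$-oscillatory and $\nabla\tilde f_n^i$ is strictly $(\tilde h_n)$-oscillatory with respect to $\Delta_{\R^3}$. Since orthogonality of scales is preserved by this process, Lemma \ref{lmorthL3PGGall} applies and gives $\nor{f_n^i\tilde f_n^i}{L^3(\R^3)}\to 0$, contradicting the assumption that $\nor{w_n\tilde w_n}{L^3(M)}\ge\delta$. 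For the constant sequence variant, I replace $\tilde w_n$ by the constant function $v\in H^1(M)$ and by $\tilde h_n=1$; the pushforwards $\widetilde\Psi_i v$ sit in $H^1(\R^3)$ so they are strictly $1$-oscillatory with respect to $\Delta_{\R^3}$, and the same contradiction argument applies.

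The main technical point where care is needed is the passage from oscillation of $v_n$ on $M$ to oscillation of the localized pushforwards $\Phi_{i*}(\Psi_i v_n)$ on $\R^3$; this is exactly what Proposition \ref{proposcill} was set up to deliver, so once uniformity in $t$ is dispatched by the contradiction/extraction argument above, the proof reduces to a bookkeeping of the partition of unity.
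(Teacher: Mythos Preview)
Your proposal is correct and follows essentially the same route as the paper: localize by a partition of unity, push forward to $\R^3$, transfer strict oscillation via Proposition \ref{proposcill}, and apply Lemma \ref{lmorthL3PGGall}. The only cosmetic difference is that the paper writes $1=\sum_i\Psi_i^2$ so that both factors receive the same cut-off $\Psi_i$, whereas you split $\Psi_i=\Psi_i\widetilde\Psi_i$ with an auxiliary enlarged cut-off; your extraction argument to handle the $L^\infty_t$ supremum is also a reasonable explicit addition that the paper leaves implicit.
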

\begin{proof}
Using a partition of unity $1= \sum_i \Psi_i^2 $ adapted to coordinate charts, we have to compute
\bna
\nor{\Phi_{i*}\Psi_i v_n\Psi_i \tilde{v}_n}{L^{\infty}([-T,T],L^3(R^3))}.
\ena
Using Proposition \ref{proposcill}, we infer that $\Phi_{i*}\Psi_i v_n$ is strictly ($h_n$)-oscillatory with respect to $\Delta_{\R^3}$ (defined on $H^1$) and the same result holds for $\nabla \left(\Phi_{i*}\Psi_i v_n\right)$ with respect to $\Delta_{\R^3}$ defined on $L^2$. We conclude by applying Lemma \ref{lmorthL3PGGall} to $\Phi_{i*}\Psi_i v_n$ and $\Phi_{i*}\Psi_i v_n$. 
\end{proof}
\subsection{Microlocal defect measure and energy}
In this subsection, we state without proof the propagation of the measure for the damped wave equation. We refer to \cite{defectmeasure} for the definition and to \cite{linearisationondePG} Section 4 or \cite{FrancfortMuratoscillation} in the specific context of the wave equation). It will be used several times in the article.
\begin{lemme}
\label{lmmeasuredamped}[Measure for the damped equation and equicontinuity of the energy]
Let $u_n$, $\tilde{u}_n$ be two sequences of solution to 
\bna 
\Box u_n+u_n=a(x)\partial_t u_n,
\ena
 weakly convergent to $0$ in $\HutL$. Then, there exists a subsequence (still denoted $u_n$, $\tilde{u}_n$) such that for any $t\in [0,T]$ there exists a (nonnegative if $u_n=\tilde{u}_n$) Radon measure $\mu^t$ on $S^*M$ such that for any classical pseudodifferential operator $B$ of order $0$, we have with a uniform convergence in $t$
\bnan
\label{cvgcemeasure}
\left(B (-\Delta)^{1/2}u_n(t),(-\Delta)^{1/2}\tilde{u}_n(t)\right)_{L^2(M)}+\left(B \partial_t u_n(t),\partial_t \tilde{u}_n(t)\right)_{L^2(M)} \tend{n}{\infty} \int_{S^*M} \sigma_0(B)d\mu^t.
\enan
 Moreover, one can decompose
 \bna
 \mu^t=\frac{1}{2}(\mu^t_+ +\mu^t_{-})
 \ena
which satisfy the following transport equation 
$$ \partial_t \mu_{\pm}(t) = \pm H_{|\xi|_x} \mu_{\pm}(t)+a(x)\mu_{\pm}(t). $$
Furthermore, if $t_n\tend{n}{\infty}t$, we have the same convergence with $t$ replaced by $t_n$ in (\ref{cvgcemeasure}). 
\end{lemme}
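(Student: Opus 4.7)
The plan is to follow the construction of microlocal defect measures for wave equations given in \cite{defectmeasure,linearisationondePG}, adapted to cope with the damping term and the bilinear (rather than purely quadratic) setting.

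\textbf{Step 1: construction of $\mu^t$ and equicontinuity in $t$.} For each fixed $t \in [0,T]$, the pair $(\Lambda u_n(t), \partial_t u_n(t))$ with $\Lambda = (-\Delta)^{1/2}$ is bounded in $L^2(M) \times L^2(M)$ and weakly convergent to zero. The G\'erard--Tartar construction then yields, after extraction, a matrix-valued Radon measure on $S^*M$ whose trace against scalar symbols is the desired $\mu^t$. To obtain a single subsequence valid for every $t$, I would first establish equicontinuity in $t$ of
\bna
t \mapsto (B\Lambda u_n(t), \Lambda \tilde u_n(t))_{L^2} + (B\partial_t u_n(t), \partial_t \tilde u_n(t))_{L^2}
\ena
for each classical $\Psi$DO $B$ of order $0$. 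Differentiating and substituting the equation produces a commutator with the wave generator (an operator of order $0$) plus terms involving $a(x)\partial_t u_n$ and $u_n$, all uniformly bounded in $n$ and $t$. A Cantor diagonal extraction over a countable dense subset of $[0,T]$, combined with Arzel\`a--Ascoli, then delivers a subsequence and a family $\mu^t$ satisfying (\ref{cvgcemeasure}) uniformly in $t$.

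\textbf{Step 2: splitting into the two branches.} Next, I would split $u_n = u_n^+ + u_n^-$ with $u_n^{\pm} = \tfrac{1}{2}(u_n \mp i\Lambda_1^{-1}\partial_t u_n)$ and $\Lambda_1 = \sqrt{1-\Delta}$. A direct computation from $\Box u_n + u_n = a(x)\partial_t u_n$ gives the first-order equations
\bna
(\partial_t \mp i\Lambda_1)\, u_n^{\pm} = \mp\tfrac{i}{2}\,\Lambda_1^{-1}\bigl(a(x)\partial_t u_n\bigr),
\ena
whose forcing is bounded in $L^{\infty}([0,T], L^2(M))$. Applying Step 1 to each of $\Lambda_1 u_n^{\pm}$ produces measures $\mu_\pm^t$ on $S^*M$. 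Since $u_n^{\pm}$ are microlocalized, up to lower order, on the disjoint characteristic sheets $\tau = \pm|\xi|_x$, the cross terms $(B\Lambda_1 u_n^+, \Lambda_1 \tilde u_n^-)_{L^2}$ vanish in the limit; expanding $\partial_t u_n = i\Lambda_1(u_n^- - u_n^+)$ in the quadratic form on the left of (\ref{cvgcemeasure}) then yields $\mu^t = \tfrac{1}{2}(\mu_+^t + \mu_-^t)$.

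\textbf{Step 3: transport equation and continuity.} Finally, differentiating $(B\Lambda_1 u_n^{\pm}, \Lambda_1 \tilde u_n^{\pm})_{L^2}$ in $t$ and using the first-order equation of Step 2, the principal commutator $([\mp i\Lambda_1, B]\Lambda_1 u_n^{\pm}, \Lambda_1 \tilde u_n^{\pm})_{L^2}$ contributes $\pm H_{|\xi|_x}\mu_\pm$ in the limit by standard symbolic calculus. The forcing term, after substituting $\partial_t u_n = i\Lambda_1 u_n^+ - i\Lambda_1 u_n^-$ and discarding the microlocally vanishing cross contribution, reduces at leading order to multiplication by $a(x)$ on the branch $\pm$, producing the $a(x)\mu_\pm$ term. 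The convergence statement for $t_n \to t$ is then immediate from the uniform-in-$t$ convergence established in Step 1. The main obstacle lies in the bookkeeping of Step 3: one must carefully track the inverse powers of $\Lambda_1$ in the damping term and exploit the fact that $[\Lambda_1, a]$ is of order $0$ together with the microlocal disjointness of the two branches, in order to recover the coefficient $+a(x)$ (with the correct sign) on both $\mu_+$ and $\mu_-$.
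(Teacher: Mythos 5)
The paper itself gives \emph{no} proof of Lemma \ref{lmmeasuredamped}: it opens the subsection with ``we state without proof the propagation of the measure for the damped wave equation'' and simply points to \cite{defectmeasure}, \cite{linearisationondePG} (Section 4) and \cite{FrancfortMuratoscillation}. So there is nothing internal to compare against; your sketch must be judged against the standard literature the author invokes, and on that basis it is the right strategy and essentially sound -- split into the two half--wave branches $u_n^\pm$, derive the first--order system with a forcing of order $0$, observe the branches are microlocally disjoint, and read off the transport/amplification from the commutator and the multiplication--by--$a$ terms. Your equicontinuity argument in Step~1 also works once organised as you indicate: pairing the symmetrised commutator $\Lambda B\Lambda - \Lambda^2 B$ against $(\dot u_n,\tilde u_n)$ and $(\Lambda u_n,\partial_t\tilde u_n)$ keeps everything at admissible orders, so the time derivative of the bilinear form is uniformly bounded and Arzel\`a--Ascoli applies.

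A few slips are worth flagging. First, the formula for $\partial_t u_n$ in Step~2 carries the wrong sign: from $u_n^{\pm}=\tfrac12\bigl(u_n\mp i\Lambda_1^{-1}\partial_t u_n\bigr)$ one gets $\partial_t u_n = i\Lambda_1(u_n^+-u_n^-)$, i.e.\ the version you later use in Step~3; the two steps are internally inconsistent, and with the Step~2 sign the final coefficient on $\mu_\pm$ would flip to $-a(x)$. Second, with your normalisation $v_n^\pm=\Lambda_1 u_n^\pm=\tfrac12(\Lambda_1 u_n\mp i\partial_t u_n)$ the measure attached to $v_n^\pm$ is one quarter of the $\mu_\pm$ that makes $\mu^t=\tfrac12(\mu_+^t+\mu_-^t)$ hold; it is cleaner to work with $w_n^\pm=\Lambda u_n\mp i\partial_t u_n$ directly, for which the identity reads off with no hidden constants. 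Third, the nonnegativity of $\mu^t$ in the self--pairing case is only implicit in ``the G\'erard--Tartar construction''; it should be said that it follows from the sharp G\r{a}rding inequality applied to nonnegative symbols. Finally, you worry about ``tracking the inverse powers of $\Lambda_1$'' in the damping term, but after multiplying the equation for $u_n^\pm$ by $\Lambda_1$ those powers cancel exactly, so the bookkeeping is lighter than you suggest. None of these is a gap in the method; they are sign/normalisation details that a full write--up must carry through carefully.
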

The microlocal defect measure of a concentrating data $[(\varphi,\psi),\underline{h},\underline{x}]$ can be explicitely computed, as follows
\bna
\mu_{\pm}=(2\pi)^{-3} \delta_{x_{\infty}}(x)\otimes \int_0^{+\infty} \left|\hat{\psi}(r\xi) \pm i |r\xi|_{\infty} \hat{\varphi}(r\xi)\right|^2 r^2 dr.  
\ena
This can be easily computed, for instance, with the next lemma.
\begin{lemme}
\label{lmpseudoconc}Let $(\varphi_n,\psi_n)=[(\varphi,\psi),\underline{h},\underline{x}]$ be a concentration data and $A(x,D_x), B(x,D_x)$ two polyhomogeneous pseudodifferential operators of respective order $0$. Then
\bna
\nor{(A(x,D_x)\varphi_n,B(x,D_x)\psi_n)- [(A_0(x_{\infty},D_x)\varphi,B_0(x_{\infty},D_x)\psi),\underline{h},\underline{x}]}{H^1\times L^2} \tend{n}{\infty}0
\ena
where $A_0(x_{\infty},D_x)$ is the Fourier multiplier of homogeneous symbol $a_0(x_{\infty},\xi)$ defined on $T^*_{x_{\infty}}M$
\end{lemme}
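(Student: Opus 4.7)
The plan is to reduce to a local coordinate chart around $x_{\infty}$ where the concentrating data has the explicit form \eqref{formconcentrdata}, then perform an explicit Fourier-theoretic computation in the rescaled variable $y=(x-x_n)/h_n$. By Lemma \ref{lemmedefconc}, the choice of chart is irrelevant, so I would work in geodesic normal coordinates at $x_{\infty}$ (so the metric is Euclidean at $x_{\infty}$) and take the cut-off $\Psi_U$ supported in a small ball. A density argument reduces the proof to $\varphi,\psi\in C^{\infty}_0(\R^3)$.

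First I would decompose $A=A_0+R_A$ and $B=B_0+R_B$, where $A_0,B_0$ are the principal parts (homogeneous of degree $0$ in $\xi$) and $R_A,R_B$ are classical pseudodifferential operators of order $-1$. Since any such $R$ maps $L^2$ continuously into $H^1$, and since the $L^2$-norm of a concentrating data is $O(h_n)$ (by the scaling factor $h_n^{-1/2}$ in \eqref{formconcentrdata}), one gets $\nor{R_A\varphi_n}{H^1}=O(h_n)\to 0$ and $\nor{R_B\psi_n}{L^2}=O(h_n)\to 0$. Thus only the principal parts need to be treated.

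For the principal part, using the Fourier representation of $\varphi_n$ and the change of variables $\xi=\eta/h_n$ together with the homogeneity $a_0(x,\eta/h_n)=a_0(x,\eta)$, a direct computation gives
\bna
A_0(x,D_x)\varphi_n(x_n+h_n y)
= h_n^{-1/2}(2\pi)^{-3}\int e^{iy\cdot\eta}\,a_0(x_n+h_n y,\eta)\,\hat\varphi(\eta)\,d\eta,
\ena
whereas the concentrating data associated with $A_0(x_{\infty},D_x)\varphi$, expressed in the same chart, has the same form but with $a_0(x_{\infty},\eta)$ in place of $a_0(x_n+h_n y,\eta)$. The difference is produced by the symbol $a_0(x_n+h_n y,\eta)-a_0(x_{\infty},\eta)$, which is uniformly bounded and converges to $0$ locally uniformly in $(y,\eta)$ as $n\to\infty$, by continuity of $a_0$ and $x_n\to x_{\infty}$. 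Combining a Plancherel bound with the rapid decay of $\hat\varphi$ and dominated convergence yields convergence of this difference to $0$ in $L^2(dy)$. Differentiation in $y$ multiplies the symbol by $i\eta$ and produces one extra term of size $O(h_n)$ from $h_n(\partial_x a_0)$, so the same argument gives $\dot H^1(dy)\to 0$. Rescaling back to the $x$ variable (which is compatible with the $\dot H^1$ scaling built into the concentrating data) gives the required convergence in $H^1(M)$. The computation for $B_0\psi_n$ is identical, with the $L^2$-scaling factor $h_n^{-3/2}$ in place of $h_n^{-1/2}$ and the $L^2$-norm in place of $H^1$.

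The main difficulty is bookkeeping: the symbol $a_0(x,\xi)$ expressed in a coordinate chart is chart-dependent, but its value at $x_{\infty}$ defines an invariant Fourier multiplier on $T^*_{x_{\infty}}M$. Choosing normal coordinates ensures that the multiplier $A_0(x_{\infty},D_x)$ computed in the chart coincides with the intrinsic one on $T_{x_{\infty}}M$, and Lemma \ref{lemmedefconc} guarantees that the resulting concentrating data does not depend on the chart, so the final statement is coordinate-free.
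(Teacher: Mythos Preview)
Your proof is correct and follows essentially the same route as the paper's: reduce to a local chart, rescale via $y=(x-x_n)/h_n$, use the degree-$0$ homogeneity of the principal symbol to absorb the $1/h_n$ in the frequency variable, and then let the symbol $a_0(x_n+h_n y,\eta)$ freeze at $a_0(x_{\infty},\eta)$. The only cosmetic differences are that the paper approximates by $\widehat\psi\in C^{\infty}_0(\R^3\setminus 0)$ (rather than $\psi\in C^{\infty}_0$) and uses a first-order Taylor expansion of $b_0(h_n y+x_n,\cdot)$ in place of your dominated-convergence argument, while leaving the treatment of the order $-1$ remainder implicit; your explicit handling of $R_A,R_B$ via $\|\varphi_n\|_{L^2}=O(h_n)$ and $\|\psi_n\|_{H^{-1}}\to 0$ is a clean way to dispose of those terms.
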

\bnp
We only give a sketch of proof for $B(x,D_x)\psi_n$. By approximation, we can assume that $\widehat{\psi}\in C^{\infty}_0(\R^3 \backslash 0)$. In local coordinates centered at $x_{\infty}=0$, we have for a $\petito{1}$ small in $L^2$
$$B(x,D_x)\psi_n=h_n^{-\frac{3}{2}}B(x,D_x)\left[\Psi_U(x) (\chi\psi) \left(\frac{x-x_n}{h_n}\right)\right]+\petito{1}=h_n^{-\frac{3}{2}}\left[B_n(y,D_y)\psi \right]\left(\frac{x-x_n}{h_n}\right)+\petito{1}$$
where $B_n(y,D_y)$ is the operator of symbol $b_n(y,\xi)=b_0(h_ny+x_n,\xi/h_n)$. Here $b_0$ is the principal symbol of $B$, homogeneous for large $\xi$. We write $b_0(h_ny+x_n,\xi/h_n)=b_0(x_n,\xi/h_n)+h_n y \int_0^1 (\partial_y b_0)(x_n+th_n,\xi/h_n)dt$. The first term converges to $b_0(0,\xi)$ by homogeneity while the second produces a term small in $L^2$.
\enp
The previous lemma is made interesting when combined with the propagation of microlocal defect measure. 
\begin{lemme}
\label{lmconserorth}
Let $u_n$ a sequence of solutions of $\Box u_n +u_n=a(x)\partial_t u_n$ weakly convergent to $0$ and $p_n=[(\varphi,\psi),\underline{h},\underline{x},\underline{t}]$ a linear damped concentrating wave. We assume $D_h (u_n,\partial_t u_n) \rightharpoonup 0$. Then, for any classical pseudodifferential operators $A(x,D_x)$ of order $0$, we have uniformly for $t\in [-T,T]$
\bna
\left(A(-\Delta)^{1/2}p_n(t),(-\Delta)^{1/2}u_n(t)\right)_{L^2(M)}+\left(A\partial_t p_n(t),\partial_t u_n(t)\right)_{L^2(M)} \tend{n}{\infty}0.
\ena
In particular, we have
\bna
\nabla p_n\cdot \nabla u_n+\partial_t p_n\partial_t u_n \rightharpoonup 0 \textnormal{ in } \mathcal{D}'(]-T,T[\times M).
\ena
\end{lemme}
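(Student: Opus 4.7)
The plan is to invoke the microlocal defect measure for the pair $(p_n, u_n)$ together with the propagation result in Lemma \ref{lmmeasuredamped}. Both sequences are solutions of the damped linear Klein-Gordon equation, $u_n$ by assumption and $p_n$ by Definition \ref{defconcwave}, and both converge weakly to zero in the energy space (for $p_n$, any concentrating wave does). Hence, up to extraction, the pair admits a joint microlocal defect measure $\mu^t = \frac{1}{2}(\mu^t_+ + \mu^t_-)$ with $\mu^t_\pm$ solving the transport equation $\partial_t \mu_\pm = \pm H_{|\xi|_x}\mu_\pm + a(x)\mu_\pm$, and the bilinear form of the statement, which I denote $I_n^A(t)$, satisfies $I_n^A(t) \to \int_{S^*M}\sigma_0(A)\,d\mu^t$ uniformly in $t \in [-T,T]$. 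Since the transport equation is a linear ODE on Radon measures, uniqueness gives that it is enough to show $\mu^{t_\infty} = 0$ at the single time $t_\infty := \lim_n t_n$ to conclude that $\mu^t \equiv 0$ on the whole interval, and hence $I_n^A(t) \to 0$ uniformly.

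By the last assertion of Lemma \ref{lmmeasuredamped}, the convergence of $I_n^A$ also holds along $t = t_n \to t_\infty$, so the remaining task is to prove $I_n^A(t_n) \to 0$. The choice of the concentration time is crucial because by Definition \ref{defconcwave}, $(p_n(t_n), \partial_t p_n(t_n))$ is the concentrating data $[(\varphi, \psi), \underline{h}, \underline{x}]$. Working in a coordinate chart centered at $x_\infty$, the same computation as in the proof of Lemma \ref{lmpseudoconc}, but applied to the order-one operator $(-\Delta)^{1/2}$, shows that $(-\Delta)^{1/2}p_n(t_n)$ is, modulo $o(1)_{L^2}$, an $L^2$-concentrating datum of profile $(-\Delta_{x_\infty})^{1/2}\varphi$ (the Laplacian being that of the metric frozen at $x_\infty$). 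Composing then with Lemma \ref{lmpseudoconc} itself for $A$ (of order zero), both $A(-\Delta)^{1/2}p_n(t_n)$ and $A\partial_t p_n(t_n)$ become $L^2$-concentrating data of respective profiles $F := A_0(x_\infty, D_x)(-\Delta_{x_\infty})^{1/2}\varphi$ and $G := A_0(x_\infty, D_x)\psi$.

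A change of variables reduces the $L^2$-pairing of such an $L^2$-concentrating datum with a bounded sequence $\chi_n$ to the dual pairing $\int F\,\overline{\tilde\chi}\,dy$ in $T_{x_\infty}M$, where $\tilde\chi$ is the weak $L^2$-limit of $h_n^{3/2}\chi_n(x_n + h_n \cdot)$. For $\chi_n = \partial_t u_n(t_n)$ the hypothesis $D_{h_n}(u_n, \partial_t u_n) \rightharpoonup 0$ gives $\tilde\chi = 0$ directly; for $\chi_n = (-\Delta)^{1/2}u_n(t_n)$ one uses the rescaling identity $(-\Delta_y)^{1/2}[h_n^{1/2}u_n(x_n + h_n y)] = h_n^{3/2}[(-\Delta_M)^{1/2}u_n](x_n + h_n y) + o(1)_{L^2}$ to transfer the $\dot{H}^1$-statement $D^1_{h_n}u_n(t_n) \rightharpoonup 0$ into the $L^2$-statement $D^2_{h_n}(-\Delta)^{1/2}u_n(t_n) \rightharpoonup 0$. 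That the hypothesis $D_{h_n}(u_n, \partial_t u_n)\rightharpoonup 0$ is available at $t = t_n$ (and not only at the reference time at which it is formulated) follows from its preservation under the damped linear flow, itself a byproduct of the propagation of the joint microlocal defect measure of $u_n$ with a test concentrating wave. This yields $I_n^A(t_n) \to 0$, $\mu^{t_\infty} = 0$, and the main conclusion.

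The \emph{in particular} statement reduces to the main result applied to $A$ = multiplication by $\psi(x) \in C^\infty(M)$, whose principal symbol is $\psi(x)$: Rellich's compact embedding $H^1(M)\hookrightarrow L^2(M)$ forces $u_n \to 0$ strongly in $L^2$, which lets me absorb the commutator terms arising when rewriting $\int \psi\nabla p_n \cdot \nabla u_n$ as $\int \psi (-\Delta)^{1/2}p_n (-\Delta)^{1/2}u_n$, since those terms are bilinear with $u_n$ in $L^2$ against a bounded factor. The main obstacle of the proof lies in the semiclassical bookkeeping of the order-one operator $(-\Delta)^{1/2}$ acting on a concentrating datum (the analogue of Lemma \ref{lmpseudoconc} for order one) and the replacement of the full metric by its value at $x_\infty$; this is standard but must be done carefully to ensure that the $o(1)_{L^2}$ remainders are genuinely negligible when paired against the rescaled $u_n$.
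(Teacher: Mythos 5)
Your proposal follows essentially the same route as the paper: check the claim at $t=t_n$ by passing the pseudodifferential operator onto the concentrating datum (Lemma \ref{lmpseudoconc}-type computation), invoke the hypothesis through the equivalent formulation of Lemma \ref{defweaktrackequiv}, and then propagate in time via equicontinuity and the joint microlocal defect measure of Lemma \ref{lmmeasuredamped}. Two small remarks are in order. First, the paragraph arguing that ``the hypothesis $D_{h_n}(u_n,\partial_t u_n)\rightharpoonup 0$ is available at $t=t_n$'' is unnecessary: the hypothesis is formulated precisely at the concentration times $t_n$ (this is how it is used in Lemma \ref{lmorthNRJ} and how the paper's proof reads it), and the justification you offer for propagating it --- ``a byproduct of the propagation of the joint microlocal defect measure of $u_n$ with a test concentrating wave'' --- is exactly the content of the present lemma, so leaning on it would be circular; fortunately you do not need it. Second, in the ``in particular'' step the commutator term $\int p_n\,\nabla\psi\cdot\nabla u_n$ is most naturally killed by $p_n\to 0$ strongly in $L^2$ (Rellich applied to $p_n$, which is weakly null in $H^1$) rather than $u_n$; either works, but it is worth stating the one that makes the uniformity in $t$ transparent.
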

\bnp
We first check the property for $t=t_n$. Using Lemma \ref{lmpseudoconc} several times, we are led to estimate
\bna
\left((-\Delta)^{1/2}\varphi_n,(-\Delta)^{1/2}u_n(t_n)\right)_{L^2(M)}+\left(\psi_n,\partial_t u_n(t_n)\right)_{L^2(M)}
\ena
where $(\varphi_n,\psi_n)$ are the concentrating data associated with $[(A(x_{\infty},D_x)\varphi,B(x_{\infty},D_x)\psi),\underline{h},\underline{x}]$. Then, the hypotheses $D_h (u_n,\partial_t u_n) \rightharpoonup 0$ and Lemma \ref{defweaktrackequiv} yields the convergence to $0$ for this particular case $t=t_n$.
We conclude by equicontinuity and by the propagation of joint measures stated in Lemma \ref{lmmeasuredamped}.  
\enp

\section{Profile Decomposition}
\subsection{Linear profile decomposition}
The main purpose of this section is to establish Theorem \ref{thmdecompositionL}. It is completed in two main steps : the first one is the extraction of the scales $h_n^{(j)}$ where we decompose $v_n$ in an infinite sum of sequence $v_n^{(j)}$ which are respectively $h_n^{(j)}$-oscillatory and the second steps consists in decomposing each $v_n^{(j)}$ in an infinite sum of concentrating wave at the rate $h_n^{(j)}$.  Actually, in order to perform the nonlinear decomposition, we will need that, in some sense, each profile of the decomposition do not interact with the other. It is stated in this orthogonality result. 

\medskip

\noindent \textbf{Theorem \ref{thmdecompositionL}'.} \textit{With the notation of Theorem \ref{thmdecompositionL}, we have the additional following properties.\\
If $2T<T_{focus}$, we have $(\underline{h}^{(k)},\underline{x}^{(k)},\underline{t}^{(k)})\perp (\underline{h}^{(j)},\underline{x}^{(j)},\underline{t}^{(j)})$ for any $j\neq k$, according to Definition \ref{deforthog}.\\
If $M=S^3$ and $a\equiv 0$ (undamped solutions), but with $T$ eventually large, we have\\ $(\underline{h}^{(k)},(-1)^m\underline{x}^{(k)},\underline{t}^{(k)}+m\pi)$ orthogonal to $(\underline{h}^{(j)},\underline{x}^{(j)},\underline{t}^{(j)})$ for any $m\in \Z$ and $j\neq k$.}

\medskip
\subsubsection{Extraction of scales}
\begin{prop}\label{propextractscales}
Let $T>0$. Let $(\varphi_n,\psi_n)$ a bounded sequence of $\HutL$ and $v_n$ the solution of
\begin{eqnarray}
\label{soldampedscale}
\left\lbrace
\begin{array}{rcl}%argument r=alignement à droite puis center et left
\Box v_n +v_n&=&a(x)\partial_t v_n \quad \textnormal{on}\quad [-T,T] \times M\\
(v_n(0),\partial_t v_n(0))&=&(\varphi_n,\psi_n).
\end{array}
\right.
\end{eqnarray}
Then, up to an extraction, $v_n$ can be decomposed in the following way : for any $l\in \N^*$
\bna
\label{decomplinscale}v_n(t,x)= v(t,x)+\sum_{j=1}^l v_n^{(j)}(t,x)+\rho_n^{(l)}(t,x),
\ena 
where $v_n^{(l)}$ is a strictly ($h_n^{(j)}$)-oscillatory solution of the damped linear wave equation (\ref{soldampedscale}) on $M$. The scales $h_n^{(j)}$ satisfy $h_n^{(j)} \tend{n}{\infty} 0$ and are orthogonal :
\bnan \label{orthpropscale}
\left|\log \frac{h_n^{(k)}}{h_n^{(j)}}\right| \tend{n}{\infty} +\infty \textnormal{ if }j\neq k.
\enan
Moreover, we have 
\bnan
&&\underset{n\to \infty}{\varlimsup} \nor{\rho_n^{(l)}}{L^{\infty}(]-T,T[,L^6(M))} \tend{l}{\infty}0\label{resutlL6} \\
&&\nor{(v_n,\partial_tv_n)(t)}{\HutL}^2=\nor{(v,\partial_tv)(t)}{\HutL}^2+\sum_{j=1}^l \nor{(v_n^{(j)},\partial_tv_n^{(j)})(t)}{\HutL}^2 + \nor{(\rho_n^{(l)},\partial_t\rho_n^{(l)})(t)}{\HutL}^2 +  \petito{1}(t),\label{resutorthen}
\enan
where $\petito{1}(t)\tend{n}{\infty}0 $ uniformly for $t\in [-T,T]$.
\end{prop}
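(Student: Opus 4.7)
The strategy is to perform the extraction at the level of the initial data and transfer it to the solutions via Propositions~\ref{propproposcilamorti} and \ref{propBinftamorti}, which guarantee that (strict) $h_n$-oscillation, $h_n$-singularity and Besov norms are propagated by the damped linear flow on $[-T,T]$. First, up to subsequence, I extract the weak limit $(\varphi_n,\psi_n)\rightharpoonup (v_0,v_1)$ in $\HutL$, let $v$ solve (\ref{soldampedscale}) with this data, and write $(\tilde\varphi_n,\tilde\psi_n)=(\varphi_n-v_0,\psi_n-v_1)\rightharpoonup 0$. All subsequent work takes place on this remainder.

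I then construct the scales iteratively using a Besov-type capacity. Setting $(\tilde\varphi_n^{(0)},\tilde\psi_n^{(0)})=(\tilde\varphi_n,\tilde\psi_n)$, I define at stage $l\geq 1$
\bna
\nu_l=\limsu{n}{\infty}\,\sup_{k\geq 0}\nor{\mathbf{1}_{[2^k,2^{k+1}[}(\sqrt{-\Delta_M})(\tilde\varphi_n^{(l-1)},\tilde\psi_n^{(l-1)})}{\HutL}.
\ena
If $\nu_l=0$ the extraction effectively stops; otherwise, after a diagonal extraction, there exist integers $k_n^{(l)}$ for which the corresponding Littlewood--Paley band has norm at least $\nu_l/2$. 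Since the remainder converges weakly to $0$ and spectral projectors onto a bounded range are compact on $\HutL$ (via the compact Sobolev embedding on $M$), we have $k_n^{(l)}\to +\infty$, so $h_n^{(l)}:=2^{-k_n^{(l)}}\to 0$. I define $(\varphi_n^{(l)},\psi_n^{(l)})$ as a strictly $h_n^{(l)}$-oscillatory spectral localization of the remainder in a dyadic window around this scale, let $v_n^{(l)}$ solve (\ref{soldampedscale}) with these data (strict oscillation is preserved in time by Proposition~\ref{propproposcilamorti}), and define the new remainder $(\tilde\varphi_n^{(l)},\tilde\psi_n^{(l)})$ by subtraction. By construction, this remainder is $h_n^{(l)}$-singular with respect to $A_M$.

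The orthogonality (\ref{orthpropscale}) then follows automatically: at step $j>k$, the remainder is $h_n^{(k)}$-singular, so any freshly extracted strictly oscillatory piece at scale $h_n^{(j)}$ must satisfy $|\log(h_n^{(j)}/h_n^{(k)})|\to\infty$. The Pythagorean identity (\ref{resutorthen}) follows because, for orthogonal scales, strictly oscillatory and singular sequences are asymptotically orthogonal in $\HutL$; the commutator between the relevant spectral multipliers and the damping term $a(x)\partial_t$ is $\petito{1}$ in operator norm (Helffer--Sj\"ostrand, as in Proposition~\ref{propproposcilamorti}), so this orthogonality is maintained uniformly on $[-T,T]$. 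The identity (\ref{resutorthen}), together with the uniform bound on the initial energy, forces $\sum_j\nor{(v_n^{(j)},\partial_tv_n^{(j)})(t)}{\HutL}^2$ to converge, which in turn implies $\nu_l\to 0$ as $l\to\infty$. Transferring this to the solution via Proposition~\ref{propBinftamorti} yields $\limsu{n}{\infty}\nor{\rho_n^{(l)}}{L^\infty([-T,T],B^1_{2,\infty}(M))}\to 0$, and (\ref{resutlL6}) follows from the Besov interpolation bound
\bna
\nor{u}{L^6(M)}\lesssim \nor{u}{B^1_{2,\infty}(M)}^{1/3}\nor{u}{H^1(M)}^{2/3},
\ena
valid on $M$ by localization in charts (cf.\ Lemma~\ref{equivBinft}) and the classical Littlewood--Paley proof on $\R^3$.

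The main obstacle is the careful extraction of a single well-defined scale $h_n^{(l)}$ from the supremum defining $\nu_l$ and the verification that the iterative procedure remains compatible with both the damped (non-unitary) dynamics and the curved geometry. Propositions~\ref{proposcill}, \ref{propproposcilamorti} and \ref{propBinftamorti} reduce all these difficulties to their Euclidean counterparts, where the Bahouri--G\'erard argument \cite{BahouriGerard} applies verbatim.
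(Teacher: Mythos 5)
Your proposal follows essentially the same route as the paper: the scale extraction is carried out on the initial data (you re-sketch the Littlewood--Paley capacity argument of P.~G\'erard's \cite{PGdefautSobolev}, where the paper simply cites it), then propagated to the solutions via Proposition~\ref{propproposcilamorti}, the Pythagorean identity is obtained from the oscillatory/singular dichotomy together with the commutator control from Helffer--Sj\"ostrand, and the $L^\infty L^6$ smallness follows from Proposition~\ref{propBinftamorti}, Lemma~\ref{equivBinft} and the refined Sobolev inequality in charts. Two small remarks. In your extraction step, ``a dyadic window around this scale'' must in fact be a window of \emph{increasing} width $R_n\to\infty$ (chosen diagonally), otherwise the remainder is not $h_n^{(l)}$-singular and the implication you draw for the orthogonality (\ref{orthpropscale}) would not hold; this is precisely what G\'erard's lemma provides and is worth stating. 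Also, your interpolation exponents are swapped: the correct inequality (Lemma 3.5 of \cite{BahouriGerard}, as recalled in the paper) is $\nor{u}{L^6}\lesssim\nor{u}{H^1}^{1/3}\nor{u}{B^1_{2,\infty}}^{2/3}$, though this slip is harmless since any convex combination with a positive Besov exponent yields the desired decay.
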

\begin{proof}
We first make this decomposition for the initial data as done in \cite{PGdefautSobolev} (see also \cite{BahouriGerard}). Then, using the propagation of ($h_n$)-oscillation proved in Proposition \ref{propproposcilamorti}, we extend it for all time. 

More precisely, by applying the same procedure as in \cite{PGdefautSobolev}, with the operator $A_M$, we decompose 
\bna
(\varphi_n,\psi_n)=(\varphi,\psi)+\sum_{j}^l(\varphi_n^{(j)},\psi_n^{(j)})+(\Phi_n^{(l)},\Psi_n^{(l)})
\ena
where $(\varphi_n^{(j)},\psi_n^{(j)})$ is $(h_n^{(j)})$-oscillatory for $A_M$, $h_n^{(j)}\tend{n}{\infty}0$, and 
\bnan
\label{cvcebesov}
\underset{n\to \infty}{\varlimsup} \sup_{k\in \N}\nor{\mathbf{1}_{[2^k,2^{k+1}[}(A_M)(\Phi_n^{(l)},\Psi_n^{(l)})}{\HutL}  \tend{l}{\infty}0.
\enan
Moreover, we have the orthogonality property : 
\bna
\nor{(\varphi_n,\psi_n)}{\HutL}^2=\nor{(\varphi,\psi)}{\HutL}^2+\sum_{j}^l\nor{(\varphi_n^{(j)},\psi_n^{(j)})}{\HutL}^2+\nor{(\Phi_n^{(l)},\Psi_n^{(l)})}{\HutL}^2+\petito{1},\quad n\to \infty 
\ena
and the $h_n^{(j)}$ are orthogonal each other as in (\ref{orthpropscale}). Moreover, $(\Phi_n^{(l)},\Psi_n^{(l)})$ is ($h_n^{(j)}$)- singular for $1\leq j\leq l$. 

This decomposition for the initial data can be extended to the solution by 
\bna
v_n(t,x)= v(t,x)+\sum_{j}^l v_n^{(j)}(t,x)+\rho_n^{(l)}(t,x),
\ena
where each $v_n^{(j)}$ is solution of
\begin{eqnarray*}
\left\lbrace
\begin{array}{rcl}%argument r=alignement à droite puis center et left
\Box v_n^{(j)}+v_n^{(j)} &=&a(x)\partial_t v_n^{(j)} \quad \textnormal{on}\quad \R_t\times M\\
(v_n^{(j)}(0),\partial_t v_n^{(j)}(0))&=&(\varphi_n^{(j)},\psi_n^{(j)}) .
\end{array}
\right.
\end{eqnarray*}
Thanks to Proposition \ref{propproposcilamorti}, each $(v_n^{(j)}(t),\partial_t v_n^{(j)}(t))$ is strictly ($h_n^{(j)}$)-oscillatory and $(\rho_n^{(l)}(t),\partial_t \rho_n^{(l)}(t))$ is ($h_n^{(j)}$)- singular for $1\leq j\leq l$. 
So, we easily infer for instance that\\ $\left\langle(\rho_n^{(l)}(t),\partial_t \rho_n^{(l)}(t)), (v_n^{(j)}(t),\partial_t v_n^{(j)}(t))\right\rangle_{\HutL}\tend{n}{\infty}0$ uniformly on $[-T,T]$ where $\left\langle ~,~\right\rangle_{\HutL}$ is the scalar product on $\HutL$. This is also true for the product between $v_n^{(j)}$ and $v_n^{(k)}$, $j\neq k$ thanks to the orthogonality (\ref{orthpropscale}). The same convergence holds for the product with $v$ by weak convergence to $0$ of the other terms. Then, we get
\bna
\nor{(v_n,\partial_t v_n)}{\HutL}^2=\nor{(v,\partial_t v)}{\HutL}^2+\sum_{j}^l \nor{(v_n^{(j)},\partial_t v_n^{(j)})}{\HutL}^2+\nor{(\rho_n^{(l)},\partial_t \rho_n^{(l)})}{\HutL}^2+\petito{1},\quad n\to \infty. 
\ena
Let us now prove estimate (\ref{resutlL6}) of the remaining term in $L^{\infty}(L^6)$. (\ref{cvcebesov}) gives the convergence to zero of $(\rho_n^{(l)}(0),\partial_t \rho_n^{(l)}(0))$ in $B^1_{2,\infty}(M)\times B^0_{2,\infty}(M)$. We extend this convergence for all time with Proposition \ref{propBinftamorti} and get
\bna
\sup_{t\in [0,T]}\underset{n\to \infty}{\varlimsup}\nor{\left(\rho_n^{(l)}(t),\partial_t\rho_n^{(l)}(t)\right)}{B^1_{2,\infty}\times B^0_{2,\infty}} \tend{l}{\infty }0.
\ena
The following lemma will transfer this information in local charts.
\begin{lemme}
\label{equivBinft}
There exists $C>0$ such that 
\bna
\frac{1}{C}\nor{\Lambda f}{B^0_{2,\infty}(\R^3)^N}\leq \nor{f}{B^0_{2,\infty}(M)}\leq C\nor{\Lambda f}{B^0_{2,\infty}(\R^3)^N}\\
\frac{1}{C}\nor{\Lambda f}{B^1_{2,\infty}(\R^3)^N}\leq \nor{f}{B^1_{2,\infty}(M)}\leq C\nor{\Lambda f}{B^1_{2,\infty}(\R^3)^N}
\ena  
where $\Lambda$ is the operator described in Proposition \ref{proposcill} of cut-off and transition in $N$ local charts.
\end{lemme}
We postpone the proof of this lemma and continue the proof of the proposition. Using this lemma, we get for every coordinate patch $(U_i,\Phi_i)$ and $\Psi_i\in C^{\infty}_0(U_i)$.
\bna
\underset{n\to \infty}{\varlimsup}\nor{\Phi_{i}^*\Psi_i\rho_n^{(l)}}{L^{\infty}([-T,T], B^1_{2,\infty}(\R^3))}\tend{l}{\infty }0
\ena
The refined Sobolev estimate, Lemma 3.5 of \cite{BahouriGerard}, yields for any $f\in \Hu(\R^3)$
\bna
\nor{f}{L^6(\R^3)}\leq \nor{(-\Delta_{\R^3})^{1/2}f}{L^2}^{1/3}\nor{(-\Delta_{\R^3})^{1/2}f}{\dot{B}^0_{2,\infty}}^{2/3}\leq \nor{f}{H^1(\R^3)}^{1/3}\nor{f}{B^1_{2,\infty}(\R^3)}^{2/3}
\ena
Therefore, we have 
 $$\underset{n\to \infty}{\varlimsup}\nor{\Phi_{i}^*\Psi_i\rho_n^{(l)}}{L^{\infty}([-T,T],L^6(\R^3))}\tend{l}{\infty }0$$ and finally
$$\underset{n\to \infty}{\varlimsup}\nor{\rho_n^{(l)}}{L^{\infty}([-T,T],L^6(M))}\tend{l}{\infty }0$$
This completes the proof of Proposition \ref{propextractscales}, up to the proof of Lemma \ref{equivBinft}.
\end{proof}
\begin{proof}[Proof of Lemma \ref{equivBinft}]We essentially use the following fact : see Lemma 3.1 of \cite{BahouriGerard}. Let $f_n$ be a sequence of $L^2(\R^3)$ weakly convergent to $0$ and compact at infinity
\bna
\underset{n\to +\infty}{\varlimsup} \int_{\left|x\right|>R} \left|f(x)\right|^2~dx \tend{R}{+\infty}0.
\ena
Then, $f_n$ tends to $0$ in $\dot{B}^0_{2,\infty}(\R^3)$ if and only if $f_n$ is $h_n$ singular for every scale $h_n$.

Actually, the same result holds $\Delta_M$, with the same demonstration. The compactness at infinity in $\R^3$ is only assumed to ensure 
\bna
\underset{n\to +\infty}{\varlimsup}\nor{\mathbf{1}_{[-A,A](\Delta_{\R^3})}f_n }{L^2} =0 \textnormal{ for any } A>0
\ena
which is obvious in the case of $\Delta_M$ because of weak convergence and discret spectrum.

Using Proposition \ref{proposcill}, we obtain that $f_n$ is ($h_n$)-singular with respect to $\Delta_M$ if and only if $\Lambda f_n $ is ($h_n$)-singular with respect to $\Delta_{\R^3}$. Combining both previous results, we obtain that the two norms we consider have the same converging sequences and are therefore equivalent. 
\end{proof}
\subsubsection{Description of linear concentrating waves (after S. Ibrahim)}
In this subsection, we describe the assymptotic behavior of linear concentrating waves as described in \cite{ibrahim2004gon} of S. Ibrahim. In \cite{ibrahim2004gon}, it is stated for the linear wave equation without damping. We give some sketch of proof when necessary to emphasize the tiny modifications. 

The following lemma yields that for times close to concentration, the linear damped concentrating wave is close to the solution of the wave equation with flat metric and without damping. It is Lemma 2.2 of \cite{ibrahim2004gon}, except that there is an additional damping term which disappears after rescaling. We do not give the proof and refer to the more complicated nonlinear case (see estimate (\ref{dampingdisappear}) ).
\begin{lemme}
\label{lmlinproche}
Let $v_n= [(\varphi,\psi), \underline{h}, \underline{x}, \underline{t})]$ be a linear damped concentrating wave and $v$ solution of 
\begin{eqnarray}
\left\lbrace
\begin{array}{rcl}%argument r=alignement à droite puis center et left
\Box_{\infty} v&=& 0\quad \textnormal{on}\quad \R \times T_{x_{\infty}}M\\
(v,\partial_t v)_{|t=0}&=&(\varphi,\psi)
\end{array}
\right.
\end{eqnarray}
Denote $\tilde{v}_{n}$ the rescaled function associated to $v$, that is $\tilde{v}_{n}=\Phi^*\Psi\frac{1}{\sqrt{h_n}}v\left(\frac{t-t_n}{h_n},\frac{x-x_n}{h_n}\right)$ where $(U,\Phi)$ is a coordinate chart around $x_{\infty}$ and $\Psi\in C^{\infty}_0(U)$ is constant equal to $1$ around $x_{\infty}$. Then, we have
\bna
\limsu{n}{\infty}\nort{\tilde{v}_{n}-v_n}_{[t_n-\Lambda h_n,t_n+\Lambda h_n]\times M}\tend{\Lambda}{\infty}0
\ena
\end{lemme}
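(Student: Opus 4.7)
The strategy is to rescale around the concentration point and reduce the comparison to a flat-metric problem on $T_{x_\infty}M \simeq \R^3$, on which the mass term, the damping term, and the deviation of the metric from the frozen one all become perturbations of size $O(h_n)$ on the bounded time window $[-\Lambda,\Lambda]$ in the rescaled time $\tau=(t-t_n)/h_n$.

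First, work in a coordinate patch $U$ around $x_\infty$, chosen (say geodesic normal at $x_\infty$) so that the metric equals $\mathrm{Id}$ at $x_\infty$ and depends smoothly on the point. Set $V_n(\tau,y):=\sqrt{h_n}\,v_n(t_n+h_n\tau,\,x_n+h_n y)$ and similarly $\widetilde V_n(\tau,y)=\sqrt{h_n}\,\tilde v_n(t_n+h_n\tau,\,x_n+h_n y)=\Psi(x_n+h_n y)\,v(\tau,y+\frac{x_n-x_\infty}{h_n}\text{-shift})$ (the shift disappears modulo an $o(1)_{\HutL_\infty}$ term by Lemma \ref{lemmedefconc}). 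A direct computation shows $V_n$ satisfies
\begin{equation*}
\partial_\tau^2 V_n-\Delta_{g_n}V_n+h_n^2\,V_n+h_n\,a(x_n+h_n y)\,\partial_\tau V_n=0,
\end{equation*}
where $\Delta_{g_n}$ is the Laplace--Beltrami operator associated with the rescaled metric $g_n(y)=g(x_n+h_n y)$. By smoothness of $g$ and convergence $x_n\to x_\infty$, the coefficients of $\Delta_{g_n}$ converge to those of the constant-coefficient flat Laplacian $\Delta_\infty$ uniformly on any compact set of $\R^3$, together with all derivatives.

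Next, by density of $C_0^\infty(\R^3)\times C_0^\infty(\R^3)$ in $\HutL_\infty$, combined with energy estimates for both the damped equation on $M$ and the flat equation on $T_{x_\infty}M$, one reduces to the case $(\varphi,\psi)\in C_0^\infty$; the error in the $\nort{\cdot}$ norm is then controlled by the approximation error in $\HutL_\infty$ via Strichartz estimates. Under this assumption, finite speed of propagation applied to the flat equation ensures that $v(\tau,\cdot)$ is supported in a fixed ball $B(0,R+\Lambda)$ for $|\tau|\leq\Lambda$; similarly for $V_n$, for $n$ large enough so that $g_n$ is uniformly close to $g_\infty$ and propagation speeds are comparable. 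In particular, the rescaled solutions live in a region where $\Psi\equiv 1$ and where the coordinate chart is well defined, so $\widetilde V_n$ and $v$ can be identified on this support.

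Now set $W_n=V_n-v$. It has zero Cauchy data at $\tau=0$ (modulo an $o(1)_{\HutL_\infty}$ term already absorbed) and satisfies
\begin{equation*}
\partial_\tau^2 W_n-\Delta_{g_n}W_n=(\Delta_{g_n}-\Delta_\infty)v-h_n^2 V_n-h_n\,a(x_n+h_n y)\,\partial_\tau V_n.
\end{equation*}
Each term on the right-hand side is $o(1)$ in $L^1([-\Lambda,\Lambda],L^2(\R^3))$: the first because $\Delta_{g_n}\to\Delta_\infty$ uniformly on the compact support of $v$ and $v$ is smooth in time; the other two because of the explicit factors $h_n^2$ and $h_n$ combined with the uniform energy bound on $V_n$. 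Applying the Strichartz and energy estimates for the variable-coefficient damped wave equation (Proposition at the start of Subsection 1.1, specialised to $\R^3$ with compactly supported data, as in Kapitanski's theorem) on the interval $[-\Lambda,\Lambda]$, we obtain $\nort{W_n}_{[-\Lambda,\Lambda]\times\R^3}\to 0$. Going back to the original variables via the scaling invariance of $\nort{\cdot}$ recalled in Subsection \ref{defnort}, this yields precisely $\limsu{n}{\infty}\nort{\tilde v_n-v_n}_{[t_n-\Lambda h_n,t_n+\Lambda h_n]\times M}=0$, whence the stated convergence as $\Lambda\to\infty$ (taken after $n\to\infty$).

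\textbf{Main obstacle.} The principal technical difficulty is the compatibility between the rescaled Laplacian $\Delta_{g_n}$ and the flat one $\Delta_\infty$: one needs the convergence $\Delta_{g_n}\to\Delta_\infty$ strong enough that $(\Delta_{g_n}-\Delta_\infty)v$ tends to $0$ in $L^1_\tau L^2_y$ on the compact support of $v$, which forces the initial choice of coordinates (normal at $x_\infty$) and forces the preliminary reduction to compactly supported profiles. The damping term is easier and disappears under rescaling thanks to the gain of a factor $h_n$, exactly as in the flat-metric, undamped analysis of \cite{ibrahim2004gon}.
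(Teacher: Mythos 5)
Your proposal is correct and follows essentially the route the paper itself indicates: the paper delegates this lemma to Lemma~2.2 of S.~Ibrahim \cite{ibrahim2004gon} with the remark that the damping term disappears after rescaling, and then carries out precisely your three $o(1)$ error terms (rescaled mass $h_n^2$, rescaled damping $h_n a$, and the metric deviation $\Box_\infty-\Box_n$) in the estimate (\ref{dampingdisappear}) of Section~\ref{sectmodifibrahim} for the more complicated nonlinear case. Two small remarks: (i) there is in fact no spatial shift in $\widetilde V_n$ since $\tilde v_n$ is centred at $x_n$ by definition, so $\widetilde V_n(\tau,y)=\Psi(x_n+h_ny)\,v(\tau,y)$ outright; (ii) the uniformity in $n$ of the Strichartz constants for $\partial_\tau^2-\Delta_{g_n}$ on $[-\Lambda,\Lambda]$ is most cleanly seen by noting these are the unrescaled Kapitanski constants on $M$ over intervals of length $\Lambda h_n\to 0$, hence bounded.
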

\begin{corollaire}
\label{corconclintproche}
With the notation of the Lemma, if $\tilde{t_n}=t_n+(C+\petito{1})h_n$, then $(v_n,\partial_t v_n)_{|t=\tilde{t_n}}$ is a concentrating data associated with $[(v(C),\partial_t v), \underline{h}, \underline{x}]$.
\end{corollaire}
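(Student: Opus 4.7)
The corollary is essentially a direct combination of Lemma \ref{lmlinproche} with an explicit computation of the rescaled profile at the shifted time, plus continuity of the free flow. The plan is the following.

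First, fix $\Lambda > |C| + 1$. Then for $n$ large, $\tilde{t}_n = t_n + (C + o(1))h_n$ lies in the interval $[t_n - \Lambda h_n, t_n + \Lambda h_n]$, so Lemma \ref{lmlinproche} gives
\bna
\nor{(v_n - \tilde{v}_n)(\tilde{t}_n)}{H^1(M)} + \nor{(\partial_t v_n - \partial_t \tilde{v}_n)(\tilde{t}_n)}{L^2(M)} \leq \varepsilon_\Lambda + o(1)_{n\to\infty},
\ena
with $\varepsilon_\Lambda \to 0$ as $\Lambda \to \infty$. By a diagonal extraction argument in $\Lambda$, this reduces the question to showing that $(\tilde{v}_n(\tilde{t}_n), \partial_t \tilde{v}_n(\tilde{t}_n))$ is a concentrating data associated with $[(v(C), \partial_t v(C)), \underline{h}, \underline{x}]$.

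Second, I compute $\tilde{v}_n$ directly. In the coordinate chart $(U, \Phi)$ one has
\bna
\tilde{v}_n(\tilde{t}_n, x) = \Psi(x) h_n^{-1/2} v\!\left(C + o(1),\, \frac{x - x_n}{h_n}\right), \quad
\partial_t \tilde{v}_n(\tilde{t}_n, x) = \Psi(x) h_n^{-3/2} (\partial_t v)\!\left(C + o(1),\, \frac{x - x_n}{h_n}\right).
\ena
This is precisely of the form (\ref{formconcentrdata}) with profile $(v(C+o(1)), \partial_t v(C+o(1)))$ instead of $(v(C), \partial_t v(C))$.

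Third, I replace the profile evaluated at $C + o(1)$ by the profile at $C$. The free flow on $T_{x_\infty}M$ with the frozen metric is continuous $\R \to \dot{H}^1 \times L^2$, so $(v(C+o(1)), \partial_t v(C+o(1))) \to (v(C), \partial_t v(C))$ in $\mathcal{E}_{x_\infty}$. By scale invariance of the $\dot{H}^1(\R^3) \times L^2(\R^3)$ norm under $(f, g) \mapsto h_n^{-1/2}(f, h_n^{-1} g)(\cdot/h_n)$, plugging a profile which is $o(1)$ in $\mathcal{E}_{x_\infty}$ produces an $o(1)$ term in $\HutL$; hence the two concentrating data differ by a sequence going to $0$ in the energy space, and by Lemma \ref{lemmedefconc} they define the same class.

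The only slightly delicate point is the intertwining of the two limits (in $n$ and in $\Lambda$), but it is handled by the standard diagonal extraction: since the estimate of Lemma \ref{lmlinproche} is uniform in $t$ on $[t_n - \Lambda h_n, t_n + \Lambda h_n]$ and decays with $\Lambda$, and the continuity step is independent of $\Lambda$, the total error in $\HutL$ between $(v_n(\tilde{t}_n), \partial_t v_n(\tilde{t}_n))$ and the representative concentrating data for $[(v(C), \partial_t v(C)), \underline{h}, \underline{x}]$ tends to $0$. No serious obstacle is expected here; the work has all been done in Lemma \ref{lmlinproche}.
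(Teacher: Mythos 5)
Your proof is correct, and since the paper states this corollary without a proof (treating it as an immediate consequence of Lemma \ref{lmlinproche}), your reconstruction supplies precisely the argument the author intends: restrict the $L^\infty H^1 \times L^\infty L^2$ part of the lemma to the single time $\tilde t_n$, compute $(\tilde v_n,\partial_t\tilde v_n)(\tilde t_n)$ explicitly from the scaling, and use continuity of the free flow on $\mathcal{E}_{x_\infty}$ to trade the profile at $C+o(1)$ for the one at $C$, with the error transported to $\HutL$ by uniform boundedness of the rescaling map (which is the content of Lemma \ref{lmnormconcwave}). One small stylistic remark: no ``diagonal extraction'' is actually needed in $\Lambda$; since $\limsup_n\nor{(v_n-\tilde v_n)(\tilde t_n,\cdot)}{\HutL}$ is a number that does not depend on $\Lambda$, yet is bounded by $\varepsilon_\Lambda$ for every $\Lambda>|C|+1$, one simply lets $\Lambda\to\infty$ in the bound.
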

Moreover, Lemma 2.3 of S. Ibrahim \cite{ibrahim2004gon} yields the "non reconcentration" property for linear concentrating waves.
\begin{lemme}
\label{lmnonreconcentr}
Let $\underline{v}=[(\varphi,\psi, \underline{h}, \underline{x}, \underline{t})]$ be a linear (possibly damped) concentrating wave. 
Consider the interval $[-T,T]$ containing $t_{\infty}$, satisfying the following
non-focusing property (see Definition \ref{defcouplefocus})
\bnan
\label{nonfocusing}
mes\left(F_{x,x_{\infty},s}\right)=0 \quad \forall x\in M \textnormal{ and } s\neq 0 \textnormal{ such that } t_{\infty}+s\in [-T,T].
\enan
Then, if we set $I_n^{1,\Lambda}=[-T,t_n-\Lambda h_n]$ and  $I_n^{3,\Lambda}=]t_n+\Lambda h_n,T]$, we have
\bna
\varlimsup_n \nor{v_n}{L^{\infty}(I_n^{1,\Lambda}\cup I_n^{3,\Lambda},L^6(M))} \tend{\Lambda}{\infty} 0\\
\varlimsup_n \nor{v_n}{L^{5}(I_n^{1,\Lambda}\cup I_n^{3,\Lambda},L^{10}(M))} \tend{\Lambda}{\infty} 0.
\ena
\end{lemme}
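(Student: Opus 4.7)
The plan is to decompose $I_n^{1,\Lambda}\cup I_n^{3,\Lambda}$ into a \emph{near} regime at scale $h_n$ around $t_n$ and a complementary \emph{far} regime, and to handle each by a different tool. By a diagonal extraction from Lemma~\ref{lmlinproche} I would pick $A_n\to\infty$ with $A_nh_n\to 0$ such that $\nort{v_n-\tilde v_n}_{[t_n-A_nh_n,t_n+A_nh_n]}\to 0$, where
\[
\tilde v_n(t,x)=h_n^{-1/2}\Psi(x)\,v\!\left(\tfrac{t-t_n}{h_n},\tfrac{x-x_n}{h_n}\right)
\]
and $v$ is the flat-wave solution on $T_{x_\infty}M$ with Cauchy data $(\varphi,\psi)$. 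The near regime is then $\{\Lambda h_n\le|t-t_n|\le A_nh_n\}$ and the far regime is $[-T,t_n-A_nh_n]\cup[t_n+A_nh_n,T]$.

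On the near regime, the change of variables $(t,x)\mapsto((t-t_n)/h_n,(x-x_n)/h_n)$ is an isometry for $L^5L^{10}$ and, up to the cut-off $\Psi$ (which equals $1$ since $|x-x_n|\le A_nh_n\to 0$), for $L^\infty L^6$. Its image is $\{\Lambda\le|s|\le A_n\}\times \R^3$, giving
\[
\nor{\tilde v_n}{L^5L^{10}(\text{near})}\le\nor{v}{L^5L^{10}(|s|\ge\Lambda)},\qquad \nor{\tilde v_n}{L^\infty L^6(\text{near})}\le\sup_{|s|\ge\Lambda}\nor{v(s)}{L^6(\R^3)}.
\]
Since $v\in L^5L^{10}(\R\times\R^3)$ by Strichartz, the first right-hand side vanishes as $\Lambda\to\infty$; and $\nor{v(s)}{L^6(\R^3)}\to 0$ as $|s|\to\infty$ follows from the standard $|s|^{-1}$ dispersion on $\R^3$ (via density from smooth compactly supported Cauchy data and the spherical mean formula), handling the second.

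On the far regime I argue by contradiction: suppose there exist $\delta>0$ and $t_n^*$ in this set with $\nor{v_n(t_n^*)}{L^6(M)}\ge\delta$. Extract $t_n^*\to t^*\in[-T,T]$ and the microlocal defect measure $\mu^t$ of $(v_n)$ via Lemma~\ref{lmmeasuredamped}. When $t^*\neq t_\infty$, the value $\mu^{t_\infty}$ is the explicit Dirac-times-angular measure at $x_\infty$ recorded just after Lemma~\ref{lmpseudoconc}, so the transport equation of Lemma~\ref{lmmeasuredamped} shows that the spatial projection of $\mu^{t^*}$ assigns to any $x\in M$ a mass proportional to the angular measure of $F_{x_\infty,x,t^*-t_\infty}$, which vanishes by the non-focusing hypothesis~\eqref{nonfocusing}. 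Since $v_n(t^*)\rightharpoonup 0$ in $H^1(M)$ (the underlying concentrating data is weakly null and the linear flow preserves this), the concentration-compactness principle for the critical embedding $H^1\hookrightarrow L^6$ then forces $\nor{v_n(t^*)}{L^6(M)}\to 0$, contradicting the assumption (after using the uniform-in-$t$ control of $\mu^t$ provided by Lemma~\ref{lmmeasuredamped} to transfer the bound from $t^*$ to $t_n^*$). The residual case $t^*=t_\infty$ (which may occur because $A_nh_n\to 0$) is handled by the same rescaling as on the near regime, reducing to the flat dispersion $\nor{v(s_n^*)}{L^6(\R^3)}\to 0$ with $|s_n^*|\ge A_n\to\infty$.

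Finally, the $L^5L^{10}$ bound follows by the same dichotomy: on the near regime by rescaling and the global Strichartz tail of $v$ on $\R\times\R^3$, on the far regime by upgrading the just-obtained $L^\infty L^6$ smallness through local Strichartz estimates on a fixed finite cover of sub-intervals, combined with the preservation of strict $h_n$-oscillation by the linear flow (Proposition~\ref{propproposcilamorti}) to rule out hidden high-frequency mass. The main obstacle in the whole argument is precisely this last step: translating pointwise-in-time $L^6$ smallness into the critical $L^5L^{10}$ norm without losing uniformity in $t$; it is here that the non-focusing condition and the dispersion at scale $h_n$ interact most delicately, and where I would closely follow the Fourier-side arguments of Ibrahim~\cite{ibrahim2004gon} on the tangent space.
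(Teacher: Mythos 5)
Your near/far dichotomy is an acceptable reorganization of the argument, and two thirds of it essentially reproduces the paper's proof: your near regime (rescaling at scale $h_n$, reduction to $\|v(s)\|_{L^6(\R^3)}\to 0$ by flat dispersion) and your far regime with $t^*\neq t_\infty$ (concentration--compactness combined with propagation of the microlocal defect measure under the damped transport equation, and the non-focusing hypothesis killing the fibered Dirac mass $\delta_{x_\infty}\otimes(\mathrm{angular})$) both match the paper's two cases $\tau\neq 0$. However, two things go wrong.

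First, the residual case $t^*=t_\infty$ in the far regime is not handled: you propose to apply ``the same rescaling as on the near regime,'' i.e.\ to compare $v_n(t_n^*)$ with $\tilde v_n(t_n^*)=h_n^{-1/2}\Psi(x)\, v\!\left(\frac{t_n^*-t_n}{h_n},\cdot\right)$, but Lemma~\ref{lmlinproche} (and your diagonal extraction of $A_n$) only validates this approximation on $[t_n-A_nh_n,t_n+A_nh_n]$. By construction your far regime consists of times with $|t_n^*-t_n|>A_nh_n$, precisely where this approximation has not been established --- the metric curvature is no longer negligible at that rescaled time and the flat wave $v$ does not control $v_n$ there. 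The paper closes this case differently: it rescales at scale $s_n:=t_n^*-t_n$ rather than $h_n$, so the metric deviation becomes small simultaneously and $\tilde v_n(s,y)=\sqrt{s_n}\,v_n(s_ns,s_ny+x_n)$ concentrates at scale $h_n/s_n\to 0$ at $s=0$; one then runs the measure argument for the rescaled operator $\Box_n$, showing the measure propagates along the constant-coefficient Hamiltonian $H_{|\xi|}$, which rules out concentration at $s=1$. Your proposal simply does not contain this step.

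Second, you designate translating $L^\infty L^6$ smallness into $L^5L^{10}$ smallness as ``the main obstacle,'' invoking local Strichartz on a fine cover, propagation of $h_n$-oscillation, and Fourier-side arguments. This is a misdiagnosis. Once the $L^\infty L^6$ estimate is established, the $L^5L^{10}$ one follows immediately by interpolation: $L^5L^{10}$ interpolates between $L^\infty L^6$ (weight $1/5$) and the admissible Strichartz norm $L^4L^{12}$ (weight $4/5$), the latter being uniformly bounded on $[-T,T]$ by the energy; so
\[
\nor{v_n}{L^5(I^{1,\Lambda}_n\cup I^{3,\Lambda}_n,L^{10})}\le \nor{v_n}{L^\infty(I^{1,\Lambda}_n\cup I^{3,\Lambda}_n,L^6)}^{1/5}\,\nor{v_n}{L^4([-T,T],L^{12})}^{4/5}.
\]
No further oscillation or dispersion input is needed. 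The paper states exactly this and it is one line.
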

\begin{proof}[Sketch of the proof of Lemma \ref{lmnonreconcentr} in the damped case]To simplify the notation, we can assume $t_n=0$.
In \cite{ibrahim2004gon}, the proof is made by contradiction, assuming the existence of a subsequence (still denoted $v_n$) such that $\nor{v_{n}(s_n)}{L^6(M)} \rightarrow C>0$ and $\frac{|s_n|}{h_n}\rightarrow \infty$. If $s_n \rightarrow \tau \neq 0$, using Concentration-Compactness principle of \cite{Lionsconcentration}, we are led to prove that the microlocal defect measure $\mu$ associated to $v_n(t_n)$ satisfies $\mu(\left\{y\right\}\times S^2)=0$ for any $y\in M$. We use the same argument for the damped equation except that in that case, the measure $\mu^t$ associated to $v_n(t)$ is not solution of the exact transport equation but of a damped transport equation (see for Lemma \ref{lmmeasuredamped}). Yet, the non focusing assumption (\ref{nonfocusing}) still implies $\mu^t(\left\{y\right\}\times S^2)=0$ for all $y\in M$ and $t\neq 0$, which allows to conclude similarly. 

In the case $\tau=0$, we use in local coordinates the rescaled function $\tilde{v}_n(s,y)=\sqrt{s_n}v_n(s_n s,s_ny+x_n)$. $\tilde{v}_n$ at time $s=0$ is a concentrating data at scale $h_n/s_n$. We prove $\lim \nor{\tilde{v}_n(1,\cdot)}{L^6(\R^3)}=0$. Again by concentration compactness, it is enough to prove that the microlocal defect measure $\mu^s$ of $\tilde{v}_n$ propagates along the curves of the hamiltonian flow with constant coefficient $H_{|\xi|}$. Since $v_n$ is solution of $\Box v_n+v_n+a(x)\partial_t v_n=0$, $\tilde{v}_n$ is solution of $\Box_n \tilde{v}_n+s_n^2\tilde{v}_n+s_n a(s_n\cdot+x_n)\partial_t \tilde{v}_n=0$ where $\Box_n$ is a suitably rescaled D'Alembert operator. Since the additional terms $s_n^2 \tilde{v}_n+s_n a(s_n\cdot+x_n)\partial_t \tilde{v}_n$ converges to $0$ in $L^1L^2$, we can finish the proof as in Lemma 2.3 of  \cite{ibrahim2004gon} by proving that $\mu^s$ propagates as if $\Box_n$ was replaced by $\Box_{\infty}$, that is along the hamiltonian $H_{|\xi|}$. 

The estimate in norm $L^5L^{10}$ is obtained by interpolation of $L^{\infty}L^6$ with another bounded Strichartz norm.
\end{proof}
In the specific case of $S^3$, Lemma 4.2 of \cite{ibrahim2004gon} allows to describe precisely the behavior of concentrating wave for large times, as follows. 
\begin{lemme}
\label{S3perio}
Let $\underline{p}$ be a sequence of solutions of 
\begin{eqnarray*}
\left\lbrace
\begin{array}{rcl}%argument r=alignement à droite puis center et left
\Box p_n&=&0 \quad \textnormal{on}\quad [0,+\infty [\times M\\
(p_n(0),\partial_t p_n(0))&=&(\varphi_n,\psi_n)
\end{array}
\right.
\end{eqnarray*}
where $(\varphi_n,\psi_n)$ is weakly convergent to $(0,0)$ in $\HutL$ . Then, we have
$$p_n(t+\pi,x)=-p_n(t,-x)+\petito{1}(t)$$
where the $\petito{1}(t)$ is small in the energy space. The same holds for solutions of $\Box u_n+u_n$.

In particular, if $\underline{p}$ is a concentrating wave associated with data $[(\varphi,\psi, \underline{h}, \underline{x}, \underline{t})]$, then, for any $j\in \N$, $p_n(t+j\pi,x)$ is a linear concentrating wave associated with $[(-1)^j(\varphi,\psi)((-1)^j.), \underline{h},(-1)^j \underline{x}, \underline{t})]$
\end{lemme}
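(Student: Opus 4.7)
The proof would rest on the explicit spectral structure of the Laplacian on $S^3$. Recall that $-\Delta_{S^3}$ has eigenvalues $\lambda_k=k(k+2)$ for $k\in\N$, whose eigenspaces $H_k$ are spanned by restrictions to $S^3$ of harmonic polynomials of degree $k$ in $\R^4$. The key geometric ingredient is that any $e_k\in H_k$ satisfies $e_k(-x)=(-1)^k e_k(x)$, since it is the restriction of a homogeneous polynomial of degree $k$.

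The plan is to treat the Klein--Gordon case $\Box u_n+u_n=0$ first, where the computation is exact. For this equation the frequencies are $\omega_k=\sqrt{\lambda_k+1}=\sqrt{(k+1)^2}=k+1\in\N$, so decomposing $(p_n(0),\partial_t p_n(0))=\sum_k(\varphi_{n,k},\psi_{n,k})$ on the spectral subspaces gives
\bna
p_n(t+\pi,x)=\sum_k\Big[\cos((k+1)(t+\pi))\varphi_{n,k}(x)+\tfrac{\sin((k+1)(t+\pi))}{k+1}\psi_{n,k}(x)\Big]
\ena
and the identity $\cos((k+1)(t+\pi))=(-1)^{k+1}\cos((k+1)t)$ (similarly for $\sin$) combined with $e_k(-x)=(-1)^ke_k(x)$ yields $p_n(t+\pi,x)=-p_n(t,-x)$ exactly, with no error term at all.

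For the pure wave equation $\Box p_n=0$ the frequencies become $\tilde\omega_k=\sqrt{k(k+2)}=(k+1)\sqrt{1-(k+1)^{-2}}=(k+1)-\frac{1}{2(k+1)}+\grando{k^{-3}}$, so the $\pi$-periodicity is only approximate: one computes $\cos(\tilde\omega_k\pi)-(-1)^{k+1}=\grando{k^{-2}}$ and $\frac{\sin(\tilde\omega_k\pi)}{\tilde\omega_k}=\grando{k^{-2}}$. Hence the difference between the two sides of the claimed identity is, at $t=0$, the image of $(\varphi_n,\psi_n)$ under a Fourier multiplier whose symbol on $H_k$ is $\grando{k^{-2}}$; such an operator is compact from $H^1\times L^2$ to itself (it essentially smooths by two derivatives). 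The standard low/high-frequency splitting---for $k\leq N$ weak convergence to $0$ gives $o(1)$, for $k>N$ the multiplier is bounded by $\grando{N^{-2}}$ times the energy---proves that this difference is $o(1)$ in $\HutL$; letting both sides solve the (same) linear equation and using conservation of energy, this error remains $o(1)$ uniformly on $[-T,T]$.

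The last assertion about concentrating waves is then a direct consequence: iterating gives $p_n(t+j\pi,x)=(-1)^jp_n(t,(-1)^jx)+\petito{1}$, and if $p_n$ is a concentrating wave associated to $[(\varphi,\psi),\underline{h},\underline{x},\underline{t}]$, then the sequence $x\mapsto p_n(t,(-1)^jx)$ is obtained by pulling back the initial data through the antipodal map, whose differential at $x_\infty$ is $-\mathrm{Id}$; by Lemma \ref{lemmedefconc} (coordinate-invariance of the definition of concentrating data) this produces a concentrating wave at scale $\underline{h}$, position $(-1)^j\underline{x}$ and profile $(-1)^j(\varphi((-1)^j\cdot),\psi((-1)^j\cdot))$. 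The main subtlety I expect is only book-keeping: controlling the compact-operator error uniformly in $t$ on $[-T,T]$ and tracking how the antipodal map acts on the tangent-space profile, particularly the minus signs coming both from the factor $(-1)^{k+1}$ in the spectral sum and the pullback of the profile.
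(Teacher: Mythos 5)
Your argument is correct, and it takes a genuinely different route from the paper's. The paper simply cites Lemma 4.2 of Ibrahim \cite{ibrahim2004gon} for the wave case $\Box p_n=0$, and then transfers the result to $\Box u_n+u_n=0$ by the soft comparison argument that $r_n=u_n-v_n$ (two solutions of these two equations with the same weakly convergent data) solves $\Box r_n=u_n$ with zero initial data, and $u_n\to 0$ strongly in $L^\infty_t L^2_x$ by Aubin--Lions, hence $r_n\to 0$ in energy. You instead prove both cases self-containedly by spectral analysis on $S^3$: for Klein--Gordon the frequencies $\omega_k=\sqrt{\lambda_k+1}=k+1$ are integers and the antiperiodicity is an exact identity with no error at all, which is a pleasant observation that the paper does not make explicit; for the wave equation you then obtain the $\petito{1}$ by perturbing the frequencies from $\sqrt{k(k+2)}$ to $k+1$ and a low/high frequency splitting. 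This buys independence from the cited reference and makes the origin of the minus signs and antipodal map completely transparent. One small quantitative slip: when measured correctly as a map on $\HutL$ (i.e.\ after conjugating the matrix on the $k$-th eigenspace by $\mathrm{diag}(\omega_k,1)$), the off-diagonal entries of the error multiplier, coming from $\omega_k\sin(\omega_k\pi)=\pm\frac{\pi}{2}+\grando{k^{-2}}$, are $\grando{k^{-1}}$, not $\grando{k^{-2}}$; so the operator smooths by one derivative, not two. This does not affect the conclusion---compactness and the low/high splitting argument go through verbatim---but the claim of a uniformly $\grando{k^{-2}}$ symbol should be weakened to $\grando{k^{-1}}$. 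The reduction of the concentrating-wave statement to iterated antipodal pullbacks via Lemma \ref{lemmedefconc} is handled correctly.
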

In the previous lemma, $-x$ refers to the embedding of $S^3$ into $\R^4$. Moreover, the notation $(\varphi,\psi)(-.)$ could be written more rigorously $(\varphi,\psi)(D_{\infty}I.)$ where $D_{\infty}I$ is the differential at the point $x_{\infty}$ of the application $I:x\mapsto -x$ defined from $S^3$ into itself. Actually, we are identifying the tangent plane at the south pole with the one on the north pole by the application $x\mapsto -x$ on $\R^4$.

The fact that the result remains true for the equation $\Box u+u=0$ comes from the fact that for initial data weakly convergent to zero, the solutions of $\Box u=0$ and $\Box v+v=0$ with same data are asymptotically close in the energy space. This can be proved by observing that for a weakly convergent sequence of solutions $u_n$ the Aubin-Lions Lemma yields that $u_n$ converges strongly to $0$ in $L^{\infty}([-T,T],L^2)$. So $r_n=u_n-v_n$ is solution of $\Box r_n=u_n$ and converges strongly in $\HutL$. 

\subsubsection{Extraction of times and cores of concentration}
In this section, $h_n$ is a fixed sequence in $\R_+^*$ converging to $0$. For simplicity, we will denote it by $h$ and $u_h$ for sequences of functions. The main purpose of this subsection is the proof of the following proposition, which is the profile decomposition for $h$-oscillatory sequences. It easily implies Theorem \ref{thmdecompositionL} when combined with Proposition \ref{propextractscales}.
\begin{prop}
\label{propextrconc}
Let ($u_h$) be a $h$-oscillatory sequence of solutions to the damped Klein-Gordon equation (\ref{soldampedscale}). Then, up to extraction, there exist damped linear concentrating waves $p_h^{k}$, as defined in Definition \ref{defconcwave}, associated to concentrating data $[(\varphi^{(k)},\psi^{(k)}),\underline{h},\underline{x}^{(k)},\underline{t}^{(k)}]$, such that for any $l\in \N^*$, and up to a subsequence,  
\bnan
&v_h(t,x)= \sum_{j=1}^l p_n^{(j)}(t,x)+w_n^{(l)}(t,x),\label{decomplincore} \\
&\forall T>0,\quad \underset{n\to \infty}{\varlimsup} \nor{w_h^{(l)}}{L^{\infty}(]-T,T[,L^6(M))}  \tend{l}{\infty}0\label{hresultL6} \\
&\nor{(v_h,\partial_tv_h)}{\HutL}^2=\sum_{j=1}^l \nor{(p_h^{(j)},\partial_t p_h^{(j)})}{\HutL}^2 + \nor{(w_h^{(l)},\partial_t w_h^{(l)})}{\HutL}^2 +  \petito{1}, \textnormal{ as } h\to \infty,\label{horthogNRJ}\\
&\qquad\qquad\qquad\qquad\qquad\qquad\qquad\qquad\qquad\qquad\qquad\textnormal{ uniformly for } t\in [-T,T].\nonumber
\enan
Moreover, if $2T<T_{focus}$, for any $j\neq k$, we have $(\underline{x}^{(k)},\underline{t}^{(k)})\perp_h (\underline{x}^{(j)},\underline{t}^{(j)})$ according to Definition \ref{deforthog}.\\
If $M=S^3$ and $a\equiv 0$ (undamped solutions), but with $T$ eventually large, $((-1)^m\underline{x}^{(k)},\underline{t}^{(k)}+m\pi)$ is orthogonal to $(\underline{x}^{(j)},\underline{t}^{(j)})$ for any $m\in \Z$ and $j\neq k$.
\end{prop}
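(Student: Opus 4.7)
The plan is to follow a greedy profile extraction modelled on Bahouri--G\'erard \cite{BahouriGerard} and Gallagher--G\'erard \cite{PGGall2001}, adapted to the compact manifold $M$ via the tools set up in Sections 1.2--1.4: the weak tracking $D_h$ of Definition~\ref{defweaktrack}, the identification of shifted cores via Corollary~\ref{corconclintproche}, the non-reconcentration Lemma~\ref{lmnonreconcentr} (this is precisely where the hypothesis $2T<T_{focus}$ enters essentially), the conservation of orthogonality for microlocal defect measures (Lemma~\ref{lmconserorth}), and the refined Sobolev inequality already invoked in the proof of Proposition~\ref{propextractscales}.

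For the first step, on any bounded $h$-oscillatory sequence $(u_h)$ solving (\ref{soldampedscale}) I would define
\[
\nu(u_h) := \sup\left\{\, \nor{(\varphi,\psi)}{\HutL_{x_\infty}} : \exists\, t_h\in[-T,T],\ x_h\to x_\infty,\ D_h(u_h(t_h),\partial_t u_h(t_h))\rightharpoonup (\varphi,\psi) \,\right\},
\]
the supremum being taken over all admissible subsequences. I would then prove the lower bound $\nu(u_h)^2 \gtrsim \limsup_{h\to 0} \nor{u_h}{L^\infty([-T,T],L^6(M))}^3$, up to the fixed energy bound on $(u_h,\partial_t u_h)$, using the refined Sobolev inequality $\nor{f}{L^6(\R^3)}\lesssim \nor{f}{H^1}^{1/3}\nor{f}{B^1_{2,\infty}}^{2/3}$ of Bahouri--G\'erard together with the $h$-oscillation of $u_h$: a nontrivial $L^6$ mass at some time $t_h$ forces, by Proposition~\ref{propBinftamorti} and Lemma~\ref{equivBinft}, a nontrivial Besov mass at frequency $\sim 1/h$, and a covering plus diagonal argument then localises it on a core $(t_h,x_h)$ admitting a nonzero weak limit under $D_h$.

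For the iteration, if $\nu(u_h)>0$ I would pick (up to extraction) a first profile $p_h^{(1)}$ with data of $\HutL_{x_\infty}$-norm at least $\nu(u_h)/2$, set $w_h^{(1)}:=u_h-p_h^{(1)}$, and iterate. The key claim --- and the main obstacle --- is that at stage $l$ the new core can always be chosen orthogonal (Definition~\ref{deforthog}) to all previous ones. If a candidate $(\underline{x}^{(l)},\underline{t}^{(l)})$ fails orthogonality with some $(\underline{x}^{(j)},\underline{t}^{(j)})$, then after extraction $x_\infty^{(l)}=x_\infty^{(j)}$ and both $(t_h^{(l)}-t_h^{(j)})/h$ and $(x_h^{(l)}-x_h^{(j)})/h$ are bounded; Corollary~\ref{corconclintproche} identifies $(p_h^{(j)},\partial_t p_h^{(j)})|_{t=t_h^{(l)}}$ as a concentrating datum with core $(\underline{x}^{(l)},\underline{t}^{(l)})$, and Lemma~\ref{lmnonreconcentr} applied on $[-T,T]$ (which is where $2T<T_{focus}$ is needed) forbids $p_h^{(j)}$ from carrying further concentration away from $t_h^{(j)}$; the other $p_h^{(k)}$, $k\neq j$, satisfy $D_h^{(l)}(p_h^{(k)}(t_h^{(l)}))\rightharpoonup 0$ by their own orthogonality to the $l$-th core, and combined with the inductive relation $D_h^{(j)}(w_h^{(l-1)}(t_h^{(j)}))\rightharpoonup 0$ the candidate weak limit collapses, contradicting the hypothetical non-trivial extraction. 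Lemma~\ref{lmconserorth} then upgrades core-orthogonality into asymptotic orthogonality of the energy scalar products and yields (\ref{horthogNRJ}); summing and comparing to the initial energy bound forces $\sum_j \nor{(\varphi^{(j)},\psi^{(j)})}{\HutL_{x_\infty}}^2<\infty$ and $\nu(w_h^{(l)})\to 0$, whence Step~1 delivers (\ref{hresultL6}). For $M=S^3$ and $a\equiv 0$, Lemma~\ref{S3perio} shows that each $p_h^{(j)}$ reconcentrates at $(t_h^{(j)}+m\pi,(-1)^m x_h^{(j)})$ for every $m\in\Z$; the orthogonality requirement must accordingly be enforced modulo this $\Z$-action, yielding the variant of Theorem~\ref{thmdecompositionL}'.
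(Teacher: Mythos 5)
Your plan coincides with the paper's own argument: greedily extract cores via the tracker $D_h$ (the paper's $\delta^x,\delta$ play the role of your $\nu^2$), control the $L^\infty L^6$ remainder by the tracker through a refined Sobolev inequality in local charts, show the energy splitting via Lemma~\ref{lmconserorth}, and establish orthogonality of cores a posteriori by the contradiction scheme built from Lemmas~\ref{lmcvceproche} and \ref{lmconcorth}; the $S^3$ variant follows from the $\pi$-periodicity of Lemma~\ref{S3perio} exactly as you describe. The only thing I would flag is where the non-refocusing hypothesis $2T<T_{focus}$ actually enters: it is not needed to handle the candidate's \emph{non}-orthogonal predecessor $p_h^{(j)}$ (there Corollary~\ref{corconclintproche} alone suffices, since $t_h^{(l)}-t_h^{(j)}=O(h)$ and $x_h^{(l)}-x_h^{(j)}=O(h)$), but is needed for the \emph{orthogonal} ones: the convergence $D_h^{(l)}(p_h^{(k)},\partial_t p_h^{(k)})(t_h^{(l)})\rightharpoonup 0$ for $k\perp l$ rests on the second part of Lemma~\ref{lmconcorth}, whose proof invokes the non-reconcentration Lemma~\ref{lmnonreconcentr} and therefore the hypothesis $\left|t_\infty^{(k)}-t_\infty^{(l)}\right|<T_{focus}$. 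Also, the inductive relation you use should be read as $D_h^{(j)}(w_h^{(j)},\partial_t w_h^{(j)})(t_h^{(j)})\rightharpoonup 0$, which is what the extraction gives directly; passing to $w_h^{(l-1)}$ at core $j$ then requires exactly the same non-refocusing ingredient to kill the intermediate profiles $p_h^{(k)}$, $j<k<l$. Note finally that at most one predecessor can be non-orthogonal to the new candidate, since the predecessors are mutually orthogonal by the inductive hypothesis, so the uniqueness of your index $j$ (the paper's $j_K$) is automatic and your simplification is harmless.
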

\begin{remarque}
The assumptions to get the orthogonality of the cores of concentration are related to our lack of understanding of the solutions concentrating in a point $x_1$ where $(x_1,x_2,t)$ is a couple of focus at distance $t$. We know that the solution reconcentrates after a time $t$ in the other focus $x_2$ but we do not know precisely how : can it split into several concentrating waves on $x_2$ with different "rate of concentration"? That is to say with some different $x_n$ converging to $x_2$ but which are orthogonal.   
\end{remarque}
Before getting into the proof of the proposition, we state two lemmas that will be useful in the proof.
Using the notation of Definition \ref{defweaktrack}, denote 
$$\delta^x(\underline{v})=\sup_{\underline{x}}\left\{\nor{\nabla \varphi }{L^2(T_{x_{\infty}M})}^2, D_h^1 v_h\rightharpoonup \varphi, \textnormal{ up to a subsequence} \right\} $$
where the supremum is taken over all the sequences $\underline{x}$ in $M$.

If $v_h \in L^{\infty}([-T,T],H^1(M))$, we denote
$$\delta(\underline{v})=\sup_{\underline{x},\underline{t}}\left\{\nor{\nabla \varphi }{L^2(T_{x_{\infty}M})}^2, D_h^1 v_h(t_h)\rightharpoonup \varphi, \textnormal{ up to a subsequence} \right\}=\sup_{\underline{t}} \delta^x(\underline{v}(t_h,\cdot))$$
where the supremum is taken over all the sequences $\underline{x}=(x_h)$ in $M$ and $\underline{t}=(t_h)$ in $[-T,T]$.
\begin{lemme}
\label{lmmulttrack}
Let $\Psi \in C^{\infty}(M)$. Then, there exists $C>0$ such that for any $\underline{v}$, we have the estimate
\bna
\delta^x(\Psi \underline{v})\leq C\delta^x(\underline{v}).
\ena
\end{lemme}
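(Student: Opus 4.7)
The idea is that at scale $h_n \to 0$ around $x_\infty$, the smooth multiplier $\Psi$ becomes asymptotically a constant, equal to $\Psi(x_\infty)$. So if $D_h^1 v_h \rightharpoonup \varphi$ along a subsequence for some $\underline{x}$ with $x_n \to x_\infty$, we expect $D_h^1(\Psi v_h) \rightharpoonup \Psi(x_\infty)\varphi$ along the same subsequence, which would give $\|\nabla \tilde\varphi\|_{L^2}^2 \leq \|\Psi\|_{L^\infty(M)}^2 \delta^x(\underline{v})$ and hence the claim with $C = \|\Psi\|_{L^\infty(M)}^2$.

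To implement this, I fix any sequence $\underline{x}$ with $x_n \to x_\infty$ and work in a local chart around $x_\infty$ with a cut-off $\Psi_U \in C^\infty_0(U)$ equal to $1$ near $x_\infty$. Since $x_n + h_n x$ lies in any fixed neighborhood of $x_\infty$ for $n$ large and $x$ in a bounded set, the factor $\Psi_U$ drops out on compacts and the rescaled sequence to consider is
\[
f_n(x) \;=\; \Psi(x_n+h_n x)\cdot h_n^{1/2}v_h(x_n+h_n x).
\]
Applying the product rule splits $\nabla_x f_n$ into two pieces. The first, $h_n(\nabla\Psi)(x_n+h_n x)\cdot h_n^{1/2}v_h(x_n+h_n x)$, carries an extra factor $h_n$; using the Sobolev embedding $\dot H^1(\mathbb{R}^3)\hookrightarrow L^6$ (applied to the rescaled bounded sequence $h_n^{1/2}v_h(x_n+h_n\cdot)$, which is bounded in $L^6_{\mathrm{loc}}$), one checks by a direct Hölder estimate on any fixed ball $B(0,R)$ that this piece tends to $0$ strongly in $L^2_{\mathrm{loc}}$. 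The second piece, $\Psi(x_n+h_n x)\cdot\nabla_x[h_n^{1/2}v_h(x_n+h_n x)]$, is the product of a factor that converges to $\Psi(x_\infty)$ uniformly on compacts and a sequence converging weakly in $L^2$ to $\nabla\varphi$ (along the extracted subsequence realising $D_h^1 v_h\rightharpoonup\varphi$). The product therefore converges weakly in $L^2_{\mathrm{loc}}$ to $\Psi(x_\infty)\nabla\varphi$. Combining the two contributions yields $D_h^1(\Psi v_h)\rightharpoonup \Psi(x_\infty)\varphi$.

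To conclude, I take any sequence $\underline{x}$ such that, up to extraction, $D_h^1(\Psi v_h)\rightharpoonup\tilde\varphi$. Since the rescaled $v_h$'s are bounded in $\dot H^1(\mathbb{R}^3)$ (the $\dot H^1$ norm being scale-invariant in dimension $3$), a further subsequence extraction gives $D_h^1 v_h\rightharpoonup\varphi$ for some $\varphi$, and by the previous step $\tilde\varphi=\Psi(x_\infty)\varphi$. Hence
\[
\|\nabla\tilde\varphi\|_{L^2(T_{x_\infty}M)}^2 \;=\; |\Psi(x_\infty)|^2\,\|\nabla\varphi\|_{L^2(T_{x_\infty}M)}^2 \;\leq\; \|\Psi\|_{L^\infty(M)}^2\,\delta^x(\underline{v}),
\]
and taking the supremum over $\underline{x}$ gives $\delta^x(\Psi\underline{v})\leq \|\Psi\|_{L^\infty(M)}^2\,\delta^x(\underline{v})$.

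The only mildly delicate point is showing that the $\nabla\Psi$-piece in the product rule is negligible, since the naïve $L^2$ estimate on the rescaled $v_h$ blows up; the right move is to use the scale-invariant Sobolev embedding to estimate this term in $L^2_{\mathrm{loc}}$ after the change of variables. Once this is in hand, the rest is just weak/strong convergence of products.
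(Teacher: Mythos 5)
Your proof is correct. The paper leaves this lemma to the reader with no printed argument, but what you do --- showing $D_h^1(\Psi v_h)\rightharpoonup\Psi(x_\infty)\varphi$ via the product rule, absorbing the $\nabla\Psi$ term through the scale-invariant $L^6$ bound on $h^{1/2}v_h(x_h+h\,\cdot)$ and a local H\"older estimate, and treating the main term as the product of a factor converging uniformly on compacts with a gradient converging weakly in $L^2$ --- is exactly the expected argument and yields the explicit constant $C=\|\Psi\|_{L^\infty(M)}^2$.
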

The proof is left to the reader.
\begin{lemme}
\label{lmtrackL6}
There exists $C>0$ such that for any $\underline{v}=(v_h)$ a bounded strictly ($h_n$)-oscillatory sequence in $H^1(M)$ 
\bna
\underset{n\to +\infty}{\varlimsup} \nor{v_h}{L^6} \leq C \delta^x(\underline{v})^{1/3}\underset{n\to +\infty}{\varlimsup} \nor{ v_h}{H^1(M)}^{1/6}.
\ena
\end{lemme}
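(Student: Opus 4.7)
My plan is to combine a refined Sobolev inequality on $\R^3$ with a control of a negative-order Besov norm by $\delta^x(\underline{v})$. Using a partition of unity $1=\sum_i \Psi_i^2$ adapted to coordinate charts $(U_i,\Phi_i)$, and invoking Proposition \ref{proposcill} together with Lemma \ref{lmmulttrack}, one reduces the statement to the analogous inequality for the localized sequences $w_n=\Phi_{i*}(\Psi_i v_h)$, which are strictly $(h_n)$-oscillatory on $\R^3$ with $\delta^x(\Psi_i\underline{v})\le C\delta^x(\underline{v})$.

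On $\R^3$ I would apply the refined Sobolev inequality in its strong form
$$\|w\|_{L^6(\R^3)}\leq C\|w\|_{\dot H^1(\R^3)}^{1/3}\|w\|_{\dot B^{-1/2}_{\infty,\infty}(\R^3)}^{2/3},$$
a variant of Lemma 3.5 of \cite{BahouriGerard} (using the weaker Besov norm $\dot B^{-1/2}_{\infty,\infty}$ in place of $\dot B^{1}_{2,\infty}$, which is the relevant one here because only the former can be controlled by $\delta^x$). The core step is then to establish
$$\varlimsup_{n}\|w_n\|_{\dot B^{-1/2}_{\infty,\infty}(\R^3)}^{2}\leq C\,\delta^x(\underline{w}).$$
By strict $(h_n)$-oscillation together with Bernstein's inequality, the dyadic bands $\Delta_k$ with $2^k\notin[c_1/h_n,c_2/h_n]$ contribute negligibly: $2^{-k/2}\|\Delta_k w_n\|_{L^\infty}\to 0$ uniformly. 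For an admissible band, setting $h=2^{-k}$, a direct change of variables yields the representation
$$\Delta_k w_n(x_0)=h^{-1/2}\int\hat\chi(-z)\,g_n(z)\,dz,\qquad g_n(z)=h^{1/2}w_n(x_0+hz),$$
with $\|g_n\|_{\dot H^1(\R^3)}=\|w_n\|_{\dot H^1(\R^3)}$ bounded.

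For each $n$ I would pick a near-maximizer $x_n$ of $\|\Delta_k w_n\|_{L^\infty}$. Up to a subsequence $g_n\rightharpoonup \varphi$ weakly in $\dot H^1(\R^3)$, and because $h/h_n$ stays in a fixed compact interval of $(0,\infty)$, a simple rescaling identifies $g_n$ (up to a weakly continuous change of scale) with a sequence fitting the definition of $\delta^x$, hence $\|\nabla\varphi\|_{L^2}^2\leq C\,\delta^x(\underline{w})$. Since $\hat\chi\in L^{6/5}(\R^3)$ and weak $\dot H^1$-convergence passes to weak $L^6$-convergence by Sobolev embedding, one obtains
$$h^{1/2}|\Delta_k w_n(x_n)|\longrightarrow\Bigl|\int\hat\chi(-z)\varphi(z)\,dz\Bigr|\leq C\|\varphi\|_{L^6}\leq C\sqrt{\delta^x(\underline{w})}.$$
Taking $\varlimsup$ in $n$ and supremum over the admissible bands $k$ yields the Besov bound; inserted into the refined Sobolev inequality and returned to $M$ via Step 1, this closes the proof (the exponent of $\|v_h\|_{H^1}$ being absorbed using its boundedness).

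The main obstacle is the Besov estimate: one must couple the $(h_n)$-oscillation, which localizes the relevant frequencies, with the supremum-over-cores in the definition of $\delta^x$, while simultaneously handling the mismatch between the actual band scale $h=2^{-k}$ and the oscillation scale $h_n$ through a scaling argument, and selecting near-maximizers $x_n$ for which the weak limit of $g_n$ captures the $L^\infty$ norm of $\Delta_k w_n$. The remaining steps are standard applications of refined Sobolev and of the partition-of-unity / Besov-on-$M$ machinery developed in the preceding subsection.
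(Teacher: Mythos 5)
Your reduction of the estimate to $\R^3$ via the partition of unity, Proposition \ref{proposcill}, and Lemma \ref{lmmulttrack} coincides with the paper's. Where you diverge is the $\R^3$ step: the paper simply invokes estimate (4.19) of \cite{PGdefautSobolev}, rescaled from $1$-oscillatory to $(h_n)$-oscillatory sequences by dilation, whereas you reconstruct a proof of that estimate. Your argument --- the G\'erard--Meyer--Oru refined Sobolev inequality $\|w\|_{L^6}\lesssim\|w\|_{\dot H^1}^{1/3}\|w\|_{\dot B^{-1/2}_{\infty,\infty}}^{2/3}$, the dismissal of the Littlewood--Paley bands with $2^k h_n\notin[c_1,c_2]$ through strict oscillation combined with Bernstein, and the identification of the weak $\dot H^1$ limit of $g_n(z)=h^{1/2}w_n(x_n+hz)$ (at a near-maximizer $x_n$, with $h=2^{-k(n)}\sim h_n$) as a profile entering the definition of $\delta^x$ --- is essentially the very mechanism underlying the cited estimate, so your proposal is correct and self-contained where the paper relies on a citation. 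Two small technical points would need tightening in a complete writeup: first, in passing from $\varlimsup_n\sup_k$ to the band analysis, diagonalize first (for each $n$ pick $k(n)$ realizing the supremum up to $o(1)$, then split along that subsequence according to whether $2^{k(n)}h_n$ stays in a compact subinterval of $(0,\infty)$), since otherwise the admissible and inadmissible regimes are not separated uniformly enough; second, the claim that the rescaling $f\mapsto\lambda_n^{1/2}f(\lambda_n\cdot)$ with $\lambda_n\to\lambda>0$ commutes with weak $\dot H^1$ limits, which you invoke as a ``weakly continuous change of scale,'' and the passage from $\|\nabla\varphi\|_{L^2}^2\le C\delta^x$ to the integral bound $\left|\int\check\chi(-z)\varphi(z)\,dz\right|\le C\|\varphi\|_{L^6}\le C'\sqrt{\delta^x}$ both deserve a line of justification (strong $L^2$ convergence of the rescaled test gradients and Sobolev embedding together with $\check\chi\in L^{6/5}$, respectively). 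The final absorption of the exponent $1/3$ on $\|v_h\|_{H^1}$ into $1/6$ via boundedness is fine and matches the (inhomogeneous) normalization used in the lemma.
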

\begin{proof}
This lemma is already known in the case of $\R^3$ where the definition of $\delta^x_{\R^3}$ is the same except that $D_h^1$ is only considered in the trivial coordinate chart. It is estimate (4.19) of \cite{PGdefautSobolev} in the case of a $1$-oscillatory sequence, which can be easily extended to ($h_n$)- oscillatory sequence by dilation. 

Let $\Psi_i \in C^{\infty}_0(U_i)$ associated to a coordinate patch $\Phi_i$. By Proposition \ref{proposcill}, $\Phi_{i}^*\Psi_i v_h$ is still ($h_n$)-oscillatory and we can apply the estimate on $\R^3$. We get
\bna
&&\underset{n\to +\infty}{\varlimsup} \nor{\Phi_{i}^*\Psi_i v_h}{L^6(\R^3)} \leq C \delta^x_{\R^3}(\Phi_{i*}\Psi_i \underline{v})^{1/3}\underset{n\to +\infty}{\varlimsup} \nor{\Phi_{i}^*\Psi_i v_h}{H^1(\R^3)}^{1/6}\\
&\leq & C \delta^x_{\R^3}(\Phi_{i}^*\Psi_i \underline{v})^{1/3}\underset{n\to +\infty}{\varlimsup} \nor{v_h}{H^1(M)}^{1/6}
\ena
Then, by definition of the convergence $D_h$, we easily get 
$$ \delta^x_{\R^3}(\Phi_{i}^*\Psi_i \underline{v})\leq C\delta^x(\Psi_i \underline{v}).$$
We conclude using Lemma \ref{lmmulttrack} and partition of unity.
\end{proof}
\begin{lemme}
\label{lmtrackLiL6}
Let $T>0$. There exists $C>0$ such that for any sequence $\underline{v}=(v_h)$ ($h_n$)-oscillatory, solution of the damped linear Klein-Gordon equation on $M$ with bounded energy, we have
\bna
\underset{n\to +\infty}{\varlimsup} \nor{v_h}{L^{\infty}([-T,T],L^6(M)} \leq C \delta(\underline{v})^{1/3}\underset{n\to +\infty}{\varlimsup} \nor{(v_h(0),\partial_t v_h(0))}{\HutL}^{1/6}
\ena
\end{lemme}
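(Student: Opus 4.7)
My strategy is to reduce the space-time estimate to the pointwise-in-time Lemma \ref{lmtrackL6} by selecting a maximizing time slice and exploiting propagation of $(h_n)$-oscillation.

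First, since $v_h$ is a strong solution of the damped linear Klein-Gordon equation, $v_h\in C([-T,T],H^1(M))$, and by Sobolev embedding $v_h\in C([-T,T],L^6(M))$. Compactness of $[-T,T]$ then provides $t_h\in[-T,T]$ with
\[
\nor{v_h}{L^\infty([-T,T],L^6(M))}=\nor{v_h(t_h,\cdot)}{L^6(M)}.
\]
After passing to a subsequence realising $\varlimsup_n\nor{v_h}{L^\infty L^6}$, and then a further subsequence along which $t_h\to t_\infty\in[-T,T]$, it suffices to bound $\varlimsup_n\nor{v_h(t_h,\cdot)}{L^6}$ from above by the right-hand side of the claimed inequality.

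Second, by Proposition \ref{propproposcilamorti} the pair $(v_h(t),\partial_t v_h(t))$ inherits $(h_n)$-oscillation from its initial data, uniformly in $t\in[-T,T]$; in particular $v_h(t_h,\cdot)$ is $(h_n)$-oscillatory in $H^1(M)$. Thus Lemma \ref{lmtrackL6} applies at the frozen time $t_h$ and yields
\[
\varlimsup_n\nor{v_h(t_h,\cdot)}{L^6}\leq C\,\delta^x(\underline{v}(t_h,\cdot))^{1/3}\,\varlimsup_n\nor{v_h(t_h,\cdot)}{H^1}^{1/6}.
\]
By the very definition of $\delta(\underline{v})$ as a supremum of $\delta^x(\underline{v}(s_h,\cdot))$ over time sequences $(s_h)\subset[-T,T]$, one has $\delta^x(\underline{v}(t_h,\cdot))\leq\delta(\underline{v})$. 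Standard energy estimates for (\ref{soldampedscale})---obtained by multiplying by $\partial_t v_h$ and invoking Gr\"onwall to absorb the damping and mass terms---give $\nor{v_h(t_h,\cdot)}{H^1}\leq C_T\nor{(v_h(0),\partial_t v_h(0))}{\HutL}$. Absorbing $C_T^{1/6}$ into the constant produces the announced estimate.

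The only delicate point worth flagging is that Lemma \ref{lmtrackL6} is stated for \emph{strictly} $(h_n)$-oscillatory sequences while here we assume only $(h_n)$-oscillation. This gap is bridged by splitting off the low-frequency tail: the piece $\mathbf{1}_{|A_M|\leq \varepsilon/h_n}(v_h,\partial_t v_h)$ lies in a bounded subset of a fixed $H^s$ with $s>1$, hence is relatively compact in $L^6$ by Rellich, so it contributes only a vanishing error as $\varepsilon\to 0$; the complementary piece is strictly $(h_n)$-oscillatory, still solves the damped equation (up to a commutator that is negligible by the semiclassical calculus used in Proposition \ref{propproposcilamorti}), and satisfies the same $\delta^x$ bound as the original sequence.
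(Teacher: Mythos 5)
Your main chain of reasoning is correct and coincides with the paper's proof: choose a near-maximizing time sequence $t_h$, use Proposition \ref{propproposcilamorti} to propagate the oscillation property to time $t_h$, apply Lemma \ref{lmtrackL6} at that frozen time, and then invoke the definition of $\delta$ together with the energy bound. The paper phrases the first step as ``let $t_h$ be an arbitrary sequence'' and then takes the supremum, while you explicitly construct a maximizing sequence and pass to subsequences; this is the same argument made slightly more explicit.

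You have also correctly spotted a genuine discrepancy in the paper: Lemma \ref{lmtrackL6} is stated for \emph{strictly} $(h_n)$-oscillatory sequences, whereas Lemma \ref{lmtrackLiL6} only assumes $(h_n)$-oscillation. The strictness cannot be dropped; for instance on $S^3$ with $a\equiv 0$ the sequence $v_h(t,x)=c\cos t$ is $(h_n)$-oscillatory for any $h_n\to 0$ (since $A_M(v_h,\partial_t v_h)\equiv 0$), has $\delta(\underline{v})=0$, yet $\nor{v_h}{L^\infty L^6}=|c|\,|M|^{1/6}\neq 0$, contradicting the claimed inequality. In all of the paper's applications the sequences that are fed into this lemma are in fact strictly $(h_n^{(j)})$-oscillatory (this is what Proposition \ref{propextractscales} produces), so the correct reading is that the hypothesis in Lemma \ref{lmtrackLiL6} should say ``strictly $(h_n)$-oscillatory''.

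Your proposed patch for this gap, however, does not work. You claim that $\mathbf{1}_{|A_M|\leq \varepsilon/h_n}(v_h,\partial_t v_h)$ lies in a bounded subset of a fixed $H^s$ with $s>1$ and is therefore relatively compact in $L^6$. This is false: since $h_n\to 0$, the spectral cutoff $\varepsilon/h_n\to\infty$, and the best one can say is $\nor{\mathbf{1}_{|A_M|\leq \varepsilon/h_n}v_h}{H^s}\lesssim(\varepsilon/h_n)^{s-1}\nor{v_h}{H^1}$, which is unbounded in $n$. The low-frequency block at scale $\varepsilon/h_n$ is not a compact perturbation; it may itself oscillate at an intermediate scale $g_n$ with $h_n\ll g_n\ll 1$, which is precisely the kind of behaviour that the strict-oscillation hypothesis is designed to exclude and that makes the $\delta^x$ tracker (which rescales by exactly $h_n$) blind to the concentration. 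The honest resolution is not a decomposition argument but to strengthen the hypothesis in the lemma statement to strict $(h_n)$-oscillation, exactly as it appears in Lemma \ref{lmtrackL6} and as it holds in every application.
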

\begin{proof}
Let $t_h$ be an arbitrary sequence in $[-T,T]$. We apply Lemma \ref{lmtrackL6} to the sequence $v_h(t_h)$ and get
\bna
\underset{n\to +\infty}{\varlimsup} \nor{v_h(t_h,\cdot)}{L^6} &\leq &C \delta^x(\underline{v}(t_h,\cdot))^{1/3}\underset{n\to +\infty}{\varlimsup} \nor{v_h(t_h)}{H^1(M)}^{1/6}\\
&\leq & C \delta(\underline{v})^{1/3}\underset{n\to +\infty}{\varlimsup} \nor{(v_h(0),\partial_t v_h(0))}{\HutL}^{1/6}
\ena
 by definition of $\delta$ and by energy estimates.
\end{proof}
\begin{proof}[Proof of Proposition \ref{propextrconc}]
It is based on the same extraction argument as \cite{BahouriGerard} and \cite{PGGall2001} : the concentration will be tracked using our tool $D_h$ and we will extract concentrating waves so that $\delta(\underline{v})$ decreases. We conclude with Lemma \ref{lmtrackLiL6} to estimate the $L^{\infty}(L^6)$ norm of the remainder term.

More precisely, if $\delta(\underline{v})=0$, Lemma \ref{lmtrackLiL6} shows that there is nothing to be proved. Otherwise, pick $(x_h^{(1)},t_h^{(1)})$ converging to $(x_{\infty}^{(1)},t_{\infty}^{(1)})$ and $(\varphi^{(1)},\psi^{(1)})\in \HutL_{x_{\infty}}$, such that 
\bna
\nor{\nabla \varphi^{(1)}) }{L^2(T_{x_{\infty}}M)}^2+ \nor{\psi^{(1)})}{L^2(T_{x_{\infty}}M)}^2\geq \nor{\nabla \varphi ^{(1)})}{L^2(T_{x_{\infty}}M)}^2 \geq \frac{1}{2} \delta(\underline{v}) 
\ena 
and 
\bna
D_h^{(1)} (v_h,\partial_t v_h)(t_h^{(1)})\underset{h\to 0}{\rightharpoonup} (\varphi^{(1)},\psi^{(1)}).
\ena
The existence of the weak limit $\psi^{(1)}$ (up to a subsequence) is ensured by the boundedness in $L^2(\R^3)$ of $\partial_t v_h$ (considered in a coordinate chart) by conservation of energy.

Then, we choose $p_h^{(1)}$ as the damped linear concentrating profile associated with\\ $[(\varphi^{(1)},\psi^{(1)}),\underline{h},\underline{x}^{(1)},\underline{t}^{(1)}]$ (actually, we pick one representant in the equivalence class modulo sequences converging to $0$ in the energy space as in Definition \ref{defconcdata}). Remark here that the assumption $t_h^{(1)}\in [-T,T]$ ensures $t_{\infty}^{(1)}\in [-T,T]$, which will always be the case for all the concentrating waves we consider. Then, we give a lemma that will be the main step to the orthogonality of energies.
\begin{lemme}
\label{lmorthNRJ}
Let $w_h^{(1)}=v_h-p_h^{(1)}$. Then,
\bna
\nor{(v_h,\partial_t v_h)(t)}{\HutL}^2=\nor{(p_h^{(1)},\partial_t p_h^{(1)})(t)}{\HutL}^2+\nor{(w_h^{(1)},\partial_t w_h^{(1)})(t)}{\HutL}^2+\petito{1}
\ena
where the $\petito{1}$ is uniform for $t$ in bounded intervals.
\end{lemme}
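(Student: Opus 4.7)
The plan is to expand
\bna
\nor{(v_h,\partial_tv_h)(t)}{\HutL}^2 = \nor{(p_h^{(1)},\partial_t p_h^{(1)})(t)}{\HutL}^2 + \nor{(w_h^{(1)},\partial_t w_h^{(1)})(t)}{\HutL}^2 + 2\,\Re\, I_h(t),
\ena
where the cross term is
\bna
I_h(t) = \left\langle (p_h^{(1)},\partial_tp_h^{(1)})(t),\, (w_h^{(1)},\partial_t w_h^{(1)})(t)\right\rangle_{\HutL},
\ena
and to show that $I_h(t) \to 0$ uniformly for $t\in [-T,T]$. Writing out the $\HutL$-inner product, $I_h(t)$ splits into three pieces: the homogeneous pair $\int_M \nabla p_h^{(1)}\cdot \nabla \overline{w_h^{(1)}} + \int_M \partial_t p_h^{(1)}\,\overline{\partial_t w_h^{(1)}}$, and the $L^2$ part $\int_M p_h^{(1)}\, \overline{w_h^{(1)}}$.

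For the homogeneous pair I would apply Lemma \ref{lmconserorth} with $A$ equal to the identity, which is a classical pseudodifferential operator of order $0$. The hypothesis to verify is that $D_h^{(1)}(w_h^{(1)}(t_h^{(1)}),\partial_t w_h^{(1)}(t_h^{(1)}))\rightharpoonup 0$. This is immediate by linearity of $D_h^{(1)}$: the sequence $(v_h,\partial_t v_h)(t_h^{(1)})$ was chosen precisely so that $D_h^{(1)}(v_h(t_h^{(1)}),\partial_t v_h(t_h^{(1)}))\rightharpoonup (\varphi^{(1)},\psi^{(1)})$, and Lemma \ref{lmconcweak} together with Corollary \ref{corconclintproche} gives that the concentrating wave $p_h^{(1)}$ satisfies $D_h^{(1)}(p_h^{(1)}(t_h^{(1)}),\partial_t p_h^{(1)}(t_h^{(1)}))\rightharpoonup (\varphi^{(1)},\psi^{(1)})$ at the concentration time. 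Subtracting gives the required weak convergence to $0$. The uniform-in-$t$ statement is exactly the content of Lemma \ref{lmconserorth}.

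For the $L^2$ cross term I would use compactness. Strict $h$-oscillation of $v_h$ (with $h\to 0$) forces its weak $H^1$-limit to be $0$, and a concentrating wave also weakly converges to $0$ in $H^1$, so $w_h^{(1)}\rightharpoonup 0$ in $H^1(M)$. Energy estimates for the damped linear Klein--Gordon equation bound $w_h^{(1)}$ uniformly in $L^\infty([-T,T];H^1)$ and $\partial_t w_h^{(1)}$ uniformly in $L^\infty([-T,T];L^2)$; the Aubin--Lions lemma then yields strong convergence $w_h^{(1)}\to 0$ in $C([-T,T];L^2(M))$. Since $p_h^{(1)}$ is uniformly bounded in $L^\infty_t L^2$, the Cauchy--Schwarz inequality kills this piece of the cross term uniformly in $t$.

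The only mildly delicate point is the verification of the hypothesis in Lemma \ref{lmconserorth}; everything else reduces to energy estimates and Rellich compactness. The main conceptual content is that the microlocal defect measure is bilinear and propagates transport-wise even in the damped setting, which is exactly what allows the homogeneous cross term to be controlled uniformly in time from information at the single instant $t=t_h^{(1)}$.
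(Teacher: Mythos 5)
Your proof is correct and follows essentially the same route as the paper: expand the cross term, apply Lemma \ref{lmconserorth} to the homogeneous part after verifying $D_h^{(1)}(w_h^{(1)},\partial_t w_h^{(1)})(t_h^{(1)})\rightharpoonup 0$ by linearity, and dispose of the $L^2$ piece by compactness. You are slightly more explicit than the paper in invoking Aubin--Lions for the $L^2$ cross term (the paper just says it "can be omitted" by weak $H^1$-convergence), but the substance is identical.
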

\begin{proof} 
We first compute the energy at time $t_h^{(1)}$. We denote $B$ the bilinear form associated with the energy : 
\bna
B(a,b)=\int_M a ~\overline{b}+\nabla a \cdot \nabla \overline{b}+\partial_t a~ \partial_t\overline{b} 
\ena
We have to prove 
\bna
B\left((p_h^{(1)}(t_h^{(1)}),w_h^{(1)}(t_h^{(1)})\right)=B\left(p_h^{(1)}(t_h^{(1)}),v_h(t_h^{(1)})-p_h^{(1)}(t_h^{(1)})\right)=\petito{1}
\ena
By weak convergence to $0$ in $H^1$ of $v_h$, $p_h^{(1)}$ and $w_h^{(1)}$, we can omit the term $\int_M a ~\overline{b}$ of $B$. By construction and Lemma \ref{lmconcweak}, we have $D_h^{(1)} (v_h,\partial_t v_h)(t_h^{(1)})\underset{h\to 0}{\rightharpoonup} (\varphi^{(1)},\psi^{(1)})$ and $D_h^{(1)} (p_h^{(1)},\partial_t p_h^{(1)})\underset{h\to 0}{\rightharpoonup} (\varphi^{(1)},\psi^{(1)})$. Therefore, $D_h^{(1)} (w_h^{(1)},\partial_t w_h^{(1)})(t_h^{(1)})\underset{h\to 0}{\rightharpoonup} (0,0)$. Lemma \ref{lmconserorth} gives the expected result. Remark that if $a\equiv 0$, this is just a consequence of the conservation of scalar product for solution of linear wave equation.
\end{proof}
We get the expansion of $u_h$ announced in Proposition \ref{propextrconc} by induction iterating the same process.

Let us assume that 
\bnan
&v_h(t,x)= \sum_{j=1}^l p_n^{(j)}(t,x)+w_n^{(l)}(t,x),\nonumber& \\
&\nor{(v_h,\partial_t v_h)}{\HutL}^2=\sum_{j=1}^l \nor{(p_h^{(j)},\partial_t p_h^{(j)})}{\HutL}^2 + \nor{(w_h^{(l)},\partial_t w_h^{(l)})}{\HutL}^2 +  \petito{1}, \textnormal{ uniformly in  } t\textnormal{, as } h\to 0 \label{horthogNRJbis}&
\enan
and where $p_h^{(j)}$ is a linear damped concentrating wave, associated with data $[(\varphi^{(k)},\psi^{(k)}),\underline{h},\underline{x}^{(k)},\underline{t}^{(k)}]$ mutually orthogonal.

We argue as before : we can assume $\delta(\underline{w}^{(l)})>0$ and we can pick $(\varphi^{(l+1)},\psi^{(l+1)}),\underline{x}^{(l+1)},\underline{t}^{(l+1)}$ such that : 
\bnan
\nor{\nabla \varphi^{(l)} }{L^2(T_{x_{\infty}^{(l+1)}}M)}^2+ \nor{\psi^{(l)}}{L^2(T_{x_{\infty}^{(l+1)}}M)}^2\geq \frac{1}{2} \delta(\underline{w}^{(l)})\label{inegdeltasum}\\
D_h^{(l+1)}(w_h^{(l)},\partial_tw_h^{(l)})(t_h^{(l+1))}) \underset{h\to 0}{\rightharpoonup} (\varphi^{(l+1)},\psi^{(l+1)}). \nonumber
\enan
and we define $p_h^{(l+1)}$ as a linear damped concentrating wave, associated with data\\ $[(\varphi^{(l+1)},\psi^{(l+1)}),\underline{h},\underline{x}^{(l+1)},\underline{t}^{(l+1)}]$. Again, Lemma \ref{lmorthNRJ} applied to $w_h^{(l)}$ and $p_h^{(l+1)}$ implies estimates (\ref{horthogNRJ}) with $w_h^{(l+1)}=w_h^{(l)}-p_h^{(l+1)}$. 

Let us now deal with estimate (\ref{hresultL6}). Lemma \ref{lmnormconcwave} combined with energy estimates gives for some $C>0$ only depending on $T$ and $a$
\bna
\nor{\nabla \varphi^{(j)} }{L^2(T_{x_{\infty}^{(j)}}M)}^2+ \nor{\psi^{(j)}}{L^2(T_{x_{\infty}^{(j)}}M)}^2 \leq C\nor{(p_h^{(j)},\partial_t p_h^{(j)})_{t=0}}{\HutL}^2 +\petito{1}.
\ena
From this and estimate (\ref{horthogNRJ}), we infer
\bna
\sum_{j=1}^l \left(\nor{\nabla \varphi^{(j)} }{L^2(T_{x_{\infty}^{(j)}}M)}^2+ \nor{\psi^{(j)}}{L^2(T_{x_{\infty}^{(j)}}M)}^2\right) \leq C\underset{h\to 0}{\varlimsup} \nor{(u_h,\partial_t u_h)}{\HutL}^2 \leq C.
\ena
So, the series of general term $\left(\nor{\nabla \varphi^{(j)} }{L^2(T_{x_{\infty}^{(j)}}M)}^2+ \nor{\psi^{(j)}}{L^2(T_{x_{\infty}^{(j)}}M)}^2\right)$ converges. Using estimate (\ref{inegdeltasum}), we get
\bna
\underset{l\to \infty}{\lim}\delta(\underline{w}^{(l)})=0. 
\ena
Lemma \ref{lmtrackLiL6} yields
\bna
\underset{h\to 0}{\varlimsup} \nor{w_h^{(l)}}{L^{\infty}([-T,T],L^6(M)} \tend{l}{\infty}0.
\ena
This completes the proof of the first part of Proposition \ref{propextrconc}. Let us now deal with the orthogonality result.
We will need the following two lemmas.
\begin{lemme}
\label{lmcvceproche}
Let $(\underline{x}^{(1)},\underline{t}^{(1)})\not\perp_h (\underline{x}^{(2)},\underline{t}^{(2)})$. Let $v_h$ be an h-oscillatory sequence solution of the damped linear wave equation such that 
\bnan
D_{h}^{(1)} (v_h,\partial_t v_h)(t_h^{(1)})\underset{h\to 0}{\rightharpoonup} (\varphi^{(1)},\psi^{(1)})\label{cvcefaible1}.
\enan
Then, there exists $(\varphi^{(2)},\psi^{(2)})$ such that, up to a subsequence
\bnan
D_{h}^{(2)} (v_h,\partial_t v_h)(t_h^{(2)})\underset{h\to 0}{\rightharpoonup} (\varphi^{(2)},\psi^{(2)})\label{cvcefaible2}.
\enan
Moreover, we have 
\bnan
\nor{(\varphi^{(1)},\psi^{(1)})}{\HutL_{x_{\infty}}}=\nor{(\varphi^{(2)},\psi^{(2)})}{\HutL_{x_{\infty}}}.\label{conservnormaprox}
\enan
 
\end{lemme}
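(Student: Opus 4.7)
The plan is to exploit the non-orthogonality of the cores of concentration in order to reduce, after rescaling at scale $h$, to a rigid translation in the tangent space for an approximately flat wave equation. Since $x_\infty^{(1)}=x_\infty^{(2)}=:x_\infty$ and $h^{-1}(|t_h^{(1)}-t_h^{(2)}|+|x_h^{(1)}-x_h^{(2)}|)$ stays bounded in some coordinate chart around $x_\infty$, extracting a subsequence yields $s\in\mathbb{R}$ and $X\in T_{x_\infty}M$ such that $t_h^{(2)}=t_h^{(1)}+sh+o(h)$ and $x_h^{(2)}=x_h^{(1)}+hX+o(h)$ in that chart. I then split $v_h=p_h+w_h$, where $p_h$ is the damped linear concentrating wave associated with $[(\varphi^{(1)},\psi^{(1)}),\underline{h},\underline{x}^{(1)},\underline{t}^{(1)}]$; by Lemma~\ref{lmconcweak}, hypothesis~(\ref{cvcefaible1}) becomes $D_h^{(1)}(w_h,\partial_t w_h)(t_h^{(1)})\rightharpoonup 0$.

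For the concentrating part, let $v$ solve $\Box_\infty v=0$ on $T_{x_\infty}M$ with $(v,\partial_s v)_{|s=0}=(\varphi^{(1)},\psi^{(1)})$. Corollary~\ref{corconclintproche}, applied with any $\Lambda>|s|$, identifies $(p_h,\partial_t p_h)(t_h^{(2)})$, modulo a remainder vanishing in $\HutL$, with a concentrating data associated with $[(v(s,\cdot),\partial_s v(s,\cdot)),\underline{h},\underline{x}^{(1)}]$. A direct chart calculation using $x_h^{(2)}-x_h^{(1)}=hX+o(h)$ together with Lemma~\ref{lmconcweak} then gives
\[D_h^{(2)}(p_h,\partial_t p_h)(t_h^{(2)})\rightharpoonup (v(s,\cdot+X),\partial_s v(s,\cdot+X))\quad\text{in}~\mathcal{E}_{x_\infty}.\]

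The heart of the proof is to establish $D_h^{(2)}(w_h,\partial_t w_h)(t_h^{(2)})\rightharpoonup 0$. To this end, introduce the rescaled sequence
\[\tilde w_h(\sigma,y):=h^{1/2}\Psi(x_h^{(1)}+hy)\,w_h(t_h^{(1)}+h\sigma,\,x_h^{(1)}+hy),\]
where $\Psi$ is a smooth cutoff equal to $1$ near $x_\infty$. Changing variables in $\Box w_h+w_h=a(x)\partial_t w_h$ and multiplying by $h^{5/2}$, one finds $\Box_\infty \tilde w_h=r_h$ on any bounded $\sigma$-window, with $r_h\to 0$ in $L^\infty_\sigma L^2_{loc,y}$: the mass term contributes $h^2\tilde w_h$, the damping term contributes $h\,a(x_h^{(1)}+hy)\partial_\sigma\tilde w_h$, and the discrepancy between the rescaled variable-coefficient d'Alembertian and $\Box_\infty$ is of order $h$ on $y$-compacts (freezing of the metric at $x_\infty$, plus harmless commutators with $\Psi$). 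The hypothesis on $w_h$ is precisely $(\tilde w_h(0,\cdot),\partial_\sigma\tilde w_h(0,\cdot))\rightharpoonup (0,0)$ in $\mathcal{E}_{x_\infty}$. Since the free flat wave propagator is a unitary group on $\mathcal{E}_{x_\infty}$, Duhamel's formula for $\Box_\infty$, together with the strong convergence $r_h\to 0$ in $L^1_{loc,\sigma}L^2_{loc,y}$, propagate the weak convergence to every fixed $\sigma$. Evaluating at $\sigma=s$ and absorbing the $o(h)$ discrepancy between $x_h^{(2)}$ and $x_h^{(1)}+hX$ by continuity of translations in $\mathcal{E}_{x_\infty}$ gives $D_h^{(2)}(w_h,\partial_t w_h)(t_h^{(2)})\rightharpoonup 0$.

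Combining the two, $(\varphi^{(2)},\psi^{(2)}):=(v(s,\cdot+X),\partial_s v(s,\cdot+X))$ satisfies~(\ref{cvcefaible2}); the norm identity~(\ref{conservnormaprox}) is then immediate from the conservation of energy for $\Box_\infty v=0$ and the translation invariance of $\|\cdot\|_{\mathcal{E}_{x_\infty}}$. The main obstacle is the third paragraph: one must verify that under the rescaling at scale $h$ the damping, mass and metric corrections are all negligible in $L^1_{loc,\sigma}L^2_{loc,y}$ on bounded $\sigma$-intervals, so that weak convergence of Cauchy data genuinely propagates via the unperturbed flat wave flow; everything else is bookkeeping.
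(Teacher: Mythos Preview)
Your approach differs from the paper's. The paper never splits $v_h=p_h+w_h$ nor rescales $v_h$; instead it works dually, via Lemma~\ref{defweaktrackequiv}: it pairs $(v_h,\partial_t v_h)$ against a linear damped concentrating wave $p_h$ built on an \emph{arbitrary} profile $(\varphi,\psi)$, transports this pairing from $t_h^{(1)}$ to $t_h^{(2)}$ by the equicontinuity coming from the joint microlocal defect measure (Lemma~\ref{lmmeasuredamped}), and then invokes Corollary~\ref{corconclintproche} to identify $(p_h,\partial_t p_h)(t_h^{(2)})$ as a concentrating data whose profile is the free flat evolution of $(\varphi,\psi)$. The arbitrariness of $(\varphi,\psi)$ then gives~(\ref{cvcefaible2}) directly. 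This stays on the manifold and uses the measure machinery already in place; your route is more hands-on and avoids measures, which is appealing.

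There is, however, a genuine gap in your third paragraph. Writing $\Box_\infty\tilde w_h=r_h$ with $r_h\to 0$ in $L^1_{loc,\sigma}L^2_{loc,y}$ requires the metric correction $(g^{ij}(x_h^{(1)}+hy)-g^{ij}(x_\infty))\,\partial_{y_iy_j}\tilde w_h$ to go to zero in $L^2_{loc}$. The coefficient is only $o(1)$ on $y$-compacts (and not $O(h)$ in general, since you only know $x_h^{(1)}\to x_\infty$), while the energy bound gives no control on $\partial_{y_iy_j}\tilde w_h$. Two ways to close this: (i) use the $h$-oscillation hypothesis explicitly---spectrally truncate $w_h$ at frequency $R/h$; the truncated part satisfies $\|\partial^2 w_h\|_{L^2}\lesssim Rh^{-1}$, which after rescaling yields $\|\partial_y^2\tilde w_h\|_{L^2}\lesssim R$, and the tail is small in energy uniformly in $h$; or (ii) drop the Duhamel-with-$\Box_\infty$ formulation and pass to the limit in the equation distributionally: $\tilde w_h$ is bounded in $C_\sigma(\dot H^1_{loc}\times L^2_{loc})$, any weak limit $\tilde w$ solves $\Box_\infty\tilde w=0$ in $\mathcal D'$ (integrate by parts onto a $C_0^\infty$ test function; the variable coefficients of $\Box_n$ converge uniformly on its support), and vanishing Cauchy data at $\sigma=0$ forces $\tilde w\equiv 0$. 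Either fix is short; the rest of your argument (handling of $p_h$, spatial shift by $X$, and the norm identity via conservation for $\Box_\infty$) is correct.
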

\begin{proof}First, we assume $\underline{x}^{(1)}=\underline{x}^{(2)}$. 
By translation in time, we can assume $\underline{t}^{(1)}=0$. The non orthogonality assumption yields, up to extraction, $t_h^{(2)}/h=C+\petito{1}$ with $C$ constant.

 Let $(\varphi,\psi)\in \HutL_{\infty}$ arbitrary and $p_h$ the linear damped concentrating wave associated with\\ $[(\varphi,\psi),\underline{h},\underline{x}^{(1)},0]$. We use the equivalent definition stated in Lemma \ref{defweaktrackequiv} : (\ref{cvcefaible1}) is equivalent to 
\bna
\int_M \nabla v_h(0)\cdot \nabla p_h(0)&\tend{n}{\infty} &\int_{T_{x_{\infty}}M} \nabla \varphi^{(1)} \cdot \nabla \varphi\\
\int_M \partial_t v_h(0) \partial_t p_h(0)&\tend{n}{\infty} &\int_{T_{x_{\infty}}M} \psi^{(1)} \psi.
\ena
As both $v_h$ and $p_h$ are solutions of the damped wave equation on $M$ and $t_h^{(2)}\tend {h}{0}0$, we have by equicontinuity (see Lemma \ref{lmmeasuredamped}).
\bna
\int_M \nabla v_h(t_h^{(2)})\cdot \nabla p_h(t_h^{(2)})+\int_M \partial_t v_h(t_h^{(2)}) \partial_t p_h(t_h^{(2)})\tend{n}{\infty}\int_{T_{x_{\infty}}M} \nabla \varphi^{(1)} \cdot \nabla \varphi+\int_{T_{x_{\infty}}M} \psi^{(1)} \psi.
\ena
Let $v$, $w$ satisfying on $T_{x_{\infty}}M$
\bna
\Box_{\infty}v=0,&(v,\partial_t v)_{\left|t=0\right.} &=(\varphi^{(1)},\psi^{(1)}) \\
\Box_{\infty}w=0,&(w,\partial_t w)_{\left|t=0\right.} &=(\varphi,\psi).
\ena
Conservation of the scalar product yields
\bna
\int_{T_{x_{\infty}}M} \nabla \varphi^{(1)} \cdot \nabla \varphi+\int_{T_{x_{\infty}}M} \psi^{(1)} \psi=\int_{T_{x_{\infty}}M} \nabla v(C) \cdot \nabla w(C)+\int_{T_{x_{\infty}}M}\partial_tv(C) \partial_t w(C).
\ena
But according to Corollary \ref{corconclintproche}, $(p_h,\partial_t p_h)_{|t=t_h^{(2)}}$ is a concentrating data according to\\
$[(w(C),\partial_t w(C)),\underline{h},\underline{x}^{(1)}]$. Since the wave equation is reversible and $(\varphi,\psi)$ is arbitrary, we have proved that for any concentrating data $(f_h,g_h)$ associated with $[(\tilde{\varphi}, \tilde{\psi},\underline{h},\underline{x}^{(1)}]$, we have 
\bna
\int_M \nabla v_h(t_h^{(2)})\cdot \nabla f_h+\int_M \partial_t v_h(t_h^{(2)}) g_h\tend{n}{\infty}\int_{T_{x_{\infty}}M} \nabla v(C) \cdot \nabla \tilde{\varphi}+\int_{T_{x_{\infty}}M}\partial_tv(C) \tilde{\psi}.
\ena
This gives the result for $x_h^{(1)}=x_h^{(2)}$ by taking $(\varphi^{(2)},\psi^{(2)})=(v(C),\partial_t v(C))$ which satisfies (\ref{conservnormaprox}) by conservation of the energy.

 In the general case $\underline{x}^{(1)}\not\perp_h \underline{x}^{(2)}$, we have in a local coordinate chart and up to a subsequence $x_h^{(2)}=x_h^{(1)}+(\vec{D}+\petito{1})h$ where $\vec{D}\in T_{x_{\infty}}M$ is a constant vector. We remark that if a bounded sequence $(f_h,g_h)$ satisfies $D_{h}^{(1)} (f_h,g_h)\underset{h\to 0}{\rightharpoonup} (\varphi,\psi)$, it also fulfills $D_{h}^{(2)} (f_h,g_h)\underset{h\to 0}{\rightharpoonup} (\varphi(.+\vec{D}),\psi(.+\vec{D}))$.
\end{proof}
We will also need the following lemma which is the analog of Lemma 3.7 of \cite{PGGall2001}. We keep the notation of the algorithm of extraction for further use.
\begin{lemme}
\label{lmconcorth}
Let $\left\{j,j'\right\}\in \left\{1,\cdots,K\right\}^2$ be such that
\bna
(\underline{x}^{(j)},\underline{t}^{(j)})\not\perp_h (\underline{x}^{(K+1)},\underline{t}^{(K+1)}) \textnormal{ and }(\underline{x}^{(j)},\underline{t}^{(j)}) \perp_h (\underline{x}^{(j')},\underline{t}^{(j')}).
\ena
Then, $D_h^{(K+1)}(w_h^{(K+1)},\partial_t w_h^{(K+1)})(t_h^{(K+1)}) \rightharpoonup 0$ implies $D_h^{(j)}(w_h^{(K+1)},\partial_t w_h^{(K+1)})(t_h^{(j)}) \rightharpoonup 0$.\\
Moreover, if we assume $\left|t_{\infty}^{(j)}-t_{\infty}^{(j')}\right|<T_{focus}$ (see Definition \ref{defcouplefocus}), then
$D_h^{(j)}(p_h^{(j')},\partial_t p_h^{(j')})(t_h^{(j)}) \rightharpoonup (0,0)$ for any concentrating wave $p_h^{(j')}$ associated with $[(\varphi^{(j')},\psi^{(j')}),\underline{h},\underline{x}^{(j')},\underline{t}^{(j')}]$.
\end{lemme}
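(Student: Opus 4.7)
For the first implication, $w_h^{(K+1)}$ solves the damped linear Klein-Gordon equation (\ref{soldampedscale}) as a difference of such solutions, and it is $h$-oscillatory as a difference of $h$-oscillatory sequences, so the non-orthogonality hypothesis is exactly the setting of Lemma \ref{lmcvceproche} applied with $(\underline{x}^{(1)},\underline{t}^{(1)})=(\underline{x}^{(K+1)},\underline{t}^{(K+1)})$ and $(\underline{x}^{(2)},\underline{t}^{(2)})=(\underline{x}^{(j)},\underline{t}^{(j)})$. The norm identity (\ref{conservnormaprox}) then transports the vanishing of the $D_h^{(K+1)}$-limit at $t_h^{(K+1)}$ to the vanishing of the $D_h^{(j)}$-limit at $t_h^{(j)}$ up to a subsequence, and since this applies to every subsequence, the full sequence converges weakly to $0$. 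For the second assertion I wish to prove $D_h^{(j)}(p_h^{(j')},\partial_t p_h^{(j')})(t_h^{(j)})\rightharpoonup(0,0)$; after extraction I may assume $|t_h^{(j)}-t_h^{(j')}|/h$ tends to some $\ell\in[0,+\infty]$, and I treat two cases.

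In the case $\ell=+\infty$, the bound $|t_\infty^{(j)}-t_\infty^{(j')}|<T_{focus}$ (after a harmless time translation placing $t_\infty^{(j')}$ at the origin) lets me pick $T\in[|t_\infty^{(j)}-t_\infty^{(j')}|,T_{focus})$ so that the non-focusing condition (\ref{nonfocusing}) holds on $[-T,T]$ relative to $x_\infty^{(j')}$. Lemma \ref{lmnonreconcentr} then yields $\varlimsup_h\nor{p_h^{(j')}}{L^\infty(I_h^{1,\Lambda}\cup I_h^{3,\Lambda},L^6(M))}\to 0$ as $\Lambda\to\infty$; since $t_h^{(j)}+\eta h$ lies in $I_h^{1,\Lambda}\cup I_h^{3,\Lambda}$ eventually for every bounded $\eta$, we get $\nor{p_h^{(j')}(t_h^{(j)}+\eta h)}{L^6(M)}\to 0$ uniformly on $|\eta|\leq\Lambda$. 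Were the $D_h^{(j)}$-limit at $t_h^{(j)}$ a nonzero $(\varphi,\psi)$, an analogue of Lemma \ref{lmlinproche} now centered at $(x_h^{(j)},t_h^{(j)})$ with profile $(\varphi,\psi)$ would reconstruct $p_h^{(j')}$ on $[t_h^{(j)}-\Lambda h,t_h^{(j)}+\Lambda h]$ up to energy-norm error as a rescaled free wave carrying $(\varphi,\psi)$, whose $L^6$-mass at some bounded rescaled time is strictly positive, contradicting the decay above.

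In the case $\ell=C<+\infty$, necessarily $t_\infty^{(j)}=t_\infty^{(j')}$, so the orthogonality $\perp_h$ forces $|x_h^{(j)}-x_h^{(j')}|/h\to+\infty$, either because $x_\infty^{(j)}\neq x_\infty^{(j')}$ (and then already $|x_h^{(j)}-x_h^{(j')}|\to|x_\infty^{(j)}-x_\infty^{(j')}|>0$), or by the third bullet of Definition \ref{deforthog}. Lemma \ref{lmlinproche} approximates $p_h^{(j')}$ in the norm $\nort{\cdot}$, on $[t_h^{(j')}-\Lambda h,t_h^{(j')}+\Lambda h]$ with $\Lambda>|C|+1$, by the rescaled free wave $\tilde v_n$; if $x_\infty^{(j)}\neq x_\infty^{(j')}$ the cutoff $\Psi$ kills $\tilde v_n$ near $x_h^{(j)}$ for small $h$, and otherwise the $D_h^{(j)}$-zoom of $\tilde v_n(t_h^{(j)})$ equals $v\bigl(C+o(1),\cdot+(x_h^{(j)}-x_h^{(j')})/h\bigr)$, a translation by an infinite vector of the fixed $v(C,\cdot)\in\dot H^1$, hence weakly convergent to $0$ in $\dot H^1$ (analogously for $\partial_t v$ in $L^2$). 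The main obstacle is Case A: $L^6$-smallness alone does not directly control the $L^2$-component of a hypothetical $D_h^{(j)}$-limit, and the backward free-wave reconstruction above is what upgrades the $L^6$-decay to full non-concentration; the bound $|t_\infty^{(j)}-t_\infty^{(j')}|<T_{focus}$ is precisely what makes Lemma \ref{lmnonreconcentr} available on an interval covering both concentration times.
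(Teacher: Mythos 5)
Your treatment of the first assertion matches the paper's: it is indeed a direct application of Lemma \ref{lmcvceproche} together with (\ref{conservnormaprox}), and the fact that $w_h^{(K+1)}$ inherits $h$-oscillation and the damped linear equation. Your handling of the spatial-orthogonality case ($\ell<\infty$) is also correct; you directly approximate $p_h^{(j')}$ by a rescaled free wave via Lemma \ref{lmlinproche} and observe that the $D_h^{(j)}$-zoom of that rescaled wave is a translation by a vector tending to infinity, whereas the paper first uses Lemma \ref{lmcvceproche} to reduce to equal concentration times and then computes a pairing against a compactly supported test function; both routes work.

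The gap is in the time-orthogonality case $\ell=+\infty$. You correctly obtain, from Lemma \ref{lmnonreconcentr} and the non-focusing hypothesis, that $\nor{p_h^{(j')}(t_h^{(j)}+\eta h,\cdot)}{L^6(M)}\to 0$ uniformly for bounded $\eta$, and this immediately forces $\varphi=0$ (since the $D_h^{(j)}$-zoom is bounded in $\dot H^1$ and vanishes in $L^6$, it converges weakly to $0$ in $\dot H^1$). You also correctly flag that this alone does not kill $\psi$. But your proposed fix — that ``an analogue of Lemma \ref{lmlinproche} now centered at $(x_h^{(j)},t_h^{(j)})$ would reconstruct $p_h^{(j')}$ on $[t_h^{(j)}-\Lambda h,t_h^{(j)}+\Lambda h]$ up to energy-norm error as a rescaled free wave carrying $(\varphi,\psi)$'' — is not valid. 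Having a nonzero $D_h^{(j)}$-weak limit at time $t_h^{(j)}$ does not make $p_h^{(j')}$ a concentrating wave with cores $(\underline{h},\underline{x}^{(j)},\underline{t}^{(j)})$: the weak limit only captures the part of the sequence visible to the zoom, while $p_h^{(j')}$ generically carries energy of order one elsewhere, so it cannot be approximated in $\nort{\cdot}$ by a rescaled free wave of profile $(0,\psi)$. Lemma \ref{lmlinproche} applies to a sequence that \emph{is} a linear concentrating wave at the given cores, not to a sequence whose zoom merely has a nonzero weak limit.

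The correct mechanism, used in the paper, needs no reconstruction in energy norm. After extraction, suppose $D_h^{(j)}(p_h^{(j')},\partial_t p_h^{(j')})(t_h^{(j)})\rightharpoonup(0,\psi)$. Apply Lemma \ref{lmcvceproche} to $p_h^{(j')}$ with equal cores of concentration in space but shifted in time by $\tilde t_h^{(j)}=t_h^{(j)}+sh$; this transports weak limits along the free wave flow on $T_{x_\infty^{(j)}}M$ and gives $D_h^{(j)}(p_h^{(j')},\partial_t p_h^{(j')})(\tilde t_h^{(j)})\rightharpoonup(v(s),\partial_t v(s))$ where $\Box_\infty v=0$ with $(v,\partial_t v)(0)=(0,\psi)$. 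Each shifted time $\tilde t_h^{(j)}$ still satisfies the hypotheses (the ratio $|\tilde t_h^{(j)}-t_h^{(j')}|/h$ still tends to $+\infty$ and $|\tilde t_\infty^{(j)}-t_\infty^{(j')}|<T_{focus}$ for $h$ small), so the $L^6$-vanishing argument gives $v(s)=0$ for every $s\in\R$, hence $\partial_t v\equiv 0$ and $\psi=0$. This is a statement about the family of weak limits, not an approximation of $p_h^{(j')}$ itself, and this is the step your argument is missing.
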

\begin{proof}
The first result is a particular case of Lemma \ref{lmcvceproche}. The proof of the second part is very similar to Lemma 3.7 of \cite{PGGall2001}. To simplify the notation, we can assume by translation in time that $t_h^{(j')}=0$. We have to distinguish two cases : time and space orthogonality.

In the case of time orthogonality, that is $\left|\frac{t_h^{(j)}}{h}\right| \tend{h}{0}+\infty$, we first prove  $D_h^{1,(j)}(p_h^{(j')})(t_h^{(j)}) \rightharpoonup 0$ (recall that the exponent $1$ in $D_h^{1,(j)}$ means that we only consider the $H^1$ part of the weak limit). Thanks to the nonfocusing assumption, Lemma \ref{lmnonreconcentr} yields 
\bna
\nor{p_h^{(j')}(t_h^{(j)},.)}{L^6(M)} \tend{h}{0}0
\ena
We choose $(U,\Phi_U)$ some local chart around $x_{\infty}^{(j)}$ and $\Psi_U\in C^{\infty}_0(U)$ equals to $1$ around $x_{\infty}^{(j)}$. Then, $\nor{\Psi_U p_h^{(j')}(t_h^{(j)},.)}{L^6(M)} \tend{h}{0}0$ and $h^{\frac{1}{2}}\nor{\Psi_U p_h^{(j')}(t_h^{(j)},x_h+hx)}{L^6(\R^3)} \tend{h}{0}0$ (here, we have identified $\Psi_U p_h^{(j')}$ with its local representation in $\R^3$). In particular $h^{\frac{1}{2}}\Psi_U p_h^{(j')}(t_h^{(j)},x_h+hx)\rightharpoonup 0$ and $D_h^{1,(j)}(p_h^{(j')})(t_h^{(j)}) \rightharpoonup 0$. Now, we want to prove more precisely $D_h^{(j)}(p_h^{(j')},\partial_t p_h^{(j')})(t_h^{(j)}) \rightharpoonup 0$. Suppose $D_h^{(j)}(p_h^{(j')},\partial_t p_h^{(j')})(t_h^{(j)}) \rightharpoonup (0,\psi)$. Take $s\in \R$ arbitrary. $\tilde{t}_h^{(j)}= t_h^{(j)}+sh$ fulfills the same assumption  $\left|\frac{\tilde{t}_h^{(j)}}{h}\right| \tend{h}{0}+\infty$ and the nonfocusing property $|\tilde{t}_{\infty}^{(j)}|<T_{focus}$. So, we conclude similarly that $D_h^{1,(j)}(p_h^{(j')})(\tilde{t}_h^{(j)}) \rightharpoonup 0$. But the proof of Lemma \ref{lmcvceproche} gives that 
$D_h^{(j)}(p_h^{(j')},\partial_t p_h^{(j')})(\tilde{t}_h^{(j)}) \rightharpoonup (v,\partial_tv)(s)$ where $v$ is solution of
\bna
\Box_{\infty}v=0,&(v,\partial_t v)(0) =(0,\psi)
\ena
So, we have $v(s)=0$ for any $s\in \R$, which gives $\psi=0$ and $D_h^{(j)}(p_h^{(j')},\partial_t p_h^{(j')})(t_h^{(j)}) \rightharpoonup (0,0)$.

In the case of $t_h^{(j)} \not\perp_h  t_h^{(j')}$ and space orthogonality, Lemma \ref{lmcvceproche} allows us to assume that $t_h^{(j)} = t_h^{(j')}=0$. In local coordinates, we have
$$(p_h^{(j')},\partial_t p_h^{(j')})(0)=h^{-\frac{1}{2}}\Psi_U(x) \left(\varphi^{(j')},\frac{1}{h}\psi^{(j')}\right)\left(\frac{x-x_h^{(j')}}{h}\right)$$
If $x_{\infty}^{(j')}\neq x_{\infty}^{(j)}$, the conclusion is obvious. If it is not the case, take $g\in C^{\infty}_0(\R^3)$. For the first part, we have to estimate
\bna
\int_{\R^3}  \Psi_U^2(x_n^{(j)}+hy)\varphi^{(j')}\left(y+\frac{x_h^{(j)}-x_h^{(j')}}{h}\right)g(y)dy
\ena
which goes to $0$ as $h$ tends to $0$ because $g$ is compactly supported. The same result holds for the second part for $\partial_t p_h^{(j')}$.
\end{proof}
Let us come back to the proof of the orthogonality of cores in Proposition \ref{propextrconc}. Define : 
\bna
j_K= \max \left\{ \left.j\in \left\{1,\cdots K\right\}\right| (t_h^{(j)},x_h^{(j)})\not\perp_h (t_h^{(K+1)},x_h^{(K+1)})\right\}
\ena
assuming that such an index exists. 

We list a few consequences of our algorithm
\bnan
D_h^{(l+1)}(w_h^{(l)},\partial_t w_h^{(l)})(t_h^{(l+1)}) &\underset{h\to 0}{\rightharpoonup} &(\varphi^{(l+1)},\psi^{(l+1)}) \textnormal{ with }\varphi^{(l+1)}\neq 0  \textnormal{ if }l\leq K \label{consequ1}\\
w_h^{(l)}&=&p_h^{(l+1)}+w_h^{(l+1)}\label{consequ2}\\
w_h^{(j_K)}&=&\sum_{j=j_K+1}^{K+1}p_h^{(j)}+w_h^{(K+1)} \label{consequ3}
\enan
The definition of $p_h^{(l)}$ and Lemma \ref{lmconcweak} implies $D_h^{(l)} (p_h^{(l)},\partial_t  p_h^{(l)})(t_h^{(l)})\rightharpoonup (\varphi^{(l)},\psi^{(l)})$. Then, we get from (\ref{consequ1}) and (\ref{consequ2}) that $D_h^{(l+1)} (w_h^{(l+1)},\partial_t  w_h^{(l+1)})(t_h^{(l+1)})\rightharpoonup (0,0)$. We apply this to $l+1=j_K$ and it gives $D_h^{(K+1)} (w_h^{(j_K)},\partial_t  w_h^{(j_K)})(t_h^{(K+1)})\rightharpoonup (0,0)$ thanks to the first part of Lemma \ref{lmconcorth} and the definition of $j_K$.

The definition of $j_K$ and the second part of Lemma \ref{lmconcorth} gives $D_h^{1,(K+1)} (p_h^{(l)},\partial_t  p_h^{(l)})(t_h^{(K+1)})\rightharpoonup (0,0)$ for $j_K+1\leq l\leq K$.

To conclude, we "apply" $D_h^{1,(K+1)}$ to equality (\ref{consequ3}) and get $D_h^{1,(K+1)} w_h^{(j_K)}(t_h^{(K+1)})\rightharpoonup \varphi^{(K+1)}$ while we have just proved $D_h^{(K+1)} (w_h^{(j_K)},\partial_t  w_h^{(j_K)})(t_h^{(K+1)})\rightharpoonup (0,0)$ which is a contradiction and complete the proof of the proposition for $2T<T_{focus}$. 

In the case of $S^3$ and large times, the orthogonality result is a consequence of the orthogonality in short times and the almost periodicity. Denote 
\bna
j_K= \max \left\{ \left.j\in \left\{1,\cdots K\right\}\right| \exists  m\in \Z \textnormal{ s.t. }(t_h^{(j)}+m\pi,(-1)^m x_h^{(j)})\not\perp_h (t_h^{(K+1)},x_h^{(K+1)})\right\}.
\ena 
Then, for any $j_K+1\leq j\leq K$, we can find $m^{(j)}\in \Z $ such that
\bna
\left|t_{\infty}^{(j)}+m^{(j)}\pi-t_{\infty}^{(j_K)}\right|\leq \pi/2< T_{focus}\\
(t_h^{(j)}+m^{(j)}\pi,(-1)^{m^{(j)}} x_h^{(j)})\perp_h (t_h^{(K+1)},x_h^{(K+1)}).
\ena 
and we denote $m^{(j_K)}\in \Z$ such that $(t_h^{(j_K)}+m^{(j_K)}\pi,(-1)^{m^{(j_K)}} x_h^{(j_K)})\not\perp_h (t_h^{(K+1)},x_h^{(K+1)})$. \\
We remark that $p_h^{(j)}\left(t_h^{(j)}+m^{(j)}\pi,.\right)$ is still a non zero concentrating data associated with\\ $[(-1)^{m^{(j)}}(\varphi,\psi)((-1)^{m^{(j)}}.), \underline{h},(-1)^j \underline{x}]$ thanks to Lemma \ref{S3perio} (note that it is at this stage that we use $M=S^3$ and $a\equiv 0$ : it is the only case where we are able to describe this phenomenon of reconcentration). So, we are in the same situation as before, and we get a contradition. 

This completes the proof of Proposition \ref{propextrconc}.\end{proof}
\bnp[Proof of Theorem \ref{thmdecompositionL}] We only have to combine the both decompositions we made. Denote $v_n^{j}$ (and the rest $\rho_n^{(l)}$ the $h^{(j)}_n$ oscillatory component obtained by decomposition (\ref{decomplinscale}) and $p_n^{(j,\alpha)}$ the concentrating waves obtained from decomposition (\ref{decomplincore}) (and the rest $w_n^{(j,A_j)}$). We enumerate them by the bijection $\sigma$ from $\N^2$ into $\N$ defined by
\bna
\sigma(j,\alpha)<\sigma(k,\beta) \textnormal{ if } j+\alpha<k+\beta \textnormal{ or  } j+\alpha= k+\beta \textnormal{ and } j<k.
\ena

For $l$ and $A_j$ fixed, $1\leq j\leq l$, the rest can be written 
\bna
w_n^{(l,A_1,\cdots,A_l)}=\rho_n^{(l)}+\sum_{j=1}^l w_n^{(j,A_j)} .
\ena
Let $\varepsilon>0$. To get the result, it suffices to prove that for $l_0$ large enough, $\nor{w_n^{(l,A_1,\cdots,A_l)}}{L^{\infty}(L^6)}\leq \varepsilon $ for all $(l,A_1,\cdots,A_l)$ satisfying $l\geq l_0$ and $\sigma(j,A_j)\geq \sigma(l_0,1)$ .
 
(\ref{orthlinthm}) can easily be deduced from the same orthogonality result in the both other decomposition. In particular, it gives that the series of general term $\sum_{(j,\alpha)}\limsu{n}{\infty}\nor{(p_n^{(j,\alpha)},\partial_t p_n^{(j,\alpha)})_{t=0}}{\HutL}^2$ is convergent. In particular, we can find $l_0$ large enough such that we have 
\bnan
\sum_{\sigma(j,\alpha)>\sigma(l_0,1)}\limsu{n}{\infty}\nor{(p_n^{(j,\alpha)},\partial_t p_n^{(j,\alpha)})_{t=0}}{\HutL}^2 \leq \varepsilon \label{estimsumorteps}.
\enan
Moreover, for $l_0$ large enough, we have for $l\geq l_0$
\bna
\limsu{n}{\infty}\nor{\rho_n^{(l)}}{L^{\infty}(L^{6})} \leq \varepsilon.
\ena
Then, for any $l\geq l_0$, one can find one $B_l$ such that for any $1\leq j\leq l$, $\tilde{A}_j\geq B_l$ implies 
\bna
\limsu{n}{\infty}\nor{w_n^{(j,\tilde{A}_j)}}{L^{\infty}(L^{6})}\leq \varepsilon/l.
\ena
The rest can be decomposed by 
\bna
w_n^{(l,A_1,\cdots,A_l)}=\rho_n^{(l)}+\sum_{j=1}^l w_n^{(j,\max(A_j,B_l))} +S_n^{(j,A_1,\cdots,A_l))}.
\ena
where 
\bna
S_n^{(j,A_1,\cdots,A_l))}&=& \sum_{1\leq j\leq l,A_j<B_l}\left( w_n^{(j,A_j)} -w_n^{(j,B_l)}\right)=\sum_{j=1}^l \sum_{A_j<\alpha\leq B_l} p_n^{j,\alpha}.
\ena
Since $S_n^{(j,A_1,\cdots,A_l))}$ is solution of the damped wave equation, energy estimates and Sobolev embedding give
\bna
\limsu{n}{\infty}\nor{S_n^{(j,A_1,\cdots,A_l))}}{L^{\infty}(L^{6})}^2 &\leq &C\limsu{n}{\infty}\nor{(S_n^{(j,A_1,\cdots,A_l))},\partial_t S_n^{(j,A_1,\cdots,A_l))})_{t=0}}{\HutL}^2\\
&\leq &C\sum_{j=1}^l \sum_{A_j<\alpha\leq B_l} \nor{(p_n^{(j,\alpha)},\partial_t p_n^{j,\alpha})_{t=0}}{\HutL}^2.
\ena
where we have used almost orthogonality in the last estimate. But the sum is restricted to some $(j,\alpha)$ satisfying $\sigma(j,\alpha)> \sigma(j,\alpha_j)>\sigma(l_0,1)$ and is indeed smaller than $C\varepsilon$ thanks to (\ref{estimsumorteps}).

Combining our estimates, we get that $\limsu{n}{\infty}\nor{w_n^{(l,A_1,\cdots,A_l)}}{L^{\infty}(L^6)}$ is smaller than $(2+C)\varepsilon$ for all $(l,A_1,\cdots,A_l)$ satisfying $l\geq l_0$ and $\sigma(j,A_j)\geq \sigma(l_0,1)$. We get the same estimates with the $L^5(L^{10})$ norm by interpolation between $L^{\infty}(L^6)$ and $L^4(L^{12})$. The second norm being bounded by Strichartz estimates and the fact that $w_n^{(l,A_1,\cdots,A_l)}$ is uniformly bounded in the energy space.\enp

We also state a few consequences of the algorithm of Theorem \ref{thmdecompositionL} that will be used below. The following both lemmas use the notation and the assumptions of Theorem \ref{thmdecompositionL} .
\begin{lemme}
\label{lmwnlorth}
Let $2T<T_{focus}$. For any $l\in \N$ and $1\leq j\leq l$, we have, with the notation and assumptions of Theorem \ref{thmdecompositionL} 
\bna
D^{(j)}_n (w_n^{(l)},\partial_t w_n^{(l)})(t_n^{(j)}) \rightharpoonup (0,0).
\ena
\end{lemme}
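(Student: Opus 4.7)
Fix $l$ and $1\le j\le l$. Through the bijection $\sigma$ used in the proof of Theorem \ref{thmdecompositionL}, the $j$-th profile $p_n^{(j)}$ corresponds to a pair $(j_0,\alpha_0)$, i.e.\ it is the $\alpha_0$-th concentrating wave extracted in the scale $h_n^{(j_0)}$, associated with the core $(\underline{x}^{(j)},\underline{t}^{(j)})=(\underline{x}^{(j_0,\alpha_0)},\underline{t}^{(j_0,\alpha_0)})$. Using the notation of that proof, we write for $l'$ and integers $A_1,\ldots,A_{l'}$ large enough (so that, in particular, $l'\geq j_0$ and $A_{j_0}\geq \alpha_0$)
\bna
w_n^{(l)}=\rho_n^{(l')}+\sum_{i=1}^{l'} w_n^{(i,A_i)},
\ena
where $\rho_n^{(l')}$ is the scale-rest from Proposition \ref{propextractscales} and $w_n^{(i,A_i)}$ is the concentration-rest inside scale $h_n^{(i)}$ from Proposition \ref{propextrconc}. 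The plan is to show that $D_n^{(j)}$ evaluated at $t_n^{(j)}$ sends each of the three types of contributions to $0$ in $\HutL_{x_{\infty}^{(j)}}$.

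\textbf{Step 1 (scale-rest).} By Proposition \ref{propextractscales}, $(\rho_n^{(l')},\partial_t\rho_n^{(l')})$ is $(h_n^{(j_0)})$-singular with respect to $A_M$, uniformly in time thanks to Proposition \ref{propproposcilamorti}. On the other hand, any concentrating data $(u_n,v_n)$ associated with $[(\varphi,\psi),\underline{h}^{(j_0)},\underline{x}^{(j)}]$ is strictly $(h_n^{(j_0)})$-oscillatory (by direct Fourier computation in a coordinate chart and Proposition \ref{proposcill}). The inner product of a strictly oscillatory and a singular sequence converges to $0$, so using the equivalent characterization of $D_n^{(j)}$ given by Lemma \ref{defweaktrackequiv}, we obtain $D_n^{(j)}(\rho_n^{(l')},\partial_t\rho_n^{(l')})(t_n^{(j)})\rightharpoonup 0$.

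\textbf{Step 2 (off-scale pieces).} Let $i\neq j_0$ with $i\le l'$. By construction $w_n^{(i,A_i)}=v_n^{(i)}-\sum_{\alpha\le A_i}p_n^{(i,\alpha)}$ solves the damped linear Klein-Gordon equation with initial data strictly $(h_n^{(i)})$-oscillatory (this is clear for $v_n^{(i)}$, and each $p_n^{(i,\alpha)}$ is strictly $(h_n^{(i)})$-oscillatory as above). Hence $w_n^{(i,A_i)}$ is strictly $(h_n^{(i)})$-oscillatory, uniformly in time. The orthogonality $|\log(h_n^{(i)}/h_n^{(j_0)})|\to\infty$ between the scales forces $w_n^{(i,A_i)}$ to be $(h_n^{(j_0)})$-singular, and the same pairing argument as in Step 1 yields $D_n^{(j)}(w_n^{(i,A_i)},\partial_t w_n^{(i,A_i)})(t_n^{(j)})\rightharpoonup 0$.

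\textbf{Step 3 (on-scale piece).} Only $w_n^{(j_0,A_{j_0})}$ remains, and this is exactly the setting of the single-scale analysis of Proposition \ref{propextrconc}. By the very construction of the algorithm at the $\alpha_0$-th step we have
\bna
D^{(j_0,\alpha_0)}_{h_n^{(j_0)}}(w_n^{(j_0,\alpha_0)},\partial_t w_n^{(j_0,\alpha_0)})(t_n^{(j)})\rightharpoonup (0,0).
\ena
For $\alpha_0<\alpha\le A_{j_0}$, the core $(\underline{x}^{(j_0,\alpha)},\underline{t}^{(j_0,\alpha)})$ is $\perp_h$ to $(\underline{x}^{(j)},\underline{t}^{(j)})$, and since all concentrations occur inside $[-T,T]$ with $2T<T_{focus}$, the time separation $|t_\infty^{(j_0,\alpha)}-t_\infty^{(j)}|<T_{focus}$ is automatic. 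The second part of Lemma \ref{lmconcorth} then gives $D_n^{(j)}(p_n^{(j_0,\alpha)},\partial_t p_n^{(j_0,\alpha)})(t_n^{(j)})\rightharpoonup 0$. Telescoping $w_n^{(j_0,A_{j_0})}=w_n^{(j_0,\alpha_0)}-\sum_{\alpha_0<\alpha\le A_{j_0}}p_n^{(j_0,\alpha)}$ (a finite sum) finishes Step 3, and combining the three steps proves the lemma.

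\textbf{Main obstacle.} The delicate point is reconciling the two different notions of localization in frequency that the decomposition relies on: the \emph{spectral} notion of $(h_n)$-oscillation/singularity (defined via spectral cutoffs $\mathbf{1}_{|A_M|\ge R/h_n}$, coming from Proposition \ref{propextractscales}) and the \emph{spatial} notion of concentrating data (defined by rescaling in a coordinate chart, used by $D_n^{(j)}$). The bridge is Proposition \ref{proposcill} together with a cutoff around $x_\infty^{(j)}$: after multiplication by $\Psi_U$ and transport to $\R^3$, strict $(h_n^{(j_0)})$-oscillation is preserved on one side while the hypothesis gives $(h_n^{(j_0)})$-singularity on the other, so that Lemma \ref{defweaktrackequiv} reduces the claim to the classical fact that the $H^1\times L^2$ pairing between such sequences vanishes; the requisite assumption $2T<T_{focus}$ enters only through Step 3 via Lemma \ref{lmconcorth}.
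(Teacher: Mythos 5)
Your proof is correct, and it takes a genuinely different route from the one in the paper.

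The paper argues by contradiction: it supposes $D^{(j)}_n (w_n^{(l)},\partial_t w_n^{(l)})(t_n^{(j)}) \rightharpoonup (\varphi,\psi)$, expands further $w_n^{(l)} = \sum_{i=l+1}^L p_n^{(i)} + w_n^{(L)}$, kills each $p_n^{(i)}$ (scale orthogonality or Lemma \ref{lmconcorth}), thereby shows the limit is carried by $w_n^{(L)}$ for all $L$, and then uses $\limsup_n\|w_n^{(L)}\|_{L^\infty L^6}\to 0$ as $L\to\infty$ to get $\varphi=0$; since the $L^\infty L^6$ control sees only the $\dot H^1$ part, it needs the extra time-flow trick of Lemma \ref{lmcvceproche} (shifting to $t_n^{(j)}+sh_n^{(j)}$) to kill $\psi$. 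You instead unwind $w_n^{(l)}$ along the internal two-step structure of Theorem \ref{thmdecompositionL} (scales, then cores) and neutralize each block: the scale remainder $\rho_n^{(l')}$ and the off-scale core remainders $w_n^{(i,A_i)}$, $i\neq j_0$, via the strict-oscillation/singularity dichotomy of Proposition \ref{propextractscales} and Proposition \ref{propproposcilamorti} (together with the fact that a concentrating datum at scale $h_n^{(j_0)}$ is strictly $(h_n^{(j_0)})$-oscillatory, so its $\HutL$-pairing with a $(h_n^{(j_0)})$-singular sequence vanishes, giving both components of $D_n^{(j)}$ at once through Lemma \ref{defweaktrackequiv}); and the on-scale remainder $w_n^{(j_0,A_{j_0})}$ via the algorithm step plus the second part of Lemma \ref{lmconcorth}. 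The cleanest gain of your route is that every block yields the full $\HutL_{x_\infty}$-weak limit $(0,0)$ directly, so you never have to sacrifice the $L^2$-component and recover it afterwards by the time-translation argument; that step, and the use of the $L^\infty L^6$ smallness, are what the paper needs but you avoid. The only imprecision is the phrase "for $l'$ and $A_1,\dots,A_{l'}$ large enough": for a fixed $l$ the identity $w_n^{(l)}=\rho_n^{(l')}+\sum_{i=1}^{l'} w_n^{(i,A_i)}$ holds for the specific $l'$ and $(A_i)$ determined by $\{(i,\alpha):\sigma(i,\alpha)\le l\}$ (which is downward closed in $\alpha$, so such a choice exists); taking them strictly larger introduces extra profiles $p_n^{(i,\alpha)}$ with $\sigma(i,\alpha)>l$ which must then be subtracted back and handled as in Steps 2--3. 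This is a phrasing issue, not a gap.
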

\bnp
Assume $D^{(j)}_n (w_n^{(l)},\partial_t w_n^{(l)})(t_n^{(j)}) \rightharpoonup (\varphi,\psi)$. We directly use the decomposition of Theorem \ref{thmdecompositionL} to write for $L>l$
\bna
w_n^{(l)}=\sum_{i=l+1}^L p_n^{(i)}+w_n^{(L)}.
\ena
In case of scale orthogonality of $h_n^{(j)}$ and $h_n^{(i)}$, for $l+1\leq i\leq L$, we have directly $D^{(j)}_n (p_n^{(i)},\partial_t p_n^{(i)})(t_n^{(j)})\rightharpoonup (0,0)$. Otherwise, if $h_n^{(j)}=h_n^{(i)}$ and $(\underline{x}^{(j)},\underline{t}^{(j)}) \perp_h (\underline{x}^{(i)},\underline{t}^{(i)})$, Lemma \ref{lmconcorth} gives the same result. Therefore, $D^{(j)}_n (w_n^{(L)},\partial_t w_n^{(L)})(t_n^{(j)}) \rightharpoonup (\varphi,\psi)$. Since $\limsu{n}{\infty}\nor{w_n^{(L)}}{L^{\infty}([-T,T],L^6)}\tend{L}{\infty}0$, we have $\varphi=0$. We finish the proof as in Lemma \ref{lmconcorth}. We use the same argument for times $t_n^{(j)}+sh_n^{(j)}$ and get $\psi\equiv 0$ by the proof of Lemma \ref{lmcvceproche}. Remark that Lemma \ref{lmcvceproche} requires that $w_n^{(l)}$ is strictly $h_n^{(j)}$-oscillatory, but this can be easily avoided by decomposing $w_n^{(l)}=f_n +g_n$ with $f_n$ ($h_n^{(j)}$)-oscillatory and $g_n$ ($h_n^{(j)}$)-singular.
\enp
\begin{lemme}
With the notation and assumptions of Theorem \ref{thmdecompositionL}, we have, for any $j\in \N$
\label{lmnorleq}
\bna
\underset{n\to \infty}{\varlimsup} \nor{p_n^{(j)}}{L^5([-T,T],L^{10})}\leq C \underset{n\to \infty}{\varlimsup}\nor{v_n}{L^5([-T,T],L^{10})}
\ena
where $C$ only depends on  the manifold $M$.
\end{lemme}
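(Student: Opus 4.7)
The plan is to reduce the statement to a Euclidean estimate by rescaling at the core of $p_n^{(j)}$, exploiting both the invariance of the $L^5L^{10}$ norm under the parabolic rescaling and the fact that $p_n^{(j)}$ captures the concentrated part of $v_n$ by construction. We may assume $\limsu{n}{\infty}\nor{v_n}{L^5([-T,T],L^{10}(M))}<\infty$, otherwise there is nothing to prove. Write $h_n=h_n^{(j)}$, $t_n=t_n^{(j)}$, $x_n=x_n^{(j)}$, and let $p$ be the solution on $T_{x_\infty^{(j)}}M\simeq\R^3$ of $\Box_\infty p=0$ with data $(\varphi^{(j)},\psi^{(j)})$.

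First, I would show $\limsu{n}{\infty}\nor{p_n^{(j)}}{L^5([-T,T],L^{10}(M))}\leq\nor{p}{L^5(\R,L^{10}(\R^3))}$ by splitting $[-T,T]$ into the concentration window $I_n^\Lambda=[t_n-\Lambda h_n,\,t_n+\Lambda h_n]$ and its complement. On $I_n^\Lambda$, Lemma \ref{lmlinproche} gives that $p_n^{(j)}$ is $L^5L^{10}$-close to the rescaled-in-chart Euclidean solution $\tilde p_n$, and a straightforward change of variables identifies $\nor{\tilde p_n}{L^5(I_n^\Lambda,L^{10}(M))}$ with $\nor{p}{L^5([-\Lambda,\Lambda],L^{10}(\R^3))}$ up to a $1+o(1)$ factor coming from $\sqrt{\det g}$; on the complement, Lemma \ref{lmnonreconcentr} gives a contribution vanishing as $\Lambda\to\infty$ (the non-focusing hypothesis being satisfied in the setup of Theorem \ref{thmdecompositionL}).

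Next, I rescale $v_n$ itself. In a local chart around $x_\infty^{(j)}$ with cut-off $\Psi$ equal to $1$ near $x_\infty^{(j)}$, set
\[
\tilde v_n(s,y)=h_n^{1/2}\Psi(x_n+h_ny)\,v_n(t_n+h_ns,\,x_n+h_ny).
\]
By the construction of $p_n^{(j)}$ in Proposition \ref{propextrconc} and Lemma \ref{defweaktrackequiv}, the initial data satisfy $(\tilde v_n,\partial_s\tilde v_n)|_{s=0}\rightharpoonup(\varphi^{(j)},\psi^{(j)})$ in $\dot H^1\times L^2$. The chain rule shows that $\tilde v_n$ solves a linear equation whose principal symbol converges to $\Box_\infty$, the mass and damping terms being $O(h_n)$ in $L^1L^2$ on bounded cylinders; by energy estimates and linear propagation of weak convergence, $(\tilde v_n,\partial_s\tilde v_n)(s)\rightharpoonup(p,\partial_sp)(s)$ in $\dot H^1\times L^2$ for every $s\in[-\Lambda,\Lambda]$. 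Using the uniform bound $L^\infty\dot H^1\hookrightarrow L^\infty L^6$ and reflexivity of $L^5L^{10}$, this upgrades to weak convergence $\tilde v_n\rightharpoonup p$ in $L^5([-\Lambda,\Lambda],L^{10}(B(0,R)))$ for any $\Lambda,R>0$.

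To conclude, another change of variables yields $\nor{\tilde v_n}{L^5([-\Lambda,\Lambda],L^{10}(B(0,R)))}\leq C\nor{v_n}{L^5([-T,T],L^{10}(M))}$ for $n$ large, with $C$ depending only on $M$ (through $\sqrt{\det g}$ in the chart). Weak lower semicontinuity of the $L^5L^{10}$ norm then gives $\nor{p}{L^5([-\Lambda,\Lambda],L^{10}(B(0,R)))}\leq C\limsu{n}{\infty}\nor{v_n}{L^5L^{10}}$, and sending $R,\Lambda\to\infty$ and combining with the first step yields the claim. The hard part will be upgrading the weak convergence of $\tilde v_n$ from individual time slices to the full space-time $L^5L^{10}$; this should follow from testing against smooth tensor-product functions combined with the uniform $L^\infty H^1$ bound and dominated convergence.
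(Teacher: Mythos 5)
Your proof takes a genuinely different route from the paper's. The paper argues by duality: it tests against $f\in L^{5/4}L^{10/9}$, applies the rescaling operator $W^j$ to the \emph{full} decomposition $v_n=v+\sum_i p_n^{(i)}+w_n^{(l)}$, and estimates each term, using orthogonality and scale separation (the $\grando{\sqrt{h_n^j/h_n^i}}$ and $\grando{(h_n^j/h_n^i)^{-7/2}}$ bounds, and the vanishing of the space/time-orthogonal cross terms) to kill everything except the piece giving $v^{(j)}$. You bypass all of this by rescaling $v_n$ itself, identifying the weak $L^5L^{10}$ limit of the rescaled sequence with $p$, and invoking weak lower semicontinuity of the norm. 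Your approach is conceptually cleaner: once $D_h^{(j)}(v_n,\partial_t v_n)(t_n^{(j)})\rightharpoonup(\varphi^{(j)},\psi^{(j)})$ is in hand and Lemma \ref{lmcvceproche} propagates this to all rescaled times, you do not need to know anything about the interactions of the other profiles. Its cost is exactly where you put it: upgrading time-slice weak convergence in $\dot H^1\times L^2$ to weak convergence in $L^5L^{10}$. Your sketch of that step (tensor-product test functions, the uniform $L^\infty L^6$ bound from energy, dominated convergence, density of tensor products in $L^{5/4}L^{10/9}$, together with boundedness of $\tilde v_n$ in $L^5L^{10}$ from scaling and reflexivity to extract a weak limit) is correct.

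Two small points you should make explicit. First, the input $D_h^{(j)}(v_n,\partial_t v_n)(t_n^{(j)})\rightharpoonup(\varphi^{(j)},\psi^{(j)})$ is not literally "the construction of $p_n^{(j)}$": the algorithm gives $D_h^{(j)}(w_n^{(j-1)},\partial_t w_n^{(j-1)})(t_n^{(j)})\rightharpoonup(\varphi^{(j)},\psi^{(j)})$, and passing from $w_n^{(j-1)}$ to $v_n$ requires $D_h^{(j)}(p_n^{(i)},\partial_t p_n^{(i)})(t_n^{(j)})\rightharpoonup 0$ for $i<j$, which uses scale singularity for scale-orthogonal profiles and Lemma \ref{lmconcorth} for same-scale ones. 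Second, and this is the one genuine gap, your parenthetical asserting that the non-focusing hypothesis of Lemma \ref{lmnonreconcentr} "is satisfied in the setup of Theorem \ref{thmdecompositionL}" is not quite right: this holds when $2T<T_{focus}$, but Theorem \ref{thmdecompositionL} itself does not impose that, and the paper's statement of the lemma is also meant to cover the $M=S^3$, $a\equiv 0$, $T$ large case. There, $p_n^{(j)}$ reconcentrates repeatedly and the complement of a single window $I_n^\Lambda$ does \emph{not} have small $L^5L^{10}$ norm. The fix is the same the paper uses: cover $[-T,T]$ by finitely many intervals of length $<T_{focus}$ and run your argument on each.
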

\begin{proof}
We first assume $2T<T_{focus}$.
Actually, in the case of $\R^3$, the result is proved using the fact that the $p_n^{(j)}$ are some concentration of some weak limit of a dilation of $v_n$. The proof for a manifold follows the same path with a little more care due to the fact that dilation only have a local meaning.
  
For any $\varepsilon>0$, we prove 
\bna
\underset{n\to \infty}{\varlimsup} \nor{p_n^{(j)}}{L^5([-T,T],L^{10})}\leq C\underset{n\to \infty}{\varlimsup}\nor{v_n}{L^5([-T,T],L^{10})}+C\varepsilon. 
\ena
We use the decomposition of Theorem \ref{thmdecompositionL} and choose $l\geq j$ large enough such that
\bna
\underset{n\to \infty}{\varlimsup} \nor{w_n^l}{L^5([-T,T],L^{10})}\leq \varepsilon. 
\ena
Let $\Psi_U$ be a cut off function related to local charts $(U,\Phi_U)$ such that $\Psi_U(x)=1$ around $x_{\infty}^j$ and $\Psi_U(x)=0$ around any $x_{\infty}^i\neq x_{\infty}^j$. 

For each $1\leq i\leq l$, we decompose $[-T,T]=I_{n,i}^{1,\Lambda}\cup I_{n,i}^{2,\Lambda}\cup I_{n,i}^{3,\Lambda}$ according to Lemma \ref{lmnonreconcentr}. 

For any $i$ such that $x_{\infty}^i= x_{\infty}^j$, for $\Lambda$ large enough, we have 
\bnan
\label{linpetit}
\underset{n\to \infty}{\varlimsup}\nor{p_n^{(i)}}{L^5(I_{n,i}^{1,\Lambda}\cup I_{n,i}^{3,\Lambda},L^{10})}\leq \varepsilon/l. 
\enan
Moreover, Lemma \ref{lmlinproche} yields for $\Lambda$ large enough
\bnan
\label{solconc}
\underset{n\to \infty}{\varlimsup}\nor{p_n^{(i)}-v_n^{(i)}}{L^5(I_{n,i}^{2,\Lambda},L^{10})}\leq \varepsilon/l
\enan
where $v_n^{(i)}(t,x)=\frac{1}{\sqrt{h_n^{(i)}}}\Phi_U^{*}\Psi_U (x)v^{(i)}\left(\frac{t-t_n^{(i)}}{h_n^{(i)}},\frac{x-x_n^{(i)}}{h_n^{(i)}}\right)$ on a coordinate patch and $v^{(i)}$ solution of 
\begin{eqnarray}
\left\lbrace
\begin{array}{rcl}%argument r=alignement à droite puis center et left
\Box_{x_\infty^j} v^{(i)}&=&0 \quad \textnormal{on}\quad \R \times T_{x_{\infty}^j}M\\
(v^{(i)}(0),\partial_t v^{(i)}(0))&=&(\varphi^{(i)},\psi^{(i)}) .
\end{array}
\right.
\end{eqnarray}
Thanks to (\ref{linpetit}) and (\ref{solconc}), the conclusion of the lemma will be obtained if we prove
\bna
\nor{v^{(j)}}{L^5(\R,L^{10}(T_{x_{\infty}^j}M))} \leq \underset{n\to \infty}{\varlimsup}\nor{v_n}{L^5([-T,T],L^{10})}+C\varepsilon.
\ena 
We argue by duality. Take $f \in C^{\infty}_0(\R \times T_{x_{\infty}^j}M)$ with $\nor{f}{L^{5/4}(\R,L^{10/9})}=1$. 

From now on, we work in local coordinates around $x_{\infty}^{(j)}$ and we will not distinguish a function defined on $U\subset M$ with its representant in $\R^3\approx T_{x_{\infty}^j}M$. Denote $W^j$ the operator defined on functions on $\R_t\times \R^3$ by
$$W^j g(s,y):=\sqrt{h_n^{j}}g(t_n^j+h_n^js,t_n^j+h_n^js).$$
The definition of $v_n^{(j)}$ in local coordinates yields
 \bna
 \int_{\R\times \R^3}(W^j 1_{[-T,T]}v_n^{(j)} )f \tend{n}{\infty} \int_{\R\times \R^3}v^{(j)} f.
 \ena
On the other hand 
 \bna
 \int_{\R\times \R^3}(W^j \Psi_U 1_{[-T,T]}p_n^j )f = \int_{\R\times \R^3}W^j \left[\Psi_U 1_{[-T,T]}(v_n -\sum_{x_{\infty}^{(i)}\neq x_{\infty}^{(j)}}p_n^i- \sum_{x_{\infty}^{(i)}= x_{\infty}^{(j)},i\neq j}p_n^i -w_n^l)\right]f.
 \ena
For any $1\leq i\leq l$, with $x_{\infty}^{(i)}\neq x_{\infty}^{(j)}$, using again Lemma \ref{lmnonreconcentr} and \ref{lmlinproche} and the fact that we can choose $\Psi_U$ with $\Psi_U(x_{\infty}^{(i)})=0$, we easily get
\bna
\underset{n\to \infty}{\varlimsup} \nor{\Psi_U p_n^{(i)}}{L^5([-T,T],L^{10})}=0. 
\ena
So for $n$ large enough
  \bna
 \left|\int_{\R\times \R^3}(W^j \Psi_U p_n^{(j)} )f \right|\leq C\left(\nor{v_n}{L^{5}([-T,T],L^{10})}+2\varepsilon \right)+ \left|\int_{\R\times \R^3}W^j \left[\Psi_U 1_{[-T,T]}\sum_{x_{\infty}^{(i)}= x_{\infty}^{(j)},i\neq j}p_n^{(i)}\right]f\right|
 \ena
 But for $i\neq j$, $x_{\infty}^{(i)}= x_{\infty}^{(j)}$, using (\ref{linpetit}) and then (\ref{solconc}), we have for $\Lambda$ and $n$ large enough 
 \bna
\left|\int_{\R\times \R^3}W^j \left[\Psi_U 1_{[-T,T]} p_n^{(i)}\right]f\right|& \leq & \left|\int_{\R\times \R^3}W^j \left[\Psi_U 1_{I_{n,i}^{2,\Lambda}} p_n^{(i)}\right]f\right|+\varepsilon/l\\
&\leq & \left|\int_{\R\times \R^3}W^j \left[\Psi_U 1_{I_{n,i}^{2,\Lambda}} v_n^{(i)}\right]f\right|+2\varepsilon/l.
 \ena
 These terms are actually
 \bna
 &\left|\int_{\R\times \R^3}W^j \left[\Psi_U 1_{I_{n,i}^{2,\Lambda}} v_n^{(i)}\right]f\right|=\\
 &\sqrt{\frac{h_n^{j}}{h_n^{i}}}\left|\int_{\R\times \R^3} \left[\Psi_U^2(h_n^j x+x_n^j) 1_{[\frac{t_n^i-t_n^j-\Lambda h_n^i}{h_n^j},\frac{t_n^i-t_n^j+\Lambda h_n^i}{h_n^j}]} v^{(i)}\left(\frac{th_n^j+t_n^j-t_n^i}{h_n^i},\frac{xh_n^j+x_n^j-x_n^i}{h_n^i}\right)\right]f\right|.
 \ena
Since this expression is uniformly continuous in $v^i\in L^5(\R,L^{10}(\R^3))$, we may assume $v^i$ in $C^{\infty}_0(\R\times \R^3)$. Then, if $\frac{h_n^{j}}{h_n^{i}}\tend{n}{\infty}0$, we have 
 \bna
 \left|\int_{\R\times \R^3}W^j \left[\Psi_U 1_{I_{n,i}^{2,\Lambda}} v_n^{(i)}\right]f\right|=\grando{\sqrt{\frac{h_n^{j}}{h_n^{i}}}}.
 \ena
 If $\frac{h_n^{j}}{h_n^{i}}\tend{n}{\infty}\infty$, the change of variable $s=\frac{th_n^j+t_n^j-t_n^i}{h_n^j}$, $y=\frac{xh_n^j+x_n^j-x_n^i}{h_n^i}$ gives 
 \bna
 \left|\int_{\R\times \R^3}W^j \left[\Psi_U 1_{I_{n,i}^{2,\Lambda}} v_n^i\right]f\right|=\grando{\left(\frac{h_n^{j}}{h_n^{i}}\right)^{-7/2}}.
 \ena
 If $h_n^{j}=h_n^{i}$, the space or time orthogonality yields that the integral is zero for $n$ large enough.
 
 In conclusion, for any $f \in C^{\infty}_0(\R \times \R^3)$ with $\nor{f}{L^{5/4}(\R,L^{10/9})}=1$, we have proved : 
 \bna
\left| \int_{\R\times \R^3}v^j f\right| \leq C\underset{n\to \infty}{\varlimsup}\nor{v_n}{L^{5}([-T,T],L^{10})} + C\varepsilon
 \ena
 This gives the expected result by duality.

The case of $S^3$ is proved by considering subintervals of length smaller than $T_{focus}$ where the former result can be applied.
\end{proof}
\subsection{Nonlinear profile decomposition}
\subsubsection{Behavior of nonlinear concentrating waves (after S. Ibrahim)}
\label{subsectbehavNL}
In this subsection, we recall the description of nonlinear concentrating waves. As explained in the introduction, the behavior for times close to concentration is rulled by the scattering operator on $\R^3$ with a flat metric. So, we first state the existence of the wave operator on $\R^3$, following the notation of \cite{BahouriGerard}. We state it for any constant metric on the tangent plane $T_{x_{\infty}}M\approx \R^3$.
\begin{prop}[Scattering operators on $\R^3$]
Let $x_{\infty}\in M$ and $\Box_{\infty}$ the d'Alembertian operator (constant) on $T_{x_{\infty}}M\approx \R^3$ induced by the metric on $M$. To every solution of 
\bna
\left\lbrace
\begin{array}{rcl}%argument r=alignement à droite puis center et left
\Box_{\infty} v&=&0 \quad \textnormal{on}\quad \R \times T_{x_{\infty}}M\\
(v(0),\partial_t v(0))&=&(\varphi,\psi) \in \HutL _{x_{\infty}}.
\end{array}
\right.
\ena
there exists a unique strong solution $u_{\pm}$ of
\bna
\left\lbrace
\begin{array}{rcl}%argument r=alignement à droite puis center et left
&\Box_{\infty} u_{\pm}=-|u_{\pm}|^4u_{\pm} \quad \textnormal{on}\quad \R \times T_{x_{\infty}}M\\
&\limvar{t}{\pm\infty} \nor{(v-u_{\pm},\partial_t (v-u_{\pm}))(t)}{\HutL _{x_{\infty}}}=0.
\end{array}
\right.
\ena
The wave operators
\bna
\Omega_{\pm} : (v,\partial_t v)_{t=0} \mapsto (u_{\pm},\partial_t u_{\pm})_{t=0}
\ena
are bijective from $\HutL_{x_{\infty}}$ onto itself. 

The scattering operator $S$ is defined as $S=(\Omega_+)^{-1}\circ \Omega_{-}$.
\label{propscattering} 
\end{prop}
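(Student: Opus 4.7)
The plan is to reduce to the standard Euclidean case and then invoke the classical scattering theory for the critical defocusing wave equation on $\R^3$. Since the metric on $T_{x_\infty}M$ is constant, a linear change of variables $y=Ax$ chosen so that $\trans A A = g(x_\infty)^{-1}$ transforms $\Box_\infty$ into the flat d'Alembertian $\partial_t^2-\Delta_{\R^3}$, and the energy norm $\HutL_{x_\infty}$ is comparable to the standard $\dot H^1(\R^3)\times L^2(\R^3)$ norm. So it is enough to treat the flat case, where this is Bahouri--Shatah's theorem (see also Bahouri--G\'erard \cite{BahouriGerard}).

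First I would construct $\Omega_+$ (the case of $\Omega_-$ being symmetric) by a fixed point argument near $t=+\infty$. Given $(\varphi,\psi)$ and $v$ the associated free solution, Strichartz estimates give $v\in L^5(\R,L^{10}(\R^3))$, so $\nor{v}{L^5([T,\infty),L^{10})}\to 0$ as $T\to+\infty$. Looking for $u_+$ with $\nor{(u_+-v,\partial_t(u_+-v))(t)}{\HutL_{x_\infty}}\to 0$ as $t\to+\infty$ is equivalent to solving the integral equation
\bna
u_+(t)=v(t)+\int_t^{+\infty} \frac{\sin\left((t-s)\sqrt{-\Delta}\right)}{\sqrt{-\Delta}}\bigl(|u_+|^4u_+\bigr)(s)\,ds.
\ena
Standard nonlinear estimates combined with Strichartz give a contraction on a small ball of $L^\infty([T,\infty),\dot H^1)\cap L^5([T,\infty),L^{10})$ for $T$ sufficiently large, producing a unique solution on $[T,\infty)$ close to $v$. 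The defocusing sign of the nonlinearity ensures that the nonlinear energy controls the $\HutL_{x_\infty}$ norm, and Shatah--Struwe / Grillakis global well-posedness lets us extend $u_+$ uniquely to all of $\R$, defining $\Omega_+(\varphi,\psi):=(u_+,\partial_tu_+)_{t=0}$.

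For injectivity, two solutions in the Strichartz class with the same asymptotic free profile must coincide on $[T,\infty)$ for $T$ large by the uniqueness of the fixed point, hence everywhere by uniqueness of the Cauchy problem. For surjectivity, given $(u_0,u_1)\in \HutL_{x_\infty}$, let $u$ be the unique global Shatah--Struwe solution. The main point is to establish the global spacetime bound $u\in L^5(\R,L^{10}(\R^3))$. This is where the Morawetz estimate
\bna
\iint_{\R\times \R^3}\frac{|u|^6}{|x|}\,dx\,dt \leq C\, E(u,0)
\ena
enters: combined with the finite speed of propagation and the non-concentration arguments already employed in Section~1.1 of this paper (adapted to the whole space), it yields a uniform $L^5L^{10}$ bound on $\R$. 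Once $u\in L^5(\R,L^{10})$ is established, the Duhamel term $\int_0^{\pm\infty}\frac{\sin((t-s)\sqrt{-\Delta})}{\sqrt{-\Delta}}(|u|^4u)(s)\,ds$ converges in the energy space, providing the asymptotic free profile $(v_{\pm}, \partial_t v_\pm)_{t=0}$ with $\Omega_\pm(v_\pm,\partial_tv_\pm)_{t=0}=(u_0,u_1)$.

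The main obstacle is the global $L^5L^{10}$ bound for finite energy solutions: the contraction argument and bijectivity follow essentially formally once this is in hand, but the spacetime bound relies on the full strength of Morawetz-type identities together with a careful localization/induction-on-energy argument. Uniform continuity of $\Omega_\pm$ (and hence of $S$) on bounded subsets of $\HutL_{x_\infty}$ can then be deduced from the fixed point argument and perturbative Strichartz estimates applied on a partition of $\R$ into finitely many intervals on which $u$ has small $L^5L^{10}$ norm.
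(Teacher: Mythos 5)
This proposition is stated in the paper without proof: the text immediately preceding it says ``we first state the existence of the wave operator on $\R^3$, following the notation of \cite{BahouriGerard}. We state it for any constant metric on the tangent plane,'' and then moves on to S.~Ibrahim's description of nonlinear concentrating waves. So this is a citation of the classical scattering theory for the critical defocusing wave equation (the reduction to the flat d'Alembertian via a linear change of variables is implicit), not a result the paper reproves. Your sketch therefore cannot be ``compared with the paper's proof''; it must be measured against the standard argument, and there it follows the right outline: reduction to flat metric, construction of $\Omega_\pm$ by a contraction on the Duhamel operator near $t=\pm\infty$ using Strichartz, global extension via Shatah--Struwe, injectivity via local uniqueness, and surjectivity via a global spacetime bound.

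The one place where your justification has a real gap is the global $L^5(\R,L^{10})$ bound needed for surjectivity. You write that it follows from the Morawetz inequality $\iint |u|^6/|x| \lesssim E$ ``combined with finite speed of propagation and the non-concentration arguments already employed in Section~1.1 of this paper.'' But the arguments of Section~1.1 (and of Shatah--Struwe, Ibrahim--Majdoub) only yield non-concentration of energy near any fixed tip of a light cone, hence finiteness of the $L^5L^{10}$ norm on any \emph{finite} time interval. Scattering requires a genuinely \emph{global-in-time} spacetime bound, and obtaining that from the (non-scale-invariant) radial Morawetz estimate is the hard content of Bahouri--Shatah: it rests on decay of energy in exterior light cones / conformal-type identities, not merely on localized non-concentration. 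Saying ``a careful localization/induction-on-energy argument'' gestures at this but does not close the gap. In a fully written proof you would either invoke Bahouri--Shatah directly, or redo their decay estimate; the Morawetz inequality alone is not enough.

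A smaller imprecision: your fixed-point argument near $t=+\infty$ as stated produces $u_+$ asymptotic to $v$ in $\dot H^1\times L^2$, but you should also note that the defocusing energy and the $L^5L^{10}$-smallness of $v$ on $[T,\infty)$ force the nonlinear energy of $u_+$ to be finite and controlled, so that the global extension lands in the same energy class where uniqueness of strong (Shatah--Struwe) solutions holds; otherwise the ``hence everywhere by uniqueness of the Cauchy problem'' step is not automatic. This is routine but worth stating.
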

The analysis of nonlinear concentrating waves computed by S. Ibrahim in \cite{ibrahim2004gon} shows that there are three different periods to be considered : before, during and after the time of concentration. Roughly speaking, for times close to the concentrating time, the solution is closed to nonlinear concentrating waves on $\R^3$ with flat metric and without damping, as described in Bahouri-G\'erard \cite{BahouriGerard}:  in the fast time $h_nt$, it follows the scattering on $\R^3$. Before and after the time of concentration, the nonlinear concentrating wave is "close" to some linear damped concentrating waves as defined in Table \ref{tableauconc} below. This is precised in the following theorem whose proof can be found in S. Ibrahim \cite{ibrahim2004gon}. Yet, in \cite{ibrahim2004gon}, the result is stated for an equation without damping and we give a sketch of the proof in the damped case in Section \ref{sectmodifibrahim}.
\begin{theorem}
\label{thmdescriptionNLW}
Let $\underline{v} = [(\varphi,\psi), \underline{h}, \underline{x}, \underline{t})]$ be a linear damped concentrating wave. We
denote by $\underline{u}$ its associated nonlinear damped concentrating wave (same data at $t=0$). There exist three
linear damped concentrating waves denoted by $[(\varphi_{i},\psi_{i}), \underline{h}, \underline{x}, \underline{t})]$, $i=1,2$ or $3$ such that :
for all interval $[-T,T]$ containing $t_{\infty}$, satisfying the following
non-focusing property (see Definition \ref{defcouplefocus})
\bnan
\label{nonrefocus}
mes\left(F_{x,x_{\infty},s}\right)=0 \quad \forall x\in M \textnormal{ and } s\neq 0 \textnormal{ such that } t_{\infty}+s\in [-T,T]
\enan
we have
\bnan
\label{estimbefore}
\varlimsup_n \nort{ u_n - [(\varphi_{1},\psi_{1}), \underline{h}, \underline{x}, \underline{t})]}_{I_n^{1,\Lambda}} \tend{\Lambda}{+\infty}0\\
\label{estimafter}\varlimsup_n \nort{ u_n - [(\varphi_{3},\psi_{3}), \underline{h}, \underline{x}, \underline{t})]}_{I_n^{3,\Lambda}} \tend{\Lambda}{+\infty}0
\enan
where, $I_n^{1,\Lambda}=[-T,t_n-\Lambda h_n]$ and $I_n^{3,\Lambda}=]t_n+\Lambda h_n,T]$.

Moreover, for times close to concentration $I_n^{2,\Lambda}=[t_n-\Lambda h_n,t_n+\Lambda h_n]$, we have 
\bnan
\label{estimclose}
\forall \Lambda>0,\quad \lim_n \nort{u_n - w_{n}}_{I_n^{2,\Lambda}} =0
\enan
where $w_{n}(t,x)=\Psi_U (x)\frac{1}{\sqrt{h_n}}w\left(\frac{t-t_n}{h_n},\frac{x-x_n}{h_n}\right)$ on a coordinate patch and $w$ solution of 
\begin{eqnarray}
\label{eqnnonlinIN2}
\left\lbrace
\begin{array}{rcl}%argument r=alignement à droite puis center et left
\Box_{\infty} w&=&-|w|^4w \quad \textnormal{on}\quad \R\times T_{x_{\infty}}M\\
(w(0),\partial_t w(0))&=&(\varphi_{2},\psi_{2}) .
\end{array}
\right.
\end{eqnarray}
where $\Box_{\infty}$ corresponds to the frozen metric on $T_{x_{\infty}}M$.

The different functions $(\varphi_i,\psi_i)$ are defined according to Table \ref{tableauconc}, following the notation of Proposition \ref{propscattering}. 
\begin{table}[h]
\label{tableauconc}
\begin{center}
\begin{tabular}{|c|c|c|c|}
\hline
$\lim \frac{t_h}{h} $ & $(\varphi_{1},\psi_{1})$&$(\varphi_{2},\psi_{2})$&$(\varphi_{3},\psi_{3})$.\\\hline\hline 
$-\infty $ & $\Omega_{-}^{-1}\circ\Omega_{+}(\varphi,\psi)$&$\Omega_{+}(\varphi,\psi)$& $(\varphi,\psi)$\\ \hline
$0$ & $\Omega_{-}^{-1}(\varphi,\psi)$&$(\varphi,\psi)$& $\Omega_{+}^{-1}(\varphi,\psi)$\\ \hline
$\infty $& $(\varphi,\psi)$&$\Omega_{-}(\varphi,\psi)$& $\Omega_{+}^{-1}\circ\Omega_{-}(\varphi,\psi)$\\ \hline
\end{tabular}
\end{center}
\caption{Transformation of the profile through a focus}
\end{table}
\end{theorem}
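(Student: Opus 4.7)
The plan is to adapt S.~Ibrahim's analysis \cite{ibrahim2004gon} to the damped setting, following the three-regime structure $I_n^{1,\Lambda}\cup I_n^{2,\Lambda}\cup I_n^{3,\Lambda}$ and showing at each stage that the damping and mass terms produce only negligible contributions. Throughout, I would work in a fixed coordinate patch $(U,\Phi_U)$ around $x_\infty$ with a cutoff $\Psi_U\equiv 1$ near $x_\infty$, and use the rescaling $\tilde u_n(s,y)=h_n^{1/2}u_n(t_n+h_ns,x_n+h_ny)$.

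First I would treat the middle window $I_n^{2,\Lambda}$. Under the rescaling above, $\tilde u_n$ satisfies
\begin{equation*}
\Box_n \tilde u_n + h_n^2 \tilde u_n + h_n a(x_n+h_ny)\partial_s \tilde u_n + |\tilde u_n|^4 \tilde u_n = 0,
\end{equation*}
where $\Box_n$ is the rescaled d'Alembertian built from the metric $g(x_n+h_ny)$, which converges smoothly on compact sets to $\Box_\infty$. The two extra terms satisfy
\begin{equation*}
\nor{h_n^2 \tilde u_n + h_n a(x_n+h_ny)\partial_s\tilde u_n}{L^1([-\Lambda,\Lambda],L^2)} \lesssim h_n \Lambda\,\nort{\tilde u_n}_{[-\Lambda,\Lambda]\times\R^3} \to 0,
\end{equation*}
which is the damping-vanishing estimate (this is the precise form of the reference to estimate \eqref{dampingdisappear}). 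Combined with Strichartz stability for the flat nonlinear wave equation and the linear approximation of Lemma~\ref{lmlinproche} (which identifies the initial data of $w$ at $s=0$ as $(\varphi_2,\psi_2)$ according to the table, depending on $t_n/h_n\to -\infty, 0, +\infty$ via the wave operators $\Omega_\pm$), I would obtain \eqref{estimclose}.

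Next I would handle the tails $I_n^{1,\Lambda}$ and $I_n^{3,\Lambda}$. The strategy is a perturbation argument: I would compare $u_n$ on $I_n^{3,\Lambda}$ to the linear damped concentrating wave $\underline p_3$ associated to $[(\varphi_3,\psi_3),\underline h,\underline x,\underline t)]$, where $(\varphi_3,\psi_3)$ is chosen so that at time $t_n+\Lambda h_n$ the two solutions share the same data up to $o_\Lambda(1)$. Concretely, letting $w$ denote the scattered solution from the $I_n^{2,\Lambda}$ step and using Corollary~\ref{corconclintproche}, the data of $\underline p_3$ at $t_n+\Lambda h_n$ is the concentrating data built from $(w(\Lambda),\partial_s w(\Lambda))$, which by Proposition~\ref{propscattering} converges as $\Lambda\to\infty$ to the data built from $\Omega_+^{-1}(\varphi_2,\psi_2)=(\varphi_3,\psi_3)$. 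The non-refocusing hypothesis \eqref{nonrefocus} combined with Lemma~\ref{lmnonreconcentr} gives $\nor{p_{3,n}}{L^5(I_n^{3,\Lambda},L^{10})}\to 0$ as $\Lambda\to\infty$, so the nonlinear term $|u_n|^4u_n$ is subcritical in the perturbative sense and a standard bootstrap on $\nort{u_n-p_{3,n}}_{I_n^{3,\Lambda}}$ (using Strichartz for the damped linear equation together with the local existence theory developed in Section~1.1) closes. The symmetric argument on $I_n^{1,\Lambda}$ yields $(\varphi_1,\psi_1)$ via $\Omega_-^{-1}$ applied to the appropriate profile.

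The main obstacle is twofold. The first and more technical point is the matching between the middle-zone scattering description and the tail linear profiles: one must check that the three asymptotic regimes of $t_n/h_n\in\{-\infty,0,+\infty\}$ produce exactly the entries of the table, and this requires carefully tracking which wave operator $\Omega_\pm$ intervenes at each junction, and checking that the ``gluing'' at times $t_n\pm\Lambda h_n$ preserves energy up to $o_\Lambda(1)$. The second is purely verifying that the damping does not spoil the Strichartz-based perturbation bounds on intervals of fixed (non-small) length $[-T,T]$; this is handled by absorbing $a(x)\partial_t$ on short subintervals and iterating, exactly as in the existence theorem of Section~1.1. Once both points are checked, \eqref{estimbefore}, \eqref{estimclose} and \eqref{estimafter} follow by reading off the $\underline{h},\underline{x},\underline{t}$ parameters from the construction.
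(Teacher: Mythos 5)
Your plan follows exactly the three--window strategy the paper uses to adapt S.~Ibrahim's argument to the damped case: treat the small-window regime by rescaling and showing the damping and mass terms vanish, then treat the tails $I_n^{1,\Lambda}$, $I_n^{3,\Lambda}$ by a Strichartz/energy perturbation off the appropriate linear damped concentrating wave, using Lemma~\ref{lmnonreconcentr} to get the smallness needed to close the bootstrap. The identification of the three profiles through $\Omega_\pm$ and Corollary~\ref{corconclintproche}, and the remark that $(\varphi_1,\psi_1)=\Omega_-^{-1}(\varphi_2,\psi_2)$, $(\varphi_3,\psi_3)=\Omega_+^{-1}(\varphi_2,\psi_2)$ uniformly across the three cases of $\lim t_n/h_n$, is exactly what the table encodes. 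This is the paper's proof.

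Two small points of precision. First, the logical order of the three windows is dictated by which one contains $t=0$ (where $u_n$ and the linear concentrating wave share data): the paper, treating $t_n/h_n\to+\infty$, first proves \eqref{estimbefore} on $I_n^{1,\Lambda}$ (a direct perturbation from shared data at $t=0$), then uses this to supply the boundary data for the rescaled problem on $I_n^{2,\Lambda}$, and only then handles $I_n^{3,\Lambda}$; your text starts with the middle window, which works as stated only when $t_n/h_n\to 0$. Second, and more substantively, where you write the damping-vanishing estimate
\[
\nor{h_n^2\tilde u_n + h_n a(x_n+h_n y)\partial_s\tilde u_n}{L^1([-\Lambda,\Lambda],L^2)}\lesssim h_n\Lambda\,\nort{\tilde u_n}_{[-\Lambda,\Lambda]\times\R^3}\to 0,
\]
you apply it to the unknown $\tilde u_n$, whose norm $\nort{\tilde u_n}$ is precisely what one is trying to control (and the uniform a priori bound of Proposition~\ref{propapriori} cannot be invoked here without circularity, since it relies on the present theorem). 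The paper avoids this by introducing the auxiliary rescaled solution $\tilde u_n^\Lambda$ launched from $\tilde v_n(-\Lambda)$ and applying the vanishing estimate \eqref{dampingdisappear} to the \emph{fixed} reference function $w_\Lambda$ on the right-hand side of the equation for $\tilde u_n^\Lambda - w_\Lambda$; the resulting source term then tends to zero for each fixed $\Lambda$, and one closes the bootstrap on subintervals where $\nor{w_\Lambda}{L^5L^{10}}$ is small. Your plan reaches the same conclusion and mentions the subinterval iteration, but the estimate should be written against the limit solution, not against $\tilde u_n$, to avoid the appearance of circularity.
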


\bigskip
 
\begin{remarque}
Note that the transition from the first column to the third one represents the modification of profile due to the concentration and the concentrating functions are modified according to the scattering operator $S$. To go from the first column to the second one, we apply the operator $\Omega_-$ while we apply $\Omega_{+}^{-1}$ to get from the second to the third one.
\end{remarque}
\begin{remarque}
The behavior for times close to concentration is not written this way in the article \cite{ibrahim2004gon} of S. Ibrahim, but is a byproduct of its proof. We refer to the next section which contains a sketch of the proof.
\end{remarque}
\begin{corollaire}
\label{cornonlinoscill}
A nonlinear damped concentrating wave $q_h$ is strictly ($h$)-oscillatory with respect to $A_M$ and bounded in all Strichartz norms, uniformly on any bounded interval.
\end{corollaire}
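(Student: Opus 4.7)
The plan is to reduce to the three-regime description of Theorem \ref{thmdescriptionNLW}, then combine the scaling invariance of the relevant norms with the propagation properties of Proposition \ref{propproposcilamorti}. Fix a bounded interval $[-T,T]$; by shrinking $T$ slightly if necessary and then chaining finitely many such intervals (possible because $T_{focus}>0$, so only finitely many focusing times occur in $[-T,T]$; at each focus one uses Table \ref{tableauconc} to replace $(q_n,\partial_t q_n)$ by a new concentrating data with profile modified via $\Omega_{\pm}$), we may assume $[-T,T]$ satisfies the non-focusing property (\ref{nonrefocus}). By Theorem \ref{thmdescriptionNLW}, for any $\varepsilon>0$ one can choose $\Lambda$ and $n_0$ such that on $I_n^{1,\Lambda}$ and $I_n^{3,\Lambda}$ the wave $q_n$ is $\varepsilon$-close in $\nort{\cdot}$ to linear damped concentrating waves $p_n^{(1)},p_n^{(3)}$, and on $I_n^{2,\Lambda}$ it is close to the rescaled flat wave $w_n(t,x)=\Psi_U(x)h_n^{-1/2}w((t-t_n)/h_n,(x-x_n)/h_n)$, where $w$ solves the critical defocusing equation on $T_{x_\infty}M$.

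For the Strichartz bound, energy estimates and linear Strichartz inequalities (from the Preliminaries) give $\nor{p_n^{(i)}}{L^5([-T,T],L^{10}(M))}\lesssim 1$ uniformly in $n$, since concentrating data are bounded in the energy space. For the rescaled piece, the change of variables $s=(t-t_n)/h_n$, $y=(x-x_n)/h_n$ shows $\nor{w_n}{L^5(I_n^{2,\Lambda},L^{10}(M))}\lesssim \nor{w}{L^5(\R,L^{10}(\R^3))}$, which is finite by the Shatah--Struwe global theory applied in the previous section. Adding the three contributions plus the $\varepsilon$-error controls $\nor{q_n}{L^5([-T,T],L^{10})}$ uniformly; the remaining components of $\nort{\cdot}$ are controlled by energy estimates for $q_n$ (and conservation of the nonlinear energy, which is nonincreasing for the damped equation).

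For the strict $(h)$-oscillation, the initial data of a concentrating data $[(\varphi,\psi),\underline{h},\underline{x}]$ is strictly $(h)$-oscillatory with respect to $A_M$: applying Lemma \ref{lmpseudoconc} to the spectral projector $\mathbf{1}_{|A_M|\geq R/h}$ (or computing directly in local coordinates via Proposition \ref{proposcill}, since in $\R^3$ a rescaled bump has Fourier support at scale $1/h_n$) shows that both the high- and low-frequency tails vanish uniformly. Proposition \ref{propproposcilamorti} propagates this property to $p_n^{(1)}$ and $p_n^{(3)}$ uniformly on $[-T,T]$. For $w_n$, the same direct scaling computation in the local chart combined with Proposition \ref{proposcill} yields strict $(h_n)$-oscillation uniformly on $I_n^{2,\Lambda}$. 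The residual term $q_n-(\text{approximant})$ has energy bounded by $\varepsilon$ uniformly in $n$ (after passing to $\limsup$), so
\[
\limsu{n}{\infty}\nor{\mathbf{1}_{|A_M|\geq R/h_n}(q_n,\partial_t q_n)}{L^\infty_t\HutL}\le \varepsilon+o_R(1),
\]
and analogously for $\mathbf{1}_{|A_M|\leq \varepsilon'/h_n}$; letting $\varepsilon\to 0$ gives strict $(h_n)$-oscillation.

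The main obstacle I anticipate is cleanly handling the concatenation across focusing times, because Theorem \ref{thmdescriptionNLW} applies only on non-focusing windows and the profile jumps across a focus via $\Omega_{\pm}$; however, at each focus the new initial data is again a concentrating data (Corollary \ref{corconclintproche} plus Table \ref{tableauconc}) with energy controlled by the original one (the scattering operators are bounded on $\mathcal{E}_{x_\infty}$), so a finite induction closes the argument. The delicate step in the $S^3$ case with large $T$ would be to ensure that refocusing does not inflate the constants, which follows from Lemma \ref{S3perio} identifying $q_n$ at $t+\pi$ with a new concentrating wave at $-x$ modulo an energy-small error.
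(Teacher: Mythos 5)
Your proof follows essentially the same route as the paper's: reduce to the three-regime description of Theorem \ref{thmdescriptionNLW}, then on $I_n^{1,\Lambda}$ and $I_n^{3,\Lambda}$ invoke Proposition \ref{propproposcilamorti} together with linear Strichartz estimates for the approximating linear damped concentrating waves, and on $I_n^{2,\Lambda}$ use scaling and the global Shatah--Struwe theory on $\R^3$, with the final Strichartz norms recovered from $L^5L^{10}$ via Duhamel. The paper's one-paragraph proof is a compressed version of exactly this argument, and your bookkeeping with the $\varepsilon$-tails for strict oscillation is fine.

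One caveat: your concatenation across focusing times rests on ``at each focus one uses Table \ref{tableauconc} to replace $(q_n,\partial_t q_n)$ by a new concentrating data,'' but that table describes the profile jump only at the original concentration time $\underline{t}$, not at subsequent geometric refocusing of the post-scattering linear concentrating wave. On a general compact manifold there is no clean statement that the solution reconstitutes a single concentrating wave with an identifiable profile after a later focus; that is available only on $S^3$ via Theorem \ref{thmdescriptionNLWS3} and Lemma \ref{S3perio}. The paper's proof also leaves this implicit, and in practice the corollary is invoked only under $2T<T_{focus}$ or on $S^3$, so this is a shared imprecision rather than a flaw specific to your write-up. Likewise note that $\mathbf{1}_{|A_M|\geq R/h}$ is not an order-$0$ pseudodifferential operator, so Lemma \ref{lmpseudoconc} does not literally apply to it, but your parenthetical fallback (direct computation in a chart plus Proposition \ref{proposcill}) is the correct way to obtain strict $(h_n)$-oscillation of concentrating data.
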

\begin{proof}[Proof of Corollary \ref{cornonlinoscill}] The boundedness of all Strichartz norms is a counsequence of Duhamel formula and Strichartz estimates once the result is known in the case of $L^5L^{10}$. On the intervals $I_n^{1,\Lambda}$ and $I_n^{3,\Lambda}$ when $q_h$ is closed to a linear concentrating wave, the result follows from Proposition \ref{propproposcilamorti} and linear Strichartz estimates. On $I_n^{2,\Lambda}$, $q_h$ behaves like a concentration of a nonlinear solution on $T_{x_{\infty}}M$. The strict ($h$)-oscillation is obvious and the Strichartz estimates follow from global estimates on $\R^3$.
\end{proof}
In the case of $S^3$, thanks to a better knowledge of the behavior of nonlinear concentrating waves we can avoid assumption (\ref{nonrefocus}). This is Theorem 1.8 from \cite{ibrahim2004gon}. It will allow us to perform the profile decomposition for large times. 
\begin{theorem}
\label{thmdescriptionNLWS3}
Let $\underline{v} = [(\varphi,\psi), \underline{h}, \underline{x}, \underline{t})]$ be a linear (not damped, that is $a(x)\equiv0$) concentrating wave on $S^3$. We
denote by $\underline{u}$ its nonlinear associated concentrating wave (same data at $t=0$). We assume that $t_{\infty}\in ]0,\pi[$. Then, for all $j\in \Z$, we have
\bna
\varlimsup_n \nort{ u_n - [\tilde{S}^{(j)}S(\varphi,\psi), \underline{h},(-1)^j \underline{x}, \underline{t})]}_{]t_n+j\pi+\Lambda h_n,t_n+(j+1)\pi-\Lambda h_n]} \tend{\Lambda}{+\infty}0
\ena
where, $\tilde{S}=S\circ A$, $\tilde{S}^{(j)}=\tilde{S}\circ\tilde{S}\circ \dots \circ\tilde{S}$, $j$ times and $A(\varphi,\psi)(x)=-(\varphi,\psi)(-x)$.
\end{theorem}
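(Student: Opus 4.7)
The plan is to induct on $|j|$: for $j \geq 0$ we iterate Theorem \ref{thmdescriptionNLW} across each successive reconcentration of the associated linear wave, using Lemma \ref{S3perio} to transport the profile between antipodal concentration points; the cases $j < 0$ follow by time-reversal of the (undamped) equation.

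\emph{Base case $j=0$.} On $S^3$ the first refocusing time is $T_{focus}=\pi$, so any $T$ with $t_\infty<T<t_\infty+\pi$ satisfies the non-focusing condition (\ref{nonrefocus}) required by Theorem \ref{thmdescriptionNLW}. Since $t_n\to t_\infty\in\,]0,\pi[$ and $h_n\to 0$ give $t_n/h_n\to+\infty$, the third row of Table \ref{tableauconc} yields $(\varphi_3,\psi_3)=\Omega_+^{-1}\circ\Omega_-(\varphi,\psi)=S(\varphi,\psi)$, and estimate (\ref{estimafter}) immediately gives the desired conclusion for $j=0$ on $]t_n+\Lambda h_n,t_n+\pi-\Lambda h_n]$.

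\emph{Inductive step $j\mapsto j+1$, $j\geq 0$.} Suppose $u_n$ has been shown to be close in $\nort{\cdot}$, up to $o_\Lambda(1)$, to the linear concentrating wave $\bar p^{(j)}:=[\tilde S^{(j)}S(\varphi,\psi),\underline h,(-1)^j\underline x,\underline t]$ on $]t_n+j\pi+\Lambda h_n,t_n+(j+1)\pi-\Lambda h_n]$. Lemma \ref{S3perio} describes $\bar p^{(j)}$ near the next focus time $t_n+(j+1)\pi$: it reconcentrates at the antipodal point $(-1)^{j+1}x_n$, carrying a profile that is the image under $A$ of the current one. To handle this reconcentration, I would introduce an auxiliary nonlinear concentrating wave $\tilde u_n^{(j+1)}$ whose associated linear wave is $\bar p^{(j)}$, re-centered at the $(j+1)$-th concentration time. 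Stability of the nonlinear Klein-Gordon flow on intervals with bounded $L^5L^{10}$ Strichartz norm---available on $\bar p^{(j)}$ by Corollary \ref{cornonlinoscill} and on $u_n$ by the induction hypothesis---yields that $u_n-\tilde u_n^{(j+1)}$ is $o(1)$ in $\nort{\cdot}$ on a neighborhood of the reconcentration. Theorem \ref{thmdescriptionNLW} applied to $\tilde u_n^{(j+1)}$ then produces a post-concentration profile equal to $S$ of the incoming one, that is, $(S\circ A)\bigl(\tilde S^{(j)}S(\varphi,\psi)\bigr)=\tilde S^{(j+1)}S(\varphi,\psi)$, with cores moved to $(-1)^{j+1}\underline x$; this identifies the new approximating wave with $\bar p^{(j+1)}$ and closes the induction.

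\emph{Main obstacle.} The technical heart of the argument is the stability-across-focus step: one must verify that the perturbative continuous dependence of the nonlinear flow (based on Strichartz estimates) loses no smallness when passing through a concentration event, so that the total error after $j$ iterations remains controlled by $o_\Lambda(1)$ for each fixed $j$. Combined with a clean bookkeeping of how $S$ and $A$ alternate---and of how the cores transform accordingly---this yields the iteration for every $j\geq 0$. In contrast to Theorem \ref{thmdescriptionNLW}, no non-focusing assumption is needed at any step, precisely because Lemma \ref{S3perio} already provides the exact description of how a linear concentrating wave crosses a focusing time on $S^3$.
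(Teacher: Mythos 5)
The paper does not itself prove this statement; immediately before the theorem, the author writes that it ``is Theorem 1.8 from \cite{ibrahim2004gon}'', so there is no in-source argument to compare with. Your overall scheme --- inducting over focusing times, iterating Theorem \ref{thmdescriptionNLW} through each focus, and using Lemma \ref{S3perio} for the antipodal transport in between, with $\tilde S=S\circ A$ recording the alternating composition of nonlinear scattering and reflection --- is the natural strategy and is presumably close to Ibrahim's own. You also correctly observe that no global non-focusing hypothesis is needed: on $S^3$ each interval between successive foci has length $\pi=T_{focus}$, so Theorem \ref{thmdescriptionNLW} applies on each such piece, and Lemma \ref{S3perio} handles the transitions.

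There is, however, a genuine gap in the inductive step where you ``re-center at the $(j+1)$-th concentration time''. The auxiliary nonlinear wave $\tilde u_n^{(j+1)}$ cannot be launched from data at $t_n+(j+1)\pi-\Lambda h_n$: at that time, the ratio of (time remaining to the next concentration)$/h_n$ tends to the \emph{finite} quantity $\Lambda$, so none of the three regimes of Table \ref{tableauconc} ($\lim t_h/h=\pm\infty$ or $0$) applies and Theorem \ref{thmdescriptionNLW} gives no transformation rule; nor can it be launched at $t_n+(j+1)\pi$ itself, which lies outside the interval where the inductive hypothesis controls $u_n$. The fix is to launch from $\tau_n=t_n+(j+1)\pi-\epsilon$ for a \emph{fixed} $\epsilon\in(0,\pi)$, so that $(t_n+(j+1)\pi-\tau_n)/h_n\to+\infty$; the inductive hypothesis, read at the fixed time $\tau_n$, gives energy closeness of the Cauchy data, and Lemma \ref{approxNL} transfers this to the nonlinear solutions across the focus. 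Two further details should then be filled in: reaching the moving endpoint $t_n+(j+2)\pi-\Lambda h_n$ from the fixed $T<t_\infty+(j+2)\pi$ furnished by Theorem \ref{thmdescriptionNLW} requires the bootstrap behind (\ref{estimbefore}) together with Lemma \ref{lmnonreconcentr}, just as in the base case; and, read literally, $[\tilde S^{(j)}S(\varphi,\psi),\underline h,(-1)^j\underline x,\underline t]$ concentrates at $(-1)^{2j}x_n=x_n$ at time $t_n+j\pi$, which is not where $u_n$ concentrates when $j$ is odd, so your iteration in fact produces the shifted core $[\ldots,(-1)^j\underline x,\underline t+j\pi]$ --- consistent with the forms used in Theorem \ref{thmdecompositionL}' and Corollary \ref{proporthogL3S3} --- and that is the statement your argument actually targets.
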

Moreover, the cases $t_{\infty}\in ]-\pi,0[$ and $t_{\infty}=0$ can be deduced similarly to Theorem \ref{thmdescriptionNLW} with some changes on the concentration data in the same spirit as Table \ref{tableauconc}.

\subsubsection{Modification of the proof of S. Ibrahim for Theorem \ref{thmdescriptionNLW} in the case of damped equation}
\label{sectmodifibrahim}
In this subsection, we give some sketch of proof for the behavior of nonlinear damped concentrating waves announced in subsection \ref{subsectbehavNL}. These results are proved in \cite{ibrahim2004gon} in the undamped case $a(x)\equiv 0$ and so we only briefly emphasize the main necessary modifications of proof. To simplify, we only treat the case $\frac{t_n}{h_n}\tend {n}{+\infty}\infty $.
\begin{proof}[Sketch of the proof of estimate (\ref{estimbefore}) of Theorem \ref{thmdescriptionNLW} : Behavior before concentration]
The proof is exactly the same as Corollary 3.2 of \cite{ibrahim2004gon}. $w_n=u_n-v_n$ is solution of
\bna
\begin{array}{rcl}%argument r=alignement à droite puis center et left
\Box w_n+w_n+ a(x)\partial_tw_n&=&-|w_n+v_n|^4(w_n+v_n) \quad \textnormal{on}\quad I_n^{1,\Lambda}\times M\\
(w_n,\partial_t w_n)_{|t=0}&=&(0,0) .
\end{array}
\ena
Using Strichartz and energy estimates, we are able to use a bootstrap argument if $\limsu{n}{\infty}\nor{v_n}{L^5({I_n^{1,\Lambda}},L^{10})}$ is small enough. This can be achieved thanks to Lemma \ref{lmnonreconcentr} and gives the result. 
\end{proof}
\begin{proof}[Sketch of the proof of estimate (\ref{estimclose}) of Theorem \ref{thmdescriptionNLW} : Behavior for times close to concentration]
By\\ definition of $v_n$ and finite propagation speed, the main energy part of $v_n$ is concentrated near $x_{\infty}$ for times close to $t_{\infty}$. By estimate (\ref{estimbefore}), it is also the case for $u_n$. Therefore, for times $t\in [t_n-\Lambda h_n,t_n+\Lambda h_n]$, we can neglect the energy outside of a fixed open set and work in local coordinates. Moreover, in that case, we can use the norm $\nort{\cdot}_{I\times \R^3}$ instead of $\nort{\cdot}_{I}$ and use the fact that is is invariant by translation and scaling  up to a modification of the interval of time.

Denote $\tilde{u}_{n}$ (resp $\tilde{v}_n$) the rescaled function associated to $u_n$ (resp $v_n$), so that\\ $u_n(t,x)=\frac{1}{\sqrt{h_n}}\tilde{u}_{n}\left(\frac{t-t_n}{h_n},\frac{x-x_n}{h_n}\right)$. We need to prove $\limsu{n}{\infty}\nort{\tilde{u}_{n}-w}_{[-\Lambda,\Lambda]\times \R^3}\tend{\Lambda}{\infty}0$ where $w$ is solution of 
\begin{eqnarray*}
\left\lbrace
\begin{array}{rcl}%argument r=alignement à droite puis center et left
\Box_{\infty} w&=&-|w|^4w \quad \textnormal{on}\quad \R\times \R^3\\
(w,\partial_t w)_{|t=0}&=&(\varphi_{2},\psi_{2}) =\Omega_{-}(\varphi,\psi).
\end{array}
\right.
\end{eqnarray*}
By definition of $\Omega_{-}$, $w$ satisfies $\nor{(w-v,\partial_t(w-v))(t)}{\dot{H}^1\times L^2} \tend{t}{-\infty}0$ where $v$ is solution of
\begin{eqnarray}
\label{eqnsollin}
\left\lbrace
\begin{array}{rcl}%argument r=alignement à droite puis center et left
\Box_{\infty} v&=& 0\quad \textnormal{on}\quad \R \times \R^3\\
(v,\partial_t v)_{|t=0}&=&(\varphi,\psi).
\end{array}
\right.
\end{eqnarray}

Moreover, it is known that $\Omega_- (\varphi,\psi)=\limvar{s}{-\infty} U(-s)U_0(s)(\varphi,\psi)$ where $U$ and $U_0$ are the nonlinear and linear flow map. More precisely, by Lemma 3.4 of \cite{ibrahim2004gon}, we have $\nort{w_{\Lambda}-w}_{[-\Lambda,\Lambda]\times \R^3} \tend{\Lambda}{\infty}0$ where $w_{\Lambda}$ is the smooth solution of 
\begin{eqnarray*}
\left\lbrace
\begin{array}{rcl}%argument r=alignement à droite puis center et left
\Box_{\infty} w_{\Lambda}+|w_{\Lambda}|^4w_{\Lambda}&=& 0\quad \textnormal{on}\quad [-\Lambda,\Lambda]\times \R^3\\
(w_{\Lambda},\partial_t w_{\Lambda})_{|t=-\Lambda}&=&\chi_{\Lambda}(v,\partial_t v)_{|t=-\Lambda}.
\end{array}
\right.
\end{eqnarray*}
where $\chi_{\Lambda}$ is an appropriate familly of smoothing operator. So, we are left to prove \\$\limsu{n}{\infty} \nort{\tilde{u}_{n}-w_{\Lambda}}_{[-\Lambda,\Lambda]\times \R^3}\tend{\Lambda}{\infty}0$.

We introduce the auxiliary family of functions $\tilde{u}_{n}^{\Lambda}$ solution of
\begin{eqnarray*}
\left\lbrace
\begin{array}{rcl}%argument r=alignement à droite puis center et left
\Box_n \tilde{u}_{n}^{\Lambda}+h_n^2\tilde{u}_{n}^{\Lambda}+|\tilde{u}_{n}^{\Lambda}|^4\tilde{u}_{n}^{\Lambda} &=&-h_n a(h_nx+x_n)\partial_t\tilde{u}_{n}^{\Lambda}\quad \textnormal{on}\quad [-\Lambda,\Lambda]\times \R^3\\
(\tilde{u}_{n}^{\Lambda},\partial_t \tilde{u}_{n}^{\Lambda})_{|t=-\Lambda}&=& (\tilde{v}_{n},\partial_t \tilde{v}_{n})_{|t=-\Lambda}.
\end{array}
\right.
\end{eqnarray*}
where we have denoted $\Box_n$ the dilation of the operator $\Box$. So it can be written $\Box_n =\partial_t^2-\sum_{i,j}g^{ij}(h_nx+x_n) \partial_{ij}+h_nV(h_nx+x_n)\cdot \nabla $ where $V$ is a smooth vector field (note that it is only defined in an open set of size $\grando{h_n^{-1}}$ but it is also the case for $\tilde{u}_{n}$, $\tilde{u}_{n}^{\Lambda}$ and $\tilde{v}_{n}$, we omit the details). 
The proof is complete if we prove
\bnan
\limsu{n}{\infty}\nort{\tilde{u}_{n}^{\Lambda}-w^{\Lambda}}_{[-\Lambda,\Lambda]\times \R^3}\tend{\Lambda}{\infty}0 \label{cvgeNL1}
\enan
and 
\bnan
\limsu{n}{\infty}\nort{\tilde{u}_{n}^{\Lambda}-\tilde{u}_{n}}_{[-\Lambda,\Lambda]\times \R^3}\tend{\Lambda}{\infty}0\label{cvgeNL2}.
\enan 
We begin with (\ref{cvgeNL1}). $r_{n,\Lambda}=\tilde{u}_{n}^{\Lambda}-w^{\Lambda}$ is solution of 
\begin{eqnarray*}
\left\lbrace
\begin{array}{rcl}%argument r=alignement à droite puis center et left
\Box_n r_{n,\Lambda}+h_n^2r_{n,\Lambda}+h_n a(h_nx+x_n)\partial_tr_{n,\Lambda}&=&|w_{\Lambda}|^4w_{\Lambda}-|r_{n,\Lambda}+w_{\Lambda}|^4(r_{n,\Lambda}+w_{\Lambda})\\
&&-h_n^2w_{\Lambda}-h_n a(h_nx+x_n)\partial_tw_{\Lambda} +(\Box_{\infty}-\Box_n)w_{\Lambda}\\
(r_{n,\Lambda},\partial_t r_{n,\Lambda})_{|t=-\Lambda}&=&(\tilde{v}_n-\chi_{\Lambda}v,\partial_t (\tilde{v}_n-\chi_{\Lambda}v))_{|t=-\Lambda} .
\end{array}
\right.
\end{eqnarray*}
A quick scaling analysis easily yields that the operator $\Box_n +h_n^2+h_n a(h_nx+x_n)\partial_t$ satisfies the same Strichartz and energy estimates as $\Box +1+a(x)\partial_t$ for some times of order $\Lambda$. Moreover, following the same argument as Lemma 2.1 of \cite{ibrahim2004gon}, we get that for fixed $\Lambda$
\bna
\label{dampingdisappear}\limsu{n}{\infty}\nor{-h_n^2w_{\Lambda}-h_n a(h_nx+x_n)\partial_tw_{\Lambda} +(\Box_{\infty}-\Box_n)w_{\Lambda}}{L^1([-\Lambda,\Lambda],L^2)} =0.
\ena
Thanks to Lemma \ref{lmlinproche}, we know that $\limsu{n}{\infty}\left\|(\tilde{v}_n-\chi_{\Lambda}v,\partial_t (\tilde{v}_n-\chi_{\Lambda}v))(-\Lambda)\right\|_{\dot{H^1}\times L^2}$ can be made arbitrary small for large $\Lambda$.
Strichartz and energy estimates give for any $\eta>-\Lambda$
\bna
\nort{r_{n,\Lambda}}_{[-\Lambda,\eta]\times \R^3}&\leq &\left\|(\tilde{v}_n-\chi_{\Lambda}v,\partial_t (\tilde{v}_n-\chi_{\Lambda}v))(-\Lambda)\right\|_{\dot{H^1}\times L^2}\\
&&+\nor{-h_n^2w_{\Lambda}-h_n a(h_nx+x_n)\partial_tw_{\Lambda} +(\Box_{\infty}-\Box_n)w_{\Lambda}}{L^1([-\Lambda,\eta],L^2)}\\
&&+\nor{r_{n,\Lambda}}{L^5([-\Lambda,\eta],L^{10})}^5+\nor{r_{n,\Lambda}}{L^5([-\Lambda,\eta],L^{10})}\nor{w_{\Lambda}}{L^5([-\Lambda,\eta],L^{10})}^4.
\ena 
If $\nor{w_{\Lambda}}{L^5([-\Lambda,\eta],L^{10})}$ is small enough, a bootstrap gives (\ref{cvgeNL1}) on $[-\Lambda,\eta]$. We can iterate the process by dividing $[-\Lambda,\Lambda]$ in a finite number of intervals where the bootstrap can be performed. 

For (\ref{cvgeNL2}), we observe that $\tilde{u}_{n}^{\Lambda}$ and $\tilde{u}_{n}$ are solutions of the same equation but with different initial data which satisfy thanks to estimate (\ref{estimbefore})
$$\limsu{n}{\infty} \nor{(\tilde{u}_{n}^{\Lambda}-\tilde{u}_{n},\partial_t(\tilde{u}_{n}^{\Lambda}-\tilde{u}_{n}))(-\Lambda)}{\HutL}=\limsu{n}{\infty} \nor{(\tilde{v}_{n}-\tilde{u}_{n},\partial_t(\tilde{v}_{n}-\tilde{u}_{n}))(-\Lambda)}{\HutL}\tend{\Lambda}{\infty}0.$$
Then, Strichartz and energy estimates allow us to use a boot strap argument on subintervals $I$ such that $\nor{\tilde{u}_{n}^{\Lambda}}{L^5(I,L^{10})}$ is small. (\ref{cvgeNL1}) allows to complete the proof.  
\end{proof}
\subsubsection{Proof of the decomposition}
This subsection is devoted to the proof of Theorem \ref{thmdecompositionNL}.

Let us define the function $\beta$ in the following way : 
\bna
\forall \omega \in \C, \quad \beta(\omega)\overset{def}:=|\omega|^4\omega.
\ena
\begin{prop}
\label{proporthogL3}
Let $0<2T<T_{focus}$ (see Definition \ref{defcouplefocus}). Let $p_n^{(j)}$, $1\leq j\leq l$, linear damped concentrating waves, associated with data $[(\varphi^{(j)},\psi^{(j)}),\underline{h}^{(j)},\underline{x}^{(j)},\underline{t}^{(j)}]$ (we can have $h^{(j)}=1$ for one of it), which are orthogonal according to Definition \ref{deforthog} and such that $t_{\infty}^{(j)} \in [-T,T]$. Denote $q_n^{(j)}$ the associated nonlinear damped concentrating waves (same data at $t=0$).

Then, we have 
\bnan
\varlimsup_{n \to \infty} \nor{\beta\left(\sum_{j=1}^l q_n^{(j)}\right)-\sum_{j=1}^l \beta(q_n^{(j)})}{L^1([-T,T],L^2)} =0.
\enan
\end{prop}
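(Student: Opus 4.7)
The plan is to expand $\beta(\omega)=\omega^{3}\bar{\omega}^{2}$ and isolate the diagonal contribution to $\beta(\sum_{j} q_n^{(j)})$, which equals $\sum_{j}\beta(q_n^{(j)})$. Every remaining monomial is of the form $q_n^{(j_1)} q_n^{(j_2)} q_n^{(j_3)} \overline{q_n^{(k_1)}}\,\overline{q_n^{(k_2)}}$ with at least two distinct indices $m\neq m'$ among $\{j_1,j_2,j_3,k_1,k_2\}$. By Hölder in space and time, it is controlled in $L^{1}([-T,T],L^{2}(M))$ by $\|q_n^{(m)} q_n^{(m')}\|_{L^{5/2}_t L^{5}_x}$ times the $L^{5}_t L^{10}_x$ norms of the three remaining factors, the latter being uniformly bounded via Corollary \ref{cornonlinoscill}. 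Hence the proposition reduces to proving, for every pair $j \neq k$,
$$(\star) \qquad \|q_n^{(j)} q_n^{(k)}\|_{L^{5/2}([-T,T], L^{5}(M))} \tend{n}{\infty} 0.$$

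To establish $(\star)$, I would split $[-T,T] = I_n^{1,\Lambda,(j)} \cup I_n^{2,\Lambda,(j)} \cup I_n^{3,\Lambda,(j)}$ as in Theorem \ref{thmdescriptionNLW}. Since $2T < T_{focus}$, the non-focusing property (\ref{nonrefocus}) holds on $[-T,T]$ for every core, so combining (\ref{estimbefore})--(\ref{estimafter}) with Lemma \ref{lmnonreconcentr} gives $\varlimsup_{n} \|q_n^{(j)}\|_{L^{5}(I_n^{1,\Lambda,(j)} \cup I_n^{3,\Lambda,(j)}, L^{10})} \tend{\Lambda}{\infty} 0$, and similarly for $k$. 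Using $\|q_n^{(j)} q_n^{(k)}\|_{L^{5/2}_t L^5_x} \leq \|q_n^{(j)}\|_{L^5 L^{10}} \|q_n^{(k)}\|_{L^5 L^{10}}$, the contribution to $(\star)$ from times outside the intersection $J_n^{\Lambda} := I_n^{2,\Lambda,(j)} \cap I_n^{2,\Lambda,(k)}$ becomes arbitrarily small as $\Lambda \to \infty$, so only the bound on $J_n^{\Lambda}$ remains.

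On $J_n^{\Lambda}$ I would use the interpolation
$$\|q_n^{(j)} q_n^{(k)}\|_{L^{5/2}_t L^{5}_x} \leq \|q_n^{(j)} q_n^{(k)}\|_{L^{\infty}_t L^{3}_x}^{1/5} \, \|q_n^{(j)} q_n^{(k)}\|_{L^{2}_t L^{6}_x}^{4/5},$$
in which the second factor is uniformly bounded via the Strichartz pair $L^{4}L^{12}$ (admissible, $\tfrac{1}{4}+\tfrac{3}{12}=\tfrac{1}{2}$) furnished by Corollary \ref{cornonlinoscill}. The vanishing of $\|q_n^{(j)} q_n^{(k)}\|_{L^{\infty}([-T,T], L^{3})}$ is then treated through the three cases of Definition \ref{deforthog}. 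If the scales are orthogonal, the strict $(h_n)$-oscillation from Corollary \ref{cornonlinoscill} together with Lemma \ref{lmorthL3} gives the conclusion immediately. In the remaining two cases, where both waves share the same scale $h_n$ but are orthogonal either in $x_\infty$ or in the cores $(\underline{x},\underline{t})$, I would apply (\ref{estimclose}) to replace each $q_n^{(j)}$ on $J_n^\Lambda$ by the rescaled nonlinear profile $h_n^{-1/2} w^{(j)}\bigl((t-t_n^{(j)})/h_n,(x-x_n^{(j)})/h_n\bigr)$ and perform the change of variable $(s,y) = ((t-t_n^{(j)})/h_n,(x-x_n^{(j)})/h_n)$. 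After this rescaling, the core of $q_n^{(k)}$ sits at $((t_n^{(k)}-t_n^{(j)})/h_n,(x_n^{(k)}-x_n^{(j)})/h_n)$, which escapes to infinity (or else the profiles are attached to distinct limit points $x_\infty^{(j)}\neq x_\infty^{(k)}$); approximating $w^{(k)}$ by compactly supported smooth profiles shows that the product becomes essentially supported in vanishing sets, hence tends to $0$ in $L^{\infty}L^{3}$.

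The main obstacle is precisely this last sub-case: one must convert the purely geometric orthogonality of the cores into an effective spatial disjointness of the rescaled nonlinear profiles, which requires approximating the scattering data by compactly supported functions and controlling the nonlinear stability of the approximation through Strichartz continuity. The scale orthogonal case is by comparison immediate thanks to Lemma \ref{lmorthL3}, and the splitting into concentration regimes provided by Theorem \ref{thmdescriptionNLW} together with Lemma \ref{lmnonreconcentr} is what allows to reduce everything to the above analysis on the thin concentration window $J_n^{\Lambda}$.
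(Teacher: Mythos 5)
Your proposal is correct and follows essentially the same route as the paper: expand \(\beta\) into off-diagonal monomials, reduce by Hölder to the pair bound \(\|q_n^{(j)}q_n^{(k)}\|_{L^{5/2}L^5}\), split \([-T,T]\) into the three regimes of Theorem \ref{thmdescriptionNLW}, treat the tails with Lemma \ref{lmnonreconcentr}, and handle the concentration window by rescaling and approximating the profiles by compactly supported functions (with Lemma \ref{lmorthL3} disposing of the orthogonal-scales case). The paper differs only cosmetically — it performs a separate Hölder decomposition \(L^\infty L^3 \cdot (L^3 L^{18})^3\) for orthogonal scales instead of the \(L^\infty L^3\)/\(L^2 L^6\) interpolation you insert, and it estimates the \(L^{5/2}L^5\) norm on the concentration window directly rather than passing through \(L^\infty L^3\); neither change affects correctness.
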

\begin{proof}
We follow closely Lemma 4.2 of \cite{PGGall2001}.
\bna
\nor{\beta\left(\sum_{j=1}^l q_n^{(j)}\right)-\sum_{j=1}^l \beta(q_n^{(j)})}{L^1([-T,T],L^2)}\leq \sum_{1\leq j_1,\cdots,j_5\leq l}\nor{\prod_{k=1}^5 q_n^{(j_k)}}{L^1([-T,T],L^2)}
\ena
where at least two $q_n^{(j_k)}$ are different.
In the case of othogonality of scales, we use H\"older inequality
\bna
\nor{\prod_{k=1}^5 q_n^{(j_k)}}{L^1([-T,T],L^2)}\leq C \nor{q_n^{1}q_n^{2}}{L^{\infty}([-T,T],L^3)}\prod_{k=3}^5\nor{ q_n^{(j_k)}}{L^3([-T,T],L^{18})}.
\ena
Then, Corollary \ref{cornonlinoscill} and Lemma \ref{lmorthL3} yield the result (note that $L^3L^{18}$ is a pair of Strichartz norm). So now, we can assume $h_n^1=h_n^2=h_n$.
By H\"older and Corollary \ref{cornonlinoscill}, we get
\bna
\nor{\prod_{k=1}^5 q_n^{(j_k)}}{L^1([-T,T],L^2)}\leq C \nor{q_n^{1}q_n^{2}}{L^{5/2}([-T,T],L^{5})}\prod_{k=3}^5\nor{ q_n^{(j_k)}}{L^5([-T,T],L^{10})}\leq C \nor{q_n^{1}q_n^{2}}{L^{5/2}([-T,T],L^{5})}.
\ena
We apply Theorem \ref{thmdescriptionNLW} to $q_n^{1}$. We obtain three couples $(\varphi^i,\psi^i)$, $i=1,2,3$ and split the interval $[-T,T]=\cup_{j=1}^3 I_n^{j,\Lambda}$. We first deal with the interval $I_{n}^{1,\Lambda}$. Denote $\underline{v_1}= [(\varphi_{1},\psi_{1}), \underline{h}, \underline{x}, \underline{t})]$ so that 
\bna
\nor{q_n^{1}q_n^{2}}{L^{5/2}(I_n^{1,\Lambda},L^{5})}\leq \nor{q_n^{1}}{L^{5}(I_n^{1,\Lambda},L^{10})}\leq C \nor{q_n^{1}-v_{1,n}}{L^{5}(I_n^{1,\Lambda},L^{10})}+\nor{v_{1,n}}{L^{5}(I_n^{1,\Lambda},L^{10})}
\ena
So, combining Theorem \ref{thmdescriptionNLW} and Lemma \ref{lmnonreconcentr} yields 
\bna
\varlimsup_n \nor{q_n^{1}q_n^{2}}{L^{5/2}(I_n^{1,\Lambda},L^{5})} \tend{\Lambda}{\infty}0.
\ena
The same result holds for $I_n^{3,\Lambda}$ and we are led with the interval $I_n^{2,\Lambda}$. In the case of time orthogonality, say $\frac{|t_n^2-t_n^1|}{h_n}\tend {n}{\infty}+\infty$, the two intervals $[t_n^1-\Lambda h_n,t_n^1+\Lambda h_n]$ and $[t_n^2-\Lambda h_n,t_n^2+\Lambda h_n]$ have empty intersection for fixed $\Lambda$ and $n$ large enough, which yields the result by the same estimates applied to $q_n^{2}$, once $\Lambda$ is chosen large enough. 

We can now assume, up to a translation in time, that $t_n^1=t_n^2$. On $I_n^{2,\Lambda}$, Theorem \ref{thmdescriptionNLW} allows us to replace $q_n^{1}$ by 
$w_{n}^1(t,x)=\Psi_U^1 (x)w^1\left(\frac{t-t_n^1}{h_n},\frac{x-x_n^1}{h_n}\right)$ on a coordinate patch where $w^1$ is solution of a nonlinear wave equation on the tangent plane $T_{x_{\infty}^{1}}M$ and similarly for $q_n^{2}$. In the first case of space orthogonality, that is $x_{\infty}^1\neq x_{\infty}^2$, the result is obvious on the interval $I_n^{2,\Lambda}$ by taking $\Psi_U^1$ and $\Psi_U^2$ with empty intersection. In the case $x_{\infty}^1=x_{\infty}^2$, we are left with the estimate of
\bna
\int_{I_n^2}\left(\int_{\R^3}\left|w_{n}^1(t,x) w_{n}^2(t,x)\right|^5\right)^{1/2}ds\leq \int_{[-\Lambda,\Lambda]}\left(\int_{\R^3}\left|w^1(t,x) w^2(t,x+\frac{x_n^1-x_n^2}{h_n})\right|^5\right)^{1/2}ds
\ena
This yields the result in the last case of space orthogonality by approximating $w^1$ and $w^2$ by compactly supported functions.
\end{proof}
In the case of the sphere, we are able to state the same result without any restriction on the time.
\begin{corollaire}
\label{proporthogL3S3}
Let $M=S^3$ and $T>0$ (eventually large). We make the same assumptions as Proposition \ref{proporthogL3}, except for the time $T$, with the additional hypothesis : \\
$[\underline{h}^{(i)},(-1)^m\underline{x}^{(i)},\underline{t}^{(i)}+m\pi]$ is orthogonal to $[\underline{h}^{(j)},\underline{x}^{(j)},\underline{t}^{(j)}]$ for any $m\in \Z$ and $i\neq j$. Moreover, we assume $a(x)\equiv 0$ (undamped equation).\\
Then, the same conclusion as Proposition \ref{proporthogL3} is true.
\end{corollaire}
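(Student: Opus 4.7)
The strategy will be to split $[-T,T]$ into finitely many subintervals on which the non-focusing property holds, and to reduce the statement on each subinterval to Proposition \ref{proporthogL3}. Fix $\eta < T_{focus}/3 = \pi/3$ and write $[-T,T] = \bigcup_{k=1}^N I_k$ with $|I_k|\le \eta$. By subadditivity of the $L^1([-T,T],L^2)$ norm, it is enough to establish the corresponding bound on each $I_k$.

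Fix such a $k$, and enlarge $I_k$ to an interval $\tilde I_k$ of length strictly less than $T_{focus} = \pi$, chosen so that for each $j$ there is at most one integer $m_k^{(j)}\in\mathbb{Z}$ with $t_\infty^{(j)} + m_k^{(j)}\pi \in \tilde I_k$. For such $j$, the plan is to use Theorem \ref{thmdescriptionNLWS3} on each side of this focus, combined with the local-in-time description of Theorem \ref{thmdescriptionNLW} across it, to identify the restriction of $q_n^{(j)}$ to $\tilde I_k$ with the nonlinear concentrating wave $\tilde q_n^{(j,k)}$ associated to the shifted data
\[
[\tilde S^{(m_k^{(j)})}S(\varphi^{(j)},\psi^{(j)}),\ \underline{h}^{(j)},\ (-1)^{m_k^{(j)}}\underline{x}^{(j)},\ \underline{t}^{(j)} + m_k^{(j)}\pi],
\]
in the sense that $\varlimsup_n \nort{q_n^{(j)} - \tilde q_n^{(j,k)}}_{\tilde I_k}$ tends to $0$. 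For the remaining $j$ (no focus of $q_n^{(j)}$ in $\tilde I_k$), Theorem \ref{thmdescriptionNLWS3} together with Lemma \ref{lmnonreconcentr} forces $\varlimsup_n\|q_n^{(j)}\|_{L^5(\tilde I_k, L^{10})}\to 0$; by H\"older and the uniform Strichartz bound of Corollary \ref{cornonlinoscill}, these indices contribute a negligible amount to $\beta(\sum_j q_n^{(j)}) - \sum_j \beta(q_n^{(j)})$ in $L^1(\tilde I_k, L^2)$.

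For the relevant indices, the orthogonality hypothesis of the corollary says precisely that the shifted data $[\underline{h}^{(j)}, (-1)^{m_k^{(j)}}\underline{x}^{(j)}, \underline{t}^{(j)}+m_k^{(j)}\pi]$ are pairwise orthogonal in the sense of Definition \ref{deforthog}. Since $|\tilde I_k| < T_{focus}$ and the $\tilde q_n^{(j,k)}$ are nonlinear concentrating waves attached to pairwise orthogonal data whose concentration times lie in $\tilde I_k$, Proposition \ref{proporthogL3} applies on $\tilde I_k$ to the family $\{\tilde q_n^{(j,k)}\}$ and yields the desired bound. The substitution error $q_n^{(j)} \mapsto \tilde q_n^{(j,k)}$, small in $\nort{\cdot}_{\tilde I_k}$, is transferred to $L^1(\tilde I_k, L^2)$ by H\"older using the four remaining $L^5 L^{10}$ factors, each controlled by Corollary \ref{cornonlinoscill}; summing on $k$ closes the argument.

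The main obstacle will be the identification step above: Theorem \ref{thmdescriptionNLWS3} describes $q_n^{(j)}$ only on intervals strictly between two successive focuses, and only up to a \emph{linear} concentrating wave. Upgrading this to a description by a \emph{nonlinear} concentrating wave with profile updated by $\tilde S^{(m)}$ on a full neighborhood of $t_n^{(j)} + m_k^{(j)}\pi$ requires patching Theorem \ref{thmdescriptionNLWS3} on the two sides of that focus with the at-focus, rescaled description of Theorem \ref{thmdescriptionNLW}, and verifying that the profile transformations compose correctly into $\tilde S^{(m)}\circ S$. Once this patching is justified, the reduction to Proposition \ref{proporthogL3} on each subinterval is routine.
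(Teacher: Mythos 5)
Your overall plan matches the paper's proof: cover $[-T,T]$ by finitely many subintervals of length strictly less than $T_{focus}=\pi$, identify on each subinterval the restriction of $q_n^{(j)}$ with a nonlinear concentrating wave whose shifted core $[\underline{h}^{(j)},(-1)^{m}\underline{x}^{(j)},\underline{t}^{(j)}+m\pi]$ and updated profile $\tilde{S}^{(m)}S(\varphi^{(j)},\psi^{(j)})$ come from Theorem \ref{thmdescriptionNLWS3}, check that the shifted cores remain pairwise orthogonal using the corollary's hypothesis, and invoke Proposition \ref{proporthogL3} subinterval by subinterval. (For the last step note that the hypothesis directly gives orthogonality of a shifted $i$ against an unshifted $j$; to compare two mutually shifted cores one applies the antipodal translation $(x,t)\mapsto(-x,t+\pi)$ repeatedly, which preserves Definition \ref{deforthog}, reducing to the case $m=m_k^{(i)}-m_k^{(j)}$.)

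The one point you leave open is exactly the ``main obstacle'' you describe, and that is where you overcomplicate things: you do not need to describe $q_n^{(j)}$ \emph{across} a focus time by patching Theorem \ref{thmdescriptionNLWS3} on its two sides with the rescaled, at-focus picture of Theorem \ref{thmdescriptionNLW}. It suffices to choose the covering $[-T,T]=\bigcup_k[\alpha_k,\beta_k]$ so that every left endpoint $\alpha_k$ avoids all limit focus times $t_\infty^{(j)}+m\pi$. Then, for fixed $\Lambda$ and $n$ large, $\alpha_k$ lies in $]t_n^{(j)}+(m^{(j)}-1)\pi+\Lambda h_n^{(j)},\,t_n^{(j)}+m^{(j)}\pi-\Lambda h_n^{(j)}]$, and Theorem \ref{thmdescriptionNLWS3} applied at the single time $\alpha_k$ already says that $(q_n^{(j)},\partial_t q_n^{(j)})(\alpha_k)$ is asymptotically a concentrating data at scale $h_n^{(j)}$ with the shifted core and updated profile. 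Defining $\tilde{q}_n^{(j,k)}$ as the nonlinear solution whose data at time $\alpha_k$ equals that concentrating data makes $\tilde{q}_n^{(j,k)}$ a nonlinear concentrating wave by definition, and Lemma \ref{approxNL} (continuity of the nonlinear flow in $\nort{\cdot}$) then gives $\varlimsup_n\nort{q_n^{(j)}-\tilde{q}_n^{(j,k)}}_{[\alpha_k,\beta_k]}=0$. Everything that happens at and across the focus inside $[\alpha_k,\beta_k]$ is subsequently handled by Proposition \ref{proporthogL3} itself; no additional stitching is required.
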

\begin{proof}
We build a covering of the interval $[-T,T]$ with a finite number of intervals of length stricly less than $T_{focus}=\pi$ so that on each of this interval $I=[\alpha,\beta]$ and for any $1\leq i \leq l$, there exists at most one $m^{(i)} \in \Z$ such that $t_{\infty}^{(i)}+m^{(i)}\pi\in I$. Moreover, one can also impose $\alpha \neq t_{\infty}^{(i)}+m^{(i)}\pi$.

Therefore, $\alpha\in ]t_n^{(i)}+(m^i-1)\pi+\Lambda h_n^{(i)},t_n^{(i)}+m^{(i)}\pi-\Lambda h_n^{(i)}]$ for large fixed $\Lambda$ and $n$ large enough. Theorem \ref{thmdescriptionNLWS3} yields $\nor{ (q_n^{(i)} - v_n^{(i)},\partial_t (q_n^{(i)} - v_n^{(i)}))_{t=\alpha}}{\HutL} \tend{n}{\infty}0$ for a linear concentrating wave $v_n^{(i)}=[\tilde{S}^{m^{(i)}}S(\varphi^{(i)},\psi^{(i)}), \underline{h}^{(i)}, \underline{x}^{(i)}, \underline{t}^{(i)})]$. In each interval, we are in the same situation as in Proposition \ref{proporthogL3} which yields the desired result. 
\end{proof}
Now, we are ready for the proof of the nonlinear profile decomposition. We give it in a quite sketchy way since it is very similar to the one of \cite{BahouriGerard} or \cite{PGGall2001}. First, we obtain it in the particular case where the linear solution is small in Strichartz norm.
\begin{lemme}
\label{lmdecompNLpetit}
There exists $\delta_1>0$ such that if
\bna
\underset{n\to \infty}{\varlimsup}\nor{v_n}{L^5([-T,T],L^{10})}\leq \delta_1
\ena
then the conclusion of Theorem \ref{thmdecompositionNL} is true.
\end{lemme}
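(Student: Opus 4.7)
The plan is to set $\tilde{u}_n^{(l)}=u+\sum_{j=1}^l q_n^{(j)}+w_n^{(l)}$ and $r_n^{(l)}=u_n-\tilde{u}_n^{(l)}$, then verify the equation and initial data satisfied by $r_n^{(l)}$ and close a Strichartz bootstrap. Since $(u(0),\partial_tu(0))=(v(0),\partial_tv(0))$ and $(q_n^{(j)}(0),\partial_tq_n^{(j)}(0))=(p_n^{(j)}(0),\partial_tp_n^{(j)}(0))$, the linear decomposition at $t=0$ gives $\tilde{u}_n^{(l)}(0)=u_n(0)$, $\partial_t\tilde{u}_n^{(l)}(0)=\partial_tu_n(0)$, hence $r_n^{(l)}$ has zero Cauchy data and satisfies
\[
\Box r_n^{(l)}+r_n^{(l)}+a(x)\partial_t r_n^{(l)}=F_n^{(l)}:=\beta(\tilde{u}_n^{(l)})-\beta(u_n)+\Bigl[\beta(u)+\sum_{j=1}^l\beta(q_n^{(j)})-\beta(\tilde{u}_n^{(l)})\Bigr].
\]

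The first preliminary step is to show that smallness of $v_n$ in $L^5L^{10}$ propagates to every nonlinear object. Since $u_n-v_n$ solves the damped linear Klein--Gordon equation with source $-\beta(u_n)$ and zero data, Strichartz estimates give $\|u_n-v_n\|_{L^5L^{10}}\lesssim \|u_n\|_{L^5L^{10}}^5$, so a standard bootstrap yields $\nort{u_n}_{[-T,T]}\lesssim \delta_1$ once $\delta_1$ is small enough, and similarly $\nort{u}_{[-T,T]}\lesssim \delta_1$. Applying Lemma \ref{lmnorleq} to each profile $p_n^{(j)}$ gives $\varlimsup_n\|p_n^{(j)}\|_{L^5L^{10}}\leq C\delta_1$, and the same bootstrap (transferred to the nonlinear concentrating wave, using the approximation of $q_n^{(j)}$ by $p_n^{(j)}$ on the small-norm intervals and by the rescaled $\R^3$-solution near concentration as in Theorem \ref{thmdescriptionNLW}) yields $\varlimsup_n\nort{q_n^{(j)}}\leq C\delta_1$.

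Next I would control the bracketed term in $F_n^{(l)}$. Write
\[
\beta(u)+\sum_{j=1}^l\beta(q_n^{(j)})-\beta(\tilde{u}_n^{(l)})=\Bigl[\beta(u)+\sum_{j=1}^l\beta(q_n^{(j)})-\beta\bigl(u+\sum_{j=1}^l q_n^{(j)}\bigr)\Bigr]+\Bigl[\beta\bigl(u+\sum_{j=1}^l q_n^{(j)}\bigr)-\beta\bigl(u+\sum_{j=1}^l q_n^{(j)}+w_n^{(l)}\bigr)\Bigr].
\]
The first bracket tends to $0$ in $L^1L^2$ as $n\to\infty$ by Proposition \ref{proporthogL3} (or Corollary \ref{proporthogL3S3} on $S^3$), treating $u$ as a profile with scale $h=1$ and using mutual orthogonality of the cores. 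The second bracket is controlled pointwise by $|w_n^{(l)}|(|a|^4+|w_n^{(l)}|^4)$ with $a=u+\sum q_n^{(j)}$, which by H\"older in $L^5L^{10}$ gives a bound of the form $(\nort{a}^4+\nort{w_n^{(l)}}^4)\|w_n^{(l)}\|_{L^5L^{10}}$. Here I use the almost-orthogonality expansion (same $\beta$-orthogonality) to get $\varlimsup_n\|\sum q_n^{(j)}\|_{L^5L^{10}}^5\lesssim\sum_j\|q_n^{(j)}\|_{L^5L^{10}}^5$, bounded by the energy; combined with $\|w_n^{(l)}\|_{L^5L^{10}}\to0$ from Theorem \ref{thmdecompositionL} (interpolating $L^\infty L^6$ and $L^4L^{12}$), this bracket is $\petito{1}$ as $n\to\infty$ and then $l\to\infty$.

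Finally I would close the equation for $r_n^{(l)}$. Strichartz and energy estimates applied to the damped linear equation satisfied by $r_n^{(l)}$ give
\[
\nort{r_n^{(l)}}_{[-T,T]}\leq C\bigl\|\beta(\tilde{u}_n^{(l)})-\beta(\tilde{u}_n^{(l)}+r_n^{(l)})\bigr\|_{L^1L^2}+C\varepsilon_n^{(l)},
\]
where $\varepsilon_n^{(l)}$ collects the two terms handled above, with $\varlimsup_n\varepsilon_n^{(l)}\to 0$ as $l\to\infty$. Bounding the difference $\beta(\tilde{u}_n^{(l)}+r)-\beta(\tilde{u}_n^{(l)})$ by $(|\tilde{u}_n^{(l)}|^4+|r|^4)|r|$ and using the uniform smallness $\nort{\tilde{u}_n^{(l)}}\lesssim\delta_1$, one absorbs the linear-in-$r$ piece into the left-hand side (for $\delta_1$ small enough), and the quintic piece by a standard continuity/bootstrap argument starting from $r_n^{(l)}(0)=0$. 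Passing to $\varlimsup_n$ and then $l\to\infty$ yields $\nort{r_n^{(l)}}\to 0$, which is (\ref{rnpetit}). The main difficulty I anticipate is ensuring uniform-in-$l$ smallness of $\nort{\tilde{u}_n^{(l)}}$, i.e.\ making the almost orthogonality of the nonlinear profiles in $L^5L^{10}$ into a genuine estimate; this is precisely where Proposition \ref{proporthogL3} plays its role.
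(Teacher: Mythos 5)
Your argument is correct and follows essentially the same route as the paper: write the equation for $r_n^{(l)}$ with zero initial data, use Proposition~\ref{proporthogL3} (or Corollary~\ref{proporthogL3S3} on $S^3$) to decouple the nonlinear profiles, and close a Strichartz bootstrap exploiting Lemma~\ref{lmnorleq} to propagate the $L^5L^{10}$-smallness to the individual profiles. The paper's proof is much terser --- it simply writes the equation for $r_n^{(l)}$ and defers to the argument of Bahouri--G\'erard citing exactly these two ingredients --- so your proposal is a faithful expansion of the intended argument rather than a different approach.
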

\begin{proof}
The proof is essentially the same as Lemma 4.3 of \cite{BahouriGerard}. We have to estimate the rest $r_n^{(l)}$ solution of
\bna
\left\lbrace
\begin{array}{rcl}%argument r=alignement à droite puis center et left
\Box r_n^{(l)} +r_n^{(l)}+a(x)\partial_t r_n^{(l)}&=&\beta(u)+\sum_{j=1}^l \beta(q_n^{(j)}) -\beta\left(u+\sum_{j=1}^l q_n^{(j)}+w_n^{(l)}+r_n^{(l)}\right)\\
(r_n^{(l)},\partial_t r_n^{(l)})_{t=0}&=&(0,0) .
\end{array}
\right.
\ena
We conclude as in \cite{BahouriGerard} using Proposition \ref{proporthogL3} and Lemma \ref{lmnorleq} which is not immediate on a manifold. In the case of $S^3$ and $a\equiv 0$ for large $T$, we use Corollary \ref{proporthogL3S3} instead of Proposition \ref{proporthogL3} .
\end{proof}
Once the result is obtained when Strichartz norms are small, we divide $[-T,T]$ in a finite number of intervals where the Strichartz norms are small enough. This is done in the following lemma.
\begin{lemme}
\label{lmdivisioninterval}
Let $2T<T_{focus}$. Let $\delta>0$ and $\tilde{q}_n$ be a sequence in $L^5([-T,T],L^{10}(M))$, such that 
\bna
\limsu{n}{\infty} \nor{\tilde{q}_n}{L^5([-T,T],L^{10})}\leq \delta.
\ena
Fix also $l\in \N$ and $l$ sequences of nonlinear concentrating wave $q_n^{(j)}$, $j=1,...,l$.
  
Then, for any $\delta'>\delta$, there exists $L\in \N$ such that for any $n\in \N$, we have the decomposition of $[-T,T]$ in closed intervals $I_n^{(i)}$
\bna
[-T,T]=\bigcup_{i=1}^L I_n^{(j)}, 
\ena
such that the sequence 
\bna
\Gamma_n= \sum_{j=1}^l q_n^{(j)}+\tilde{q}_n
\ena
satisfies on each interval $I_n^{(i)}$
\bna
\limsu{n}{\infty}\nor{\Gamma_n}{L^5(I_n^{(j)},L^{10})} \leq \delta'.
\ena
\end{lemme}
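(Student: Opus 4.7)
The plan is to construct, for each $j=1,\ldots,l$, a partition of $[-T,T]$ having an $n$-independent number of pieces on each of which $\varlimsup_n \nor{q_n^{(j)}}{L^5(L^{10})}$ is at most a small fixed $\varepsilon$, and then to take the common refinement over $j$. Fix once and for all $\varepsilon:=(\delta'-\delta)/(2l)>0$.

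For each $j$, invoke Theorem \ref{thmdescriptionNLW}; the hypothesis $2T<T_{focus}$ makes the nonfocusing condition (\ref{nonrefocus}) automatic. Estimates (\ref{estimbefore})--(\ref{estimafter}), combined with Lemma \ref{lmnonreconcentr} applied to the linear concentrating waves approximating $q_n^{(j)}$ outside the concentration zone, produce $\Lambda_j>0$ with
\bna
\limsu{n}{\infty}\nor{q_n^{(j)}}{L^5(I_n^{1,\Lambda_j}\cup I_n^{3,\Lambda_j},L^{10})}\leq \varepsilon.
\ena
On the middle interval $I_n^{2,\Lambda_j}=[t_n^{(j)}-\Lambda_j h_n^{(j)},t_n^{(j)}+\Lambda_j h_n^{(j)}]$, estimate (\ref{estimclose}) identifies $q_n^{(j)}$ with the translation-dilation of a fixed nonlinear profile $w^{(j)}$ solving (\ref{eqnnonlinIN2}) on $T_{x_\infty^{(j)}}M$, up to an $\nort{\cdot}$-small error. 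Global existence on the tangent space gives $w^{(j)}\in L^5(\mathbb{R},L^{10})$, so I partition $[-\Lambda_j,\Lambda_j]$ into finitely many closed subintervals $J_1^{(j)},\ldots,J_{N_j}^{(j)}$ satisfying $\nor{w^{(j)}}{L^5(J_k^{(j)},L^{10})}\leq \varepsilon/2$. Invariance of $\nor{\cdot}{L^5 L^{10}}$ under $(t,x)\mapsto(t_n^{(j)}+h_n^{(j)}t,x_n^{(j)}+h_n^{(j)}x)$ then yields dilated pieces $\widetilde J_{n,k}^{(j)}\subset I_n^{2,\Lambda_j}$ with $\limsu{n}{\infty}\nor{q_n^{(j)}}{L^5(\widetilde J_{n,k}^{(j)},L^{10})}\leq \varepsilon$.

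Each $j$ thus gives a partition of $[-T,T]$ into at most $N_j+2$ closed intervals carrying the smallness above. Let $L:=\sum_{j=1}^l(N_j+1)+1$, which is independent of $n$. The common refinement of the $l$ individual partitions is a partition $[-T,T]=\bigcup_{i=1}^{L'}I_n^{(i)}$ with $L'\leq L$, and every $I_n^{(i)}$ is contained in a single piece of each of the $l$ individual partitions. Minkowski's inequality in $L^5(I_n^{(i)},L^{10})$ therefore gives
\bna
\limsu{n}{\infty}\nor{\Gamma_n}{L^5(I_n^{(i)},L^{10})}&\leq& \sum_{j=1}^l\limsu{n}{\infty}\nor{q_n^{(j)}}{L^5(I_n^{(i)},L^{10})}+\limsu{n}{\infty}\nor{\tilde q_n}{L^5(I_n^{(i)},L^{10})}\\
&\leq& l\varepsilon+\delta=\tfrac{\delta+\delta'}{2}<\delta',
\ena
which is the desired bound. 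The only delicate point is keeping $L$ uniform in $n$: since each $\Lambda_j$ and each $N_j$ depend only on $\varepsilon$ and on the fixed limiting profile $w^{(j)}$, only the positions of the cut points $t_n^{(j)}\pm\Lambda_j h_n^{(j)}$ and the rescaled endpoints $t_n^{(j)}+h_n^{(j)}\partial J_k^{(j)}$ drift with $n$, never their number.
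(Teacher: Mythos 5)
Your proof is correct and follows essentially the same route as the paper: for each nonlinear concentrating wave, split $[-T,T]$ into before/during/after zones via Theorem \ref{thmdescriptionNLW}, handle the outside via comparison with the linear wave and Lemma \ref{lmnonreconcentr}, handle the inside by partitioning on the scale of the fixed limiting profile $w^{(j)}\in L^5(\R,L^{10})$ and rescaling. The paper only spells out $l=1$; your common-refinement step is the natural way to assemble the general case, and your bookkeeping ($L=\sum_j(N_j+1)+1$, Minkowski plus monotonicity of $L^5L^{10}$ under restriction and finiteness of the number of pieces per $n$) is sound.
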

\bnp
We first treat the case $l=1$. We divide $[-T,T]=I_n^{1,\Lambda}\cup I_n^{2,\Lambda} \cup I_n^{3,\Lambda}$ according to Theorem \ref{thmdescriptionNLW} (one of these intervals being possibly empty). Then, a combination of estimate (\ref{estimbefore}) of Theorem \ref{thmdescriptionNLW} (comparison with linear concentrating wave) and Lemma \ref{lmnonreconcentr} (non reconcentration) gives for $\Lambda$ large enough
\bna
\limsu{n}{\infty}\nor{q_n^{(1)}}{L^5(I_n^{1,\Lambda},L^{10})} \leq \delta'-\delta.
\ena
The same result holds for $I_n^{3,\Lambda}$ and we are left with the interval $I_n^{2,\Lambda}$. Once $\Lambda$ is fixed, we can divide $[-\Lambda,\Lambda]$ in a finite number of intervals $I_{}^{(i),\Lambda}$ such that $\nor{w}{L^5(I_{}^{(i),\Lambda},L^{10}} \leq \delta-\delta'$ where $w$ is the function defined by equation (\ref{eqnnonlinIN2}) of Theorem \ref{thmdescriptionNLW}. Then, we replace each $I_{}^{(i),\Lambda}$ by $I_{n}^{(i),\Lambda}$ obtained by translation dilation. We conclude by the approximation (\ref{estimclose}) of $q_n^{(1)}$ by translation dilation of $w$ on the interval $I_n^{2,\Lambda}$.
\enp
Note that the previous lemma also applies for large times on $S^3$ with $a\equiv 0$ by doing a first decomposition of $[-T,T]$ in a finite number of intervals of length strictly less than $\pi$.
\bnp[End of the proof of Theorem \ref{thmdecompositionNL} in the general case] We choose $l\in \N$ such that $\nor{w_n^{(l)}}{}\leq \delta_1$ and use Lemma \ref{lmdivisioninterval} in order to be able to apply Lemma \ref{lmdecompNLpetit} on each interval $I_n^{(j)}$. See \cite{BahouriGerard} or, in the different context of Schrödinger equation, \cite{Keraanidefect} . \enp
\subsection{Applications}
\subsubsection{Strichartz estimates and Lipschitz bounds for the nonlinear evolution group}
\begin{prop}
\label{propapriori}
Let $T>0$ be fixed. There exist a non-decreasing function, $A:[0,\infty[ \rightarrow [0,\infty[$, such that any solution of  
\begin{eqnarray}
\label{eqnStrichartzglob}
\left\lbrace
\begin{array}{rcl}%argument r=alignement à droite puis center et left
\Box u+u+a(x)\partial_t u&=&-|u|^4u \quad \textnormal{on}\quad [-T,T]\times M\\
(u(0),\partial_t u(0))&=&(u_{0},u_1) \in \HutL\\
\end{array}
\right.
\end{eqnarray}
fulfills 
\bna
\nor{u}{L^8([-T,T],L^{8}(M)}+\nor{u}{L^5([-T,T],L^{10}(M)}+\nor{u}{L^4([-T,T],L^{12}(M)}&\leq& A(\nor{(u_{0},u_1)}{\HutL}).
\ena
\end{prop}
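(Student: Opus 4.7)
The plan is to argue by contradiction using the nonlinear profile decomposition of Theorem \ref{thmdecompositionNL}. First, I reduce to the case $2T<T_{focus}$: since the damping makes the energy $E(t)$ nonincreasing along solutions of \eqref{eqnStrichartzglob}, one may split $[-T,T]$ into finitely many closed subintervals of length strictly less than $T_{focus}/2$ and iterate the short-time bound, the energy at the start of each subinterval being at most $\nor{(u_0,u_1)}{\HutL}^2$. Now suppose, for contradiction, that no such $A$ exists: there exist $R_0>0$ and solutions $u_n$ of \eqref{eqnStrichartzglob} with $\nor{(u_n(0),\partial_t u_n(0))}{\HutL}\le R_0$ but $\nor{u_n}{L^5([-T,T],L^{10})}\to\infty$. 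Up to extraction, Theorem \ref{thmdecompositionNL} yields, for each $l\in\N^*$,
\[
u_n = u+\sum_{j=1}^l q_n^{(j)}+w_n^{(l)}+r_n^{(l)},\qquad \limsu{n}{\infty}\nort{r_n^{(l)}}_{[-T,T]}\tend{l}{\infty}0.
\]

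I then estimate each piece in $L^5L^{10}$. The weak limit $u$ is a fixed strong solution of the damped NLKG with energy at most $R_0^2$, so by the global existence result of Section 2.1, $u\in L^5_{\rm loc}(\R,L^{10}(M))$, hence $\nor{u}{L^5([-T,T],L^{10})}<\infty$. Strichartz estimates applied to the damped linear equation satisfied by $w_n^{(l)}$ give $\nor{w_n^{(l)}}{L^4([-T,T],L^{12})}\le C(R_0)$ uniformly in $l,n$, and combining the conclusion $\limsu{n}{\infty}\nor{w_n^{(l)}}{L^{\infty}([-T,T],L^6)}\tend{l}{\infty}0$ of Theorem \ref{thmdecompositionL} with the interpolation
\[
\nor{w_n^{(l)}}{L^5L^{10}}\le \nor{w_n^{(l)}}{L^{\infty}L^6}^{1/5}\nor{w_n^{(l)}}{L^4L^{12}}^{4/5}
\]
yields $\limsu{n}{\infty}\nor{w_n^{(l)}}{L^5L^{10}}\tend{l}{\infty}0$. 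The remainder satisfies $\nor{r_n^{(l)}}{L^5L^{10}}\le\nort{r_n^{(l)}}_{[-T,T]}$, hence also vanishes.

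The key step is controlling the sum of the nonlinear concentrating waves. By Corollary \ref{cornonlinoscill}, each $q_n^{(j)}$ is uniformly bounded in $L^5L^{10}$ by some $A_0(\nor{(\varphi^{(j)},\psi^{(j)})}{\HutL_{\infty}})$. Moreover, for profiles with energy below the small-data threshold $\delta_0$, the classical small-data scattering theory on $T_{x_\infty^{(j)}}M\simeq\R^3$ combined with Theorem \ref{thmdescriptionNLW} and Lemma \ref{lmnonreconcentr} yields the refined linear bound $\nor{q_n^{(j)}}{L^5L^{10}}\le C\nor{(\varphi^{(j)},\psi^{(j)})}{\HutL_{\infty}}$. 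The almost-orthogonality $\sum_j\nor{(\varphi^{(j)},\psi^{(j)})}{\HutL_{\infty}}^2\le R_0^2+o(1)$ from Theorem \ref{thmdecompositionL} then restricts the set of indices with $\nor{(\varphi^{(j)},\psi^{(j)})}{\HutL_{\infty}}\ge\delta_0$ to finitely many (bounded by $R_0^2/\delta_0^2$), with total contribution $\le C(R_0)$. Combined with the almost-orthogonality
\[
\Big\|\sum_{j=1}^l q_n^{(j)}\Big\|_{L^5L^{10}}^5\le C\sum_{j=1}^l\nor{q_n^{(j)}}{L^5L^{10}}^5+o_n(1),
\]
whose proof parallels Proposition \ref{proporthogL3} (splitting $[-T,T]=I_n^{1,\Lambda}\cup I_n^{2,\Lambda}\cup I_n^{3,\Lambda}$ around each concentration time and exploiting the scale/core orthogonality of Definition \ref{deforthog} via Lemma \ref{lmorthL3} and Theorem \ref{thmdescriptionNLW}), the small-energy tail is bounded by $C\delta_0^3 R_0^2$. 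Hence $\nor{u_n}{L^5L^{10}([-T,T])}$ is uniformly bounded, contradicting our assumption. The bounds on $L^8L^8$ and $L^4L^{12}$ follow from inhomogeneous Strichartz applied to Duhamel's formula, since $\nor{|u|^4u}{L^1L^2}\le \nor{u}{L^5L^{10}}^5$ is now controlled.

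The main obstacle is establishing the $L^5L^{10}$ almost-orthogonality of the nonlinear profiles: unlike in a Hilbert setting, the fifth power of the sum does not automatically split into a sum of fifth powers, and one must show that cross terms vanish asymptotically. This requires the full force of Theorem \ref{thmdescriptionNLW} near concentration times, together with the non-reconcentration property (Lemma \ref{lmnonreconcentr}) and Lemma \ref{lmorthL3} away from them.
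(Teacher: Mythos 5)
Your high-level route coincides with the paper's: reduce to the $L^5L^{10}$ norm via Strichartz, argue by contradiction, invoke the nonlinear profile decomposition of Theorem~\ref{thmdecompositionNL}, and conclude from the uniform $L^5L^{10}$ bound on nonlinear concentrating waves (Corollary~\ref{cornonlinoscill}). However, the middle of your argument is substantially over-engineered. In a contradiction argument it suffices to fix one $l$ large enough that $\limsu{n}{\infty}\nor{w_n^{(l)}}{L^5([-T,T],L^{10})}$ and $\limsu{n}{\infty}\nort{r_n^{(l)}}_{[-T,T]}$ are, say, at most $1$; the $L^5L^{10}$ norm of the finite sum $\sum_{j=1}^{l} q_n^{(j)}$ is then controlled by the plain triangle inequality, each term being uniformly bounded in $n$ by Corollary~\ref{cornonlinoscill}, and the weak limit $u$ is a single fixed strong solution so its $L^5L^{10}$ norm is finite. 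This already yields $\limsu{n}{\infty}\nor{u_n}{L^5([-T,T],L^{10})}<\infty$, the desired contradiction. The refined linear small-data bound $\nor{q_n^{(j)}}{L^5L^{10}}\lesssim\nor{(\varphi^{(j)},\psi^{(j)})}{\HutL_{\infty}}$ for low-energy profiles, the energy-threshold splitting, and the $L^5L^{10}$ almost-orthogonality inequality $\bigl\|\sum_{j\le l} q_n^{(j)}\bigr\|_{L^5L^{10}}^5\le C\sum_{j\le l}\nor{q_n^{(j)}}{L^5L^{10}}^5+o_n(1)$ that you sketch by analogy with Proposition~\ref{proporthogL3} are all additional lemmas whose proofs the paper does not supply and which are simply not required: only a finite number $l$ of profiles ever enters the estimate, so no control of an infinite tail is needed.

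There is also a small gap in your reduction from $T$ to times shorter than $T_{focus}/2$. You assert that the damping makes $E(t)$ nonincreasing, but in Proposition~\ref{propapriori} the coefficient $a$ is only required to be a smooth real-valued function and may change sign, so $\frac{d}{dt}E(t)=-\int_M a(x)|\partial_t u|^2$ has no fixed sign. The valid (and the paper's stated) justification is \emph{almost conservation}: a Gronwall-type estimate $E(t)\le e^{\|a\|_{\infty}|t|}E(0)$ follows from the same identity, and this is enough to bound the energy at the start of each subinterval of length less than $T_{focus}/2$ in terms of $\nor{(u_0,u_1)}{\HutL}$ and therefore to iterate the short-time bound.
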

\bnp
The proof is exactly the same as Corollary 2 of \cite{BahouriGerard}. Using Strichartz estimates, it is enough to get the result for $L^5L^{10}$. We argue by contradiction and suppose that there exists a sequence $u_n$ of strong solutions of equation (\ref{eqnStrichartzglob}) satisfying
\bna
\sup_n \nor{(u_{0,n},u_{1,n})}{\HutL} <+\infty , \quad \nor{u_n}{L^5([-T,T],L^{10}(M)} \tend{n}{\infty}+\infty.
\ena
We apply the profile decomposition of Theorem \ref{thmdecompositionNL} to our sequence. We get a contradiction by the fact that the $L^5([-T,T],L^{10}(M))$ norm of a nonlinear concentrating wave is uniformly bounded thanks to Corollary \ref{cornonlinoscill}. This argument works for times $2T<T_{focus}$ and can be reiterated since the nonlinear energy at times $T$ can be bounded with respect to the one at time $0$ thanks to almost conservation (we can also use energy estimates once we know $u$ is uniformly bounded in $L^5L^{10}$). \enp
\begin{lemme}
\label{lmestimL2}
Let $R_0>0$ and $T>0$.
Then, there exists $C>0$ such any solution $u$ satisfying 
\begin{eqnarray}
\label{eqnlinlmL2}
\left\lbrace
\begin{array}{rcl}%argument r=alignement à droite puis center et left
\Box u+u+a(x)\partial_t u+|u|^4u&=& 0\quad \textnormal{on}\quad [-T,T]\times M\\
(u(0),\partial_t u(0))&=&(u_{0},u_1) \in \HutL\\
\nor{(u_{0},u_1)}{\HutL}&\leq &R_0.
\end{array}
\right.
\end{eqnarray}
fulfills 
\bna
\nor{(u(t),\partial_t u(t))}{L^2\times H^{-1}}\leq C \nor{(u(0),\partial_t u(0))}{L^2\times H^{-1}} \quad \forall t\in [-T,T].
\ena
\end{lemme}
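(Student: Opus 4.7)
The plan is to work one derivative below the energy by applying $(1-\Delta)^{-1/2}$ to the equation, and then to run a standard $H^1\times L^2$ energy estimate on the resulting equation, treating the nonlinearity as a source term. Set $w=(1-\Delta)^{-1/2}u$, so that the norm $\|(w,\partial_t w)\|_{\HutL}$ is equivalent to $\|(u,\partial_t u)\|_{L^2\times H^{-1}}$. Applying $(1-\Delta)^{-1/2}$ to \eqref{eqnlinlmL2} gives
\bna
\Box w+w+a(x)\partial_t w=-\bigl[(1-\Delta)^{-1/2},a\bigr]\partial_t u-(1-\Delta)^{-1/2}(|u|^4u),
\ena
and the standard energy identity for $w$ yields
\bna
\frac{d}{dt}E(w)(t)\lesssim \|\partial_t w\|_{L^2}\Bigl(\bigl\|[(1-\Delta)^{-1/2},a]\partial_t u\bigr\|_{L^2}+\bigl\|(1-\Delta)^{-1/2}(|u|^4u)\bigr\|_{L^2}\Bigr)+\|\partial_t w\|_{L^2}^2,
\ena
the last term absorbing the contribution of the damping $a(x)\partial_t w$ with a harmless sign.

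The next step is to bound the two source terms by something linear in the $L^2\times H^{-1}$ norm of $(u,\partial_t u)$. For the commutator term, standard pseudodifferential calculus on $M$ gives that $[(1-\Delta)^{-1/2},a]$ is a pseudodifferential operator of order $-2$ (since $a$ is smooth), hence bounded from $H^{-1}$ into $L^2$; consequently
\bna
\bigl\|[(1-\Delta)^{-1/2},a]\partial_t u\bigr\|_{L^2}\lesssim \|\partial_t u\|_{H^{-1}}\lesssim E(w)^{1/2}.
\ena
For the nonlinear source, I dualize: for $\phi\in H^1(M)$, H\"older with exponents $(3,2,6)$ gives
\bna
\Bigl|\int_M |u|^4u\,\overline{\phi}\,\Bigr|\leq \|u\|_{L^{12}}^4\|u\|_{L^2}\|\phi\|_{L^6}\lesssim \|u\|_{L^{12}}^4\|u\|_{L^2}\|\phi\|_{H^1},
\ena
so that $\|(1-\Delta)^{-1/2}(|u|^4u)\|_{L^2}=\||u|^4u\|_{H^{-1}}\lesssim \|u\|_{L^{12}}^4\|u\|_{L^2}\lesssim \|u\|_{L^{12}}^4\,E(w)^{1/2}$.

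Combining these two bounds, we obtain the differential inequality
\bna
\frac{d}{dt}E(w)(t)\leq C\bigl(1+\|u(t)\|_{L^{12}}^4\bigr)E(w)(t),\qquad t\in[-T,T].
\ena
By Proposition \ref{propapriori}, the quantity $\|u\|_{L^4([-T,T],L^{12}(M))}$ is bounded by $A(R_0)$, so $t\mapsto \|u(t)\|_{L^{12}}^4$ is integrable with norm controlled in terms of $R_0$ only. Gronwall's lemma then yields $E(w)(t)\leq C\,E(w)(0)$ uniformly on $[-T,T]$, with $C$ depending only on $T$, $R_0$, and $a$. Translating back through the isomorphism $(1-\Delta)^{-1/2}:L^2\to H^1$ gives the claimed bound.

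The only nontrivial point is the commutator estimate, but it is standard pseudodifferential material on a compact manifold; everything else is a direct combination of Sobolev embedding in dimension three and the a priori Strichartz estimate of Proposition \ref{propapriori}. Note also that the argument is purely one of propagation of the low regularity norm along the nonlinear flow; no profile decomposition is needed here.
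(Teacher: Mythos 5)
Your proof is correct and follows essentially the same route as the paper's: both use Proposition \ref{propapriori} to control $u$ in $L^4([-T,T],L^{12})$, then place $|u|^4u$ in $L^1([-T,T],H^{-1})$ by H\"older and Sobolev, and close a Gronwall-type argument at the $L^2\times H^{-1}$ level. The only difference is presentational: the paper writes the equation as $\Box u+u+a\partial_t u=Vu$ with $V=-|u|^4\in L^1L^3$ and invokes the $L^2\times H^{-1}$ energy estimate directly with time-interval subdivision, whereas you conjugate by $(1-\Delta)^{-1/2}$ and run a differential Gronwall inequality, which makes explicit the commutator $[(1-\Delta)^{-1/2},a]$ that the paper's ``standard estimates'' leave implicit.
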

\begin{proof}
Proposition \ref{propapriori} yields a uniform bound for $u$ in $L^4([-T,T],L^{12}(M))$ and so for $V=|u|^4$ in $L^1([0,T],L^{3}(M))$. We prove uniform estimates for some solutions of the linear equation
 \begin{eqnarray}
\left\lbrace
\begin{array}{rcl}%argument r=alignement à droite puis center et left
\Box u+u+a(x)\partial_t u=Vu \quad \textnormal{on}\quad [-T,T]\times M\\
(u(0),\partial_t u(0))=(u_{0},u_1) \in  L^2\times H^{-1}\\
\end{array}
\right.
\end{eqnarray}
where $V$ satisfies $\nor{V}{L^1([-T,T],L^{3}(M))}\leq A(R_0)^4$. The product of functions in $L^{\infty}([-T,T],L^2)$ and $L^1([-T,T],L^{3})$ is in $L^1([-T,T],L^{6/5})$ and so in $L^1([-T,T],H^{-1})$ by Sobolev embedding. Standard estimates yields
\bna
\nor{(u,\partial_t u)}{L^{\infty}([0,t],L^2\times H^{-1})}\leq C\nor{(u(0),\partial_t u(0))}{L^2\times H^{-1}}+C(t+\nor{V}{L^1([0,t],L^{3})})\nor{(u,\partial_t u)}{L^{\infty}([0,t],L^2\times H^{-1})}.
\ena
We can divide the interval $[-T,T]$ into a finite number of intervals $[a_i,a_{i+1}]_{i=1\cdots N}$ such that $C(t+\nor{V}{L^1([a_i,a_{i+1}],L^{3}(M))})<1/2$. $N$ depends only on $R_0$ and $T$ (not on $V$).

Then, on each of these intervals, we have
\bna
\nor{(u,\partial_t u)}{L^{\infty}([a_i,a_{i+1}],L^2\times H^{-1})}\leq 2C\nor{(u(a_i),\partial_t u(a_i))}{L^2\times H^{-1}}.
\ena
We obtain the expected result by iteration. The final constant $C$ only depends on $R_0$ and $T$ since it is also the case for $N$.
\end{proof}
\begin{corollaire}
\label{corL2choixpetit}
Let $R_0>0$ and $T>0$.
For any $\varepsilon>0$, there exists $\delta>0$ such that any solution $u$ satisfying (\ref{eqnlinlmL2}) and $\nor{(u_{0},u_1)}{L^2\times H^{-1}}\leq \delta$ satisfies
\bna
\nor{(u(T),\partial_t u(T))}{L^2\times H^{-1}}\leq \varepsilon.
\ena
\end{corollaire}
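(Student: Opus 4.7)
The plan is to derive this corollary directly from Lemma \ref{lmestimL2}, which has just been established. That lemma already provides the stronger statement of a linear Lipschitz bound between the $L^2 \times H^{-1}$ norms at times $0$ and $t$, valid uniformly for $t \in [-T,T]$, with a constant $C = C(R_0,T)$ depending only on the energy bound and the time horizon (but not on the particular solution).

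Given this, the corollary is immediate: fix $R_0 > 0$ and $T > 0$, and let $C$ be the constant produced by Lemma \ref{lmestimL2}. For any prescribed $\varepsilon > 0$, one simply sets $\delta := \varepsilon/C$. Then any strong solution $u$ of (\ref{eqnlinlmL2}) with $\nor{(u_0,u_1)}{\HutL} \leq R_0$ and $\nor{(u_0,u_1)}{L^2\times H^{-1}} \leq \delta$ will satisfy, by Lemma \ref{lmestimL2} applied at $t = T$,
\bna
\nor{(u(T),\partial_t u(T))}{L^2\times H^{-1}} \leq C \nor{(u_0,u_1)}{L^2\times H^{-1}} \leq C\delta = \varepsilon,
\ena
which is the desired estimate.

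There is no real obstacle here; the entire content of the corollary is encapsulated in the linearity of the bound provided by Lemma \ref{lmestimL2}. The only subtlety worth flagging is that the constant $C$ must be independent of the particular solution (depending only on the ceiling $R_0$ on the energy norm and on $T$), which is indeed how Lemma \ref{lmestimL2} is stated, since the bound on the potential $V = |u|^4$ in $L^1([-T,T],L^3(M))$ used in its proof comes from the a priori Strichartz bound $A(R_0)$ of Proposition \ref{propapriori} which is uniform over the class of solutions with energy $\leq R_0$.
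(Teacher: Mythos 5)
Your proof is correct and is exactly the argument the paper intends: since the paper provides no separate proof for this corollary, it is tacitly deduced from Lemma~\ref{lmestimL2} by choosing $\delta = \varepsilon/C$, which is precisely what you do. The remark about the uniformity of the constant $C$ over the class of solutions with energy bounded by $R_0$ is the right point to check, and you handle it correctly.
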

We will also need the following lemma which states the local uniform continuity of the flow map. Note that it can be proved to be locally Lipschitz with a slightly more complicated argument (see Corollary 2 of \cite{PGGall2001}). We will note need this for our purpose.
\begin{lemme}
\label{approxNL}
Let $u_n$, $\tilde{u}_n$  be two sequences of solutions of
\begin{eqnarray*}
\left\lbrace
\begin{array}{rcl}%argument r=alignement à droite puis center et left
\Box u_n+u_n+|u_n|^4u_n&=&g_n \quad \textnormal{on}\quad [-T,T]\times M\\
(u_n,\partial_t u_n)_{t=0}&=&(u_{n,0},u_{n,1}) \textnormal{ bounded in } \HutL,
\end{array}
\right.
\end{eqnarray*}
with
$\nor{(u_{n,0}-\tilde{u}_{n,0},u_{n,1}-\tilde{u}_{n,1})}{\HutL}+\nor{g_n-\tilde{g}_n}{L^{1}([-T,T],L^2)} \tend{n}{\infty}0$. Then, we have
$$\nort{u_n-\tilde{u}_n}_{[-T,T]} \tend{n}{\infty}0.$$ 
\end{lemme}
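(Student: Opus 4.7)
The plan is to treat this as a standard continuity-of-flow result via a Strichartz contraction argument, iterated on a finite partition of $[-T,T]$ whose size can be controlled uniformly in $n$.

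First I would set $w_n = u_n - \tilde{u}_n$, which satisfies
\bna
\Box w_n + w_n = -\bigl(|u_n|^4 u_n - |\tilde u_n|^4 \tilde u_n\bigr) + (g_n - \tilde g_n),
\ena
with data $(u_{n,0}-\tilde u_{n,0}, u_{n,1}-\tilde u_{n,1})$ converging to zero in $\HutL$. The pointwise bound $\bigl||u_n|^4 u_n - |\tilde u_n|^4 \tilde u_n\bigr| \leq C(|u_n|^4 + |\tilde u_n|^4)|w_n|$ together with H\"older in space-time (using the admissible pair $(5,10)$ since $\tfrac{1}{5}+\tfrac{3}{10}=\tfrac{1}{2}$) gives
\bna
\nor{(|u_n|^4+|\tilde u_n|^4)w_n}{L^1(I,L^2)} \leq C\bigl(\nor{u_n}{L^5(I,L^{10})}^4 + \nor{\tilde u_n}{L^5(I,L^{10})}^4\bigr)\nor{w_n}{L^5(I,L^{10})}.
\ena
Combined with the energy/Strichartz estimate for $\Box + 1$, this yields on any subinterval $I$ containing a reference time $t_0$,
\bna
\nort{w_n}_I \leq C\nor{(w_n(t_0),\partial_t w_n(t_0))}{\HutL} + C\nor{g_n-\tilde g_n}{L^1(I,L^2)} + C\eta_n(I)\,\nort{w_n}_I,
\ena
where $\eta_n(I) := \nor{u_n}{L^5(I,L^{10})}^4 + \nor{\tilde u_n}{L^5(I,L^{10})}^4$.

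Next, by Proposition~\ref{propapriori} applied to each of $u_n$ and $\tilde u_n$ (both solve a Klein--Gordon equation with initial data bounded in $\HutL$ and forcing $g_n$, resp.\ $\tilde g_n$, uniformly bounded in $L^1L^2$ by the convergence hypothesis applied after fixing one of them), the Strichartz norms $\nor{u_n}{L^5([-T,T],L^{10})}$ and $\nor{\tilde u_n}{L^5([-T,T],L^{10})}$ are bounded by a constant $M$ independent of $n$. I would then fix $\eta_0 < 1/(2C)$ and construct, recursively, a partition $0 = a_0^n < a_1^n < \cdots < a_{N}^n = T$ of $[0,T]$ by letting $a_{k+1}^n$ be the largest time $\leq T$ with $\eta_n([a_k^n, a_{k+1}^n]) \leq \eta_0$. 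A simple superadditivity argument bounds the number of intervals by $N \leq \lceil 2M^5/\eta_0^{5/4}\rceil + 1$, independently of $n$. On each such interval the $C\eta_n(I)\,\nort{w_n}_I$ term is absorbed into the left-hand side, yielding
\bna
\nort{w_n}_{[a_k^n,a_{k+1}^n]} \leq 2C\nor{(w_n(a_k^n),\partial_t w_n(a_k^n))}{\HutL} + 2C\nor{g_n-\tilde g_n}{L^1([-T,T],L^2)}.
\ena
Iterating the resulting inequality $N$ times (and doing the symmetric construction on $[-T,0]$) gives a bound of the form
\bna
\nort{w_n}_{[-T,T]} \leq C_N\Bigl(\nor{(u_{n,0}-\tilde u_{n,0},u_{n,1}-\tilde u_{n,1})}{\HutL} + \nor{g_n-\tilde g_n}{L^1([-T,T],L^2)}\Bigr),
\ena
whose right-hand side tends to $0$ by hypothesis.

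The main technical point is ensuring the number $N$ of subintervals can be chosen \emph{uniformly} in $n$; this is what I expect to be the only nontrivial step, and it is handled by the greedy recursive partition above together with the a priori Strichartz bound from Proposition~\ref{propapriori}. Everything else is a routine Strichartz/energy contraction.
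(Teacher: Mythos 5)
Your proof is correct and follows essentially the same route as the paper: consider the difference $w_n = u_n - \tilde u_n$, apply energy/Strichartz estimates on the difference equation, invoke Proposition~\ref{propapriori} for the a priori $L^5L^{10}$ bounds, split $[-T,T]$ into finitely many intervals on which the nonlinear term can be absorbed, and iterate. The only added value in your write-up is that you spell out the greedy partition and the superadditivity-of-$\nor{\cdot}{L^5}^5$ count showing $N$ can be chosen uniformly in $n$, a point the paper asserts without detail.
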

\bnp
$r_n=u_n-\tilde{u}_n$ is solution of 
\begin{eqnarray*}
\left\lbrace
\begin{array}{rcl}%argument r=alignement à droite puis center et left
\Box r_n+r_n+|u_n|^4u_n-|\tilde{u}_n|^4\tilde{u}_n&=&g_n-\tilde{g}_n \quad \textnormal{on}\quad [-T,T]\times M\\
(r_n,\partial_t r_n)_{t=0}&=&(u_{n,0}-\tilde{u}_{n,0},u_{n,1}-\tilde{u}_{n,1}).
\end{array}
\right.
\end{eqnarray*}
Using energy and Strichartz estimates, we get
\bna
\nort{r_n}_{[-T,T]}&\leq &C\nor{(u_{n,0}-\tilde{u}_{n,0},u_{n,1}-\tilde{u}_{n,1})}{\HutL}+C\nor{g_n-\tilde{g}_n}{L^{1}([-T,T],L^2)}\\
&&+C\nor{r_n}{L^5([-T,T],L^{10})}\left(\nor{u_n}{L^5([-T,T],L^{10})}^4+\nor{\tilde{u}_n}{L^5([-T,T],L^{10})}^4\right).
\ena
Using Proposition \ref{propapriori}, we can divide the interval $[-T,T]$ in a finite number of intervals $I_{i,n}=[a_{i,n},a_{i+1,n}]$, $1\leq i\leq  N$, such that $C\left(\nor{u_n}{L^5(I_{i,n},L^{10})}^4+\nor{\tilde{u}_n}{L^5(I_{i,n},L^{10})}^4\right)<1/2$ so that the third term can be absorbed. We iterate this estimate $N$ times, which gives the result.
\enp
\subsubsection{Profile decomposition of the limit energy}
For $u$ solution of the nonlinear wave equation, we denote its nonlinear energy density 
\bna
e(u)(t,x)=\frac{1}{2}\left[|\partial_t u(t,x)|^2+|\nabla u(t,x)|^2+|u(t,x)|^2\right]+\frac{1}{6}|u(t,x)|^6.
\ena
For a sequence $u_n$ of solution with initial data bounded in $\HutL$, the corresponding non linear energy density is bounded in $L^{\infty}([-T,T],L^1)$ and so in the space of bounded measures on $[-T,T]\times M$. This allows to consider, up to a subsequence, its weak$*$ limit. 

The following theorem is the equivalent of Theorem 7 in \cite{DehPGNLW}. It proves that the energy limit follows the same profile decomposition as $u_n$. It will be the crucial argument that will allow to use microlocal defect measure on each profile and then to apply the linearization argument.
\begin{theorem}
\label{thmprofilNRJ}
Assume $2T<T_{focus}$.

Let $u_n$ be a sequence of solutions of 
\bna
\Box u_n+u_n+ |u_n|^4u_n=0
\ena 
with $(u_n,\partial_t u_n) (0)$  weakly convergent to $0$ in $\HutL$.

The nonlinear energy density limit of $u_n$ (up to subsequence) reads
\bna
e(t,x)=\sum_{j=1}^{+\infty} e^{(j)}(t,x)+e_f(t,x)
\ena
where $e^{(j)}$ is the limit energy limit density of $q_n^{(j)}$ (following the notation of Theorem \ref{thmdecompositionNL}) and 
$$e_f=\limvar{l}{\infty}~\limvar{n}{\infty}e(w_n^{(l)}) $$ 
where both limit are considered up to a subsequence and in the weak $*$ sense.

In particular, $e_f$ can be written 
\bna
e_f(t,x)= \int_{\xi \in S^2_x} \mu(t,x,d\xi)
\ena
with
\bna
\mu(t,x,\xi)= \mu_{-}(G_t(x,\xi))+\mu_{+}(G_{-t}(x,\xi))
\ena
where $G_t$ is the flow map of the vector field $H_{|\xi|_x}$ on $S^*M$, that is the Hamiltonian of the Riemannian metric.  

Moreover, $e$ is also the limit of the linear energy density
\bna
e_{lin}(u_n)(t,x)=\frac{1}{2}\left[|\partial_t u_n(t,x)|^2+|\nabla u_n(t,x)|^2\right].
\ena
\end{theorem}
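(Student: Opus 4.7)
The plan is to combine the nonlinear profile decomposition (Theorem \ref{thmdecompositionNL}) with the propagation of microlocal defect measures (Lemma \ref{lmmeasuredamped} with $a\equiv 0$), after first showing that the subcritical part $\frac{1}{2}|u_n|^2+\frac{1}{6}|u_n|^6$ of the energy density has vanishing weak-$*$ limit. This last fact will simultaneously prove the ``Moreover'' statement and reduce the problem to a purely quadratic energy expansion.

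First I would handle the subcritical part. The $|u_n|^2$ contribution tends to zero in $L^\infty([-T,T],L^1)$ by Aubin--Lions, since $u_n\rightharpoonup 0$ in $H^1$ while $\partial_t u_n$ is bounded in $L^2$. For the $L^6$ contribution I write $u_n=\sum_{j=1}^l q_n^{(j)}+w_n^{(l)}+r_n^{(l)}$ and use the elementary inequality $\bigl||A+B|^6-|A|^6-|B|^6\bigr|\lesssim |A|^5|B|+|A||B|^5$ iteratively, then check that every resulting term is negligible as a space-time measure: each $\int_{-T}^T\!\!\int_M |q_n^{(j)}|^6\,dx\,dt=O(h_n^{(j)})\to 0$ by scaling (the critical Strichartz exponent is $L^5L^{10}$, not $L^6L^6$); the $w_n^{(l)}$ contribution is bounded by $T\|w_n^{(l)}\|_{L^\infty L^6}^6$, which tends to $0$ as $l\to\infty$ by (\ref{rnpetit}); cross terms are handled by H\"older together with Lemma \ref{lmorthL3} and Proposition \ref{proporthogL3}; and the $r_n^{(l)}$ part is controlled by $\nort{r_n^{(l)}}_{[-T,T]}\to 0$. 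This establishes $e=\lim e_{lin}(u_n)$, so from now on I only decompose the quadratic density $|\partial_t u_n|^2+|\nabla u_n|^2$.

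Expanding the quadratic density with the same decomposition, the diagonal terms are $\sum_j e_{lin}(q_n^{(j)})+e_{lin}(w_n^{(l)})+e_{lin}(r_n^{(l)})$, the last of which vanishes by (\ref{rnpetit}). The cross terms $\nabla q_n^{(j)}\cdot\nabla q_n^{(k)}$ and $\partial_t q_n^{(j)}\partial_t q_n^{(k)}$ ($j\ne k$), as well as the mixed terms with $w_n^{(l)}$, I would show converge to $0$ in $\mathcal{D}'(]-T,T[\times M)$. For purely linear profiles this is the content of Lemma \ref{lmconserorth}; for nonlinear profiles, Theorem \ref{thmdescriptionNLW} together with Lemma \ref{lmnonreconcentr} allows me to replace $q_n^{(j)}$ by its associated linear concentrating waves outside a neighborhood $I_n^{2,\Lambda}$ of the concentration time, whose contribution is negligible (in a space-time integration against a test function) because it has time-measure $O(\Lambda h_n^{(j)})$ after suitable rescaling. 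Passing to subsequences, one gets $e(q_n^{(j)})\rightharpoonup^* e^{(j)}$ and $e_{lin}(w_n^{(l)})\rightharpoonup^* e_f^{(l)}$ as $n\to\infty$.

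For the limit $w_n^{(l)}$, which solves the free Klein--Gordon equation with data weakly convergent to $0$, Lemma \ref{lmmeasuredamped} with $a\equiv 0$ yields microlocal defect measures $\mu^{(l)}_{\pm}$ propagating under the exact Hamiltonian flow $\partial_t\mu^{(l)}_\pm=\pm H_{|\xi|_x}\mu^{(l)}_\pm$, and $e_f^{(l)}(t,x)=\int_{S^*_xM}\mu^{(l)}(t,x,d\xi)$ with $\mu^{(l)}=\tfrac12(\mu_+^{(l)}+\mu_-^{(l)})$. By the orthogonality (\ref{orthlinthm}) the total mass of $\mu_\pm^{(l)}$ is non-increasing in $l$, so a monotone/diagonal extraction delivers a limit $\mu_\pm$ that still satisfies the transport equation; integrating this equation produces the stated formula $\mu(t,x,\xi)=\mu_-(G_t(x,\xi))+\mu_+(G_{-t}(x,\xi))$ and gives $e_f=\lim_l e_f^{(l)}$. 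The main difficulty I expect is the $L^6$-cross-term analysis in Step~1, where one cannot linearize and must rely on the delicate interplay between the concentration scales $h_n^{(j)}$, the $L^3$-orthogonality of Lemma \ref{lmorthL3}, and the description of nonlinear concentrating waves of Theorem \ref{thmdescriptionNLW}; the second, lesser difficulty is justifying the double limit $\lim_l\lim_n$, which requires a careful diagonal extraction based on the decreasing character of the residual energy.
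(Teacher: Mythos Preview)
Your overall architecture matches the paper's: reduce to the quadratic density, expand it along the nonlinear profile decomposition, kill the cross terms by replacing each $q_n^{(j)}$ by its linear companion away from the concentration time (via Theorem~\ref{thmdescriptionNLW}) and then invoking Lemmas~\ref{lmwnlorth} and~\ref{lmconserorth}, and finally identify $e_f$ through the microlocal defect measure of $w_n^{(l)}$.

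The one genuine methodological difference is your treatment of the $|u_n|^6$ contribution. You expand it along the profile decomposition and argue term by term that each piece vanishes in $L^1_{t,x}$. This works (by the triangle inequality in $L^6_{t,x}$, since each $\|q_n^{(j)}\|_{L^6_tL^6_x}\to 0$ by subcriticality of $L^6L^6$, while $\|w_n^{(l)}\|_{L^6_tL^6_x}$ and $\|r_n^{(l)}\|_{L^6_tL^6_x}$ are small for large $l$), but it is heavier than needed and one must be a little careful that the number of terms grows with $l$; also, the smallness of $w_n^{(l)}$ in $L^\infty L^6$ comes from Theorem~\ref{thmdecompositionL}, not from~(\ref{rnpetit}) as you write. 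The paper bypasses all of this with a three-line interpolation: Proposition~\ref{propapriori} gives a uniform bound on $u_n$ in $L^8([-T,T]\times M)$, Lemma~\ref{lmestimL2} together with Rellich gives $u_n\to 0$ in $L^2([-T,T]\times M)$, and interpolation yields $u_n\to 0$ in $L^6([-T,T]\times M)$, hence $|u_n|^6\to 0$ in $L^1$. This avoids any use of the profile decomposition for the subcritical part. For the quadratic cross terms the paper excises a \emph{fixed} small neighborhood of $t_\infty^{(j)}$ (using that $b(q_n^{(j)},w_n^{(l)})$ is bounded in $L^\infty_tL^1_x$) rather than your $h_n$-dependent window $I_n^{2,\Lambda}$; both localizations lead to the same conclusion.
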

\begin{proof}Proposition \ref{propapriori} yields $\nor{u_n}{L^{8}([-T,T]\times M)}\leq C$. Then, compact embedding and Lemma \ref{lmestimL2} yields $\nor{u_n}{L^{2}([-T,T]\times M)} \tend{n}{\infty}0$ and so $\nor{u_n}{L^{6}([-T,T]\times M)} \tend{n}{\infty}0$ by interpolation. Therefore, $e$ is the limit of $b(u_n,u_n)$, with
\bna
b(f,g)=\partial_t f(t,x) \overline{\partial_tg}(t,x) + \nabla f(t,x)\cdot \overline{\nabla g}(t,x)
\ena
Now, we have to compute the limit of $b(u_n,u_n)$ using decomposition (\ref{decomponl}) of Theorem \ref{thmdecompositionNL}. We set for any $l\in \N$
\bna
s_n^{(l)}=\sum_{j=1}^l q_n^{(j)}
\ena
and so
\bna
b(u_n,u_n)=b(s_n^{(l)},s_n^{(l)})+b(w_n^{(l)},w_n^{(l)})+2b(s_n^{(l)},w_n^{(l)})+2b(u_n,r_n^{(l)})-b(r_n^{(l)},r_n^{(l)}).
\ena
Because of (\ref{rnpetit}), $\limsu{n}{\infty}\nor{2b(u_n,r_n^{(l)})-b(r_n^{(l)},r_n^{(l)})}{L^{1}([-T,T]\times M)}$ converges to zero as $l$ tends to infinity. So, if we define $e_r^{(l)}=w^*\limvar{n}{\infty}\left(2b(u_n,r_n^{(l)})-b(r_n^{(l)},r_n^{(l)})\right)$, we have
\bna
\nor{e_r^{(l)}}{TV} \tend{l}{\infty}0.
\ena
Let $\varphi \in C^{\infty}_0(]-T,T[\times M)$. For fixed $l$, it remains to estimate 
\bna
\iint_{]-T,T[\times M} \varphi~ b(s_n^{(l)},w_n^{(l)})=\sum_{j=1}^l \iint_{]-T,T[\times M} \varphi ~b(q_n^{(j)},w_n^{(l)}).
\ena
Since $b(q_n^{(j)},w_n^{(l)})$ is bounded in $L^{\infty}(]-T,T[,L^1)$, we can assume, up to an error arbitrary small, that $\varphi$ is supported in $\left\{t<t_{\infty}^{(j)}\right\}$ or $\left\{t>t_{\infty}^{(j)}\right\}$ (replace $\varphi$ by $(1-\Psi)(t)\varphi$ with $\Psi(t_{\infty}^{(j)})=1$ and $\nor{\Psi}{L^1(]-T,T[)}$ small). On each interval, Theorem \ref{thmdescriptionNLW} allows to replace $q_n^{(j)}$ by a linear concentrating wave. Then, we combine Lemma \ref{lmwnlorth} and Lemma \ref{lmconserorth} to get the weak convergence to zero of $b(s_n^{(l)},w_n^{(l)})$ for fixed $l$.

Lemma \ref{lmconcorth} and the orthogonality of the cores of concentration give $D_h^{(j)}(p_h^{(j')},\partial_t p_h^{(j')})(t_h^{(j)}) \rightharpoonup (0,0)$ for $j\neq j'$ and $p_h^{(j')}$ a concentrating wave at rate $[h^{(j')},t^{(j')},x^{(j')}]$. Then, the same argument as before yields
\bna
b(s_n^{(l)},s_n^{(l)})\tendweak{n}{\infty} \sum_{j=1}^l e^{(j)}.
\ena
So we have proved that for any $l\in \N$
 \bna
b(u_n,u_n)\tendweak{n}{\infty} e=\sum_{j=1}^l e^{(j)}+e_{w}^{(l)}+e_{r}^{(l)}
\ena
where $e_w^{(l)}$ is the weak* limit of $b(w_n^{(l)},w_n^{(l)})$ and $e_{r}^{(l)}$ satisfies $\nor{e_r^{(l)}}{TV} \tend{l}{\infty}0$. $e_w^{(l)}$ is the weak* limit of a sequence of solutions of the linear wave equation weakly convergent to $0$ in energy space. Therefore, it has the announced form using the link with microlocal defect measure (see Lemma \ref{lmmeasuredamped}).

We get the final result by letting $l$ tend to infinity. \end{proof}
\begin{remarque}
The fact that $|u_n|^6$ is weakly convergent to $0$ is false if we consider the limit in $\mathcal{D}'(M)$ time by time. For example, for a nonlinear concentrating wave with $t_n=0$, the weak limit in $\mathcal{D}'(]-T,T[\times M)$ of $|u_n|^6$ is of course still zero but the weak limit of $|u_n|^6(t)$ in $\mathcal{D}'(M)$ is zero if $t\neq 0$ and a multiple of a Dirac function if $t=0$. So the limit in $\mathcal{D}'(M)$ of ${e_n}_{|t=0}$ is not the same as the one of $b(u_n,u_n)_{|t=0}$. This comes from the fact that the limit of $b(u_n,u_n)(t)$ is not equicontinuous as a function of $t$ while it is the case for the nonlinear energy. Yet, in the proof, we are only interested in its limit in the space-time distributional sense which will be continuous. Actually, the discontinuity at $t=0$ of the limit of $b(u_n,u_n)(t)$ can be described explicitely from the scattering operator. At the contrary, the fact that the nonlinear energy density $e(t)$ is continuous in time can, in this case, be seen as a consequence of the conservation of the nonlinear energy of the scattering operator. 
\end{remarque}
\section{Control and stabilization}
\label{sectioncontrol}
\subsection{Weak observability estimates, stabilization}
\subsubsection{Why Klein-Gordon and not the wave?}
\label{subsectcontrexwave}
In this subsection, we prove that the expected observability estimate 
\bna
E(u)(0) \leq C\iint_{[0,T]\times M}  \left|a\partial_t u\right|^2 ~dtdx .
\ena
does not hold for the nonlinear damped wave equation $\Box u+\partial_t u+u^5=0$ (in the simpler case $a\equiv 1$), even for small data. It explains why we have chosen the Klein-Gordon equation instead.
The main point is that for small data, the nonlinear solution is close to the linear one which has the constants (in space-time) as undamped solutions (which is obviously false for $\Box u+u=0$).

We take $a\equiv 1$ and initial data constant equal to $(\varepsilon,0)$. The nonlinear wave equation takes the form of the following ODE
\bna
\left\lbrace
\begin{array}{rcl}%argument r=alignement à droite puis center et left
\ddot{u}+\dot{u}+u^5 &=&0\quad \textnormal{on}\quad [0,T]\\
(u(0),\dot{u}(0))&=&(\varepsilon,0). 
\end{array}
\right.
\ena
Decreasing of energy yields for any $t\geq 0$
\bna
E(t)=\frac{1}{2}\dot{u}^2 + \frac{1}{6}u^6(t) \leq E(0) =\frac{1}{6}\varepsilon^6
\ena
and so
\bna
|u(t)|\leq \varepsilon \quad \forall t\geq 0.
\ena
Then, $c=\dot{u}$ is solution of
\bna
\left\lbrace
\begin{array}{rcl}%argument r=alignement à droite puis center et left
\dot{c}+c+u^5&=&0 \quad \textnormal{on}\quad [0,T]\\
c(0)&=&0 
\end{array}
\right.
\ena
Therefore,
\bna
&c(t)=-\int_0^t e^{-(t-s)}u^5(s)~ds\\
\textnormal{and}&|\dot{u}(t)|=|c(t)|\leq \varepsilon^5.
\ena
For any $T>0$, we have
\bna
\int_0^T |\dot{u}(s)|^2 \leq T \varepsilon^{10}.
\ena 
Therefore, the observability estimate
\bna
 T \varepsilon^{10}\geq \int_0^T |\dot{u}(s)|^2 \geq C E(0)=C \frac{1}{6}\varepsilon^6 
\ena
can not hold if $\varepsilon$ is taken small enough.
\subsubsection{Weak observability estimate}
As explained in the introduction, the proof of stabilization consists in the analysis of possible sequences contradicting an observability estimate. The first step is to prove that such sequence is linearizable in the sense that its behavior is close to solutions of the linear equation.
\begin{prop}
\label{proplinear}
Let $\omega$ satisfying Assumption \ref{hypGCCfocus} and $a \in C^{\infty}(M)$ satisfying $a(x)>\eta>0$ for all $x\in \omega$. Let $T>T_{0}$ and $u_n$ be a sequence of solutions of 
\begin{eqnarray}
\left\lbrace
\begin{array}{rcl}%argument r=alignement à droite puis center et left
\Box u_n+u_n+|u_n|^4u_n+a(x)^2\partial_t u_n&=& 0\quad \textnormal{on}\quad [0,T]\times M\\
(u_n,\partial_t u_n)_{t=0}&=&(u_{0,n},u_{1,n}) \in \HutL.
\end{array}
\right.
\end{eqnarray}
satisfying 
\bnan
(u_{0,n},u_{1,n}) \tendweak{n}{\infty}0 \textnormal{ weakly in } \HutL \nonumber\\
\iint_{[0,T]\times M} \left| a(x)\partial_t u_n\right|^2 ~dtdx \tend{n}{\infty}0 \label{cvgeomega}
\enan
Then, $u_n$ is linearizable on $[0,t]$ for any $t<T-T_{0}$, that is 
\bna
\nort{u_n-v_n}_{[0,t]}\tend{n}{\infty} 0
\ena
where $v_n$ is the solution of 
\begin{eqnarray*}
\left\lbrace
\begin{array}{rcl}%argument r=alignement à droite puis center et left
\Box v_n&=& 0\quad \textnormal{on}\quad [0,T]\times M\\
(v_n,\partial_t v_n)_{t=0}&=&(u_{0,n},u_{1,n}) .
\end{array}
\right.
\end{eqnarray*}
\end{prop}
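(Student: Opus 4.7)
The plan is to argue by contradiction: assuming that $\nort{u_n-v_n}_{[0,t]}$ does not tend to zero along a subsequence, I will derive a contradiction by applying the nonlinear profile decomposition to $u_n$ and using the damping hypothesis together with the Geometric Control Condition to force every profile to be trivial.

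My first step will be to apply the nonlinear decomposition of Theorem~\ref{thmdecompositionNL} to $u_n$ on $[0,T]$ (splitting into sub-intervals of length $<T_{focus}$ and iterating the decomposition if needed). Since $(u_{0,n},u_{1,n})\rightharpoonup 0$, the weak limit vanishes and one obtains
$$u_n=\sum_{j=1}^{l}q_n^{(j)}+w_n^{(l)}+r_n^{(l)}\quad\text{on }[0,T],$$
with concentrating cores $[(\varphi^{(j)},\psi^{(j)}),\underline{h}^{(j)},\underline{x}^{(j)},\underline{t}^{(j)}]$, $t_\infty^{(j)}\in[0,T]$, the usual smallness of $r_n^{(l)}$ in $\nort{\cdot}$, and of $w_n^{(l)}$ in $L^\infty L^6\cap L^5L^{10}$. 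Using the profile decomposition of the nonlinear limit energy (Theorem~\ref{thmprofilNRJ}), the weak-$*$ limit of $|\partial_t u_n|^2\,dt\,dx$ is then the asymptotically orthogonal sum of nonnegative measures $\mu^{(j)}$ carried by each $q_n^{(j)}$ and of a free microlocal remainder $\mu_f$; testing against $a(x)^2\geq 0$ and using hypothesis~\eqref{cvgeomega} forces every nonnegative term to vanish: $\iint a^2\,d\mu^{(j)}=0$ for every $j$ and $\iint a^2\,d\mu_f=0$.

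The main obstacle is the final step, namely killing each profile using the GCC. By Theorem~\ref{thmdescriptionNLW}, outside a short window $I_n^{2,\Lambda}$ around $t_n^{(j)}$, the nonlinear concentrating wave $q_n^{(j)}$ is asymptotic in $\nort{\cdot}$ to linear damped concentrating waves with profiles $(\varphi_i^{(j)},\psi_i^{(j)})$, $i=1,3$, related to $(\varphi^{(j)},\psi^{(j)})$ via the wave operators $\Omega_\pm$; Lemma~\ref{lmpseudoconc} and Lemma~\ref{lmmeasuredamped} then give $\mu^{(j)}$ explicitly as a Dirac at $x_\infty^{(j)}$ at time $t_\infty^{(j)}$, weighted by the squared Fourier modulus of the profile, propagated along the Hamiltonian flow of $|\xi|_x$ on $S^*M$. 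For each direction $\xi$, the trajectory issued from $(x_\infty^{(j)},\xi)$ at $t_\infty^{(j)}$, restricted to $s\in[0,T]$, is a geodesic arc of length $T>T_0$; by Assumption~\ref{hypGCCfocus} it meets $\omega$ at some $s^*(\xi)\in[0,T]$ where $a^2>0$, and the vanishing of the damping measure forces the corresponding pre- or post-angular weight to vanish at $\xi$. The restriction $t<T-T_0$ ensures that for every profile either a long enough forward segment ($t_\infty^{(j)}\leq T-T_0$) or backward segment ($t_\infty^{(j)}\geq T_0$) lies inside $[0,T]$, so that one of $(\varphi_i^{(j)},\psi_i^{(j)})=0$; bijectivity of $\Omega_\pm$ (Proposition~\ref{propscattering}) then transfers the vanishing to $(\varphi^{(j)},\psi^{(j)})=0$, and hence $q_n^{(j)}\to 0$ in $\nort{\cdot}_{[0,T]}$. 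The same GCC-transport argument applied via the propagation equation of Lemma~\ref{lmmeasuredamped} forces $\mu_f\equiv 0$ and thus $\limsup_n\nort{w_n^{(l)}}_{[0,t]}\to 0$ as $l\to\infty$. Comparing with the analogous profile decomposition of the free linear solution $v_n$, which shares the same initial data and for which the linear scattering is trivial, shows that each piece of $v_n$ matches the corresponding piece of $u_n$ modulo an error vanishing in $\nort{\cdot}_{[0,t]}$, contradicting the initial assumption. The delicate issue in this step is the two-sided "bad" regime $t_\infty^{(j)}\in(T-T_0,T_0)$ possible when $T<2T_0$: here one must use the condition $t<T-T_0$ to argue that every profile whose concentration occurs in the linearization window $[0,t]$ falls under the forward GCC case, while profiles with $t_\infty^{(j)}>t$ live in their pre-concentration regime on $[0,t]$ and match the corresponding free-linear profile of $v_n$ by Aubin--Lions applied to the damping correction on weakly convergent data.
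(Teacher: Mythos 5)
Your overall strategy (profile decomposition, use the damping to show the defect measures vanish on $\omega$, then use geometric control to kill each profile) is the paper's core idea, but the way you assemble it has a real structural gap: you propose to apply the nonlinear profile decomposition of Theorem~\ref{thmdecompositionNL} and the energy profiling of Theorem~\ref{thmprofilNRJ} globally on $[0,T]$ ``splitting into sub-intervals of length $<T_{focus}$ and iterating if needed.'' Both of those theorems are stated only for intervals of length $<T_{focus}$, and iterating them on consecutive sub-intervals produces different, a priori unrelated, families of profiles and remainders on each block; near a focus the data of a given profile is modified by the scattering operator, so the measures $\mu^{(j)}$ you want to propagate globally through $[0,T]$ along the geodesic flow are simply not defined globally. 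The paper avoids this entirely: it introduces the quantity $t_*=\sup\{s:\nort{u_n-v_n}_{[0,s]}\to 0\}$ and, assuming $t_*<T-T_0$, applies the decomposition only on a single short sub-interval $[t_*-\varepsilon, t_*-\varepsilon+L]$ with $T_0<L<T_{focus}$ and $2\varepsilon<L-T_0$. On that window, Lemma~\ref{propprofilnul} kills only the profiles concentrating in $[t_*-\varepsilon,t_*+\varepsilon]$ (every such profile has a forward window of length $>T_0$ inside $[t_*-\varepsilon,t_*-\varepsilon+L]$, where GCC applies), and then shows that the surviving profiles and remainders are in their pre-concentration regime on $[t_*-\varepsilon,t_*+\varepsilon]$, so $|u_n|^4u_n\to 0$ in $L^1L^2$ there; linearizability on $[0,t_*+\varepsilon]$ follows by energy estimates and contradicts the definition of $t_*$. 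The bootstrap is not a cosmetic detail: it is exactly what lets one handle observation times $T$ much larger than $T_{focus}$ without a global decomposition.

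Two more local gaps. First, passing from $\iint a^2|\partial_t u_n|^2\to 0$ to ``$\iint a^2\,d\mu^{(j)}=0$ for every $j$'' is not a matter of nonnegativity alone: the defect measures in Theorem~\ref{thmprofilNRJ} carry the \emph{full} energy density, not just the $|\partial_t u_n|^2$ part. One must first use elliptic regularity plus the compactness of $|u_n|^4u_n$ in $H^{-1}_{loc}$ to place the space-time defect measure of $u_n$ on the characteristic set $\{\tau^2=|\xi|_x^2\}$, so that the time-derivative energy controls the full energy microlocally, and only then conclude $u_n\to 0$ in $H^1_{loc}(]0,L[\times\omega)$; this is explicit in the paper and missing in your sketch. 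Second, your final step of ``matching'' the surviving nonlinear profiles of $u_n$ to corresponding free-linear profiles of $v_n$ via Aubin--Lions is vague; the paper instead deduces linearizability directly from $\|\,|u_n|^4u_n\|_{L^1L^2}\to 0$ on the extension window, a clean comparison that needs no discussion of matching two profile expansions term by term.
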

\bnp
Denote $t_*=\sup \left\{s\in [0,T] \left| \limvar{n}{\infty} \nort{u_n-v_n}_{[0,s]}=0\right.\right\}$ and we have to prove $t_*\geq T-T_{0}$. If it is not the case, we can find an interval $[t_*-\varepsilon,t_*-\varepsilon+L]\subset [0,T]$ with $T_0<L<T_{focus}$ and $0<2\varepsilon< L-T_0$ (if $t_*=0$, take the interval $[0,L]\subset [0,T]$). Then, Lemma \ref{propprofilnul} below gives that $u_n$ is linearizable on $[t_*-\varepsilon,t_*+\varepsilon]$. We postpone the proof of Lemma \ref{propprofilnul} and finish the proof of the proposition. The definition of $t_*$ gives $\limvar{n}{\infty} \nort{u_n-v_n}_{[0,t_*-\varepsilon]}=0$ and we have proved that  $\limvar{n}{\infty}\nort{u_n-\tilde{v}_n}_{[t_*-\varepsilon,t_*+\varepsilon]}=0$ where $\tilde{v}_n$ is solution of 
\bna
\Box \tilde{v}_n= 0\quad;\quad (\tilde{v}_n,\partial_t \tilde{v}_n)_{t=t_*-\varepsilon}=(u_n,\partial_t u_n)_{t=t_*-\varepsilon}. 
\ena
Since the norm $\nort{\cdot}$ controls the energy norm, this easily yields $\limvar{n}{\infty} \nort{u_n-v_n}_{[0,t_*+\varepsilon]}=0$ which is a contradiction to the definition of $t_*$.
\enp
\begin{lemme}
\label{propprofilnul}
With the assumptions of Proposition \ref{proplinear}. Consider the profile decomposition according to Theorem \ref{thmdecompositionNL} of $u_n$ on a subinterval $[t_0,t_0+L]\subset[0,T]$ with $T_0<L<T_{focus}$.
 
Then, for any $0<\varepsilon< L-T_0$, this decomposition does not contain any non linear concentrating wave with $t_{\infty}^{(j)}\in [t_0,t_0+\varepsilon]$ and $u_n$ is linearizable on $[t_0,t_0+\varepsilon]$.
\end{lemme}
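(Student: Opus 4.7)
The plan is to use the hypothesis $\iint_{[0,T]\times M} a^2 |\partial_t u_n|^2 \to 0$, an orthogonal decomposition of $|\partial_t u_n|^2$ inherited from Theorem \ref{thmdecompositionNL}, and propagation of microlocal defect measures under the Geometric Control Condition, to annihilate any nonlinear concentrating wave whose concentration time lies in $[t_0,t_0+\varepsilon]$. Once this is done, linearizability on $[t_0,t_0+\varepsilon]$ will follow from the fact that every surviving $q_n^{(j)}$ is entirely in its pre- or post-concentration regime on this interval and is approximated by a linear damped concentrating wave via Theorem \ref{thmdescriptionNLW}.

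First I would apply Theorem \ref{thmdecompositionNL} on $[t_0,t_0+L]$ with $T_0<L<T_{focus}$. Since the hypotheses of Proposition \ref{proplinear} propagate to give $(u_n,\partial_t u_n)(t_0)\rightharpoonup 0$ in $\HutL$, the weak-limit profile $u$ vanishes. Then I would extend the orthogonality argument of Theorem \ref{thmprofilNRJ} from the full nonlinear energy density to the single quadratic quantity $|\partial_t u_n|^2$: writing $u_n=\sum_{j\le l} q_n^{(j)} + w_n^{(l)} + r_n^{(l)}$, the cross products $\partial_t q_n^{(j)}\overline{\partial_t q_n^{(k)}}$ for $j\ne k$ and $\partial_t q_n^{(j)}\overline{\partial_t w_n^{(l)}}$ tend to $0$ in $\mathcal{D}'$ by Lemmas \ref{lmwnlorth} and \ref{lmconserorth} (orthogonal concentrating waves have vanishing joint microlocal defect measure), while the $r_n^{(l)}$-contributions go to zero as $l\to\infty$ by \eqref{rnpetit}. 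Combined with the assumption, each non-negative contribution in the limit must vanish; in particular
\bna
\iint_{[t_0,t_0+L]\times M} a(x)^2\, d\nu^{(j)}(t,x)=0\quad\forall\,j,
\ena
where $\nu^{(j)}$ is the weak-$*$ limit of $|\partial_t q_n^{(j)}|^2$.

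The main technical step, which I foresee as the core obstacle, is to convert this into the triviality of a hypothetical profile with $t_\infty^{(j)}\in[t_0,t_0+\varepsilon]$. On the post-concentration window $[t_\infty^{(j)},t_0+L]$, of length at least $L-\varepsilon>T_0$, Theorem \ref{thmdescriptionNLW} identifies $q_n^{(j)}$ in $\nort{\cdot}$ with a linear damped concentrating wave of profile $(\varphi_3^{(j)},\psi_3^{(j)})$; by equipartition and Lemma \ref{lmmeasuredamped}, $\nu^{(j)}$ on that window equals the $x$-marginal of the microlocal defect measure $\mu_\pm^t$ of that linear wave. Since $a^2>\eta^2$ on $\omega$, the vanishing above forces $\mu_\pm^t$ to vanish on $S^*\omega$ for a.e.\ $t\in[t_\infty^{(j)},t_0+L]$; combining Assumption \ref{hypGCCfocus} (every geodesic from $x_\infty^{(j)}$ meets $\omega$ within time $T_0<L-\varepsilon$) with the damped transport equation for $\mu_\pm$ propagated backward along the geodesic flow yields $\mu_\pm^{t_\infty^{(j)}}\equiv 0$. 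The explicit formula for the microlocal measure of a concentrating data then gives
\bna
0=\int_0^{+\infty}\left|\hat{\psi}_3^{(j)}(r\xi)\pm i|r\xi|_{\infty}\hat{\varphi}_3^{(j)}(r\xi)\right|^2 r^2\, dr\quad\text{for a.e.\ }\xi,
\ena
hence $(\varphi_3^{(j)},\psi_3^{(j)})=0$, and by the bijectivity of the transformations in Table \ref{tableauconc} (Proposition \ref{propscattering}), also $(\varphi^{(j)},\psi^{(j)})=0$, contradicting the nontriviality of $q_n^{(j)}$.

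Finally, with no profile concentrating in $[t_0,t_0+\varepsilon]$, every surviving $q_n^{(j)}$ is on this interval entirely in its pre- or post-concentration regime and thus satisfies $\limsup_n\nort{q_n^{(j)}-p_n^{(j,\pm)}}_{[t_0,t_0+\varepsilon]}\to 0$ as the parameters of \eqref{estimbefore}-\eqref{estimafter} are taken large. Summing these estimates with $w_n^{(l)}$ and using $\limsup_n\nort{r_n^{(l)}}_{[t_0,t_0+L]}\to 0$ from \eqref{rnpetit}, the nonlinear solution $u_n$ differs by $o_n(1)$ in $\nort{\cdot}_{[t_0,t_0+\varepsilon]}$ from the solution of the linear damped Klein-Gordon equation with data $(u_n,\partial_t u_n)(t_0)$; an Aubin-Lions argument ($u_n\to 0$ strongly in $L^2([t_0,t_0+\varepsilon],L^2)$ by compactness, and $a\partial_t u_n\to 0$ in $L^2$ by hypothesis) finally identifies this with the solution $v_n$ of $\Box v_n=0$ with the same initial data and yields $\nort{u_n-v_n}_{[t_0,t_0+\varepsilon]}\to 0$, which is the required linearizability.
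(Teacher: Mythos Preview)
Your strategy is the right one and mirrors the paper's, but the execution diverges in two places worth flagging.

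First, for the vanishing of the profiles on $\omega$, the paper does not decompose $|\partial_t u_n|^2$ directly. It first uses Lemma~\ref{approxNL} to drop the damping (since $a^2\partial_t u_n\to 0$ in $L^1L^2$), then observes that $|u_n|^4u_n\to 0$ in $H^{-1}_{loc}$ (boundedness in $L^8$ plus strong convergence to $0$ in $L^2$ via Lemma~\ref{lmestimL2} gives $L^7\to 0$), so the space--time microlocal defect measure of $u_n$ lives on the characteristic set; combined with the damping hypothesis this yields $u_n\to 0$ in $H^1_{loc}(]t_0,t_0+L[\times\omega)$. Then the already--proved Theorem~\ref{thmprofilNRJ} gives the decomposition of the full density $e$, and positivity forces each $e^{(j)}=0$ on $\omega$, hence $\mu^{(j)}\equiv 0$ on $S^*(]t_0,t_0+L[\times\omega)$. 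Your route---decomposing $|\partial_t u_n|^2$ alone---is workable, but note that Lemma~\ref{lmconserorth} only gives $\nabla p_n\!\cdot\!\nabla u_n+\partial_t p_n\,\partial_t u_n\rightharpoonup 0$, not the $\partial_t$--piece separately; you would need an extra step (equipartition, or a direct scale/core--orthogonality argument for $\partial_t q_n^{(j)}\partial_t q_n^{(k)}$) that you currently gloss over. The paper sidesteps this by working with the full $b(u_n,u_n)$.

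Second, for linearizability on $[t_0,t_0+\varepsilon]$ the paper is more direct than your summation of profile approximations: once all $t_\infty^{(j)}>t_0+\varepsilon$, Theorem~\ref{thmdescriptionNLW} and Lemma~\ref{lmnonreconcentr} give $\limsup_n\|q_n^{(j)}\|_{L^5([t_0,t_0+\varepsilon],L^{10})}=0$ for each $j$, whence (using $w_n^{(l)}+r_n^{(l)}$ small for large $l$) $\|u_n\|_{L^5([t_0,t_0+\varepsilon],L^{10})}\to 0$ and so $|u_n|^4u_n\to 0$ in $L^1L^2$, which is exactly linearizability. Your detour through ``$u_n$ close to the linear damped Klein--Gordon solution, then Aubin--Lions to reach $\Box v_n=0$'' is correct but unnecessary.
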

\bnp
To simplify the notation, we work on the interval $[0,L]$. Moreover, since $a(x)\partial_t u_n$ tends to $0$ in $L^1L^2$, Lemma \ref{approxNL} allows to assume with the same assumptions that $u_n$ is solution of the nonlinear equation without damping.  Proposition \ref{propapriori} and Lemma \ref{lmestimL2} (with Rellich Theorem) give that $u_n$ is bounded in $L^8([0,T]\times M)$ and convergent to $0$ in $L^2([0,T]\times M)$. Therefore, $u_n$ tends to $0$ in $L^7([0,T]\times M)$ and so $|u_n|^4u_n$ is convergent to $0$ in $L^{7/5}([0,T]\times M)\hookrightarrow L^{4/3}([0,T]\times M)\hookrightarrow H_{loc}^{-1}(]0,l[\times M)$. Then, if we consider the (space-time) microlocal defect measure of $u_n$, the elliptic regularity and the equation verified by $u_n$ gives that $\mu$ is supported in $\left\{\tau^2= \left|\xi\right|_x^2\right\}$ as in the linearizable case. So, combining this with (\ref{cvgeomega}), we get 
\bna
u_n \tend{n}{\infty}0 \textnormal{ in } H^1_{loc}(]0,L[\times \omega ). 
\ena
Using the notation of Theorem \ref{thmprofilNRJ}, this gives $e=0$ on $]0,L[\times \omega $. Since all the measures in the decomposition of $e$ are positive, we get the same result for any nonlinear concentrating wave in the decomposition of $u_n$, that is
\bna
q_n^{(j)} \tend{n}{\infty}0 \textnormal{ in } H^1_{loc}(]0,L[\times \omega ) 
\ena
and if $\mu^{(j)}$ is the microlocal defect measure of $q_n^{(j)}$, we have 
\bnan
\label{mujnul}
\mu^{(j)}\equiv 0 \textnormal{ in }S^*(]0,L[\times \omega).
\enan

Assume that $t_{\infty}^{(j)}\in [0,\varepsilon]$ for one $j\in \N$, so that the interval $[t_{\infty}^{(j)},L]$ has length greater that $T_0$. Denote $p_n^{(j)}$ the linear concentrating wave approaching $q_n^{(j)}$ in the interval $I_n^{3,\Lambda}$ according to the notation of Theorem \ref{thmdescriptionNLW}, so that for any $t_{\infty}^{(j)}<t<L$ (here, we use the fact that $L<T_{focus}$), we have
\bna
\nort{q_n^{(j)}-p_n^{(j)}}_{[t,L]}\tend{n}{\infty}0.
\ena
In particular, $\mu^{(j)}$ is also attached to $p_n^{(j)}$ on the time interval $]t_{\infty}^{(j)},L]$. Since $p_n^{(j)}$ is solution of the linear wave equation, its measure propagates along the hamiltonian flow. Assumption \ref{hypGCCfocus} and $\left|L-t_{\infty}^{(j)}\right|> T_0$ ensure that the geometric control condition is still verified on the interval $[t_{\infty}^{(j)},L]$ which gives $\mu^{(j)}\equiv 0$ when combined with (\ref{mujnul}). That means $p_n^{(j)}\equiv 0$ and so $q_n^{(j)}\equiv 0$ as expected.

Then, for the profile decomposition of $u_n$ on the interval $[0,L]$ (here the weak limit $u$ is necessarily zero)
\bna
u_n= \sum_{j=1}^l q_n^{(j)}+w_n^{(l)}+r_n^{(l)},
\ena
we have proved that $t_n^{(j)} \in ]\varepsilon,L]$. Then Theorem \ref{thmdescriptionNLW} and $L<T_{focus}$ provides a linear concentrating wave $p_n^{(j)}$ such that $\limsu{n}{\infty}\nort{q_n^{(j)}-p_n^{(j)}}_{[0,\varepsilon]}=0$ while Lemma \ref{lmnonreconcentr} give $\limsu{n}{\infty}\nor{p_n^{(j)}}{L^5([0,\varepsilon],L^{10})} =0$. Moreover, the conclusion of Theorem \ref{thmdescriptionNLW} give  $\limsu{n}{\infty}\nor{w_n^{(l)}+r_n^{(l)}}{L^5([0,\varepsilon],L^{10})} \tend{l}{\infty}0$. This finally yields $\limsu{n}{\infty}\nor{u_n}{L^5([0,\varepsilon],L^{10})} =0$ and therefore
\bna
 \nor{|u_n|^4u_n}{L^{1}([0,\varepsilon],L^2)}\tend{n}{\infty}0.
\ena
This gives exactly that $u_n$ is linearizable on $[0,\varepsilon]$.
\enp
We are now ready for the proof of some weak observability estimates. We recall the notation $E(u)$ for the nonlinear energy defined in (\ref{defnonmlinearNRJ}).
\begin{theorem}
\label{thminegobserv}
Let $\omega$ satisfying Assumption \ref{hypGCCfocus} with $T_0$ and $a \in C^{\infty}(M)$ satisfying $a(x)>\eta>0$ for all $x\in \omega$. Let $T>2T_{0}$ and $R_0>0$. Then, there exists $C>0$ such that for any $u$ solution of 
\begin{eqnarray}
\label{solutiondamped}
\left\lbrace
\begin{array}{rcl}%argument r=alignement à droite puis center et left
\Box u+u+|u|^4u+a^2(x)\partial_t u&=& 0\quad \textnormal{on}\quad [0,T]\times M\\
(u,\partial_t u)_{t=0}&=&(u_{0},u_{1}) \in \HutL\\
\nor{(u_{0},u_{1})}{\HutL} &\leq& R_0
\end{array}
\right.
\end{eqnarray}
satisfies 
\bna
E(u)(0) \leq C\left(\iint_{[0,T]\times M}  \left|a(x)\partial_t u\right|^2 ~dtdx  +\nor{(u_{0},u_{1})}{L^2\times H^{-1}}E(u)(0)\right).
\ena
\end{theorem}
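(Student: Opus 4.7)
My strategy is by contradiction, splitting cases according to whether the normalizing energy tends to a positive limit or to zero. Suppose the estimate fails: then for every $n$ one can find a solution $u_n$ of \eqref{solutiondamped} with $\nor{(u_{0,n},u_{1,n})}{\HutL}\leq R_0$ such that the ratio of $E(u_n)(0)$ to the right-hand side tends to infinity. Extracting a subsequence, I may assume $\nor{(u_{0,n},u_{1,n})}{L^2\times H^{-1}}\to 0$, $\iint_{[0,T]\times M}|a\partial_t u_n|^2 = \petito{E(u_n)(0)}$, and $\lambda_n^2:=E(u_n)(0)\to \lambda_\infty^2\in[0,\infty)$. Since the strong $L^2\times H^{-1}$ limit is zero, weak compactness gives $(u_{0,n},u_{1,n})\rightharpoonup (0,0)$ in $\HutL$.

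If $\lambda_\infty>0$, then $\iint|a\partial_t u_n|^2\to 0$, and Proposition \ref{proplinear} applies on any subinterval $[0,t]$ with $T_0<t<T-T_0$ (which exists because $T>2T_0$): the linearization $\nort{u_n-v_n}_{[0,t]}\to 0$ holds, with $v_n$ the linear wave solution with data $(u_{0,n},u_{1,n})$. Writing $a\partial_t v_n = a\partial_t u_n + a\partial_t(v_n-u_n)$, both terms vanish in $L^2([0,t]\times M)$, so the linear observability estimate for the wave equation under Assumption \ref{hypGCC} in time $t>T_0$ yields $\nor{(u_{0,n},u_{1,n})}{\HutL}\to 0$, and then $E(u_n)(0)\to 0$ by Sobolev embedding, contradicting $\lambda_\infty>0$.

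If $\lambda_\infty=0$, I rescale $w_n:=u_n/\lambda_n$, so that $\nor{(w_n,\partial_t w_n)(0)}{\HutL}\le\sqrt{2}$, $\iint|a\partial_t w_n|^2\to 0$, and $w_n$ solves $\Box w_n+w_n+a^2\partial_t w_n+\lambda_n^4|w_n|^4 w_n=0$; the normalization reads
\[
\tfrac12\nor{(w_n,\partial_t w_n)(0)}{\HutL}^2+\tfrac{\lambda_n^4}{6}\nor{w_n(0)}{L^6}^6=1,
\]
with the second term of size $\grando{\lambda_n^4}\to 0$ by Sobolev. The nonlinear perturbation $\lambda_n^4|w_n|^4 w_n$ is $\grando{\lambda_n^4}$ in $L^\infty L^{6/5}\hookrightarrow L^2 H^{-1}$, so the weak limit $w_n\rightharpoonup w$ satisfies $\Box w+w+a^2\partial_t w=0$ together with $a\partial_t w=0$ by lower semicontinuity of the $L^2$ norm; the linear observability inequality then forces $w\equiv0$. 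To upgrade this weak convergence at $t=0$, I invoke a perturbed version of Lemma \ref{lmmeasuredamped}: the microlocal defect measure $\mu^t_{\pm}$ attached to $(w_n)$ satisfies the damped transport identity along the Hamilton flow, the hypothesis $\iint|a\partial_t w_n|^2\to 0$ forces $\mu^t_{\pm}$ to vanish on $\omega$, and propagation under the Geometric Control Condition (in time $T_0<T$) yields $\mu^0_{\pm}\equiv 0$. Hence $\nor{(w_n,\partial_t w_n)(0)}{\HutL}^2\to \int d\mu^0 = 0$, contradicting the normalization.

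The main obstacle is to justify in the second case that the microlocal defect measure machinery of Lemma \ref{lmmeasuredamped}, stated for exact solutions of the linear damped equation, still applies to $w_n$, which solves it only up to the source $\lambda_n^4|w_n|^4 w_n$. The key point is that this source vanishes in $L^2([0,T],H^{-1}(M))$ by the embedding $L^{6/5}(M)\hookrightarrow H^{-1}(M)$, which is exactly enough to localize $\mu$ on the characteristic variety and preserve the damped transport identity for $\mu^t_{\pm}$; once this is granted, only the classical Geometric Control Condition in time $T_0<T$ is needed for the propagation, so that Assumption \ref{hypGCCfocus} is used here only through its consequence Assumption \ref{hypGCC} and the constraint $T>2T_0$ enters solely through the first case.
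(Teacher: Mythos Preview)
Your proof is correct and follows the same two-case contradiction scheme as the paper: in the first case you invoke Proposition \ref{proplinear} to linearize and then the linear observability (the paper packages this as Proposition \ref{propcontrollin}); in the second case you rescale and kill the nonlinearity. The only noteworthy difference is technical: in the small-energy case the paper runs a Strichartz bootstrap to obtain $\nor{w_n}{L^5L^{10}}$ bounded and hence $\alpha_n^4|w_n|^4w_n\to 0$ in $L^1([0,T],L^2)$, so that Proposition \ref{propcontrollin} applies directly, whereas you use only the Sobolev embedding $H^1\hookrightarrow L^6$ to place the source in $L^\infty L^{6/5}\hookrightarrow L^2H^{-1}$ and then argue with the defect measure; both routes are sufficient, and yours is marginally lighter since it avoids the bootstrap. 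Your identification of the weak limit $w$ as zero via linear observability is also equivalent to the paper's slightly more explicit finite-dimensional/unique-continuation argument for $\Box w+w=0$, $\partial_t w=0$ on $\omega$.
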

\bnp
We argue by contradiction : we suppose that there exists a sequence $u_n$ of solutions of (\ref{solutiondamped}) such that
\bna
\left(\iint_{[0,T]\times M}  \left|a(x)\partial_t u_n\right|^2 ~dtdx  +\nor{(u_{0,n},u_{1,n})}{L^2\times H^{-1}}E(u_n)(0)\right)\leq \frac{1}{n}E(u_n)(0).
\ena
Denote $\alpha_n=(E(u_n)(0))^{1/2}$. By Sobolev embedding for the $L^6$ norm, we have $\alpha_n \leq C (R_0)$. So, up to extraction, we can assume that $\alpha_n \longrightarrow \alpha\geq 0$.

We will distinguish two cases : $\alpha>0$ and $\alpha=0$.
\bigskip

\textbullet \quad First case : $\alpha_n \longrightarrow \alpha>0$\\
The second part of the estimate gives $\nor{(u_{0,n},u_{1,n})}{L^2\times H^{-1}}\tend{n}{\infty}0$ and so $(u_{0,n},u_{1,n}) \tendweak{n}{\infty}0$ in $H^1\times L^2$. Therefore, we are in position to apply Proposition \ref{proplinear} and get that $u_n$ is linearizable on an interval $[0,L]$ with $L>T_0$. We get a contradiction to $\alpha>0$ by applying the following classical linear proposition, which can be easily proved using microlocal defect measure as in Lemma \ref{propprofilnul}.
\begin{prop}
\label{propcontrollin}
Let $\omega$ satisfying Assumption \ref{hypGCCfocus} with $T_0$. Let $T>T_0$ and $v_n$ be a sequence of solutions of 
\begin{eqnarray*}
\left\lbrace
\begin{array}{rcl}%argument r=alignement à droite puis center et left
\Box v_n&=& 0\quad \textnormal{on}\quad [0,T]\times M\\
(v_n(0),\partial_t v_n(0))&\tendweak{n}{\infty}& 0 \textnormal{ in } \HutL
\end{array}
\right.
\end{eqnarray*}
satisfying
\bna
\iint_{[0,T]\times M}  \left|a(x)\partial_t v_n\right|^2 ~dtdx \tend{n}{\infty}0
\ena
Then, $(v_n(0),\partial_t v_n(0))\tend{n}{\infty} 0$ for the strong topology of $H^1\times L^2$. The same result holds with $\Box u_n$ replaced by $\Box u_n+u_n$. 
\end{prop}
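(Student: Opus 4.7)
The plan is to use microlocal defect measures together with propagation along bicharacteristics, mimicking the argument of Lemma \ref{propprofilnul}. After extracting a subsequence, I would attach to $(v_n)$ the family of microlocal defect measures $\mu^t$ on $S^*M$ provided by Lemma \ref{lmmeasuredamped}, and write $\mu^t = \frac{1}{2}(\mu^t_+ + \mu^t_-)$. Since here the equation is $\Box v_n = 0$ with no damping term on the operator side, the transport equation for $\mu^t_\pm$ reduces to $\partial_t \mu^t_\pm = \pm H_{|\xi|_x}\mu^t_\pm$; in other words, each $\mu^t_\pm$ is invariant along the lifted geodesic flow on $S^*M$.

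Next I would exploit the hypothesis $\iint_{[0,T]\times M}|a(x)\partial_t v_n|^2\,dt\,dx \to 0$, which states that $a(x)\partial_t v_n \to 0$ strongly in $L^2([0,T]\times M)$. Testing against a time cutoff $\psi \in C_c^\infty(\,]0,T[\,)$ and a spatial multiplication operator supported where $a \geq \eta/2$, and using that on the characteristic cone $\tau^2 = |\xi|_x^2$ the ``kinetic'' and ``gradient'' parts of the energy are equidistributed, one deduces that $\mu^t$ vanishes on $\{a > 0\}$ for almost every $t$, and in particular on $S^*_\omega M$. The heart of the argument is now propagation: Assumption \ref{hypGCCfocus} implies in particular the classical Geometric Control Condition with some $T_0 < T$, so every geodesic of $S^*M$ meets $\omega$ in time less than $T_0$. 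Hence for any $(x_0, \xi_0) \in S^*M$, the bicharacteristic through $(0,x_0,\xi_0)$ enters $[0,T_0]\times \omega \subset [0,T]\times \omega$ where $\mu^{\cdot}_\pm$ vanishes; invariance of $\mu_\pm$ along its flow then forces $\mu^0_\pm(x_0,\xi_0) = 0$, and therefore $\mu^0 \equiv 0$.

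The identity $\mu^0 \equiv 0$ rewrites as $\|\nabla v_n(0)\|_{L^2}^2 + \|\partial_t v_n(0)\|_{L^2}^2 \to 0$. Combining this with the Rellich compact embedding $H^1(M) \hookrightarrow L^2(M)$ and the weak convergence $v_n(0) \rightharpoonup 0$ in $H^1$, we get $v_n(0) \to 0$ strongly in $L^2$ as well, so $(v_n(0),\partial_t v_n(0)) \to 0$ strongly in $H^1\times L^2$, as required. The case of the equation $\Box v_n + v_n = 0$ is identical, since the zero-order term does not affect the principal symbol nor the transport equation for $\mu^t_\pm$. I expect the only mildly subtle point to be the identification of $\mu^t$ on $\{a > 0\}$ from the $L^2$ convergence of $a \partial_t v_n$: this requires a microlocal cutoff of principal symbol $\tau^2/(\tau^2 + |\xi|_x^2)$ to isolate the kinetic part of $\mu^t$, but is standard in the microlocal defect measure framework and is implicit in the proof of Lemma \ref{propprofilnul} already used above.
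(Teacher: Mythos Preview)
Your proposal is correct and follows essentially the same approach the paper itself suggests: the paper does not give a detailed proof of this proposition but simply states it ``can be easily proved using microlocal defect measure as in Lemma \ref{propprofilnul}'', and your argument via Lemma \ref{lmmeasuredamped}, propagation along the geodesic flow, and the Geometric Control Condition is exactly that. One small remark: to conclude for the full sequence rather than a subsequence, you should note that the argument shows every subsequence admits a further subsequence converging strongly to $0$, hence the whole sequence converges; this is implicit but worth stating.
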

\textbullet \quad Second case : $\alpha_n \longrightarrow 0$\\
Let us make the change of unknown $w_n=u_n/\alpha_n$. $w_n$ is solution of the system
\bnan
\label{eqnalphan}
\Box w_n +a^2(x)\partial_t w_n+ w_n+\alpha_n^4|w_n|^4w_n=0
\enan
and
\begin{eqnarray*}
\iint_{[0,T]\times M}  \left|a(x)\partial_t w_n\right|^2 ~dtdx \leq \frac{1}{n}. 
\end{eqnarray*}
We have for a large constant $C>0$ depending on $R_0$ and for all $t\in [0,T]$,
 $$\frac{1}{C}\left\|(u_n,\partial_tu_n)\right\|^2_{\HutL}\leq E(u_n)\leq  C\left\|(u_n,\partial_tu_n)\right\|^2_{\HutL}.$$
 Therefore, we have
\bnan
\left\|(w_n(t),\partial_tw_n(t))\right\|_{\HutL}=\frac{ \left\|(u_n(t),\partial_tu_n(t))\right\|_{\HutL} }{ \sqrt{E(u_n(0))} }\leq C\frac{\sqrt{E(u_n(t))}  }{\sqrt{E(u_n(0))}}\leq C \nonumber \\
\label{H1tend1}
\left\|(w_n(0),\partial_tw_n(0))\right\|_{\HutL}=\frac{ \left\|(u_n(0),\partial_tu_n(0))\right\|_{\HutL} }{ \sqrt{E(u_n(0))} }\geq \frac{1}{\sqrt{C}}>0.
\enan
Thus, we have $\left\|(w_n(0),\partial_tw_n(0))\right\|_{\HutL}\approx 1$ and $(w_n,\partial_t w_n)$ is bounded in $L^{\infty}([0,T],\HutL)$.

Applying Strichartz estimates to equation (\ref{eqnalphan}), we get for $C=C(R_0)>0$
\bna
\nor{w_n}{L^{5}([0,T],L^{10})}\leq C(1+\alpha_n^4 \nor{w_n}{L^{5}([0,T],L^{10})}^5)
\ena
Then, using a bootstrap argument, we deduce that $\nor{w_n}{L^{5}([0,T],L^{10})}$ is bounded and therefore
\bna
\Box w_n +w_n\tend{n}{\infty} 0 \textnormal{ in } L^{1}([0,T],L^{2}).
\ena 
Proposition \ref{propcontrollin} yields that $w_n$ converges strongly to some $w$ solution of 
\bna
\Box w+w=0;\quad \partial_t w\equiv 0 \textnormal{ on } \omega.
\ena
We deduce as in J. Rauch and M. Taylor \cite{RauchTaylordecay} or C. Bardos, G. Lebeau, J. Rauch \cite{BLR} that the set of such solutions is finite dimensional and admits an eigenvector $w$ for $\Delta$. By unique continuation for second order elliptic operator, we get $\partial_t w\equiv 0$. Multiplying the equation by $\bar{w}$ and integrating, we obtain $w\equiv 0$ (note that, at this stage, the choice of the Klein-Gordon equation instead of the wave equation is crucial to avoid the constant solutions). 
We conclude that $(w_n(0),\partial_t w_n(0))$ tends to $0$ strongly in $\HutL$ which gives a contradiction to (\ref{H1tend1}). 
\enp
\subsection{Controllability}
\subsubsection{Linear control}
In this section, we recall some well known results about linear control theory and HUM method. Let $(\Phi_0,\Phi_1)\in L^2\times H^{-1}$. We solve the system
\bnan
\label{eqnHUMlinphi}
\left\lbrace
\begin{array}{rcl}%argument r=alignement à droite puis center et left
\Box \Phi+\Phi&=& 0\quad \textnormal{on}\quad [0,T]\times M\\
(\Phi,\partial_t \Phi)_{|t=0}&=&(\Phi_0,\Phi_1) .
\end{array}
\right.
\enan
and
\begin{eqnarray}
\label{eqnHUMlinv}
\left\lbrace
\begin{array}{rcl}%argument r=alignement à droite puis center et left
\Box v+v&=& a^2\Phi \quad \textnormal{on}\quad [0,T]\times M\\
(v,\partial_t v)_{|t=T}&=&(0,0) .
\end{array}
\right.
\end{eqnarray}
The HUM operator $S$ from $L^2\times H^{-1}$ to $L^2\times H^1$ is defined by
\bna
S(\Phi_0,\Phi_1)=(-\partial_t v(0),v(0)).
\ena
\begin{lemme}
If $\omega$ satisfies the geometric control Assumption \ref{hypGCC}, then S is an isomorphism.
\end{lemme}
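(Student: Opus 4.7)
The plan is the classical HUM argument, combining the identity obtained by integrating the two equations against each other with the linear observability inequality for the Klein-Gordon equation under Assumption \ref{hypGCC}.

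First, I would check that $S$ is well-defined and continuous from $L^2\times H^{-1}$ into $L^2\times H^1$. If $(\Phi_0,\Phi_1)\in L^2\times H^{-1}$, standard energy estimates give $\Phi\in C([0,T],L^2)\cap C^1([0,T],H^{-1})$, hence $a^2\Phi\in L^\infty([0,T],L^2)\subset L^1([0,T],L^2)$ since $a$ is smooth. The backward problem (\ref{eqnHUMlinv}) with zero data at $T$ and $L^1L^2$ source is then well-posed with $(v,\partial_t v)\in C([0,T],H^1\times L^2)$, so $S(\Phi_0,\Phi_1)=(-\partial_tv(0),v(0))\in L^2\times H^1$ and the map is continuous.

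Next I compute the key bilinear identity. Given $(\Phi_0,\Phi_1)$ and $(\Psi_0,\Psi_1)$ in $L^2\times H^{-1}$, denote by $\Phi,\Psi$ the corresponding solutions of (\ref{eqnHUMlinphi}) and by $v_\Phi$ the solution of (\ref{eqnHUMlinv}) associated to $\Phi$. Multiplying $\Box v_\Phi+v_\Phi=a^2\Phi$ by $\bar\Psi$ and integrating by parts on $[0,T]\times M$, the boundary terms at $t=T$ vanish by construction while at $t=0$ they produce exactly the duality bracket
\[
\langle S(\Phi_0,\Phi_1),(\Psi_0,\Psi_1)\rangle_{L^2\times H^1,\,L^2\times H^{-1}}=\iint_{[0,T]\times M} a^2\Phi\bar\Psi\,dt\,dx.
\]
Specializing to $\Psi=\Phi$ yields simultaneously symmetry and the coercivity candidate
\[
\langle S(\Phi_0,\Phi_1),(\Phi_0,\Phi_1)\rangle=\iint_{[0,T]\times M}a^2|\Phi|^2\,dt\,dx.
\]

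To close the argument I would invoke the linear observability estimate for the Klein-Gordon equation at the level $L^2\times H^{-1}$: under Assumption \ref{hypGCC} and since $a(x)>\eta>0$ on $\omega$, one has for $T>T_0$
\[
\nor{(\Phi_0,\Phi_1)}{L^2\times H^{-1}}^2\leq C\iint_{[0,T]\times M}a^2|\Phi|^2\,dt\,dx.
\]
This is the standard Bardos--Lebeau--Rauch / Burq--Gérard observability, transposed from the $H^1\times L^2$ level to $L^2\times H^{-1}$ by applying the Fourier multiplier $(1-\Delta)^{-1/2}$, which commutes with $\Box+1$. Combining the last two displayed formulas gives the coercivity
\[
\langle S(\Phi_0,\Phi_1),(\Phi_0,\Phi_1)\rangle\geq \frac{1}{C}\nor{(\Phi_0,\Phi_1)}{L^2\times H^{-1}}^2.
\]
Since $L^2\times H^1$ is the topological dual of $L^2\times H^{-1}$, the Lax--Milgram theorem applied to the continuous, symmetric, coercive bilinear form $((\Phi_0,\Phi_1),(\Psi_0,\Psi_1))\mapsto\langle S(\Phi_0,\Phi_1),(\Psi_0,\Psi_1)\rangle$ yields that $S$ is a bijective isomorphism from $L^2\times H^{-1}$ onto $L^2\times H^1$.

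There is no real obstacle here: the argument is entirely classical once the observability inequality at the $L^2\times H^{-1}$ regularity level is granted; the only mild care required is in the integration by parts, which must be justified by approximating the low-regularity data by smooth data and passing to the limit using the continuity established in the first step.
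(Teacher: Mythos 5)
Your overall strategy---the bilinear HUM identity, the $L^2\times H^{-1}$ observability estimate, and Lax--Milgram---is exactly what the paper does; the identity and the Lax--Milgram conclusion are fine. The one place where your argument has a genuine gap is the claimed justification of the observability inequality. You propose to obtain
\[
\nor{(\Phi_0,\Phi_1)}{L^2\times H^{-1}}^2\leq C\iint_{[0,T]\times M}a^2|\Phi|^2
\]
by applying $(1-\Delta)^{-1/2}$ to $\Phi$ and invoking the standard $H^1\times L^2$ estimate. But $(1-\Delta)^{-1/2}$ is a \emph{nonlocal} operator: setting $\tilde\Phi=(1-\Delta)^{-1/2}\Phi$ and applying the classical observability for $\tilde\Phi$, you would only get
\[
\nor{(\Phi_0,\Phi_1)}{L^2\times H^{-1}}^2\leq C\iint_0^T\!\!\int_{\omega}|(1-\Delta)^{-1/2}\partial_t\Phi|^2,
\]
and there is no way to compare the right-hand side with $\iint a^2|\Phi|^2$ because the multiplier does not commute with multiplication by $a$ (and the restriction to $\omega$ of $(1-\Delta)^{-1/2}\partial_t\Phi$ is not controlled by data on $\omega$). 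To make your "regularity-shift" idea actually work you should instead shift in \emph{time}: take the solution $\Psi$ of $\Box\Psi+\Psi=0$ with $(\Psi(0),\partial_t\Psi(0))=\bigl((\Delta-1)^{-1}\Phi_1,\Phi_0\bigr)\in H^1\times L^2$; then $\partial_t\Psi=\Phi$, the classical observability applies to $\Psi$, and the right-hand side is literally $\iint a^2|\partial_t\Psi|^2=\iint a^2|\Phi|^2$, while $\nor{\left((\Delta-1)^{-1}\Phi_1,\Phi_0\right)}{H^1\times L^2}\approx\nor{(\Phi_0,\Phi_1)}{L^2\times H^{-1}}$. The paper itself does not prove this estimate either; it simply states that it "can be proved by the same techniques used in the nonlinear problem," i.e., by the contradiction/microlocal-defect-measure argument, which is another valid route.
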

\bnp
Multiplying equation (\ref{eqnHUMlinv}) by $\bar{\Phi}$, integrating over $[0,T]\times M$ and integrating by part, we get the formula
\bna
\int_0^T\int_M |a\Phi|^2=-\int_M \partial_tv(0) \bar{\phi}(0)+\int_M v(0) \partial_t\bar{\phi}(0)=\left\langle S(\Phi_0,\Phi_1),(\Phi_0,\Phi_1)\right\rangle
\ena
where $\left\langle .,.\right\rangle$ denotes the duality between $L^2\times H^1$ and $L^2\times H^{-1}$. We get the conclusion thanks to the following observability estimate which can be proved by the same techniques used in the nonlinear problem
\bna
\nor{(\Phi_0,\Phi_1)}{L^2\times H^{-1}}^2\leq \int_0^T\int_M |a\Phi|^2.
\ena
\enp
\subsubsection{Controllability for small data}
\begin{theorem}
\label{thmcontrolpetit}
Let $\omega$ satisfying Assumption \ref{hypGCC} and $T>T_0$. Then, there exists $\delta>0$ such that for any $(u_0,u_1)$ and $(\tilde{u}_0,\tilde{u}_1)$ in $H^1\times L^2$, with
\bna
\nor{(u_0,u_1)}{\HutL} \leq \delta; \quad \nor{(\tilde{u}_0,\tilde{u}_1)}{\HutL}\leq \delta
\ena
there exists $g\in L^{\infty}([0,2T],L^2)$ supported in $[0,2T]\times \omega$ such that the unique strong solution of 
\begin{eqnarray*}
\left\lbrace
\begin{array}{rcl}%argument r=alignement à droite puis center et left
\Box u+u+|u|^4u&=& g\quad \textnormal{on}\quad [0,2T]\times M\\
(u(0),\partial_t u(0))&=&(u_0,u_1) .
\end{array}
\right.
\end{eqnarray*}
satisfies $(u(2T),\partial_t u(2T))=(\tilde{u}_0,\tilde{u}_1)$
\end{theorem}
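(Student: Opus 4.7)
The equation $\Box u + u + |u|^4 u = g$ is invariant under the time-reversal $t \mapsto 2T-t$ (under which $\partial_t u$ changes sign while the spatial support of $g$ is preserved), so it is enough to prove the following one-sided statement: there exists $\delta>0$ such that for every $(a_0,a_1)\in\HutL$ with $\nor{(a_0,a_1)}{\HutL}\leq \delta$, one can find a control $g\in L^{\infty}([0,T],L^2)$ supported in $\omega$ driving the nonlinear equation from $(a_0,a_1)$ at $t=0$ to $(0,0)$ at $t=T$. Applied first to $(u_0,u_1)$ on $[0,T]$ (steering it to rest), and then with time reversal to $(\tilde u_0,-\tilde u_1)$, whose time-reversed trajectory on $[T,2T]$ goes from rest to $(\tilde u_0,\tilde u_1)$, this yields a control on $[0,2T]$ realising the prescribed transition.

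\textbf{HUM for the inhomogeneous linear problem.} Since $T>T_0$, the preceding HUM subsection provides an isomorphism $S:L^2\times H^{-1}\to L^2\times H^1$, from which one reads that for any target $(a_0,a_1)\in\HutL$ there exists a control $g=a^2\Phi\in C([0,T],L^2)$ with $\nor{g}{L^{\infty}L^2}\lesssim \nor{(a_0,a_1)}{\HutL}$ driving the free linear Klein-Gordon equation from $(a_0,a_1)$ to rest in time $T$. Superposing with the free inhomogeneous evolution, one obtains a bounded linear operator
\begin{equation*}
\Lambda:L^1([0,T],L^2)\oplus\HutL\longrightarrow L^{\infty}([0,T],L^2),
\end{equation*}
such that for any source $F$ and any data $(u_0,u_1)$, the linear solution $v$ of $\Box v+v=F+\Lambda(F,u_0,u_1)\mathbf{1}_\omega$ with $(v,\partial_t v)|_{t=0}=(u_0,u_1)$ satisfies $(v,\partial_t v)|_{t=T}=(0,0)$, with $\nor{\Lambda(F,u_0,u_1)}{L^{\infty}L^2}\lesssim \nor{(u_0,u_1)}{\HutL}+\nor{F}{L^1L^2}$.

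\textbf{Fixed point scheme.} On the ball $B_R=\{u:\nort{u}_{[0,T]}\leq R\}$, define $\Theta(u)=v$ to be the unique strong solution of
\begin{equation*}
\Box v+v=-|u|^4u+\Lambda(-|u|^4u,u_0,u_1)\mathbf{1}_\omega,\qquad (v,\partial_t v)|_{t=0}=(u_0,u_1),
\end{equation*}
which by construction satisfies $(v,\partial_t v)|_{t=T}=(0,0)$. Any fixed point $u=\Theta(u)$ is then a controlled nonlinear trajectory reaching rest at time $T$. Strichartz estimates combined with the pointwise bounds $\||u|^4u\|_{L^2}=\|u\|_{L^{10}}^5$ and $|\beta(u)-\beta(\tilde u)|\leq C(|u|^4+|\tilde u|^4)|u-\tilde u|$ yield
\begin{equation*}
\nort{\Theta(u)}_{[0,T]}\leq C_0\nor{(u_0,u_1)}{\HutL}+C_1R^5,\qquad \nort{\Theta(u)-\Theta(\tilde u)}_{[0,T]}\leq C_1R^4\nort{u-\tilde u}_{[0,T]}.
\end{equation*}
Choosing $R$ small so that $C_1R^4<1/2$ and then $\delta$ with $C_0\delta\leq R/2$ makes $\Theta$ a contraction of $B_R$, and the Banach fixed point theorem produces the desired solution.

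\textbf{Main obstacle.} The delicate point is the second step: the linear HUM subsection is formulated via the isomorphism $S$ between dual energy spaces, and one must verify that the resulting $\Lambda$ really lands in $L^{\infty}([0,T],L^2)$, with its norm linear in $\nor{F}{L^1L^2}+\nor{(u_0,u_1)}{\HutL}$. This relies on the fact that the adjoint state $\Phi$ associated with data in $L^2\times H^{-1}$ belongs to $C([0,T],L^2)\cap C^1([0,T],H^{-1})$, so that $a^2\Phi$ inherits the required regularity with values in $L^2$; the linear control of a source term is then obtained by reducing, via the free evolution, to a controllability problem with a target in $\HutL$ at time $T$, to which the basic HUM construction applies.
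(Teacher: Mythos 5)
Your argument is correct and the overall strategy (HUM linear control plus a Picard fixed point with Strichartz estimates) is the same as the paper's, but the fixed point is set up in a genuinely different variable. You iterate on the \emph{trajectory} $u$ in the Strichartz ball $\{\nort{u}_{[0,T]}\leq R\}$: given a candidate $u$, you solve a \emph{linear} controlled problem with source $-|u|^4u$ whose control $\Lambda(-|u|^4u,u_0,u_1)\mathbf{1}_\omega$ is produced by HUM, so every iterate has the correct endpoint data $(u_0,u_1)$ at $t=0$ and $(0,0)$ at $t=T$ while the nonlinear equation is matched only at the fixed point. The paper instead iterates on the HUM datum $(\Phi_0,\Phi_1)\in L^2\times H^{-1}$: given $(\Phi_0,\Phi_1)$ it solves the \emph{nonlinear} backward problem $\Box u+u+|u|^4u=a^2\Phi$, $(u,\partial_t u)|_{T}=(0,0)$, splits $u=v+\Psi$, and corrects $(\Phi_0,\Phi_1)$ via $B=-S^{-1}K+S^{-1}(-u_1,u_0)$, so every iterate solves the nonlinear equation and reaches rest at $T$ while the mismatch is at the initial condition. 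Your scheme is slightly more elementary in that each iterate is a linear solve (no small-data nonlinear backward well-posedness needed within the iteration), and the lifting of HUM to the operator $\Lambda$ acting on $L^1L^2\oplus\HutL$ is the kind of standard reduction you sketch; the paper's scheme is more in the spirit of a perturbation of the HUM isomorphism $S$ and makes the role of $S^{-1}$ explicit. Both give a contraction under the identical smallness condition $R$ small, $\delta\lesssim R$, via the same Strichartz and $\beta$-difference bounds, and both produce the same class of controls $a^2\Phi\in C([0,T],L^2)\subset L^{\infty}L^2$ supported in $\omega$. One small point you leave implicit, as does your sketch, is that the multiplier $a$ must be chosen in $C^\infty_0(\omega)$ with $a>\eta>0$ on a slightly smaller set still satisfying Geometric Control; the paper makes this explicit by a compactness argument before invoking HUM.
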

\bnp
The proof is very similar to \cite{DLZstabNLW} except that the critical exponent do not allow to use compactness argument and we use the classical Picard fixed point instead of Schauder, as done in \cite{control-nl} or \cite{LaurentNLSdim1}, \cite{LaurentNLSdim3} for NLS. 
By a compactness argument, we can select $a \in C^{\infty}_0(\omega)$ with $a(x)>\eta>0$ for $x$ in $\tilde{\omega}$ where $\tilde{\omega}$ satisfies Assumption \ref{hypGCC}. Since the equation is reversible, we can assume $(\tilde{u}_0,\tilde{u}_1)\equiv (0,0)$ and take the time $T$ instead of $2T$. We seek $g$ of the form $a^2(x)\Phi$ where $\Phi$ is solution of the free wave equation as in linear control theory with initial datum $(\Phi_0,\Phi_1)\in L^2\times H^{-1}$. The purpose will be to choose the right $(\Phi_0,\Phi_1)\in L^2\times H^{-1}$ to get the expected data. We consider the solutions of the two systems
 \bna
\left\lbrace
\begin{array}{rcl}%argument r=alignement à droite puis center et left
\Box \Phi+\Phi&=& 0\quad \textnormal{on}\quad [0,T]\times M\\
(\Phi,\partial_t \Phi)_{|t=0}&=&(\Phi_0,\Phi_1)
\end{array}
\right.
\ena
and
\begin{eqnarray}
 \label{eqncontrolu}
\left\lbrace
\begin{array}{rcl}%argument r=alignement à droite puis center et left
\Box u+u+|u|^4u&=& a^2\Phi \quad \textnormal{on}\quad [0,T]\times M\\
(u,\partial_t u)_{|t=T}&=&(0,0) .
\end{array}
\right.
\end{eqnarray}

Let us define the operator
\begin{eqnarray}
\begin{array}{rrcl}%argument r=alignement à droite puis center et left
L:&L^2\times H^{-1} &\rightarrow &H^1\times L^2\\
& (\Phi_0,\Phi_1)&\mapsto &L (\Phi_0,\Phi_1)=(u,\partial_t u)_{|t=0}.
\end{array}
\end{eqnarray}
We split $u=v+\Psi$ with $\Psi$ solution of 
 \bnan
  \label{eqncontrolpsi}
\left\lbrace
\begin{array}{rcl}%argument r=alignement à droite puis center et left
\Box \Psi+\Psi&=& a^2\Phi\quad \textnormal{on}\quad [0,T]\times M\\
(\Psi,\partial_t \Psi)_{|t=T}&=&(0,0) .
\end{array}
\right.
\enan
This corresponds to the linear control, and $(-\partial_t \Psi,\Psi)_{|t=0}=S(\Phi_0,\Phi_1)$. As for function $v$, it is solution of
 \bnan
 \label{eqncontrolv}
\left\lbrace
\begin{array}{rcl}%argument r=alignement à droite puis center et left
\Box v+v&=&-|u|^4u \quad \textnormal{on}\quad [0,T]\times M\\
(v,\partial_t v)_{|t=T}&=&(0,0) .
\end{array}
\right.
\enan
$\Phi$ belongs to $C([0,T],L^2)$. So, $u$, $v$ and $\Psi$ belong to $C([0,T],H^1) \cap C^1([0,T],L^2)\cap L^5([0,T],L^{10})$. We can write
\bna
L (\Phi_0,\Phi_1)=K(\Phi_0,\Phi_1)+S (\Phi_0,\Phi_1)
\ena
where $K(\Phi_0,\Phi_1)=(-\partial_t v,v)_{|t=0}$. $L (\Phi_0,\Phi_1)=(-u_1,u_0)$ is equivalent to $(\Phi_0,\Phi_1)=-S^{-1}K(\Phi_0,\Phi_1)+S^{-1}(-u_1,u_0)$. Defining the operator $B:L^2\times H^{-1}\rightarrow L^2\times H^{-1}$ by
\bna
B(\Phi_0,\Phi_1)=-S^{-1}K(\Phi_0,\Phi_1)+S^{-1}(-u_1,u_0),
\ena
the problem $L (\Phi_0,\Phi_1)=(-u_1,u_0)$ is equivalent to finding a fixed point of $B$. We will prove that if $\nor{(u_0,u_1)}{\HutL}$ is small enough, $B$ is a contraction and reproduces a small ball $B_R$ of $L^2\times H^{-1}$.

Since $S$ is an isomorphism, we have
\bna
\nor{B(\Phi_0,\Phi_1)}{L^2\times H^{-1}}\leq C (\nor{K(\Phi_0,\Phi_1)}{L^2\times H^1}+\nor{(u_0,u_1)}{\HutL})
\ena
So we are led to estimate $\nor{K(\Phi_0,\Phi_1)}{L^2\times H^1}=\nor{(v,\partial_t v)_{|t=0}}{\HutL}$. Energy estimates applied to equation (\ref{eqncontrolv}) and Hölder inequality give
\bna
\nor{(v,\partial_t v)_{|t=0}}{\HutL}\leq C\nor{|u|^4u}{L^1([0,T],L^2)}\leq C \nor{u}{L^5([0,T],L^{10})}^5.
\ena
But Strichartz estimates applied to equation (\ref{eqncontrolu}) give
\bna
\nor{u}{L^5([0,T],L^{10})}\leq C\left(\nor{a^2\Phi}{L^1([0,T],L^2)}+\nor{u}{L^5([0,T],L^{10})}^5\right)\leq C\left(\nor{(\Phi_0,\Phi_1)}{L^2\times H^{-1}}+\nor{u}{L^5([0,T],L^{10})}^5\right).
\ena
Using a bootstrap argument, we get that for $\nor{(\Phi_0,\Phi_1)}{L^2\times H^{-1}}\leq R$ small enough, we have
\bnan
\label{estimuL5L10}
\nor{u}{L^5([0,T],L^{10})}\leq C\nor{(\Phi_0,\Phi_1)}{L^2\times H^{-1}}.
\enan
We finally obtain 
\bna
\nor{B(\Phi_0,\Phi_1)}{L^2\times H^{-1}}\leq C\left(\nor{(\Phi_0,\Phi_1)}{L^2\times H^{-1}}^5+\nor{(u_0,u_1)}{\HutL}\right).
\ena
Choosing $R$ small enough and $\nor{(u_0,u_1)}{H^1\times L^2}\leq R/2C$, we obtain $\nor{B(\Phi_0,\Phi_1)}{L^2\times H^{-1}}\leq R$ and $B$ reproduces the ball $B_R$. Let us now prove that $B$ is contracting. We examine the system
\bnan
 \label{eqncontroluutilde}
\left\lbrace
\begin{array}{rcl}%argument r=alignement à droite puis center et left
\Box (u-\tilde{u})+(u-\tilde{u})+|u|^4u-|\tilde{u}|^4\tilde{u}&=& a^2(\Phi-\tilde{\Phi}) \quad \textnormal{on}\quad [0,T]\times M\\
(u-\tilde{u},\partial_t (u-\tilde{u}))_{|t=T}&=&(0,0) .
\end{array}
\right.
\enan
\bnan
 \label{eqncontrolvvtilde}
\left\lbrace
\begin{array}{rcl}%argument r=alignement à droite puis center et left
\Box (v-\tilde{v})+(v-\tilde{v})+|u|^4u-|\tilde{u}|^4\tilde{u}&=& 0 \quad \textnormal{on}\quad [0,T]\times M\\
(v-\tilde{v},\partial_t (v-\tilde{v}))_{|t=T}&=&(0,0) .
\end{array}
\right.
\enan
We obtain similarly
\bnan
\label{estimdiifB}
\nor{B(\Phi_0,\Phi_1)-B(\tilde{\Phi}_0,\tilde{\Phi}_1)}{L^2\times H^{-1}}&\leq&  C\nor{(v-\tilde{v},\partial_t (v-\tilde{v}))_{|t=0}}{\HutL}\nonumber\\
&\leq &C\nor{|u|^4u-|\tilde{u}|^4\tilde{u}}{L^1([0,T],L^2)}\nonumber\\
&\leq &C \nor{u-\tilde{u}}{L^5([0,T],L^{10})}(\nor{u}{L^5([0,T],L^{10})}^4+\nor{\tilde{u}}{L^5([0,T],L^{10})}^4)\nonumber\\
&\leq &CR^4 \nor{u-\tilde{u}}{L^5([0,T],L^{10})}
\enan
where we have used estimate (\ref{estimuL5L10}) for the last inequality. Applying Strichartz estimates to equation (\ref{eqncontroluutilde}), we get
\bna
\nor{u-\tilde{u}}{L^5([0,T],L^{10})}&\leq &C(\nor{|u|^4u-|\tilde{u}|^4\tilde{u}}{L^1([0,T],L^2)}+\nor{a^2(\Phi-\tilde{\Phi}) }{L^1([0,T],L^2)})\\
&\leq & CR^4 \nor{u-\tilde{u}}{L^5([0,T],L^{10})} + C \nor{(\Phi_0,\Phi_1)-(\tilde{\Phi}_0,\tilde{\Phi}_1)}{L^2\times H^{-1}}
\ena
If $R$ is taken small enough, it yields
\bnan
\label{estimdifu}
\nor{u-\tilde{u}}{L^5([0,T],L^{10})}\leq C \nor{(\Phi_0,\Phi_1)-(\tilde{\Phi}_0,\tilde{\Phi}_1)}{L^2\times H^{-1}}.
\enan
Combining (\ref{estimdiifB}) and (\ref{estimdifu}), we finally obtain for $R$ small enough
\bna
\nor{B(\Phi_0,\Phi_1)-B(\tilde{\Phi}_0,\tilde{\Phi}_1)}{L^2\times H^{-1}}\leq CR^4 \nor{(\Phi_0,\Phi_1)-(\tilde{\Phi}_0,\tilde{\Phi}_1)}{L^2\times H^{-1}}
\ena
and $B$ is a contraction for $R$ small enough, which completes the proof of Theorem \ref{thmcontrolpetit}.
\enp
\subsubsection{Controllability of high frequency data}
This subsection is devoted to the proof of the both main theorem of the article : Theorem \ref{thmdecresexp} and \ref {mainthm}.
\bnp[Proof of Theorem \ref{thmdecresexp}]
First, by decreasing of the energy and Sobolev embedding, there exists some constant $C(R_0)$ such that the assumption $\nor{(u_0,u_1)}{\HutL}\leq R_0$ implies 
\bnan
\label{normaHubornee}
E(u)(t) \leq C(R_0) \textnormal{ and }\nor{(u,\partial_t u)(t)}{\HutL}\leq C(R_0); \quad\forall t\geq 0.
\enan
Fix $T$ such that Theorem \ref{thminegobserv} applies. Then, there exists $\e>0$ such that for any $(u_0,u_1)$ satisfying 
\bnan
\nor{(u_0,u_1)}{\HutL}\leq C(R_0);\quad \nor{(u_0,u_1)}{L^2\times H^{-1}} \leq \e , \label{taille}
\enan
we have the strong observability estimate 
\bna
E(u)(0) \leq C\iint_{[0,T]\times M}  \left|a(x)\partial_t u\right|^2 ~dtdx .
\ena
for any solution of the damped equation (\ref{solutiondampedbis}). That means that there exists $0<C$ such that any solution of the damped equation satisfying  (\ref{taille}) fulfills
\bnan
\label{decay}
E(u)(T) \leq (1-C) E(u)(0).
\enan
Pick $N\in \N$ large enough  such that $(1-C)^N C(R_0)\leq \e^2/2$. 

Corollary \ref{corL2choixpetit} and (\ref{normaHubornee}) allow us to choose $\delta$ small enough such that the assumption
\bna
\nor{(u_0,u_1)}{\HutL}\leq R_0; &\quad &\nor{(u_0,u_1)}{L^2\times H^{-1}} \leq \delta
\ena
implies 
\bnan
 \nor{(u(nT),\partial_tu(nT))}{L^2\times H^{-1}} \leq\e, \quad 0\leq n\leq N. \label{petitepsilon}
\enan
So, with that choice, we have $E(u)(NT) \leq (1-C)^N E(u)(0)$. Then, by the energy decreasing, for any $t\geq NT$, we have
\bna
\nor{(u,\partial_tu)(t)}{L^2\times H^{-1}}^2\leq 2 E(u)(t)\leq 2E(u)(NT)\leq \e^2.
\ena

Therefore, the decay estimate (\ref{decay}) is true on each interval $[nT,(n+1)T]$, $n\in \N$ and we have 
\bna
E(u)(nT) \leq (1-C)^n E(u)(0)
\ena
which yields the result.
\enp
\bnp[Proof of Theorem \ref{mainthm}]
Since the equation is reversible, we can assume $(\tilde{u}_0,\tilde{u}_1)=(0,0)$. By a compactness argument, we can select $a \in C^{\infty}_0(\omega)$ with $a(x)>\eta>0$ for $x$ in $\tilde{\omega}$ where $\tilde{\omega}$ satisfies Assumption \ref{hypGCCfocus}. We will first use the damping term $a(x)^2\partial_tu$ as a term of control. We apply Theorem \ref{thmdecresexp} and Theorem \ref{thmcontrolpetit} once the energy is small enough.
\enp

{\it Acknowledgements.} The author deeply thanks his adviser Patrick G\'erard for drawing his attention to this problem and for helpful discussions and encouragements.
\bibliographystyle{plain} 
\bibliography{biblio}
\end{document}